\newcommand{\R}{\mathbb{R}}
\newcommand{\N}{\mathbb{N}}
\newcommand{\Q}{\mathbb{Q}}
\newcommand{\ee}{\mathrm{e}}
\newcommand{\dx}{\mathrm{d}x}
\newcommand{\dy}{\mathrm{d}y}
\newcommand{\dz}{\mathrm{d}z}
\newcommand{\dX}{\mathrm{d}X}
\newcommand{\dY}{\mathrm{d}Y}
\newcommand{\dZ}{\mathrm{d}Z}
\newcommand{\ds}{\mathrm{d}s}
\newcommand{\dt}{\mathrm{d}t}
\newcommand{\dtau}{\mathrm{d}\tau}
\newcommand{\dd}{\mathrm{d}}
\newcommand{\del}{\partial}
\newcommand{\eps}{\varepsilon}
\newcommand{\esp}{r}
\newcommand{\Y}{\mathcal{Y}}
\newcommand{\const}{P}
\newcommand{\Mfin}{\mathcal{M}^{\mathrm{fin}}}
\newcommand{\T}[1]{{\mathcal{T}\left[#1\right]}}
\newcommand{\abs}[1]{\left| #1 \right|}
\newcommand{\norm}[1]{\left\| #1 \right\|}
\DeclareMathOperator*{\supp}{supp} %Tr‰ger
\theoremstyle{plain}
\newtheorem{theorem}{Theorem}[section]
\newtheorem{lemma}[theorem]{Lemma}
\newtheorem{proposition}[theorem]{Proposition}
\newtheorem{corollary}[theorem]{Corollary}
\theoremstyle{definition}
\newtheorem{definition}[theorem]{Definition}
\theoremstyle{remark}
\newtheorem{remark}[theorem]{Remark}
\begin{document}

\title{Self-similar solutions with fat tails for Smoluchowski's coagulation equation with singular kernels}
% \author{B. Niethammer\thanks{Institute of Applied Mathematics, University of Bonn, Endenicher Allee 60, 53115 Bonn, Germany} \and S. Throm\footnotemark[1] \and J.~J.~L.~Vel\'azquez\footnotemark[1]}
\author{B. Niethammer \and S. Throm \and J.~J.~L.~Vel\'azquez}

% \author[iam]{B. Niethammer \fnref{fn1}}
% \ead{niethammer@iam.uni-bonn.de}
% \fntext[fn1]{Tel.: +49 (0) 228 73 2216}
% \author[iam]{S. Throm \corref{cor}}
% \ead{throm@iam.uni-bonn.de}
% \author[iam]{J.~J.~L.~Vel\'azquez\fnref{fn2}}
% \ead{velazquez@iam.uni-bonn.de}
% \fntext[fn2]{Tel.: +49 (0) 228 73 62378}
% 
% 
% \cortext[cor]{Corresponding author; Tel.: +49 (0) 228 73 62278}
% \address[iam]{Institute of Applied Mathematics, University of Bonn, Endenicher Allee 60, 53115 Bonn, Germany}

% \begin{keyword}
%  Smoluchowski's coagulation equation \sep Self-similar solution \sep Singular kernel \sep Fat tail \sep Dual problem
% \end{keyword}

\date{}
\maketitle
Institute of Applied Mathematics, University of Bonn, Endenicher Allee 60, 53115 Bonn, Germany

E-mail: \texttt{niethammer@iam.uni-bonn.de}; \texttt{throm@iam.uni-bonn.de}; \texttt{velazquez@iam.uni-bonn.de}

\begin{abstract}

 We show the existence of self-similar solutions with fat tails for Smoluchowski's coagulation equation for homogeneous kernels satisfying $C_1 \left(x^{-a}y^{b}+x^{b}y^{-a}\right)\leq K\left(x,y\right)\leq C_2\left(x^{-a}y^{b}+x^{b}y^{-a}\right)$ with $a>0$ and $b<1$. This covers especially the case of Smoluchowski's classical kernel $K(x,y)=(x^{1/3} + y^{1/3})(x^{-1/3} + y^{-1/3})$.

 For the proof of existence we first consider some regularized kernel $K_{\eps}$ for which we construct a sequence of solutions $h_{\eps}$. In a second step we pass to the limit $\eps\to 0$ to obtain a solution for the original kernel $K$. The main difficulty is to establish a uniform lower bound on $h_{\eps}$. The basic idea for this is to consider the time-dependent problem and choosing a special test function that solves the dual problem.

% \section*{R\'{e}sum\'{e}}
% 
% Nous d\'{e}montrons l'existence des solutions auto-similaires avec queues lourdes pour l'\'{e}quation de coagulation de Smoluchowski avec un noyau $K$ satisfaisant $C_1 \left(x^{-a}y^{b}+x^{b}y^{-a}\right)\leq K\left(x,y\right)\leq C_2\left(x^{-a}y^{b}+x^{b}y^{-a}\right)$ avec $a>0$ et $b<1$.  Cela contient en particulier le noyau classique de Smoluchowski $K(x,y)=(x^{1/3} + y^{1/3})(x^{-1/3} + y^{-1/3})$.
% 
% Pour la d\'{e}monstration de l'existence nous consid\'{e}rons d'abord un noyau r\'{e}gularis\'{e} $K_{\eps}$ pour lequel nous construisons une suite de solutions $h_{\eps}$. Dans un deuxi\`{e}me temps nous obtenons une solution pour le noyau original $K$ en passant \`{a} la limite $\eps\to 0$. La difficult\'{e} principale consiste \`{a} \'{e}tablir une borne inf\'{e}rieure pour $h_{\eps}$. La cl\'{e} ici est de consid\'{e}rer le probl\`{e}me d\'{e}pendant du temps et choisir une solution du probl\`{e}me dual comme fonction test dans la formulation faible de l'\'{e}quation auto-similaire.

\end{abstract}

\section{Introduction}

\subsection{Smoluchowski's equation and self-similarity}

Smoluchowski's coagulation equation \cite{Smolu16} describes irreversible
aggregation of clusters through binary collisions by a mean-field model for
the density $f(\xi,t)$ of clusters of mass $\xi$. It is assumed that the rate
of coagulation of clusters of size $\xi$ and $\eta$ is given by a rate kernel
$K=K(\xi,\eta)$, such that the evolution of $f$ is determined by
\begin{equation}
\partial_{t}f(\xi,t)=\frac{1}{2}\int_{0}^{\xi}K(\xi-\eta,\eta)f(\xi-\eta
,t)f(\eta,t)\dd\eta-f(\xi,t)\int_{0}^{\infty}K(\xi,\eta)f(\eta,t)\dd\eta\,.
\label{smolu1}%
\end{equation}
Applications in which this model has been used are numerous and include, for example,
aerosol physics, polymerization, astrophysics and mathematical biology (see
e.g. \cite{Aldous99,Drake72}).

A topic of particular interest in the theory of coagulation is the scaling hypothesis
on the long-time behaviour of solutions to \eqref{smolu1}.
Indeed, for  homogeneous kernels one expects that solutions converge to a uniquely determined self-similar profile.
 This issue is however only well-understood for the
solvable kernels $K(x,y)=2$, $K(x,y)=x+y$ and $K(x,y)=xy$. In these cases it
is known that \eqref{smolu1} has one fast-decaying self-similar solution with
finite mass and a family of so-called fat-tail self-similar solutions with
power-law decay. Furthermore, their domains of attraction under the evolution
\eqref{smolu1} have been completely characterized in \cite{MePe04}. For
non-solvable kernels much less is known and it is exclusively for the case $\gamma<1$. In
\cite{EMR05,FouLau05} existence of self-similar solutions with finite mass has
been established for a large range of kernels and some properties of those
solutions have been investigated in \cite{CanMisch11,EsMisch06,FouLau06a}.
More recently, the first existence results of self-similar solutions with fat
tails have been proved, first for the diagonal kernel \cite{NV11a}, then for
kernels that are bounded by $C(x^{\gamma}+y^{\gamma})$ for $\gamma\in
[0,1)$ \cite{NV12a}. It is the goal of this paper to extend the results
in \cite{NV12a} to singular kernels, such as Smoluchowski's classical kernel $K(x,y)=
(x^{1/3} + y^{1/3})(x^{-1/3} + y^{-1/3})$. Uniqueness of solutions with both, finite and infinite mass is still one of the main problems for non-solvable kernels and in most cases an open question. Only recently uniqueness has been shown in the finite mass case for kernels that are in some sense close to the constant kernel \cite{NV14}.

In order to describe our results in more detail, we first derive the equation
for self-similar solutions. Such solutions to \eqref{smolu1} for kernels of
homogeneity $\gamma<1$ are of the form
\begin{equation}
f(\xi,t)=\frac{\beta}{t^{\alpha}}g\left(x\right)\,,\qquad
\alpha=1+(1{+}\gamma)\beta\,, \qquad x=\frac{\xi}{t^{\beta}}\label{ss1}%
\end{equation}
where the self-similar profile $g$ solves
\begin{equation}
-\frac{\alpha}{\beta} g- xg^{\prime}(x)=\frac{1}{2}\int_{0}%
^{x}K(x-y,y)g(x-y)g(y)\dy-g(x)\int_{0}^{\infty}K(x,y)g(y)\dy\,.
\label{ss2}%
\end{equation}
It is known that for some kernels the self-similar profiles are singular at
the origin, so that the integrals on the right-hand side are not finite and it
is necessary to rewrite the equation in a weaker form. Multiplying the equation by $x$ and
rearranging we obtain that a weak self-similar solution $g$ solves
\begin{equation}
\partial_{x}(x^{2}g(x))=\partial_{x}\left[\int_{0}^{x}\int_{x-y}%
^{\infty}yK(y,z)g(z)g(y)\,\dz\,\dy\right]+\left ((1-\gamma)-\frac{1}{\beta}\right)xg(x)\, \label{ss3}%
\end{equation}
in a distributional sense. If one in addition requires that the solution has
finite first moment, then this also fixes $\beta=1/(1-\gamma)$ and in this
case the second term on the right hand side of \eqref{ss3} vanishes.

For the following it is convenient to go over to the monomer density function
$h\left(  x,t\right)  =xg\left(  x,t\right)  $ and to introduce the parameter
$\rho=\gamma+\frac{1}{\beta}$. Then equation \eqref{ss3} becomes
\begin{equation}
\partial_{x}\left[  \int_{0}^{x}\int
_{x-y}^{\infty}\frac{K\left(  y,z\right)  }{z}h\left(  z\right)  h\left(
y\right)  \,\dz\,\dy\right]  -\left[  \partial_{x}\left(  xh\right)
+\left(  \rho-1\right)  h\right]  \left(  x\right)  =0\,. \label{A2a}%
\end{equation}
Our approach to find a solution to \eqref{A2a} requires to work with the corresponding evolution equation. 
Using as new time variable $\log\left(
t\right)  $ which will be denoted as $t$ from now on, the time dependent
version of equation \eqref{A2a} becomes
\begin{equation}
\partial_{t}h\left(  x,t\right)  +\partial_{x}\left[  \int_{0}^{x}\int
_{x-y}^{\infty}\frac{K\left(  y,z\right)  }{z}h\left(  z,t\right)  h\left(
y,t\right)  \,\dz\,\dy\right]  -\left[  \partial_{x}\left(  xh\right)
+\left(  \rho-1\right)  h\right]  \left(  x,t\right)  =0\,, \label{A2}%
\end{equation}
with initial data
\begin{equation}
h(x,0)=h_{0}(x)\,. \label{A3}%
\end{equation}

\subsection{Assumptions on the kernel and main result}

We now formulate our assumptions on the kernel $K$. We assume that
\begin{equation}
K\in C^{1}((0,\infty)\times(0,\infty))\,,\qquad K(x,y)=K(y,x)\geq
0\qquad\mbox{ for all }x,y\in(0,\infty)\,, \label{Ass1a}%
\end{equation}
$K$ is homogeneous of degree $\gamma\in(-\infty,1)$, that is
\begin{equation}
K(\lambda x,\lambda y)=\lambda^{\gamma}K(x,y)\qquad\mbox{ for all }x,y\in
(0,\infty)\,, \label{Ass1b}%
\end{equation}
and satisfies the growth condition
\begin{equation}
C_{1}\left (x^{-a}y^{b}+x^{b}y^{-a}\right)\leq K(x,y)\leq C_{2}\left (x^{-a}%
y^{b}+x^{b}y^{-a}\right)\,\qquad\mbox{ for all }x,y\in(0,\infty)\,, \label{Ass1}%
\end{equation}
where $a>0$, $b<1$, $\gamma=b-a$, and $C_{1},C_{2}$ are positive constants. Furthermore we assume the following locally uniform bound on the partial derivative: for each interval $\left[d,D\right]\subset \left(0,\infty\right)$ there exists a constant $C_{3}=C_{3}\left(d,D\right)>0$ such that 
\begin{equation}\label{eq:Ass2}
 \abs{\del_{x}K\left(x,y\right)}\leq C_{3}\left(y^{-a}+y^{b}\right) \quad \text{for all } x\in \left[d,D\right] \text{ and } y\in\left(0,\infty\right).
\end{equation}

Let us first discuss what we can expect on the possible decay behaviours of self-similar
solutions. If $h(x) \sim C x^{-\rho}$ as $x
\to\infty$, then in order for $\int_{1}^{\infty} \frac{K(x,y)}{y} h(y)\,dy <
\infty$ we need
\begin{equation}
\label{Ass1c}\rho>b=\gamma+a \qquad\mbox{ and } \qquad\rho+ a>0\,.
\end{equation}

Note that since $\gamma$ can be negative, $-a$ can be larger than $b$.
Furthermore we need to assume that $b<1$ since for $b>1$  we could have instantaneous gelation and
$b=1$ is a borderline case that can also not be treated with our methods. The same assumption has also been made
in related work, where, for example in \cite{CanMisch11}, regularity of self-similar solutions with finite mass have been investigated. In addition it will turn out later that we have to assume $\rho>0$ (see: Lemma~\ref{Lem:non:solv:limit}).

Our main result can now be formulated as follows

\begin{theorem}
\label{T.main} Let $K$ be a kernel that satisfies assumptions
\eqref{Ass1a}-\eqref{eq:Ass2} for some $b \in(-\infty,1)$ and $a>0$. Then for any
$\rho \in (\max(-a,b,0),1) = (\max(b,0),1)$ there exists a non-negative measure  $h\in \mathcal{M}([0,\infty))$ that
solves \eqref{A2a} in the sense of distributions. This solution decays in the expected manner in an averaged sense, i.e. it satisfies $\int_{\left[0,R\right]}h\dx\leq R^{1-\rho}$ for all $R>0$ and for each $\delta>0$ there exists $R_{\delta}>0$ such that 
\begin{align*}
 \left(1-\delta\right)R^{1-\rho}\leq \int_{\left[0,R\right]}h\dx \quad \text{for all } R\geq R_{\delta},
\end{align*}
which together implies $\lim_{R\to \infty}\frac{1}{R^{1-\rho}}\int_{\left[0,R\right]}h\dx=1$.
\end{theorem}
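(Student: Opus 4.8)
The plan is to obtain $h$ as a weak-$*$ limit of stationary solutions $h_{\eps}$ of a regularized version of \eqref{A2a}, the crux being to keep the $h_{\eps}$ bounded below uniformly in $\eps$ so that the limit is non-trivial and still solves the original singular equation. First I would introduce kernels $K_{\eps}$ obtained from $K$ by cutting off both the singularity along the coordinate axes and the growth at infinity, chosen so that each $K_{\eps}$ is bounded, so that $K_{\eps}$ still satisfies the two-sided bound \eqref{Ass1} with $\eps$-independent constants on the region that matters, and so that $K_{\eps}\to K$ locally uniformly on $(0,\infty)^{2}$. For fixed $\eps$ the kernel is bounded, and a stationary solution $h_{\eps}$ of the $\eps$-regularized \eqref{A2a} (with the same $\rho$) can be produced by a Schauder-type fixed-point argument for the time-dependent equation \eqref{A2}, in the spirit of \cite{EMR05,FouLau05,NV12a}: one shows that the time-one solution map of \eqref{A2} (or a time average of it) leaves invariant a suitable weak-$*$ compact convex subset of $\M([0,\infty))$ and invokes the fixed-point theorem. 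The invariant set is chosen precisely so that the \emph{upper} bound $\int_{[0,R]}h_{\eps}\,\dx\le R^{1-\rho}$ holds for every $R>0$; the admissibility of the relevant integral operators on this set is exactly what the exponent conditions $\rho>\max(b,0)$, $\rho+a>0$ together with $b<1$, $a>0$ are designed to guarantee, while the constraint $\rho>0$ is held in reserve for the final limit (cf.\ Lemma~\ref{Lem:non:solv:limit}).

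The heart of the matter is the uniform lower bound $\int_{[0,R]}h_{\eps}\,\dx\ge(1-\delta)R^{1-\rho}$ for $R$ beyond an $\eps$-independent threshold $R_{\delta}$. Here I would again exploit the time-dependent equation \eqref{A2}: testing it against a carefully constructed function $\test=\test(x,t)$ built around a solution of the linear dual problem $\partial_{t}\test-x\partial_{x}\test+(\rho-1)\test=0$ (with the self-similar weight $x^{\rho-1}$ suitably truncated near the origin and at scale $R$), the linear terms drop out of $\frac{\dd}{\dt}\int\test\,h_{\eps}\,\dx$ and leave only the coagulation contribution $\int_{0}^{\infty}\int_{0}^{\infty}\frac{K_{\eps}(y,z)}{z}(\test(y+z,t)-\test(y,t))h_{\eps}(z,t)h_{\eps}(y,t)\,\dz\,\dy$, whose sign is controlled by the monotonicity of $\test(\cdot,t)$. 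Tracking $\int\test\,h_{\eps}\,\dx$ in time and using the stationarity of $h_{\eps}$, one converts this into the desired lower bound on $\int_{[0,R]}h_{\eps}\,\dx$. The delicate point — and the step I expect to be the main obstacle — is the construction of the dual test function and the pointwise estimates it requires: the natural weight $x^{\rho-1}$ is not integrable against $h_{\eps}$ near the origin, so it has to be truncated, and one must control the error terms this produces precisely in the region of small sizes, where the $x^{-a}$ singularity of $K$ is strongest, with all bounds uniform in $\eps$.

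Granting the two uniform bounds, $\{h_{\eps}\}$ has total variation bounded on every $[0,R]$, so a subsequence converges weak-$*$ in $\M([0,\infty))$ to some $h\ge0$. The upper bound $\int_{[0,R]}h\,\dx\le R^{1-\rho}$ survives the limit, and testing the lower bound against continuous approximations of $\mathbf{1}_{[0,R]}$ together with lower semicontinuity of the total mass on $[0,R]$ gives $(1-\delta)R^{1-\rho}\le\int_{[0,R]}h\,\dx$ for $R\ge R_{\delta}$; combining the two yields $\lim_{R\to\infty}R^{-(1-\rho)}\int_{[0,R]}h\,\dx=1$, in particular $h\not\equiv0$. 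It then remains to pass to the limit in \eqref{A2a} tested against $\test\in C_{c}^{\infty}((0,\infty))$: the linear terms pass directly by weak-$*$ convergence, and in the quadratic term $\int\test'(x)\int_{0}^{x}\int_{x-y}^{\infty}\frac{K_{\eps}(y,z)}{z}h_{\eps}(z)h_{\eps}(y)\,\dz\,\dy\,\dx$ one isolates the contributions of $\{y\le\eta\}$ and $\{z\le\eta\}$, bounds them uniformly in $\eps$ by means of \eqref{Ass1}, the bound $\int_{[0,R]}h_{\eps}\le R^{1-\rho}$ and the inequalities $\rho>b$, $\rho+a>0$, and then lets $\eta\to0$; on the remaining region, where $y$ and $z$ are bounded away from the origin, $K_{\eps}\to K$ locally uniformly and $h_{\eps}\rightharpoonup h$ give the convergence. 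It is here that the assumption $\rho>0$ is used (cf.\ Lemma~\ref{Lem:non:solv:limit}). This identifies $h$ as a distributional solution of \eqref{A2a} with the asserted averaged decay, completing the proof.
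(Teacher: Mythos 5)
Your overall architecture (regularize the kernel, fixed point for the time-dependent problem on an invariant set encoding the upper bound, dual-problem test functions for the uniform lower bound, weak-$*$ limit) is the paper's architecture, but two steps contain genuine gaps. The first is your treatment of the quadratic term near the origin when passing to the limit. On $\{y\le\eta\}$ the kernel contributes a factor $(y+\eps)^{-a}$, and the only smallness you invoke is the power-law bound $\int_{[0,R]}h_{\eps}\,\dx\le R^{1-\rho}$ together with $\rho>b$ and $\rho+a>0$. A dyadic decomposition gives $\int_{0}^{\eta}(y+\eps)^{-a}h_{\eps}(y)\,\dy\lesssim\sum_{n\ge0}\left(2^{-n}\eta\right)^{1-\rho-a}$, which diverges whenever $\rho\ge 1-a$ --- a case fully allowed by the hypotheses, since $a>0$ is arbitrary and $\rho$ may be close to $1$ (the condition $\rho+a>0$ controls the tail $z\to\infty$, not the origin). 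The paper closes this by first proving, as a consequence of the uniform lower bound, the much stronger estimate $\int_{0}^{D}h_{\eps}\,\dx\le CD^{1-\rho}\exp\left(-c(D+\eps)^{-a}\right)$ (Lemma~\ref{Lem:expdecay}), which makes all moments $\int_{0}^{1}(y+\eps)^{\alpha}h_{\eps}\,\dy$ uniformly bounded; without some such super-algebraic decay at the origin your $\eta\to0$ argument fails.

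The second gap concerns the uniform lower bound itself. The dual test function must be a supersolution of the \emph{full} dual equation including the nonlocal coagulation term: if it only solves the linear transport part, the leftover coagulation contribution has the wrong sign for a lower bound (for non-increasing $\psi$ it decreases $\int\psi h_{\eps}\,\dx$ rather than increasing it). Dominating that term forces constants in the construction that involve the small-size moments $\mu_{\eps}=\int_{0}^{1}(x+\eps)^{-a}h_{\eps}\,\dx$ and $\lambda_{\eps}=\int_{0}^{1}(x+\eps)^{b}h_{\eps}\,\dx$, which are precisely the quantities not known to be bounded uniformly in $\eps$. The paper resolves this by rescaling by $L_{\eps}$ from \eqref{lepsdef}, proving the lower bound only for the rescaled profile $H_{\eps}$ (via an iteration over scales $A_{k+1}=\alpha(A_{k}-A_{k}^{\sigma})$), and then excluding $L_{\eps}\to\infty$ by showing that the rescaled limit would satisfy a transport-type equation with no admissible solution (Lemma~\ref{Lem:non:solv:limit}); this exclusion --- not the $\eta\to0$ step you cite --- is where $\rho>0$ is actually used. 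You flag the uniformity of the dual estimates as the main obstacle but offer no mechanism to overcome it; the rescaling and the non-solvability argument are missing ideas rather than routine details.
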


\begin{remark} One can in fact show that under the assumptions \eqref{Ass1a}-\eqref{eq:Ass2} the measure $h$ has a continuous density
and satisfies $h(r) \sim (1-\rho) r^{-\rho}$ as $r \to \infty$. This has been proved in the case of locally bounded kernels
in \cite{NV12a} and the proof in the present case proceeds similarly. Furthermore, if $K$ is more regular, one can also establish
higher regularity of $h$ in $(0,\infty)$. In order to keep the present paper within a reasonable length we will give
the corresponding proofs in a subsequent separate paper.
\end{remark}

\subsection{Strategy of the proof}

The proof of Theorem~\ref{T.main} consists of two main parts. In the first one which is contained in Section~\ref{sec:sol:heps} we shift the singularities of the kernel by some $\eps>0$ to get a kernel $K_{\eps}$ that is bounded at the origin. The idea is then to prove Theorem \ref{T.main} with this modified kernel to get a solution $h_{\eps}$. The proof follows the one in \cite{NV12a}, i.e. the existence of a stationary solution to \eqref{A2} is shown by using the following variant of Tikhonov's fixed point theorem.

\begin{theorem}[Theorem~1.2 in \cite{EMR05,GPV04}]\label{T.fixedpoint}
 Let $X$ be a Banach space and $(S_t)_{t \geq 0}$ be a continuous semi-group on $X$. Assume that $S_t$ is weakly sequentially 
continuous for any $t>0$ and that there exists a subset ${\cal Y}$ of $X$  that is nonempty, convex, weakly sequentially
compact and invariant under the action of $S_t$. Then there exists $z_0 \in {\cal Y}$ which is stationary under the action of $S_t$.
\end{theorem}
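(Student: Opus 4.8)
The plan is to prove this by a Ces\`aro / Krylov--Bogolyubov averaging argument, run entirely through sequences so that the (sequential) hypotheses on $S_t$ and $\mathcal{Y}$ are exactly what gets used and no locally convex fixed point machinery is needed. I first record that $\mathcal{Y}$ is norm bounded, say $\sup_{z\in\mathcal{Y}}\norm{z}=:M<\infty$, since every sequence in $\mathcal{Y}$ has a weakly convergent subsequence and weakly convergent sequences are bounded. I read ``continuous semi-group'' as: $t\mapsto S_tz$ is continuous on $[0,\infty)$ for each $z$, and ``invariant under $S_t$'' as $S_t(\mathcal{Y})\subseteq\mathcal{Y}$.

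\textbf{Step 1: a fixed point of $S_{1/n}$ for each $n$.} Fix $n\in\N$, abbreviate $T:=S_{1/n}$ (which maps $\mathcal{Y}$ into itself and is weakly sequentially continuous), pick any $z\in\mathcal{Y}$, and set $v_m:=\tfrac1m\sum_{j=0}^{m-1}T^jz=\tfrac1m\sum_{j=0}^{m-1}S_{j/n}z$. By invariance each $S_{j/n}z\in\mathcal{Y}$, so $v_m\in\mathcal{Y}$ by convexity, and the semigroup property gives $Tv_m-v_m=\tfrac1m\bigl(S_{m/n}z-z\bigr)$, hence $\norm{Tv_m-v_m}\leq 2M/m\to0$. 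Along a subsequence $v_{m_\ell}\rightharpoonup y_n\in\mathcal{Y}$ by weak sequential compactness; then $Tv_{m_\ell}=(Tv_{m_\ell}-v_{m_\ell})+v_{m_\ell}\rightharpoonup y_n$, while weak sequential continuity forces $Tv_{m_\ell}\rightharpoonup Ty_n$, so by uniqueness of weak limits $S_{1/n}y_n=y_n$, and therefore $S_ty_n=y_n$ for every $t\in\{0,\tfrac1n,\tfrac2n,\dots\}$.

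\textbf{Step 2: a common fixed point.} Since $\Q\cap(0,\infty)$ is countable, a diagonal extraction --- concretely, restrict to the subsequence $n=k!$, noting $q\mid k!$ once $k\geq q$ --- produces indices $n_k$ along which, for \emph{every} rational $t>0$, one has $S_t y_{n_k}=y_{n_k}$ for all large $k$. Passing to a further subsequence, $y_{n_k}\rightharpoonup z_0\in\mathcal{Y}$. For a fixed rational $t>0$ we then have $y_{n_k}=S_t y_{n_k}\rightharpoonup S_t z_0$ by weak sequential continuity, as well as $y_{n_k}\rightharpoonup z_0$, so $S_t z_0=z_0$; this holds for all $t\in\Q\cap[0,\infty)$, and since $t\mapsto S_t z_0$ is continuous it extends to all $t\geq0$. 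Hence $z_0$ is stationary.

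I expect the genuine subtlety to lie entirely in the transition of Step 2: a fixed point of $S_{1/n}$ is a fixed point of $S_t$ only for $t$ in the lattice $\tfrac1n\N$, so one cannot prescribe $t$ and let $n\to\infty$ directly, and the diagonalization over rational times is precisely the device that repairs this. (If $(t,z)\mapsto S_tz$ were known to be jointly continuous and $\mathcal{Y}$ were norm compact, Step 2 would reduce to a one-line limit; under the present weak/sequential assumptions the detour through $\Q$ seems to be the natural route.) Everything else is soft and reduces to the norm boundedness of $\mathcal{Y}$, the semigroup identity, and uniqueness of weak limits.
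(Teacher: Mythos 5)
This theorem is quoted from \cite{EMR05,GPV04} and the paper supplies no proof of its own, so there is nothing internal to compare against; your argument therefore has to stand on its own, and it does. The Ces\`aro-average construction of a fixed point of each $S_{1/n}$, the factorial subsequence $n=k!$ to make the fixed-point lattices eventually contain every positive rational, and the final extension by strong continuity in $t$ are all carried out correctly: the telescoping identity $S_{1/n}v_m-v_m=\tfrac1m(S_{m/n}z-z)$, the norm boundedness of $\mathcal{Y}$ deduced from weak sequential compactness, and the uniqueness of weak limits are exactly the ingredients needed, and each hypothesis (convexity, invariance, weak sequential compactness, weak sequential continuity, $S_0=\mathrm{Id}$, continuity of $t\mapsto S_tz_0$) is invoked where it is actually used. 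This is in substance the standard proof of this Tikhonov-type statement as given in the cited references, so I have no corrections; the only presentational remark is that you could note explicitly that "weakly sequentially compact" is being read as including that the subsequential weak limits lie in $\mathcal{Y}$ itself, since both Step 1 and Step 2 rely on that.
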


As most of the estimates from \cite{NV12a} remain valid for the shifted kernel $K_{\eps}$ we only state the main definitions and results and refer to \cite{NV12a} for the proofs. The only exception is the invariance of some lower bound defining the set $\Y$ from Theorem~\ref{T.fixedpoint}. As this step cannot just be transferred to the present situation we will give the full proof of this. The main idea here is to construct a special test function that solves the dual problem, for which one can derive some lower bounds that are sufficient to obtain the invariance (Section~\ref{Sec:dual:prob}). 

In the second part which is contained in Section~\ref{sec:eps:to:zero} we have to remove the shift in $K_{\eps}$, i.e we have to take the limit $\eps\to 0$. The strategy here is similar to what is done in the first part as one of the main difficulties consists in showing a suitable lower bound (uniform in $\eps$) for $h_{\eps}$ (Section~\ref{subsec:iterate}). This will again be done by constructing a suitable test function by solving the dual problem for which we get adequate estimates from below (Section~\ref{subsec:test}). One difficulty then is to show that the functions $h_{\eps}$ obtained before decay sufficiently rapidly at the origin as $\eps\to 0$ (Section~\ref{subsec:exp:decay}). In fact we will get some exponential decay that will be enough to pass to the limit $\eps\to 0$ (Section~\ref{sec:limit:eps}).

The proofs of the existence of the solutions to the dual problems as well as some basic properties and estimates frequently used are contained in the appendix.

\section{Stationary solutions for the kernel $K_{\eps}$}\label{sec:sol:heps}

In this section we let $\eps>0$ be fixed and consider the kernel 
\[
 K_{\eps}(y,z):=K(y+\eps,z+\eps).
\]
We prove the following Proposition:

\begin{proposition}\label{P.hepsexistence}
For any
$\rho\in(\max(b,0),1)$ there exists a continuous function $h_{\eps} \colon (0,\infty) \to [0,\infty)$
that is a weak  solution to \eqref{A2a} with $K$ replaced by $K_{\eps}$. This solution satisfies 
\begin{align*}
 \int_{0}^r h_{\eps}(x)\,dx&\leq r^{1-\rho} \qquad \mbox{ and } \qquad \lim_{r \to \infty} \frac{\int_0^r h_{\eps}(x)\,dx}{r^{1-\rho}}=1\,.
\end{align*}
\end{proposition}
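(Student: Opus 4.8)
The plan is to prove Proposition~\ref{P.hepsexistence} by applying the abstract fixed-point theorem (Theorem~\ref{T.fixedpoint}) to the semi-group generated by the time-dependent equation \eqref{A2} with $K$ replaced by the regularized kernel $K_\eps$. First I would set up the functional-analytic framework exactly as in \cite{NV12a}: choose a Banach space $X$ of (signed) measures on $[0,\infty)$ with a suitable weighted norm, and define $\mathcal{Y}\subset X$ to be the set of non-negative measures $h$ satisfying the upper bound $\int_{[0,R]}h\,\dx\le R^{1-\rho}$ for all $R>0$, a matching asymptotic lower bound $\int_{[0,R]}h\,\dx\ge (1-\delta(R))R^{1-\rho}$ for large $R$, plus a pointwise lower bound on $h$ on compact subsets of $(0,\infty)$ (this last one is the delicate piece). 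One then checks that $\mathcal{Y}$ is non-empty, convex, and weakly-$*$ sequentially compact — the latter because the moment bounds force tightness — and that the solution semi-group $(S_t)$ of \eqref{A2} is well-defined, continuous, and weakly sequentially continuous on $X$; all of this is essentially verbatim from \cite{NV12a} since the only change is that $K_\eps$ is now bounded near the origin, which if anything makes the estimates easier.

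The heart of the matter is the invariance of $\mathcal{Y}$ under $S_t$, and within that the invariance of the pointwise lower bound. The upper moment bound and the asymptotic lower moment bound propagate in time by the same computations as in the locally-bounded case (testing \eqref{A2} against $x\mapsto\min(x,R)$ or a smooth truncation, and using homogeneity together with \eqref{Ass1} to control the coagulation term), so I would only sketch these and refer to \cite{NV12a}. For the pointwise lower bound, the strategy announced in the introduction is to fix a point $x_0\in(0,\infty)$ and a test function $\test$ and to choose $\test$ so that it solves the \emph{dual} (backward) linear equation associated with the linearization of \eqref{A2} around the current solution: roughly, $\del_t\test = x\del_x\test - (\rho-1)\test + (\text{coagulation transport term acting on }\test)$, with terminal data a bump concentrated near $x_0$. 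Because the coagulation operator is positivity-improving and the drift $x\del_x$ together with the lower moment bound on $h$ keeps mass from escaping, one derives that $\test$ stays bounded below on a fixed neighbourhood over a fixed time interval; pairing $\test$ with $h$ and using the already-established moment bounds then yields $h_\eps$ bounded below on compacts, uniformly along the evolution. The existence and the needed lower estimates for this dual problem are to be quoted from the appendix (Section~\ref{Sec:dual:prob}).

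I would then invoke Theorem~\ref{T.fixedpoint} to get a stationary $h_\eps\in\mathcal{Y}$, which by definition is a distributional stationary solution of \eqref{A2}, hence a weak solution of \eqref{A2a} with $K_\eps$, and which inherits from membership in $\mathcal{Y}$ the bound $\int_0^r h_\eps\le r^{1-\rho}$ and, combining the upper bound with the asymptotic lower bound as $R\to\infty$, the limit $\int_0^r h_\eps/r^{1-\rho}\to 1$. Finally, continuity of $h_\eps$ on $(0,\infty)$ follows by a bootstrap from the stationary equation: the term $\int_0^x\int_{x-y}^\infty \tfrac{K_\eps(y,z)}{z}h_\eps(z)h_\eps(y)\,\dz\,\dy$ is, thanks to the boundedness of $K_\eps$ near the origin and the moment bounds, an absolutely continuous function of $x$, so \eqref{A2a} forces $xh_\eps$ (hence $h_\eps$ away from $0$) to agree with a continuous function; the pointwise lower bound then upgrades to a genuine continuous positive density. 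The main obstacle, as flagged by the authors, is the invariance of the pointwise lower bound via the dual test function — everything else is a transcription of \cite{NV12a}.
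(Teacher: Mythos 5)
Your proposal captures the broad outline (Tikhonov-type fixed point, moment bounds, dual problem for the difficult lower-bound invariance, passage to a stationary point), but it misidentifies the content of the invariant set $\Y$ and hence the nature of the hard estimate, and it omits a further regularization layer that the paper needs.

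First, in the paper $\Y$ is defined (Definition~\ref{Def:InvSet}) purely by the two \emph{averaged} (moment) inequalities \eqref{eq:F1}--\eqref{eq:F2}: the upper bound $\int_{[0,r]}h\,\dx\le r^{1-\rho}$ and the lower bound $\int_{[0,r]}h\,\dx\ge r^{1-\rho}(1-(R_0/r)^{\delta})_+$. There is \emph{no} pointwise lower bound on $h$ in the definition of $\Y$, and there cannot be one in this framework: the elements of $\mathcal{X}_\rho$ are measures (possibly with atoms away from the origin), and a pointwise lower bound would not even make sense, nor would it survive the weak-$*$ limits needed for weak sequential compactness of $\Y$ and for weak continuity of the semi-group. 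The ``delicate piece'' is therefore the propagation of the averaged lower bound \eqref{eq:F2}, not a pointwise bound. The dual-problem test function $\psi$ (solving \eqref{eq:sptest3}, or equivalently the rescaled \eqref{eq:sptest4}) is paired against $h$ to get the inequality \eqref{eq:sptest2}, and one then integrates by parts to reduce the estimate of $\int_0^R h(\cdot,t)$ to integral bounds (Lemma~\ref{Lem:subsolution:2:integral:estimate}) for $\tilde G=-\del_X\Phi$, which control $\int_{[0,r]}h$ from below. Your plan, aimed at proving $h_\eps$ bounded below pointwise on compacts, is attacking the wrong inequality.

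Second, you work directly with the semi-group for $K_\eps$ and claim well-posedness is ``essentially verbatim'' because $K_\eps$ is bounded at the origin. But $K_\eps(x,y)\sim y^b$ as $y\to\infty$ (and can also blow up as $(x+\eps)^{-a}$ for fixed small $\eps$), and the paper explicitly notes that well-posedness of \eqref{A2} for $K_\eps$ is ``not so easy to show'' directly. This is why a second cutoff $K_\eps^\lambda$ (as in \eqref{eq:A1}) is introduced: the fixed-point argument is carried out for $S_\eps^\lambda(t)$, yielding a stationary $h_\eps^\lambda\in\Y$, and only afterwards does one pass $\lambda\to 0$ along a weakly convergent subsequence to obtain $h_\eps$ solving the equation with $K_\eps$. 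Your sketch skips this intermediate regularization and its removal entirely, which is a genuine gap in establishing the semi-group in the first place. Finally, continuity of $h_\eps$ on $(0,\infty)$ (Proposition~\ref{Prop:reg:decay:stat:eps}) follows the regularity bootstrap of \cite{NV12a} and does not rely on a pointwise lower bound; the asserted asymptotics $\int_0^r h_\eps/r^{1-\rho}\to 1$ do follow, as you say, from combining the two moment inequalities defining $\Y$.
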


\subsection{Plan of the construction for $h_{\eps}$}

The proof of Proposition~\ref{P.hepsexistence} follows closely the proof of Theorem~1.1 in \cite{NV12a}. As the estimates remain in principle the same here we just recall the strategy of the proof and state the main definitions and results while for proofs we refer to \cite{NV12a}. The only modification we have to establish, compared to \cite{NV12a}, is the proof of the invariance of some lower bound that cannot just easily be adapted and we will show this in Section~\ref{S.le}.

The strategy to find a solution to~\eqref{A2a} (with $K$ replaced by $K_{\eps}$) will be to show that the evolution given by \eqref{A2} satisfies the assumptions of Theorem \ref{T.fixedpoint} (notice that it suffices that the respective properties hold in a possibly small time interval $[0,T]$). One key point in the application of this theorem is obviously an appropriate choice of $X$ and ${\cal Y}$. Here we use for $X$ the set of measures on $[0,\infty)$ and as ${\cal Y}$ the set of non-negative measures which satisfy the expected decay behaviour in an averaged sense (cf. Definition 
\ref{Def:InvSet}). As well-posedness of~\eqref{A2} is not so easy to show for $K_{\eps}$ directly we introduce a regularized problem where we cut the kernel $K_{\eps}$ (in a smooth way) for small and large cluster sizes in the following way: for $\lambda>0$ we consider
\begin{equation}\label{eq:A1}
 \begin{aligned}
  K_{\eps}^{\lambda}\left(x,y\right)&=K_{\eps}\left(x,y\right), &&\text{ if } \lambda \leq \min\left\{x,y\right\} \text{ and } \max\left\{x,y\right\} \leq \frac{1}{\lambda},\\
  K_{\eps}^{\lambda}\left(x,y\right)&=0, &&\text{ if } \min\left\{x,y\right\}\leq\frac{\lambda}{2} \text{ and } \max\left\{x,y\right\}\geq \frac{3}{2\lambda},\\
  K_{\eps}^{\lambda}&\leq K_{\eps}.
 \end{aligned}
\end{equation}
\begin{remark}
 Note that in \cite{NV12a} a slightly different cutoff was used but this does not cause any problem.
\end{remark}
In the rest of this section we assume that in all equations the kernel $K$ is replaced by $K_{\eps}^{\lambda}$ (or later by $K_{\eps}$ when we take the limit $\lambda\to 0$). 

We are now going to prove the well-posedness of~\eqref{A2} for the kernel $K_{\eps}^{\lambda}$ with $\lambda>0$. We will consider the set of non-negative Radon measures that we will denote with some abuse of notation by $h(x)\,\mbox{d}x$ and such that the norm defined in \eqref{eq:S1E3} is finite. We notice that this implies that $h\mbox{d}x$ does not contain a Dirac at the origin. Since, however, $h\mbox{d}x$ might contain
Dirac measures away from the origin, we use the convention that integrals such as  $\int_a^b h(x)\mbox{d}x$ are always understood in the sense $\int_{[a,b]}h(x)\mbox{d}x$.

\begin{definition}
 Given $\rho\in \left( \max\left\{0,b\right\},1 \right)$ with $b$ as in Assumption~\eqref{Ass1}, we will denote as $\mathcal{X}_{\rho}$ the set of measures $h\in \mathcal{M}_{+}\left( \left[0,\infty\right) \right)$ such that
\begin{align}\label{eq:S1E3}
	\norm{h}:=\sup_{R\geq 0}\frac{\int_{\left[0,R\right]}h\left( x \right)\dx}{R^{1-\rho}}<\infty\,.
\end{align}

\end{definition}

We introduce a suitable topology in $\mathcal{X}_{\rho}$. We define the neighbourhoods of $h_{*}\in \mathcal{X}_{\rho}$ by means of the intersections of sets of the form
\begin{align}\label{eq:S2E6}
 \mathcal{N}_{\phi,\epsilon}:=\left\{ h\in \mathcal{X}_{\rho}\colon \abs{\int_{\left[ 0,\infty \right)}\left( h-h_{*} \right)\phi \dx}<\epsilon \right\}, \quad \phi\in C_{c}\left( \left[ 0, \infty \right) \right), \quad \epsilon>0.
\end{align}

We now define the subset $\Y$, for which we will show that it remains invariant under the evolution defined by \eqref{A2}.
\begin{definition}\label{Def:InvSet}
Given $R_0>0$ and  $\delta>0$ we will denote by $\Y$ the family of measures $h\in \mathcal{X}_{\rho}$ satisfying the following inequalities
\begin{align}
\int_{\left[0,r\right]}h\dx&\leq r^{1-\rho}, \quad \text{for all }r\geq 0 \label{eq:F1}\\
\int_{\left[0,r\right]}h\dx&\geq r^{1-\rho}\left( 1-\frac{R_0^{\delta}}{r^{\delta}} \right)_{+} \quad \text{for all }r> 0. \label{eq:F2}
\end{align}
\end{definition}

\begin{remark}
We will not make the dependence of $\Y$ on the variables $R_0$ and $\delta$ explicit.
\end{remark}

We then easily see that

\begin{lemma}\label{Prop:weakcontin}
The sets $\Y\subset \mathcal{X}_{\rho}$ defined in Definition~\ref{Def:InvSet} are weakly sequentially compact.
\end{lemma}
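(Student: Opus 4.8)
The plan is to show that any sequence $(h_n)_{n\in\N}\subset\Y$ has a subsequence converging (in the topology generated by the neighbourhoods \eqref{eq:S2E6}) to some limit $h_\ast\in\Y$. The starting observation is that \eqref{eq:F1} gives a uniform mass bound on compact sets: for every $R>0$ we have $h_n([0,R])\leq R^{1-\rho}$, so the restrictions $h_n\lfloor[0,R]$ form a bounded sequence of non-negative measures. Hence by the standard weak-$\ast$ compactness of bounded sets of Radon measures on the compact set $[0,R]$ (Banach--Alaoglu / Helly-type selection), combined with a diagonal argument over $R=1,2,3,\dots$, I would extract a subsequence (not relabelled) and a non-negative Radon measure $h_\ast$ on $[0,\infty)$ such that $\int_{[0,\infty)}h_n\,\phi\dx\to\int_{[0,\infty)}h_\ast\,\phi\dx$ for every $\phi\in C_c([0,\infty))$. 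This is exactly convergence in the topology defined by \eqref{eq:S2E6}, so it remains only to check that the limit $h_\ast$ still lies in $\Y$, i.e. that both \eqref{eq:F1} and \eqref{eq:F2} survive the limit (and that $\norm{h_\ast}<\infty$, which is immediate from \eqref{eq:F1} for $h_\ast$).

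The passage to the limit in the two inequalities is where the only real (though minor) care is needed, because $h_\ast$ may have atoms and the indicator $\mathbf 1_{[0,r]}$ is not continuous. For the upper bound \eqref{eq:F1}: pick $\phi_k\in C_c([0,\infty))$ with $\mathbf 1_{[0,r]}\leq\phi_k\leq\mathbf 1_{[0,r+1/k]}$; then $h_\ast([0,r])\leq\int\phi_k\,h_\ast=\lim_n\int\phi_k\,h_n\leq\limsup_n h_n([0,r+1/k])\leq (r+1/k)^{1-\rho}$, and letting $k\to\infty$ gives $h_\ast([0,r])\leq r^{1-\rho}$. For the lower bound \eqref{eq:F2}: choose instead $\psi_k\in C_c([0,\infty))$ with $\mathbf 1_{[0,r-1/k]}\leq\psi_k\leq\mathbf 1_{[0,r]}$, so that $h_\ast([0,r])\geq\int\psi_k\,h_\ast=\lim_n\int\psi_k\,h_n\geq\liminf_n h_n([0,r-1/k])\geq (r-1/k)^{1-\rho}\bigl(1-R_0^{\delta}(r-1/k)^{-\delta}\bigr)_+$ for $k$ large; letting $k\to\infty$ and using continuity of the right-hand side in $r$ recovers \eqref{eq:F2}. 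Thus $h_\ast\in\Y$.

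The main (and essentially only) obstacle is bookkeeping rather than conceptual: making sure the diagonal extraction is stated cleanly and that the one-sided approximation of indicators by continuous functions is done on the correct side for each inequality, so that atoms sitting exactly at the endpoint $r$ are counted the way the inequality needs. Everything else — boundedness of the masses, weak-$\ast$ compactness on each $[0,R]$, and the fact that the limiting functional is represented by a non-negative Radon measure on $[0,\infty)$ — is classical. I would also remark that $\Y$ is convex (obvious from the linearity of $h\mapsto\int_{[0,r]}h\dx$ in the two defining inequalities), since this, together with weak sequential compactness, is what will be invoked when applying Theorem~\ref{T.fixedpoint}.
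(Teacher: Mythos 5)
Your argument is correct and is the standard one: the upper bound \eqref{eq:F1} gives a uniform bound on mass over compacts, so vague (i.e.\ $C_c$-weak-$\ast$) sequential compactness of the sequence follows from Helly/Banach--Alaoglu plus a diagonalization over $R=1,2,\dots$ (using separability of $C([0,R])$ to upgrade weak-$\ast$ compactness to sequential compactness), and the one-sided continuous approximations of the indicator $\mathbf 1_{[0,r]}$ are exactly what is needed to pass \eqref{eq:F1} and \eqref{eq:F2} to the limit despite possible atoms at the endpoint. The paper itself gives no proof of this lemma (it is stated as ``we then easily see that'' and deferred to \cite{NV12a}), but the intended argument is certainly this one, so your proposal matches the paper's approach.
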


\subsection{Well-posedness of the evolution equation }\label{S.wp}

We first have to make sure that the evolution equation \eqref{A2}-\eqref{A3}
with $K$ replaced by $K_{\eps}^{\lambda}$ as in \eqref{eq:A1} is well-posed. We are going to construct first a mild solution of \eqref{A2}. For that purpose we introduce the rescaling  
\begin{align}\label{eq:S1E7}
 X=x\ee^t,\quad h\left( x,t \right)=H\left(X,t \right)
\end{align}
and  get
\begin{align}
 \del_t H\left( X,t \right)-\rho H\left(X,t\right)+\del_{X}\left[ \int_{0}^{X}\int_{X-Y}^{\infty}\frac{K_{\lambda}^{\eps}\left( Y\ee^{-t},Z\ee^{-t} \right)}{Z}H\left( Z,t \right)H\left( Y,t \right)\dZ\dY \right]&=0\label{eq:S1E8}\\
H\left( X,0 \right)&=h_{0}\left( X \right).\label{eq:S1E8a}
\end{align}
Expanding the derivative $\del_{X}$ we find that \eqref{eq:S1E8} is equivalent to 
\begin{align}\label{eq:S1E9}
 \del_{t} H\left( X,t \right)+\left( \mathcal{A}\left[H\right]\left( X,t \right) \right)H\left( X,t \right)-\mathcal{Q}\left[ H \right] \left( X,t \right)=0
\end{align}
with
\begin{align*}
 \mathcal{A}\left[ H \right]\left( X,t \right)&:=\int_{0}^{\infty}\frac{K_{\eps}^{\lambda}\left( X\ee^{-t},Y\ee^{-t} \right)}{Y}H\left( Y,t \right)\dY-\rho\\
\mathcal{Q}\left[ H \right] \left( X,t \right)&:= \int_{0}^{X} \frac{K_{\eps}^{\lambda}\left( Y\ee^{-t},\left(X-Y\right)\ee^{-t}\right)}{X-Y}H\left(X-Y,t\right)H\left(Y,t\right)\dY\,.
\end{align*}

\begin{definition}\label{Def:mild}
 We will say that a function $H\in C\left( \left[ 0, T\right], \mathcal{X}_{\rho} \right)$ is a mild solution of equation~\eqref{eq:S1E9} if the following identity holds in the sense of measures
\begin{align}\label{eq:S2E4}
 H\left( \cdot, t \right)=\T{H}\qquad \text{for } 0\leq t\leq T,
\end{align}
where
\begin{equation}\label{eq:S2E5}
 \begin{split}
  \T{H}\left( X,t \right)&=\exp\left( -\int_{0}^{t}\mathcal{A}\left[H\right]\left( X,s \right)\ds \right)h_{0}\left( X \right)\\
&\quad +\int_{0}^{t}\exp\left( -\int_{s}^{t}\mathcal{A}\left[H\right]\left( X,\tau \right)\dtau \right)\mathcal{Q}{H}\left( X,s \right)\ds.
 \end{split}
\end{equation}
\end{definition}

For this notion of solutions we have the following existence result that follows by the contraction mapping principle.

\begin{theorem}\label{Thm:calexistence}
 Let $K$ satisfy Assumptions~\eqref{Ass1a}-\eqref{Ass1} and let $K_{\eps}^{\lambda}$ be as in \eqref{eq:A1} for $\lambda>0$. 
Then there exists $T>0$ such that there exists a unique mild solution of \eqref{eq:S1E9} in $\left( 0, T \right)$ in the sense of Definition~\ref{Def:mild}.         
\end{theorem}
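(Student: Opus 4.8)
The plan is to prove Theorem~\ref{Thm:calexistence} by a standard fixed-point argument applied to the integral operator $\T{\cdot}$ of Definition~\ref{Def:mild}. First I would fix $T>0$ (to be chosen small at the end) and work in the complete metric space $C([0,T],\mathcal{X}_{\rho})$ equipped with the norm $\sup_{t\in[0,T]}\norm{H(\cdot,t)}$, and consider the closed ball $\mathcal{B}_{M}$ of radius $M:=2\norm{h_{0}}$ around the constant map $t\mapsto h_{0}$. The goal is to show that for $T$ small, $\T{\cdot}$ maps $\mathcal{B}_{M}$ into itself and is a contraction there; then the contraction mapping principle yields the unique mild solution. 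The key is that the cutoff kernel $K_{\eps}^{\lambda}$ is \emph{bounded}: by \eqref{eq:A1} it vanishes unless both arguments lie in $[\lambda/2,3/(2\lambda)]$ in an appropriate sense, and on that region $K_{\eps}=K(\cdot+\eps,\cdot+\eps)$ is bounded by a constant $C_{\lambda,\eps}$ depending only on $\lambda,\eps$ (and $C_{2},a,b$). Hence both $\mathcal{A}$ and $\mathcal{Q}$ become essentially bounded bilinear/affine operations on measures, and all the singular behaviour of $K$ disappears.

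The core estimates I would establish are the following. For the linear part, using $K_{\eps}^{\lambda}(X\ee^{-t},Y\ee^{-t})\leq C_{\lambda,\eps}\mathbf{1}_{[\lambda\ee^{t}/2,\,3\ee^{t}/(2\lambda)]}(Y)$ and the fact that on that $Y$-range one has $1/Y\leq 2\ee^{-t}/\lambda$, one gets a pointwise bound $\abs{\mathcal{A}[H](X,t)}\leq C(\lambda,\eps)\int_{[\lambda\ee^{t}/2,3\ee^{t}/(2\lambda)]}H(Y,t)\,\dY+\rho\leq C(\lambda,\eps,\rho)\,(1+\norm{H(\cdot,t)})$, uniformly in $X$; consequently the exponential factors $\exp(-\int_{s}^{t}\mathcal{A}[H]\,\dtau)$ in \eqref{eq:S2E5} are bounded above and below by constants of the form $\ee^{\pm C(\lambda,\eps)MT}$. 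For the quadratic part $\mathcal{Q}[H]$ I would bound the measure $\mathcal{Q}[H](\cdot,s)\,\dd X$ tested against an indicator $\mathbf{1}_{[0,R]}$: after writing the double integral and using $1/(X-Y)\leq 2\ee^{-s}/\lambda$ on the effective support, one obtains $\int_{[0,R]}\mathcal{Q}[H](X,s)\,\dd X\leq C(\lambda,\eps)\norm{H(\cdot,s)}^{2}$ for $R$ in the relevant range, and $\mathcal{Q}[H]$ is supported in a bounded $X$-interval (since $K_{\eps}^{\lambda}$ cuts large sizes), so $\norm{\mathcal{Q}[H](\cdot,s)}\leq C(\lambda,\eps,\rho)\norm{H(\cdot,s)}^{2}$. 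Plugging these into \eqref{eq:S2E5} gives $\norm{\T{H}(\cdot,t)}\leq \ee^{C(\lambda,\eps)MT}\big(\norm{h_{0}}+T\,C(\lambda,\eps,\rho)M^{2}\big)$, which is $\leq M$ for $T$ small. The contraction estimate is entirely analogous: $\mathcal{Q}$ is bilinear so $\mathcal{Q}[H_{1}]-\mathcal{Q}[H_{2}]$ is controlled by $M\norm{H_{1}-H_{2}}$, and the difference of the exponential prefactors is likewise Lipschitz in $H$ with constant proportional to $MT\ee^{C(\lambda,\eps)MT}$; thus $\norm{\T{H_{1}}-\T{H_{2}}}\leq C(\lambda,\eps,\rho,M)\,T\,\sup_{t}\norm{H_{1}(\cdot,t)-H_{2}(\cdot,t)}$, which is a contraction for $T$ small.

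Two side points need attention. One is continuity in time: I would check that $\T{H}\in C([0,T],\mathcal{X}_{\rho})$ with the topology generated by the neighbourhoods \eqref{eq:S2E6}, i.e. that $t\mapsto\int\T{H}(\cdot,t)\phi\,\dd X$ is continuous for each $\phi\in C_{c}([0,\infty))$ and that $\sup_{t}\norm{\T{H}(\cdot,t)}<\infty$; this follows from dominated convergence using the boundedness of the kernel and of $H$, together with continuity of $s\mapsto K_{\eps}^{\lambda}(Y\ee^{-s},Z\ee^{-s})$, which holds because $K$ is $C^{1}$ on $(0,\infty)^{2}$ and we may assume the cutoff in \eqref{eq:A1} is implemented smoothly. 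The other is that $\T{H}$ again defines a non-negative measure with no Dirac at the origin: non-negativity is clear from the exponential/Duhamel structure since $h_{0}\geq 0$ and $\mathcal{Q}[H]\geq 0$, and the absence of an atom at $0$ is preserved because $\norm{\T{H}(\cdot,t)}<\infty$ forces $\int_{[0,R]}\T{H}\,\dd X\leq \norm{\T{H}(\cdot,t)}R^{1-\rho}\to0$ as $R\to0$.

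The main obstacle, and the only place requiring real care rather than routine estimation, is tracking the dependence of all constants on $\lambda$ and $\eps$ and, more importantly, making sure the measure-theoretic manipulations are legitimate: $H(\cdot,t)$ is only a measure, so $\mathcal{Q}[H]$ is a convolution-type object and one must argue via testing against $C_{c}$ functions (and Fubini for the iterated integrals) rather than pointwise, and one must verify that $\mathcal{A}[H](X,t)$ is genuinely a bounded measurable function of $X$ so that multiplication $\mathcal{A}[H]\cdot H$ makes sense as a measure. Once these are handled, the contraction mapping principle applies verbatim on $[0,T]$ with $T=T(\lambda,\eps,\rho,\norm{h_{0}})$, giving existence and uniqueness of the mild solution.
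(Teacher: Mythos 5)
Your proposal is correct and takes essentially the same route as the paper, which states only that Theorem~\ref{Thm:calexistence} ``follows by the contraction mapping principle'' and delegates the details to \cite{NV12a}: one uses that the cutoff kernel $K_{\eps}^{\lambda}$ is bounded and supported away from $0$ and $\infty$ to make $\mathcal{A}$ and $\mathcal{Q}$ bounded and Lipschitz on a ball in $C\left(\left[0,T\right],\mathcal{X}_{\rho}\right)$, and then applies Banach's fixed point theorem for small $T$. The one point worth spelling out is the metric on differences of the (non-negative) measures, e.g. $\sup_{R>0}R^{\rho-1}\abs{\int_{\left[0,R\right]}\left(h_{1}-h_{2}\right)\dx}$, with the kernel integrals in the contraction estimate handled by integrating by parts against the primitives; this is routine given the smooth, compactly supported cutoff.
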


We furthermore introduce weak solutions in the following sense:

\begin{definition}\label{Def:weak}
 We say that $h\in C \left( \left[ 0,T \right],\mathcal{X}_{\rho} \right)$ is a weak solution of \eqref{A2}, \eqref{A3} if for any $t\in \left[0,T\right]$ and any test function $\psi\in C_{c}^{1}\left( \left[0,\infty\right)\times \left[0,t\right] \right)$ we have
\begin{equation}\label{eq:S1E5}
 \begin{split}
  &\int_{0}^{\infty}h \left( x,t \right)\psi \left( x,t \right)\dx-\int_{0}^{\infty}h_0 \left( x \right)\psi \left( x,0 \right)\dx-\int_{0}^{t}\left[\int_{0}^{\infty} \del_s\psi \left( x,s \right)h \left( x,s \right)\dx \right]\ds\\
&+\int_{0}^{t}\left[ \int_{0}^{\infty}\psi \left( x,s \right)\int_{0}^{\infty}\frac{K_{\eps}^{\lambda} \left( x,z \right)}{z}h \left( z,s \right)\dz h \left( x,s \right)\dx \right]\ds\\
&-\int_{0}^{t}\left[ \int_{0}^{\infty}\psi \left( x,s \right)\int_{0}^{x}\frac{K_{\eps}^{\lambda} \left( y,x-y \right)}{x-y}h \left( x-y,s \right)h \left( y,s \right)\dy\dx \right]\ds\\
&+\int_{0}^{t}\int_{0}^{\infty}xh \left( x,s \right)\del_x \psi \left( x,s \right)\dx\ds-\left(\rho-1\right)\int_{0}^{t}\int_{0}^{\infty}h \left( x,s \right)\psi \left( x,s \right)\dx\ds=0.
 \end{split}
\end{equation}
\end{definition}

By changing variables one obtains the following lemma.

\begin{lemma}\label{Lem:weakSol}
 Suppose $H\in C\left( \left[0,T\right],\mathcal{X}_{\rho} \right)$ is a mild solution of \eqref{A2},\eqref{A3} in the sense of Definition~\ref{Def:mild}. Then $h$ is a weak solution in the sense of Definition~\ref{Def:weak}.
\end{lemma}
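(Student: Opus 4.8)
The plan is to derive from the Duhamel (mild) identity \eqref{eq:S2E4}--\eqref{eq:S2E5} a weak formulation of the rescaled equation \eqref{eq:S1E9} tested against functions in $C_c^1([0,\infty)\times[0,T])$, and then to undo the rescaling \eqref{eq:S1E7}. The cutoff $\lambda>0$ is what makes this work: it forces the weight $\frac{K_\eps^\lambda(X\ee^{-t},Y\ee^{-t})}{Y}$ to be continuous, compactly supported in $Y$ and locally bounded in $X$, so that $\mathcal{A}[H](\cdot,t)$ is a continuous function, locally bounded and bounded below by $-\rho$, while $\mathcal{Q}[H](\cdot,s)$ is a finite measure depending weakly continuously on $s$ (finiteness of the defining integrals uses in addition $H\in C([0,T],\mathcal{X}_\rho)$ together with $\rho>\max\{0,b\}$).

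First I would fix $\phi\in C_c([0,\infty))$, pair it against \eqref{eq:S2E5}, and abbreviate $E(X;s,t):=\exp\bigl(-\int_s^t\mathcal{A}[H](X,\tau)\,\dtau\bigr)$, which is bounded by $\ee^{\rho T}$. Since $\tau\mapsto\mathcal{A}[H](X,\tau)$ is continuous, one may differentiate in $t$ under the (finite) integrals over $X$ and $s$; using $\del_t E(X;s,t)=-\mathcal{A}[H](X,t)E(X;s,t)$, $E(X;t,t)=1$, and reinserting \eqref{eq:S2E5} to recognise the bracket that appears as $H(\cdot,t)$, one obtains
\[
\frac{\dd}{\dt}\int_0^\infty\phi\,H(\cdot,t)\,\dx=-\int_0^\infty\mathcal{A}[H](\cdot,t)\,\phi\,H(\cdot,t)\,\dx+\int_0^\infty\phi\,\mathcal{Q}[H](\cdot,t)\,\dx .
\]
The right-hand side is continuous in $t$, hence $t\mapsto\int\phi\,H(\cdot,t)\,\dx$ is $C^1$. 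For $\Phi(X,t)=\sum_j a_j(t)\phi_j(X)$ with $a_j\in C^1$ and $\phi_j\in C_c$, the Leibniz rule gives the same identity with $\phi$ replaced by $\Phi(\cdot,t)$ and an extra term $\int\del_t\Phi\,H\,\dx$; approximating an arbitrary $\Phi\in C_c^1([0,\infty)\times[0,T])$ by such finite sums in $C^1$ on the compact set $\supp\Phi$ and integrating in $t$ gives
\begin{align*}
&\int_0^\infty\Phi(\cdot,t)H(\cdot,t)\,\dx-\int_0^\infty\Phi(\cdot,0)h_0\,\dx-\int_0^t\!\!\int_0^\infty\del_s\Phi\,H\,\dx\ds\\
&\qquad+\int_0^t\!\!\int_0^\infty\mathcal{A}[H]\,\Phi\,H\,\dx\ds-\int_0^t\!\!\int_0^\infty\Phi\,\mathcal{Q}[H]\,\dx\ds=0 .
\end{align*}
Recalling the definitions of $\mathcal{A}[H]$ and $\mathcal{Q}[H]$, and that $\mathcal{A}[H]$ carries the linear contribution $-\rho$, this is precisely the weak form of \eqref{eq:S1E9}, consisting of a gain term, a loss term, and the linear term $-\rho\int_0^t\int\Phi\,H$.

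Then I would undo the rescaling: given $\psi\in C_c^1([0,\infty)\times[0,T])$, set $\Phi(X,t):=\ee^{-t}\psi(X\ee^{-t},t)$, which again lies in $C_c^1([0,\infty)\times[0,T])$ since $\psi$ has compact support in $x$ and $t$ stays bounded. Inserting this $\Phi$ into the previous identity and changing variables $X=x\ee^{s}$ in every integral over $X$ --- and additionally $Y=y\ee^{s},Z=z\ee^{s}$ in the gain double integral and $Y=y\ee^{s}$ in the loss convolution --- while using $H(X,s)=h(X\ee^{-s},s)$, all Jacobian and exponential factors cancel: the gain and loss terms become exactly those of \eqref{eq:S1E5}, one has $\int\Phi(\cdot,t)H(\cdot,t)\,\dx=\int\psi(\cdot,t)h(\cdot,t)\,\dx$ and likewise at $t=0$, and, since $\del_s\Phi(X,s)=-\ee^{-s}\psi(X\ee^{-s},s)-X\ee^{-2s}\del_x\psi(X\ee^{-s},s)+\ee^{-s}\del_s\psi(X\ee^{-s},s)$, the term $-\int_0^t\int\del_s\Phi\,H$ turns into $\int_0^t\int(\psi+x\del_x\psi-\del_s\psi)\,h$. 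Combining the $\int_0^t\int\psi h$ piece of this with $-\rho\int_0^t\int\Phi\,H=-\rho\int_0^t\int\psi h$ produces the term $-(\rho-1)\int_0^t\int h\psi$, while $\int_0^t\int x\del_x\psi\,h$ and $-\int_0^t\int\del_s\psi\,h$ match the remaining terms of \eqref{eq:S1E5}. Hence \eqref{eq:S1E5} holds for every $\psi$, i.e. $h$ is a weak solution in the sense of Definition~\ref{Def:weak}.

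I do not expect a deep obstacle --- as the paper indicates, this is essentially a change of variables --- but three points need care: (i) $h$ and $H$ are measures in the size variable, so every iterated integral and every ``change of variables'' above must be read as an identity of measures and justified by Tonelli's theorem, the finiteness being supplied by the cutoff together with $\rho>\max\{0,b\}$; (ii) the differentiation under the integral sign in the first step, which is exactly where the continuity and local boundedness of $\mathcal{A}[H]$ and the weak continuity of $\mathcal{Q}[H]$ --- hence the presence of the cutoff $\lambda>0$ --- are genuinely used; and (iii) keeping track of the exponential weights throughout the change of variables in the second step.
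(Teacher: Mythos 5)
Your proposal is correct and follows the route the paper indicates (the proof is omitted here, the authors deferring to \cite{NV12a} and noting only that the lemma follows ``by changing variables''). Your two-step argument---differentiating the Duhamel identity \eqref{eq:S2E5} in $t$ to recover a weak formulation of the rescaled equation \eqref{eq:S1E9}, then undoing the rescaling \eqref{eq:S1E7} via the substitution $\Phi(X,s)=\ee^{-s}\psi(X\ee^{-s},s)$---is exactly the intended argument, and your bookkeeping of the Jacobian and exponential factors (including how the $-\rho$ piece of $\mathcal{A}[H]$ combines with the $\psi$ term from $\del_s\Phi$ to produce $-(\rho-1)\int h\psi$) checks out.
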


\subsection{Weak continuity of the evolution semi-group}\label{S.wc}

We denote the value of the mild solution $h$ of \eqref{A2}-\eqref{A3} obtained in Theorem~\ref{Thm:calexistence} with initial data $h_0$ as
\begin{align}\label{eq:S3E8}
h\left( x,t \right)=S_{\eps}^{\lambda}\left(t\right)h_0\left(x\right),\quad t>0.
\end{align}

Note that $S_{\eps}^{\lambda}\left(t\right)$ defines a mapping from $\mathcal{X}_{\rho}$ to itself. 
We also define the transformation that brings $h_0\left(x\right)$ to $H\left(X,t\right)$ 
which solves \eqref{eq:S1E8}, \eqref{eq:S1E8a} and whose existence is given by Theorem~\ref{Thm:calexistence}. We will write
\begin{align}\label{eq:S3E8a}
H\left(X,t\right)=T_{\eps}^{\lambda}\left(t\right)h_0,\quad t>0.
\end{align} 

With this notation we can state the following Proposition giving the weak continuity of the evolution semi-group:

\begin{proposition}\label{Prop:ContSemi}
The transformation $S_{\eps}^{\lambda}\left(t\right)$ defined by means of \eqref{eq:S3E8} for any $t\in\left[0,T\right]$ is a continuous map from $\mathcal{X}_{\rho}$ into itself if $\mathcal{X}_{\rho}$ is endowed with the topology defined by means of the functionals \eqref{eq:S2E6}. 
\end{proposition}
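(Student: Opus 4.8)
The plan is to prove weak sequential continuity of $S_\eps^\lambda(t)$ by working, as usual, at the level of the mild-solution representation \eqref{eq:S2E5} for the rescaled density $H = T_\eps^\lambda(t)h_0$, since the rescaling \eqref{eq:S1E7} and Lemma~\ref{Lem:weakSol} transfer everything back to $h = S_\eps^\lambda(t)h_0$ without loss. Concretely, I would take a sequence $h_0^{(n)} \to h_0^{(\infty)}$ in the topology generated by the neighbourhoods \eqref{eq:S2E6} — so $\int \phi\, h_0^{(n)} \to \int \phi\, h_0^{(\infty)}$ for every $\phi \in C_c([0,\infty))$, and by definition of $\mathcal{X}_\rho$ the norms $\norm{h_0^{(n)}}$ are bounded along the sequence (indeed one tests against truncations of the indicator of $[0,R]$; more carefully, weak-$*$ convergence on each compact set plus the defining supremum in \eqref{eq:S1E3} gives a uniform bound) — and show that $H^{(n)}(\cdot,t) \to H^{(\infty)}(\cdot,t)$ weakly for each fixed $t \in [0,T]$. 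The first step is therefore to record a uniform-in-$n$ bound $\sup_n \sup_{t\in[0,T]} \norm{H^{(n)}(\cdot,t)} \leq C$, which comes from the a priori estimates underlying Theorem~\ref{Thm:calexistence} (the contraction is set up precisely in the ball of $C([0,T],\mathcal{X}_\rho)$ of an appropriate radius). This uniform bound gives, for each $t$, weak-$*$ precompactness of $\{H^{(n)}(\cdot,t)\}$ on every $[0,R]$, so it suffices to identify the limit of any weakly convergent subsequence.

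The second step is the passage to the limit in the fixed-point identity. The cut kernel $K_\eps^\lambda$ is, by \eqref{eq:A1}, bounded and supported away from $0$ and $\infty$ in each variable, and $C^1$ there; hence $Y \mapsto K_\eps^\lambda(X\ee^{-t},Y\ee^{-t})/Y$ is a bounded continuous function of compact support in $Y$, uniformly for $X$ in compacts and $t \in [0,T]$. Consequently $\mathcal{A}[H^{(n)}](X,s) = \int_0^\infty \frac{K_\eps^\lambda(X\ee^{-s},Y\ee^{-s})}{Y}H^{(n)}(Y,s)\dY - \rho$ converges pointwise in $(X,s)$ along the subsequence (test against a fixed compactly supported continuous function of $Y$), and it is bounded uniformly using the norm bound from Step~1; the time integrals $\int_0^t \mathcal{A}[H^{(n)}]$ and $\int_s^t \mathcal{A}[H^{(n)}]$ then converge by dominated convergence. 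The quadratic term $\mathcal{Q}[H^{(n)}](X,s)$ is the more delicate one: it is a bilinear convolution-type expression in the measure $H^{(n)}(\cdot,s)$ against itself with the bounded compactly supported weight $K_\eps^\lambda(Y\ee^{-s},(X-Y)\ee^{-s})/(X-Y)$. To pass to the limit here I would test $\mathcal{Q}[H^{(n)}]$ against an arbitrary $\zeta \in C_c([0,\infty))$, rewrite $\int_0^\infty \zeta(X)\mathcal{Q}[H^{(n)}](X,s)\dX$ as a double integral $\iint \zeta(Y+W)\frac{K_\eps^\lambda(Y\ee^{-s},W\ee^{-s})}{W}\,H^{(n)}(\dd W,s)\,H^{(n)}(\dd Y,s)$, observe that the integrand is a fixed bounded continuous function of $(Y,W)$ with compact support (because of the cut in $K_\eps^\lambda$), and invoke the standard fact that weak-$*$ convergence $H^{(n)}\rightharpoonup H^{(\infty)}$ on compacts implies convergence of such bilinear integrals $\iint G\, dH^{(n)}\otimes dH^{(n)} \to \iint G\, dH^{(\infty)}\otimes dH^{(\infty)}$ for $G \in C_c$. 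Integrating against $\zeta$ and $\dd s$ and using dominated convergence in $s$ then shows $\int_0^t \exp(-\int_s^t \mathcal{A}[H^{(n)}])\mathcal{Q}[H^{(n)}](\cdot,s)\dd s$ converges, tested against $\zeta$, to the corresponding expression with $H^{(\infty)}$. Combined with the convergence of the linear term $\exp(-\int_0^t \mathcal{A}[H^{(n)}](X,s)\dd s)h_0^{(n)}(X)$ — where one uses $h_0^{(n)}\rightharpoonup h_0^{(\infty)}$ together with uniform convergence on compacts of the continuous exponential prefactor — this identifies the weak limit of $H^{(n)}(\cdot,t) = \T{H^{(n)}}(\cdot,t)$ as a mild solution with initial data $h_0^{(\infty)}$.

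The third and final step is uniqueness: by Theorem~\ref{Thm:calexistence} the mild solution with initial data $h_0^{(\infty)}$ is unique on $[0,T]$, so the weak limit of every subsequence is the same, namely $H^{(\infty)}(\cdot,t) = T_\eps^\lambda(t)h_0^{(\infty)}$; hence the whole sequence converges and $S_\eps^\lambda(t)$ is weakly sequentially continuous, which for the topology generated by \eqref{eq:S2E6} on the norm-bounded sets that concern us is exactly the asserted continuity. The main obstacle I anticipate is the quadratic term $\mathcal{Q}$: one must be careful that the weight $K_\eps^\lambda(Y\ee^{-s},(X-Y)\ee^{-s})/(X-Y)$ really is genuinely bounded with compact support in $(Y,W)$ — this is where the lower cut in \eqref{eq:A1} is essential, since it keeps $X-Y = W$ bounded away from $0$ so the factor $1/(X-Y)$ causes no singularity, and the upper cut keeps the support compact — and that the bilinear-convergence fact is applied with a fixed test function independent of $n$ (the $s$-dependence being handled separately by dominated convergence using the uniform norm bound). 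A secondary technical point is justifying the uniform-in-$n$ $\mathcal{X}_\rho$-norm bound and the dominated-convergence applications in $s$, but these follow routinely from the a priori estimates already present in the construction of the mild solution.
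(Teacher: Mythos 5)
Your strategy---extract a weakly convergent subsequence of $H^{(n)}(\cdot,t)$ for fixed $t$, pass to the limit in the mild-solution identity \eqref{eq:S2E5}, then use uniqueness of mild solutions and the sub-subsequence principle---is a genuinely different route from the paper's. The paper proves Proposition~\ref{Prop:ContSemi} by a dual argument: subtracting the weak formulations (Definition~\ref{Def:weak}) for two solutions $H_1,H_2$ produces a linear identity, and choosing the test function $\Psi$ to solve the terminal-value problem $\partial_s\Psi-\mathcal{T}[\Psi]=0$, $\Psi(\cdot,t)=\bar\Psi$, with $\mathcal{T}$ carrying coefficients built from $H_1,H_2$ and $K_\eps^\lambda$, annihilates the bulk term and leaves $\int(H_1-H_2)(\cdot,t)\bar\Psi\,\dX=\int(h_{0,1}-h_{0,2})\Psi(\cdot,0)\,\dX$, from which continuity is read off using boundedness of $\Psi(\cdot,0)$ (the cut kernel makes the dual problem benign). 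Your compactness-plus-uniqueness scheme avoids constructing and analysing the dual solution, but at the cost of having to control the whole trajectory $s\mapsto H^{(n)}(\cdot,s)$ in the limit, not just its value at the terminal time.

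That is exactly where your Step~2 has a gap. To pass to the limit in $\int_0^t\exp\!\left(-\int_s^t\mathcal{A}[H^{(n)}]\,\dtau\right)\mathcal{Q}[H^{(n)}](\cdot,s)\,\ds$ by dominated convergence in $s$, you need $H^{(n)}(\cdot,s)\rightharpoonup H^{(\infty)}(\cdot,s)$ for a.e.\ $s\in[0,t]$ along one fixed subsequence. Extracting a subsequence that converges only at $s=t$ says nothing about the intermediate times entering the Duhamel integral. An additional compactness-in-time step is required: for instance, equicontinuity of $s\mapsto\int\phi\,H^{(n)}(\cdot,s)\,\dX$ for each fixed $\phi\in C_c$ (obtainable from the weak formulation and the boundedness and compact support of $K_\eps^\lambda$), followed by a Cantor diagonalization over a countable dense set of times, before the fixed-point identity can be passed to the limit. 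Secondarily, the claim that $\sup_n\norm{h_0^{(n)}}<\infty$ follows from weak convergence against $C_c$ is incorrect: Banach--Steinhaus gives $\sup_n\int_{[0,R]}h_0^{(n)}\dx<\infty$ for each fixed $R$, but not boundedness of the weighted supremum defining $\norm{\cdot}$; take $h_0^{(n)}$ equal to $h_0$ plus a point mass of weight $n$ at $x=n$, which converges against $C_c$ yet has $\norm{h_0^{(n)}}\geq n^\rho$. For the intended application on the norm-bounded invariant set $\Y$ this is harmless, but the uniform norm bound must be posited as a hypothesis rather than deduced from the convergence.
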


\begin{remark}
The continuity that we obtain is not uniform in $\lambda$.
\end{remark}

The idea to prove this is to use a special test function in the definition of weak solutions. As the transformation \eqref{eq:S1E7} is continuous in the weak topology it suffices to show that $T_{\eps}^{\lambda}$ is continuous. Since it is not linear it is not enough to check continuity at $h_{0}=0$. More precisely fix $t\in\left[0,T\right]$ and consider a test function $\bar{\Psi}\left(X\right)$ with $\bar{\Psi}\in C_{c}\left(\left[0,\infty\right)\right)$. Suppose that we have $H_{1},H_{2}$ such that $T_{\eps}^{\lambda}h_{0,i}=H_{i}\left(\cdot,t\right)$ for $i=1,2$. Using the definition of weak solutions and taking the difference of the corresponding equations we obtain after some manipulations:
\begin{equation*}
 \begin{split}
    \int_{0}^{\infty}&\left( H_1\left(X,t\right)-H_2\left(X,t\right) \right)\Psi\left(X,t\right)\dX-\int_{0}^{\infty}\left( h_{0,1}\left(X\right)-h_{0,2}\left(X\right) \right)\Psi\left(X,0\right)\dX\\
     &=\int_{0}^{t}\int_{0}^{\infty}\left( H_1\left(X,s\right)-H_2\left(X,s\right) \right)\left[\del_{s}\Psi-\mathcal{T}\left[\Psi\right]\right]\left(X,s\right)\dX\ds
 \end{split}
\end{equation*}
Thus choosing the test function $\Psi$ in some suitable function space such that $\del_{s}\Psi-\mathcal{T}\left[\Psi\right]=0$ and $\Psi\left(\cdot,t\right)=\bar{\Psi}$ the claim follows (for more details see~\cite[Proposition~2.8]{NV12a}).

\subsection{Recovering the upper estimate - Conservation of \eqref{eq:F1}}\label{S.ue}

\begin{proposition}\label{Prop:upper:bound:large}
Suppose that $h_0\in\mathcal{X}_{\rho}$  satisfies \eqref{eq:F1}. Let $h\left(x,t\right)$ be as in \eqref{eq:S3E8}. Then $h\left(\cdot, t\right)$ satisfies \eqref{eq:F1} as well.
\end{proposition}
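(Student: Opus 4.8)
The plan is to test the weak formulation \eqref{eq:S1E5} (for the kernel $K_\eps^\lambda$) against a carefully chosen nonnegative function concentrated near a point $r$, and to show that the quantity $\int_{[0,r]}h(x,t)\,\dx$ cannot exceed $r^{1-\rho}$ if it does not do so at $t=0$. Concretely, I would work in the rescaled variables \eqref{eq:S1E7}, where the mild/weak solution is $H(X,t)$ and \eqref{eq:F1} becomes $\int_{[0,R]}H(X,t)\,\dX\le R^{1-\rho}$ (this is the natural invariant since the characteristics of the transport term $-\rho H+\del_X(\cdots)$ are simply the scaling flow). In these variables the gain and loss operators are $\mathcal{A}[H]$ and $\mathcal{Q}[H]$ from \eqref{eq:S1E9}. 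The standard trick (as in \cite{NV12a}) is: take a test function $\psi$ that is a smooth nonincreasing approximation of the indicator $\mathbf 1_{[0,r]}$, plug it into the weak formulation, and estimate the coagulation contribution. Because coagulation only moves mass to larger sizes, the net flux of mass across the level $r$ is nonpositive up to the transport correction; since $\psi$ is nonincreasing, $\del_x\psi\le 0$, and one checks that the term $\int_0^t\!\int_0^\infty xh\,\del_x\psi\,\dx\,\ds$ together with the $-(\rho-1)$ term has the right sign relative to the barrier $r^{1-\rho}$.

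The cleanest way to organize this is to define $M(r,t):=\int_{[0,r]}H(X,t)\,\dX$ and derive a differential inequality for it. Using the weak form with $\psi$ approximating $\mathbf 1_{[0,r]}(X)$ and letting the mollification parameter tend to zero, one obtains (in the rescaled variables, so without the $xh$-transport term, which has been absorbed into the rescaling)
\[
\frac{\dd}{\dt}M(r,t)=\rho M(r,t)-\Big(\text{flux of mass across }r\text{ by coagulation}\Big),
\]
where the coagulation flux is
\[
\int_{0}^{r}\int_{r-Y}^{\infty}\frac{K_\eps^\lambda(Y\ee^{-t},Z\ee^{-t})}{Z}\,H(Z,t)\,H(Y,t)\,\dZ\,\dY\ \ge 0 .
\]
Hence $\frac{\dd}{\dt}M(r,t)\le \rho\,M(r,t)$. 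Now consider $\Phi(r,t):=M(r,t)-r^{1-\rho}$. Differentiating in $t$ (note $r^{1-\rho}$ is $t$-independent) and using the differential inequality gives $\del_t\Phi\le \rho M=\rho(\Phi+r^{1-\rho})$. That by itself is not immediately a sign condition, so the more robust route is to work directly with the barrier: one shows that along the evolution the bound is preserved by a comparison/Gronwall argument on $\sup_r M(r,t)/r^{1-\rho}$, exploiting that the coagulation flux is precisely what makes the profile $r^{1-\rho}$ a supersolution. Equivalently, following \cite[Section 2]{NV12a}, fix $r$ and suppose $M(r,0)\le r^{1-\rho}$; then at any first time $t_0$ where $M(r,t_0)=r^{1-\rho}$ one has $\del_t M(r,t_0)\le\rho M(r,t_0)$ while the derivative of the barrier $r^{1-\rho}$ (in its own natural rescaled sense, i.e. after undoing \eqref{eq:S1E7}) dominates this, giving a contradiction; carried out cleanly this yields $M(r,t)\le r^{1-\rho}$ for all $t\in[0,T]$ and all $r$.

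The main obstacle, and the step requiring genuine care, is the justification of the mollification limit: $h(\cdot,s)$ is only a measure, so plugging in the discontinuous function $\mathbf 1_{[0,r]}$ is not literally allowed, and one must approximate by $C_c^1$ test functions $\psi_n\downarrow \mathbf 1_{[0,r]}$ (with $\del_x\psi_n\le 0$), control the error terms $\int_0^t\!\int xh\,\del_x\psi_n$ and the coagulation double integrals as $n\to\infty$, and handle the possibility of an atom of $h$ exactly at $x=r$ — which is why the statement and Definition~\ref{Def:InvSet} use the convention that all such integrals mean $\int_{[0,r]}$. One uses here that the cutoff kernel $K_\eps^\lambda$ is bounded and that $h(\cdot,s)\in\mathcal{X}_\rho$ uniformly on $[0,t]$ (from $h\in C([0,T],\mathcal{X}_\rho)$), so all integrands are dominated and monotone/dominated convergence applies; the sign of $\del_x\psi_n$ ensures the limiting inequality goes the right way. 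Since the analogous computation is carried out in \cite{NV12a} for locally bounded kernels and $K_\eps^\lambda$ is likewise locally bounded (indeed globally bounded), the argument transfers essentially verbatim, and I would state it as such, giving the differential inequality for $M(r,t)$ as the one new computation to display.
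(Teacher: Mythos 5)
Your proposal is essentially the paper's argument: the paper simply integrates the rescaled equation \eqref{eq:S1E8} over $[0,R]$, drops the non-negative coagulation flux across $R$, and applies Gronwall before undoing the change of variables \eqref{eq:S1E7} (deferring details to \cite[Proposition~3.1]{NV12a}), which is exactly your differential inequality $\partial_t M\le\rho M$. One small correction: under \eqref{eq:S1E7} the bound \eqref{eq:F1} does \emph{not} become $\int_{[0,R]}H(X,t)\,\dX\le R^{1-\rho}$ but rather $\int_{[0,R]}H(X,t)\,\dX\le R^{1-\rho}\ee^{\rho t}$ (since $\int_0^r h(x,t)\,\dx=\ee^{-t}\int_0^{r\ee^t}H(X,t)\,\dX$), so the static barrier $r^{1-\rho}$ and the ``first touching time'' contradiction do not work as stated — at a touching time the two derivatives coincide rather than one dominating — whereas the Gronwall route $M(R,t)\le M(R,0)\ee^{\rho t}\le R^{1-\rho}\ee^{\rho t}$ closes the argument correctly.
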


This follows in the same way as in \cite[Proposition~3.1]{NV12a} by integrating \eqref{eq:S1E8} and changing variables.

\subsection{Recovering the lower estimate - Conservation of \eqref{eq:F2}} \label{S.le}

The invariance of the lower bound will be shown in several steps. First we choose a special test function in the definition of weak solutions, more precisely
the solution of the corresponding dual problem.
 Next we derive suitable lower bounds and integral estimates for this function. Finally we show the invariance of \eqref{eq:F2}.

\subsubsection{The dual problem}\label{Sec:dual:prob}

In Definition~\ref{Def:weak} of weak solutions
we choose  $\psi$ such that
\begin{equation}\label{eq:sptest1}
 \begin{split}
 &\quad -\int_{0}^{t}\int_{\left[0,\infty\right)}\del_s\psi\left(x,s\right)h\left(x,s\right)\dx\ds\\
 &-\int_{0}^{t}\int_{\left[0,\infty\right)}\psi\left(x,s\right)\int_{0}^{\infty}\frac{K_{\eps}^{\lambda}\left(x,z\right)}{z}h\left(z,s\right)\dz h\left(x,s\right)\dx\ds\\
 &-\int_{0}^{t}\int_{\left[0,\infty\right)}\psi\left(x,s\right)\int_{0}^{x}\frac{K_{\eps}^{\lambda}\left(y,x-y\right)}{x-y}h\left(x-y,s\right)h\left(y,s\right)\dy\dx\ds\\
 &+\int_{0}^{t}\int_{\left[0,\infty\right)}x h\left(x,s\right)\del_x \psi\left(x,s\right)\dx\ds - \left(\rho-1\right)\int_{0}^{t}\int_{\left[0,\infty\right)}h\left(x,s\right)\psi\left(x,s\right)\dx\ds\leq0,
\end{split}
\end{equation}
and thus obtain
\begin{align}\label{eq:sptest2}
\int_{\left[0,\infty\right)}h\left(x,t\right)\psi\left(x,t\right)\dx\ds-\int_{\left[0,\infty\right)}h_{0}\left(x\right)\psi \left(x,0\right)\dx\geq0.
\end{align}
 After some rearrangement we find that \eqref{eq:sptest1} is satisfied if $\psi$ solves the dual problem
\begin{align}\label{eq:sptest3}
 \del_{s}\psi\left(x,s\right)+\int_{0}^{\infty}\frac{K_{\eps}^{\lambda}\left(x,z\right)}{z}h\left(z,s\right)\left[ \psi\left(x+z,s\right)-\psi\left(x,s\right) \right]\dz-x\del_{x}\psi\left(x,s\right)-\left(1-\rho\right)\psi\left(x,s\right)\geq 0
\end{align}
together with some suitable initial condition $\psi\left(x,t\right)=\psi_{0}\left(x\right)$.

With regard to \eqref{eq:sptest2}, the idea to estimate $\int_{0}^{R}h\left(x,t\right)\dx$ is to take $\psi_{0}$ as a smoothed version of $\chi_{\left(-\infty,R\right]}$ and to estimate $\psi\left(\cdot,0\right)$ from below. Using then that \eqref{eq:F2} holds for $h_{0}$ this will be enough to show that this estimate is conserved under the action of $S_{\eps}^{\lambda}$. Rescaling $X:=x\ee^{s-t}$ and $\psi\left(x,s\right)=\ee^{-\left(1-\rho\right)\left(t-s\right)}\Phi\left(x\ee^{s-t},s\right)$ we find after
some elementary computations that
equation \eqref{eq:sptest3} together with the initial condition is equivalent to 
\begin{equation}\label{eq:sptest4}
 \begin{split}
  \del_{s}\Phi\left(X,s\right)+\int_{0}^{\infty}\frac{K_{\eps}^{\lambda}\left(X\ee^{t-s},Z\ee^{t-s}\right)}{Z}h\left(Z\ee^{t-s}\right)\left[ \Phi\left(X+Z,s\right)-\Phi\left(X,s\right) \right]\dZ&\geq0\\
  \Phi\left(X,t\right)&=\psi_{0}\left(X\right).
 \end{split}
\end{equation}
This is (the rescaled version of) the dual problem.

\begin{remark}
 Note that for applying Theorem~\ref{T.fixedpoint} we only need the assumptions on a small interval $\left[0,T\right]$. So we may assume in the following that $T<1$ is sufficiently small.
\end{remark}

\subsubsection{Construction of a solution to the dual problem}\label{Sec:constr:dual:prop}

In the following we always assume without loss of generality that $\eps\leq 1$ and $R_{0}\geq 1$ (and thus we may also assume $R\geq 1$). For $0<\kappa<1$ we furthermore denote by $\varphi_{\kappa}$ a non-negative, symmetric standard mollifier such that $\supp\varphi_{\kappa}\subset \left[-\kappa,\kappa\right]$.

The idea to construct a solution $\Phi$ to the dual problem is to replace the solution $h$ and the integral kernel $K_{\eps}^{\lambda}$ in \eqref{eq:sptest4} by corresponding power laws (using \eqref{Ass1}) multiplied by a sufficiently large constant and estimating the powers of $X$ using $X\in\left[0,R\right]$. We therefore choose $\Phi$ as a solution to
\begin{equation}\label{eq:subsolution:2}
 \begin{split}
\del_{s}\Phi\left(X,s\right)&+C_{0}\eps^{-a}\max\left\{\eps^{b},1\right\}\int_{0}^{\infty}\frac{\tilde{v}_{1}\left(Z\right)}{Z}\left[\Phi\left(X+Z\right)-\Phi\left(X\right)\right]\dZ\\
&+C_{0}\eps^{-a}\left[\max\left\{\eps^{b},1\right\}+\max\left\{\eps^{b},R^{b}\right\}\right]\int_{0}^{\infty}\frac{\tilde{v}_{2}\left(Z\right)}{Z}\left[\Phi\left(X+Z\right)-\Phi\left(X\right)\right]\dZ=0
 \end{split}
\end{equation}
with initial condition $\Phi\left(X,t\right)=\chi_{\left(-\infty,R-\kappa\right]}\ast^{2}\varphi_{\kappa/2}\left(X\right)$ and $C_{0}>0$ to be fixed later and $\tilde{v}_{i}\left(Z\right):=Z^{-\omega_{i}}$ for $i=1,2$, with $\omega_{1}=\min\left\{\rho-b,\rho\right\}$ and $\omega_{2}=\rho$. Here $\ast^{n}$ denotes the $n$-fold convolution.

\begin{lemma}\label{Lem:ex:Phi:test}  
 There exists a solution $\Phi$ of \eqref{eq:subsolution:2} in $C^{1}\left(\left[0,t\right],C^{\infty}\left(\R\right)\right)$.
\end{lemma}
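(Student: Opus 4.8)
The plan is to solve the linear integro-differential equation \eqref{eq:subsolution:2} by an explicit iteration (Duhamel/Picard) scheme, exploiting the fact that the spatial operator is, up to a bounded multiplicative constant, a convolution-type operator with a positivity-preserving structure. First I would rewrite \eqref{eq:subsolution:2} in the compact form $\del_s\Phi = -\mathcal{L}\Phi$ where $\mathcal{L}\Phi(X) = \int_0^\infty \mu(Z)\bigl[\Phi(X)-\Phi(X+Z)\bigr]\,\dZ$ with $\mu(Z) = C_0\eps^{-a}\bigl[\max\{\eps^b,1\}\,Z^{-1-\omega_1} + (\max\{\eps^b,1\}+\max\{\eps^b,R^b\})\,Z^{-1-\omega_2}\bigr]$. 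Note the kernel $\mu$ is integrable at infinity (since $\omega_1,\omega_2>0$) but NOT at $Z=0$ (since $1+\omega_i>1$); however the combination $\Phi(X)-\Phi(X+Z)$ gains a factor $Z$ for Lipschitz $\Phi$, so $\mathcal{L}$ maps $C^1_b(\R)$ (and more generally $C^{k+1}_b$) boundedly into $C^{k}_b(\R)$, but not $C^0_b$ into itself — this is precisely why one must work with the regularity scale $C^\infty(\R)$ rather than a single Banach space.

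The main steps are as follows. (1) Observe that the initial datum $\Phi(\cdot,t)=\chi_{(-\infty,R-\kappa]}\ast^2\varphi_{\kappa/2}$ is smooth, bounded together with all derivatives, and has all derivatives supported in a fixed compact set, so in particular $\Phi(\cdot,t)\in C^\infty(\R)\cap W^{k,\infty}(\R)$ for every $k$. (2) Set up the backward-in-time Picard iteration $\Phi^{(0)}(X,s)\equiv\psi_0(X)$, $\Phi^{(n+1)}(X,s) = \psi_0(X) + \int_s^t (\mathcal{L}\Phi^{(n)})(X,\sigma)\,\dd\sigma$. Since $\del_X$ commutes with $\mathcal{L}$ (translation invariance) and with the time integral, one gets uniform-in-$n$ bounds on $\|\del_X^j\Phi^{(n)}(\cdot,s)\|_{L^\infty}$ on $[0,t]$ for each fixed $j$: indeed $\del_X^j\Phi^{(n)}$ solves the same iteration with datum $\del_X^j\psi_0$, and $\|\mathcal{L}g\|_{L^\infty}\leq C(\eps,R)\bigl(\|g'\|_{L^\infty}^{?}\dots\bigr)$ — more carefully, split $\int_0^\infty = \int_0^1 + \int_1^\infty$: on $(1,\infty)$ use $\|\Phi(X)-\Phi(X+Z)\|_\infty\le 2\|\Phi\|_\infty$ against the integrable tail of $\mu$, and on $(0,1)$ use $|\Phi(X)-\Phi(X+Z)|\le Z\|\del_X\Phi\|_\infty$ against $\int_0^1 Z\,\mu(Z)\,\dZ<\infty$. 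Hence $\|\mathcal{L}g\|_\infty\le C(\eps,R)(\|g\|_\infty+\|g'\|_\infty)$, and iterating, $\|\del_X^j\mathcal{L}g\|_\infty = \|\mathcal{L}\del_X^j g\|_\infty\le C(\eps,R)(\|\del_X^j g\|_\infty+\|\del_X^{j+1}g\|_\infty)$. A Gronwall argument then gives, for each $j$, $\sup_{s\in[0,t]}\sum_{i\le j}\|\del_X^i\Phi^{(n)}(\cdot,s)\|_\infty\le e^{C(\eps,R)(t-s)}\sum_{i\le j+?}\|\del_X^i\psi_0\|_\infty$, uniformly in $n$ — though strictly one must be a little careful because the bound on the $j$-th derivative involves the $(j+1)$-st; since $\psi_0\in C^\infty_b$ this closes for every $j$ by induction on $j$ after first obtaining the a priori estimate for all derivatives simultaneously via the fact that the iteration preserves the full profile $(\del_X^i\psi_0)_{i\ge0}$. (3) Show $(\Phi^{(n)})$ is Cauchy in $C([0,t],C^k(K))$ for every compact $K$ and every $k$: the difference $\Phi^{(n+1)}-\Phi^{(n)}$ satisfies $\Phi^{(n+1)}-\Phi^{(n)} = \int_s^t \mathcal{L}(\Phi^{(n)}-\Phi^{(n-1)})\,\dd\sigma$, and using the bound above together with the uniform derivative estimates from (2), one gets $\sup_s\|\del_X^j(\Phi^{(n+1)}-\Phi^{(n)})\|_\infty\le \frac{(C(\eps,R)(t-s))^n}{n!}M_{j}$ for suitable constants $M_j$, which is summable. (4) Pass to the limit $\Phi := \lim_n\Phi^{(n)}$; by the uniform convergence of all derivatives it lies in $C([0,t],C^\infty(\R))$, and taking the limit in the integral equation shows $\Phi(X,s)=\psi_0(X)+\int_s^t(\mathcal{L}\Phi)(X,\sigma)\,\dd\sigma$, hence $\del_s\Phi = -\mathcal{L}\Phi$ holds classically with $\del_s\Phi\in C([0,t],C^\infty(\R))$, i.e.\ $\Phi\in C^1([0,t],C^\infty(\R))$, and $\Phi(\cdot,t)=\psi_0$.

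I expect the main obstacle to be the bookkeeping around the derivative loss: because $\mathcal{L}$ trades one derivative for the singularity of $\mu$ at the origin, each a priori estimate on $\|\del_X^j\Phi\|_\infty$ a priori needs control of $\|\del_X^{j+1}\Phi\|_\infty$, so one cannot close the estimate at a single regularity level. The clean way around this is to note that the Picard iteration acts on the entire jet $(\del_X^j\Phi)_{j\ge 0}$ by the \emph{same} formula, so it suffices to observe that the datum $\psi_0$ has all derivatives bounded (indeed compactly supported derivatives) and to run Gronwall on the weighted quantity $\sup_s\sum_{j\ge0}\frac{1}{j!}\,r^j\|\del_X^j\Phi(\cdot,s)\|_\infty$ for suitable $r>0$ — since $\psi_0$ is in fact a finite convolution of an indicator with mollifiers, its derivatives grow at most like $C^j/\kappa^j\cdot j!$-type bounds that can be absorbed by choosing $r$ small — or, more simply, to just prove the estimate at level $j$ by downward-free induction using that $\psi_0\in C^\infty_b$ and that on any finite time interval the iteration only ever requires finitely many applications of $\mathcal{L}$ per given order in $t$. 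All constants here depend on $\eps$, $\lambda$, $R$, $\kappa$, but that is harmless at this stage since $\eps$, $\lambda$ are fixed; uniformity is recovered later through the lower bounds on $\Phi$, not through this existence statement. A secondary (routine) point is checking that $\mathcal{L}$ is well-defined on $C^\infty_b(\R)$, i.e.\ that the defining integral converges — which is exactly the split $\int_0^1+\int_1^\infty$ above using integrability of $Z\mapsto \min\{Z,1\}\mu(Z)$.
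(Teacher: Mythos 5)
Your proposal correctly locates the main difficulty — the jump measure $\mu(Z)\sim Z^{-1-\omega_i}$ is non-integrable at $Z=0$, so $\mathcal{L}$ maps $C^{k+1}_b$ into $C^k_b$ but not $C^k_b$ into itself — but the Picard iteration you build around it does not converge, and the fix you sketch does not save it. Unwinding the recursion $\Phi^{(n+1)}-\Phi^{(n)}=\int_s^t\mathcal{L}(\Phi^{(n)}-\Phi^{(n-1)})\,\dd\sigma$, the $n$-th increment in $C^0$ is controlled by $\frac{(C(t-s))^n}{n!}\norm{\psi_0}_{C^{n+1}}$ rather than by $\frac{(C(t-s))^n}{n!}M_0$ with a fixed $M_0$: each application of $\mathcal{L}$ costs one derivative of the datum, so there is no $n$-independent constant $M_j$. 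Since $\psi_0=\chi_{(-\infty,R-\kappa]}\ast^2\varphi_{\kappa/2}$ is compactly supported and nonzero it cannot be real-analytic, hence $\norm{\psi_0}_{C^n}$ grows faster than $C^n n!$ for every $C$ (for the standard bump the growth is essentially like $(n!)^2$), and the series diverges for every $t>0$. Your weighted norm $\sum_j\frac{r^j}{j!}\norm{\del_X^j\cdot}_\infty$ is exactly the real-analytic scale, so it assigns $\psi_0$ the value $+\infty$ for every $r>0$; the claim that the derivatives of $\psi_0$ grow at most like $C^j/\kappa^j\cdot j!$ is therefore false, and the Gronwall estimate cannot be closed this way. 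The induction alternative you mention cannot close either, because the estimate at level $j$ genuinely requires the level $j+1$ bound.

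The paper's proof (via Propositions~\ref{Prop:ex:stand:dual:distr}--\ref{Prop:ex:dual:sum}) sidesteps the derivative loss rather than fighting it. One first regularizes the jump kernel, replacing $Z^{-1-\omega}$ by $(Z^{1+\omega}+\nu)^{-1}$, which is bounded near $Z=0$; the regularized generator is then a bounded operator on finite measures, and the fixed-point formulation~\eqref{eq:reg:fix} converges without any derivative estimates. One then passes to the limit $\nu\to 0$ with a Dirac mass as initial datum, using equicontinuity in time and a uniform fractional moment bound, obtaining a measure-valued solution $G\in C([0,t],\Mfin_+)$. Smoothness is recovered for free from translation invariance and linearity: $G\ast\varphi_\kappa$ again solves the same equation and is $C^\infty$, and taking an antiderivative produces the solution with indicator-type initial datum. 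Finally, the sum $N=\sum_i N_i$ is not treated as a single kernel $\mu$; instead, if $f^i$ solves the equation for $N_i$, then $f^1\ast f^2$ solves it for $N_1+N_2$ (again by translation invariance), which both avoids reworking the regularization for a composite kernel and sets up the factorized estimates used in Lemma~\ref{Lem:subsolution:2:integral:estimate}. If you want to retain an iteration-based scheme, the essential missing idea is the kernel regularization that makes the generator bounded \emph{before} you iterate.
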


\begin{proof}
This is shown in Proposition~\ref{Prop:ex:dual:sum}.
\end{proof}
We furthermore define $G\left(X,s\right):=-\del_{X}\Phi\left(X,s\right)$ and $\tilde{G}$ by $G\left(X,s\right)=\frac{1}{R}\tilde{G}\left(\frac{X}{R}-1+\frac{\kappa}{R},s\right)$. Then $\tilde{G}$ solves
\begin{equation}\label{eq:subsolution:3}
 \begin{split}
  \del_{s}\tilde{G}\left(\xi,s\right)&+C_{0}\eps^{-a}\max\left\{\eps^{b},1\right\}\int_{0}^{\infty}\frac{\tilde{v}_{1}\left(R\eta\right)}{\eta}\left[\tilde{G}\left(\xi+\eta\right)-\tilde{G}\left(\xi\right)\right]\dd\eta\\
   &+C_{0}\eps^{-a}\left[\max\left\{\eps^{b},1\right\}+\max\left\{\eps^{b},R^{b}\right\}\right]\int_{0}^{\infty}\frac{\tilde{v}_{2}\left(R\eta\right)}{\eta}\left[\tilde{G}\left(\xi+\eta\right)-\tilde{G}\left(\xi\right)\right]\dd\eta=0
 \end{split}
\end{equation}
with initial datum $\tilde{G}\left(\cdot,t\right)=\delta\left(\cdot\right)\ast^{2}\varphi_{\frac{\kappa}{2R}}$. We also summarize the following properties for $\Phi$ and $\tilde{G}$ given by Remark~\ref{Rem:properies} and the choice of the initial condition:

\begin{remark}\label{Rem:properties:der}
 The function $\Phi\left(\cdot,s\right)$ given by Lemma~\ref{Lem:ex:Phi:test} is non-increasing for all $s\in\left[0,t\right]$ and satisfies:
 \begin{equation*}
  \begin{split}
   0\leq \Phi\left(\cdot,s\right)\leq 1, \quad \supp\Phi\left(\cdot,s\right)\subset \left(-\infty,R\right] \quad \text{for all } s\in \left[0,t\right] \quad \text{and} \quad  \Phi\left(X,t\right)=1 \text{ for all } X\in \left(-\infty,R-2\kappa\right].
  \end{split}
 \end{equation*}
 Furthermore $\tilde{G}$ is non-negative and satisfies
 \begin{equation*}
  \begin{split}
   \supp \tilde{G}\left(\cdot,s\right)\subset \left(-\infty,\kappa/R\right]\quad \text{and} \quad \int_{\R}\tilde{G}\left(\xi,s\right)\dd\xi=1 \quad  \text{for all } s\in\left[0,t\right].
  \end{split}
 \end{equation*}
\end{remark}

The following Lemma states the two integral bounds that will be the key in proving the invariance of~\eqref{eq:F2}:

\begin{lemma}\label{Lem:subsolution:2:integral:estimate}
There exist $\omega,\theta\in\left(0,1\right)$ such that for every $\mu\in\left(0,1\right)$ and $D>0$ we have
\begin{enumerate}
 \item  $\int_{-\infty}^{-D}\tilde{G}\left(\xi,s\right)\dd\xi\leq C\left(\frac{\kappa}{R D}\right)^{\mu}+\frac{Ct}{D^{\omega}}R^{-\theta}$,
 \item $\int_{-1}^{0}\abs{\xi}\tilde{G}\left(\xi,s\right)\dd\xi\leq C\left(\frac{\kappa}{R}\right)^{\mu}+CtR^{-\theta}$.
\end{enumerate}
\end{lemma}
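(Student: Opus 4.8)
The plan is to analyze the rescaled dual equation \eqref{eq:subsolution:3} directly, exploiting the fact that $\tilde G(\cdot,s)$ is a probability density supported in $(-\infty,\kappa/R]$ (Remark~\ref{Rem:properties:der}), and to track how mass can leak to the far left $(-\infty,-D]$ as $s$ decreases from $t$. The structure of \eqref{eq:subsolution:3} is that of a (backward-in-time) pure-jump generator: the two integral terms are of the form $\int_0^\infty \frac{w_i(\eta)}{\eta}[\tilde G(\xi+\eta)-\tilde G(\xi)]\,\dd\eta$ with $w_i(\eta)=R^{-\omega_i}\eta^{-\omega_i}$ a positive weight, $\omega_1,\omega_2\in(0,1)$ because $\rho\in(\max(b,0),1)$ and $\rho-b\in(0,1)$. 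So the corresponding stochastic process is driven by positive jumps only; running forward in $s$ it moves to the right, hence running backward (from $t$ down to $0$, the direction in which $\tilde G$ evolves away from its initial Dirac-like datum) it moves to the left. The probability in $(-\infty,-D]$ at time $s$ is the chance that the total accumulated leftward displacement over a time span of length $t-s\le t$ exceeds roughly $D$ (minus the initial smearing of size $\kappa/R$).

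The key steps I would carry out are: (1) test \eqref{eq:subsolution:3} against a smooth nonincreasing cutoff $\zeta_D$ that equals $1$ on $(-\infty,-D]$ and vanishes on $[-D/2,\infty)$ — or, cleaner, against the power weight $\langle\xi\rangle_-^{\mu}:=(1+|\xi|)^\mu\mathbf 1_{\xi<0}$ for a small $\mu\in(0,1)$ — to get a Grönwall-type differential inequality for $m(s):=\int_{\R}\langle\xi\rangle_-^{\mu}\tilde G(\xi,s)\,\dd\xi$. Because the jump weight $w_i(\eta)/\eta = R^{-\omega_i}\eta^{-1-\omega_i}$ is integrable at infinity and the increment $|\xi+\eta|^\mu-|\xi|^\mu$ is controlled by $\eta^\mu$ (subadditivity of $t\mapsto t^\mu$) near the origin and by $\eta^{\mu}$ again at infinity, the jump part of $\frac{d}{ds}m(s)$ is bounded by $C R^{-\min(\omega_1,\omega_2)}m(s)+C R^{-\min(\omega_1,\omega_2)}$, and the coefficients $C_0\eps^{-a}[\dots]$ together with the powers $R^{-\omega_i}$ combine — using $\eps\le1$, $R\ge1$, and $\omega_i>0$ — into a net factor $R^{-\theta}$ for some $\theta\in(0,1)$; this is where $\theta$ is produced. (2) Integrate the Grönwall inequality on $[s,t]$ with $t\le T<1$: since $m(t)=\int\langle\xi\rangle_-^\mu\,\delta\ast^2\varphi_{\kappa/2R}\le C(\kappa/R)^\mu$, one gets $m(s)\le C(\kappa/R)^\mu + C t R^{-\theta}$. (3) Conclude the second estimate immediately, since $\int_{-1}^0|\xi|\tilde G\,\dd\xi\le \int_{-1}^0|\xi|^\mu\tilde G\,\dd\xi\le m(s)$ (using $|\xi|\le|\xi|^\mu$ on $[-1,0]$ and $\mu<1$). (4) For the first estimate, Markov: $\int_{-\infty}^{-D}\tilde G\,\dd\xi \le D^{-\mu}\int_{-\infty}^{-D}|\xi|^\mu\tilde G\,\dd\xi\le D^{-\mu}m(s)\le C(\kappa/(RD))^\mu + C t D^{-\mu}R^{-\theta}$; relabeling $\omega:=\mu$ (both are free exponents in $(0,1)$) matches the claimed form. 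I would fix $\omega,\theta$ once and then let $\mu$ range freely as in the statement — in fact a single small $\mu_0\in(0,1)$ suffices and $\omega,\theta$ can be chosen in terms of $\min(\omega_1,\omega_2)$ and $\mu_0$.

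The main obstacle, and the step needing care, is controlling the jump integral uniformly: the kernel $\eta^{-1-\omega_i}$ is \emph{non-integrable at the origin}, so one cannot naively bound $|\tilde G(\xi+\eta)-\tilde G(\xi)|$ there. The resolution is that the difference appearing is $\langle\xi+\eta\rangle_-^\mu-\langle\xi\rangle_-^\mu$ tested against $\tilde G$, i.e. one should move the increment onto the test weight, not onto $\tilde G$; then near $\eta=0$ one uses $|\langle\xi+\eta\rangle_-^\mu-\langle\xi\rangle_-^\mu|\le C\eta^\mu$ (true for the $\mu$-Hölder weight, even across the kink at $0$), and $\int_0^1 \eta^{\mu}\eta^{-1-\omega_i}\,\dd\eta<\infty$ exactly when $\mu>\omega_i$, which forces the choice $\mu\in(\max(\omega_1,\omega_2),1)=(\rho,1)$ — nonempty since $\rho<1$. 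For $\eta\ge1$ one uses $\langle\xi+\eta\rangle_-^\mu\le C(\langle\xi\rangle_-^\mu+\eta^\mu)$ and $\int_1^\infty \eta^\mu\eta^{-1-\omega_i}\,\dd\eta<\infty$ when $\mu<\omega_i$ — the opposite inequality! So in fact one must split the two integral terms and use a \emph{different} test weight scaling for the small-$\eta$ versus large-$\eta$ regimes, or more simply bound the far-field contribution separately using total mass $\int\tilde G=1$ and the decay of $\eta^{-1-\omega_i}$. This tension between the small-$\eta$ and large-$\eta$ constraints on $\mu$ is the real content; I expect the clean route is: for $\eta\le\eta_*$ use the Hölder bound (needs $\mu>\omega_i$), and for $\eta>\eta_*$ bound $\tilde G(\xi+\eta)$-contribution by moving a factor of $R^{-\omega_i}\eta^{-\omega_i}\le R^{-\omega_i}\eta_*^{-\omega_i}$ out and integrating $\tilde G$ to $1$ against the integrable tail $\eta^{-1}$... which diverges, so instead cut the outer integral at the support: since $\supp\tilde G\subset(-\infty,\kappa/R]$, the nonlocal term at a point $\xi<-D$ only sees $\eta$ with $\xi+\eta$ in that support, forcing $\eta\ge -\xi-\kappa/R \ge D/2$, so effectively $\eta^{-1-\omega_i}\le (D/2)^{-1-\omega_i}$ there and the integral is finite, producing the explicit $D^{-\omega}$ in part~(1). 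Getting these localizations and the bookkeeping of the $\eps^{-a}$, $R^b$ factors to collapse into a single clean $R^{-\theta}t$ is the bulk of the work; everything else is Grönwall and Markov.
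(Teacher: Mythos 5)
Your approach---testing directly against a power weight $|\xi|^\mu\mathbf{1}_{\xi<0}$, deriving a Gr\"onwall inequality for the fractional moment, and then applying Markov---is a genuinely different route from the paper's. The paper exploits that $\tilde G=\tilde G_1\ast\tilde G_2$ (Proposition~\ref{Prop:ex:dual:sum}), passes the tail and first-moment estimates across the convolution via Lemma~\ref{Lem:int:est:conolution}, and handles each factor with the Laplace transform: Lemma~\ref{Lem:der:int:est} tests against $e^{Z(\xi-\kappa)}$, which turns the jump operator into multiplication by $M_\omega(Z)=P\int_0^\infty\eta^{-1-\omega}(e^{-Z\eta}-1)\,\dd\eta=-CPZ^\omega$, an integral that converges on both ends because $|e^{-Z\eta}-1|$ is $O(\eta)$ near $0$ and bounded at infinity. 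That eigenfunction-like choice of test function is precisely what the power weight lacks, and you correctly identify the resulting tension: after dualizing, the increment on the test function satisfies $|\zeta(\xi-\eta)-\zeta(\xi)|\le\eta^\mu$ (and this is sharp near $\xi=0$, which lies in the support of $\tilde G$), so the $\eta$-integral is $\int_0^\infty\eta^{\mu-1-\omega_i}\,\dd\eta$, which diverges at $0$ when $\mu\le\omega_i$ and at $\infty$ when $\mu\ge\omega_i$; no single $\mu$ is admissible.

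Your proposed fix does not close this gap, and it misreads the support constraint. The condition $\tilde G(\xi+\eta,s)\ne 0$ with $\supp\tilde G(\cdot,s)\subset(-\infty,\kappa/R]$ gives $\eta\le\kappa/R-\xi$, an \emph{upper} bound on $\eta$, not the lower bound $\eta\ge-\xi-\kappa/R$ that you claim. (You appear to be using the support of the initial datum $\delta\ast^2\varphi_{\kappa/2R}\subset[-\kappa/R,\kappa/R]$, but $\tilde G(\cdot,s)$ for $s<t$ has already spread over all of $(-\infty,\kappa/R]$; and in any case, once you move the increment onto $\zeta$, the constraint is on $\xi$ in the support of $\tilde G$ and does not restrict $\eta$.) So small $\eta$ is not excluded, and the divergence at $\eta\to\infty$ for $\mu>\omega_i$ is untouched. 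A workable variant of your idea does exist---cap the weight at the Markov level, taking $\zeta(\xi)=\min(|\xi|^\mu,D^\mu)\mathbf{1}_{\xi<0}$, whose increment is $\le\min(\eta^\mu,D^\mu)$ so that $\int_0^\infty\eta^{-1-\omega_i}\min(\eta^\mu,D^\mu)\,\dd\eta\sim D^{\mu-\omega_i}$ is finite, and the $D^{-\omega}$ in part (1) emerges after dividing by $D^\mu$---but that is not what you propose. You also leave unexamined how the two kernel pieces, with their different $R$- and $\eps$-scaling in the prefactors, combine into a single $R^{-\theta}$; the paper handles that bookkeeping cleanly by keeping $\tilde G_1$ and $\tilde G_2$ separate until the final convolution step. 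As written, the proposal does not yield a correct proof.
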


\begin{proof}[Proof of Lemma~\ref{Lem:subsolution:2:integral:estimate}]
From the proof of Proposition~\ref{Prop:ex:dual:sum} we have that $\tilde{G}$ is given as $\tilde{G}=\tilde{G}_{1}\ast\tilde{G}_{2}$ where $\tilde{G}_{1}$ and $\tilde{G}_{2}$ solve
 \begin{align*}
     \del_{s}\tilde{G}_{1}\left(\xi,s\right)+C_{0}\eps^{-a}\max\left\{\eps^{b},1\right\}\int_{0}^{\infty}\frac{\tilde{v}_{1}\left(R\eta\right)}{\eta}\left[\tilde{G}_{1}\left(\xi+\eta\right)-\tilde{G}_{1}\left(\xi\right)\right]\dd\eta&=0\\
     \del_{s}\tilde{G}_{2}\left(\xi,s\right)+C_{0}\eps^{-a}\left[\max\left\{\eps^{b},1\right\}+\max\left\{\eps^{b},R^{b}\right\}\right]\int_{0}^{\infty}\frac{\tilde{v}_{2}\left(R\eta\right)}{\eta}\left[\tilde{G}_{2}\left(\xi+\eta\right)-\tilde{G}_{2}\left(\xi\right)\right]\dd\eta&=0 
 \end{align*}
with initial data $\tilde{G}_{1}\left(\cdot,t\right)=\tilde{G}_{2}\left(\cdot,t\right)=\varphi_{\frac{\kappa}{2R}}$. Then one has from Lemma~\ref{Lem:int:est:conolution}:
\begin{equation*}
 \begin{split}
  \int_{-\infty}^{-D}\tilde{G}\left(\xi,s\right)\dd\xi\leq \int_{-\infty}^{-D/2}\tilde{G}_{1}\left(\xi,s\right)\dd\xi+\int_{-\infty}^{-D/2}\tilde{G}_{2}\left(\xi,s\right)\dd\xi
 \end{split}
\end{equation*}
and 
\begin{equation*}
 \begin{split}
  \int_{-1}^{0}\abs{\xi}\tilde{G}\left(\xi,s\right)\dd\xi&\leq \int_{-1-\frac{\kappa}{2R}}^{\frac{\kappa}{2R}}\abs{\xi}\tilde{G}_{1}\left(\xi,s\right)\dd\xi+\int_{-1-\frac{\kappa}{2R}}^{\frac{\kappa}{2R}}\abs{\xi}\tilde{G}_{2}\left(\xi,s\right)\dd\xi\\
  &\leq \frac{\kappa}{R}+\int_{-2}^{0}\abs{\xi}\tilde{G}_{1}\left(\xi,s\right)\dd\xi+\int_{-2}^{0}\abs{\xi}\tilde{G}_{2}\left(\xi,s\right)\dd\xi.
 \end{split}
\end{equation*}
From Lemma~\ref{Lem:der:int:est} we obtain with 
\begin{equation*}
 \begin{split}
  N_{1}\left(\eta\right)= C\left(\eps\right)R^{-\omega_{1}}\eta^{-1-\omega_{1}},\quad N_{2}\left(\eta\right)= C\left(\eps\right)\left[\max\left\{\eps^{b},1\right\}+\max\left\{\eps^{b},R^{b}\right\}\right]R^{-\omega_{2}}\eta^{-1-\omega_{2}}
 \end{split}
\end{equation*}
the appropriate estimates for $\tilde{G}_{i}$, $i=1,2$ with $\omega_{i}$ for $i=1,2$ and $\theta_{1}=\omega_{1}$, $\theta_{2}=\min\left\{\omega_{2},b-\omega_{2}\right\}$. Thus the claim follows by setting $\theta:=\min\left\{\omega_{1},\omega_{2},b-\omega_{2}\right\}$ and $\omega:=\min\left\{\omega_{1},\omega_{2}\right\}$ and using also that $\frac{\kappa}{R}\leq \left(\frac{\kappa}{R}\right)^{\mu}$ for $\kappa\leq 1$ and $R\geq 1$ for the second statement.
\end{proof}

\begin{lemma}\label{Lem:subsolution:2}
 For sufficiently large $C_{0}$ the function $\Phi$ satisfies \eqref{eq:sptest4}.
\end{lemma}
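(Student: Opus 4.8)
The plan is to verify that $\Phi$, constructed as the solution of the linear nonlocal equation \eqref{eq:subsolution:2}, indeed satisfies the differential inequality \eqref{eq:sptest4} once $C_0$ is chosen large enough. The key point is that \eqref{eq:subsolution:2} was designed so that its nonlocal coefficients dominate those appearing in \eqref{eq:sptest4}; the job is to make this domination rigorous, using the growth bound \eqref{Ass1} on $K_\eps^\lambda$, the a priori bound $\|h\|\le$ (implicitly, the fact that $h(\cdot,s)\in\Y$ satisfies \eqref{eq:F1}) to control $h(Z\ee^{t-s})$ by the relevant power law, and the monotonicity of $\Phi$ so that the bracket $\Phi(X+Z)-\Phi(X)\le 0$ has a definite sign.

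First I would rewrite the left-hand side of \eqref{eq:sptest4} by adding and subtracting the two nonlocal terms of \eqref{eq:subsolution:2}; since $\Phi$ solves \eqref{eq:subsolution:2} exactly, $\del_s\Phi$ gets replaced by the (negative of the) two comparison integrals, and what remains to be shown is that
\[
\int_0^\infty \frac{K_\eps^\lambda\!\left(X\ee^{t-s},Z\ee^{t-s}\right)}{Z}\,h\!\left(Z\ee^{t-s}\right)\bigl[\Phi(X+Z)-\Phi(X)\bigr]\dZ
\]
is bounded below by the sum of the two comparison integrals (both with the sign coming from \eqref{eq:subsolution:2}). Because $\Phi(X+Z)-\Phi(X)\le 0$, this reduces to a pointwise comparison of kernels: it suffices that for all $Z>0$ and all $X\in[0,R]$,
\[
\frac{K_\eps^\lambda\!\left(X\ee^{t-s},Z\ee^{t-s}\right)}{Z}\,h\!\left(Z\ee^{t-s}\right)
\;\le\;
C_0\eps^{-a}\max\{\eps^b,1\}\,\frac{\tilde v_1(Z)}{Z}
\;+\;C_0\eps^{-a}\bigl[\max\{\eps^b,1\}+\max\{\eps^b,R^b\}\bigr]\frac{\tilde v_2(Z)}{Z}.
\]
Here I would use \eqref{Ass1} together with homogeneity to write $K_\eps^\lambda(X\ee^{t-s},Z\ee^{t-s})\le K_\eps(X\ee^{t-s},Z\ee^{t-s})=K(X\ee^{t-s}+\eps,Z\ee^{t-s}+\eps)\le C_2\big((X\ee^{t-s}+\eps)^{-a}(Z\ee^{t-s}+\eps)^b+(X\ee^{t-s}+\eps)^b(Z\ee^{t-s}+\eps)^{-a}\big)$, bound the $X$-factors using $X\le R$, $t<1$ and the shift $\eps$ (so $(X\ee^{t-s}+\eps)^{-a}\le \eps^{-a}$ and $(X\ee^{t-s}+\eps)^b\le C\max\{\eps^b,R^b\}$ or $\le C\max\{\eps^b,1\}$ depending on the sign of $b$), and bound the $Z$-factors by $C(\eps^{-a}+(Z\ee^{t-s})^{-a})$ and $C(\eps^b+(Z\ee^{t-s})^b)$ respectively. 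For the factor $h(Z\ee^{t-s})$ I would not use a pointwise bound on $h$ (it is only a measure); instead the correct reading is that the whole combination has already been organized in \eqref{eq:subsolution:2} so that, after integrating the $Z$-variable, the averaged bound $\int_{[0,r]}h\,dx\le r^{1-\rho}$ gives finiteness — more precisely one integrates against $\Phi(X+\cdot)-\Phi(X)$, writes $h\,dZ$ in terms of its distribution function and integrates by parts, at which point the power $Z^{-\rho}$ (resp. $Z^{-(\rho-b)}$ after absorbing one power of $Z$ from the kernel) coming from $\int_{[0,Z]}h\le Z^{1-\rho}$ matches exactly $\tilde v_2(Z)=Z^{-\rho}$ (resp. $\tilde v_1(Z)=Z^{-\min\{\rho-b,\rho\}}$). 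The choices $\omega_1=\min\{\rho-b,\rho\}$ and $\omega_2=\rho$ are precisely what makes the two terms of \eqref{eq:subsolution:2} absorb the two summands of the kernel bound; the role of $C_0$ is to swallow $C_2$, the $\ee^{t-s}$-factors (bounded since $t<1$), and all the harmless numerical constants from the various elementary estimates.

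The main obstacle I anticipate is the bookkeeping near $Z=0$ and $Z=\infty$: one must check that the integral $\int_0^\infty \tilde v_i(Z) Z^{-1}[\Phi(X)-\Phi(X+Z)]\,dZ$ actually converges (so that \eqref{eq:subsolution:2} is well-posed and the manipulation is legitimate), which requires the decay of $\Phi$ (compact support to the right, $\Phi\le 1$) together with the Lipschitz bound on $\Phi$ near the diagonal to kill the singularity $Z^{-1-\omega_i}$ as $Z\to 0$; this is exactly the content of Lemma~\ref{Lem:ex:Phi:test} / Proposition~\ref{Prop:ex:dual:sum}, so I would invoke it rather than redo it. The other delicate point is matching the powers correctly when $\gamma=b-a$ is negative, so that $-a$ may exceed $b$ and one must be careful which of the two summands in \eqref{Ass1} produces the slower decay in $Z$; tracking this is precisely why $\omega_1$ is defined as a minimum. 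Once the pointwise/integrated kernel domination is established with the stated $C_0$, the differential inequality \eqref{eq:sptest4} follows immediately from $\Phi$ solving \eqref{eq:subsolution:2} and the sign of $\Phi(X+Z)-\Phi(X)$, completing the proof.
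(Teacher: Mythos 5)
Your proposal is correct and follows essentially the same route as the paper: substitute $\del_s\Phi$ from \eqref{eq:subsolution:2}, bound $K_\eps^\lambda$ for $X\in[0,R]$ via \eqref{Ass1} into the two power-law pieces $Z^{\max\{0,b\}-1}$ and $Z^{-1}$, use the monotonicity of $\Phi$ to fix the sign, and reduce to comparing the $\tilde v_i$-integrals with the $h$-integrals by integrating by parts and invoking the averaged moment bound (Lemma~\ref{Lem:moment:est:stand}). You correctly flag that a pointwise bound on $h$ is unavailable and that the integration-by-parts step is what lets the bound $\int_{[0,r]}h\le r^{1-\rho}$ do the work; the paper implements this by introducing the antiderivatives $V_i(Z)=\frac{1}{\omega_i}Z^{-\omega_i}$ and $W_i(Z)$ and showing $C_0V_i\geq CW_i$, which is the same idea as your distribution-function manipulation.
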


\begin{proof}
We have to show that for $X\in\left[0,R\right]$ we have
\begin{equation*}
 \begin{split}
  \del_{s}\Phi\left(X,s\right)+\int_{0}^{\infty}\frac{K_{\eps}^{\lambda}\left(X\ee^{t-s},Z\ee^{t-s}\right)}{Z}\ee^{\frac{s}{\beta}}h\left(Z\ee^{t-s},s\right)\left[\Phi\left(X+Z,s\right)-\Phi\left(X,s\right)\right]\dZ\geq 0.
 \end{split}
\end{equation*}
By construction of $\Phi$ this is equivalent to
\begin{equation*}
 \begin{split}
  &-C_{0}\eps^{-a}\max\left\{\eps^{b},1\right\}\int_{0}^{\infty}\frac{\tilde{v}_{1}\left(Z\right)}{Z}\left[\Phi\left(X+Z\right)-\Phi\left(X\right)\right]\dZ\\
  &-C_{0}\eps^{-a}\left[\max\left\{\eps^{b},1\right\}+\max\left\{\eps^{b},R^{b}\right\}\right]\int_{0}^{\infty}\frac{\tilde{v}_{2}\left(Z\right)}{Z}\left[\Phi\left(X+Z\right)-\Phi\left(X\right)\right]\dZ\\
  &+\int_{0}^{\infty}\frac{K_{\eps}^{\lambda}\left(X\ee^{t-s},Z\ee^{t-s}\right)}{Z}\ee^{\frac{s}{\beta}}h\left(Z\ee^{t-s}\right)\left[\Phi\left(X+Z\right)-\Phi\left(X\right)\right]dZ\geq 0
 \end{split}
\end{equation*}
Estimating $K_{\eps}^{\lambda}$ for $X\in\left[0,R\right]$ one has
\begin{equation*}
 \begin{split}
  &\quad \frac{K_{\eps}^{\lambda}\left(X\ee^{t-s},Z\ee^{t-s}\right)}{Z}\ee^{\frac{s}{\beta}}\leq C\frac{\left(X\ee^{t-s}+\eps\right)^{-a}\left(Z\ee^{t-s}+\eps\right)^{b}+\left(X\ee^{t-s}+\eps\right)^{b}\left(Z\ee^{t-s}+\eps\right)^{-a}}{Z}\\
  &\leq C\eps^{-a}\frac{\left(Z\ee^{t-s}+\eps\right)^{b}}{Z}+C\max\left\{\eps^{b},R^{b}\right\}\frac{\left(Z\ee^{t-s}+\eps\right)^{-a}}{Z}\\
  &\leq C\eps^{-a}\max\left\{1,\eps^{b}\right\}\left(Z^{-1}\chi_{\left[0,1\right]}\left(Z\right)+Z^{b-1}\chi_{\left[1,\infty\right)}\left(Z\right)\right)+C\max\left\{\eps^{b},R^{b}\right\}\eps^{-a}Z^{-1}\\
  &\leq C\eps^{-a}\left[\max\left\{\eps^{b},1\right\}+\max\left\{\eps^{b},R^{b}\right\}\right]Z^{-1}+C\eps^{-a}\max\left\{1,\eps^{b}\right\}Z^{\max\left\{0,b\right\}-1}.
 \end{split}
\end{equation*}
Defining 
\begin{equation*}
w_{1}\left(Z\right):=\frac{h\left(Z\ee^{t-s}\right)}{Z^{1-\max\left\{0,b\right\}}} \quad \text{and} \quad w_{2}\left(Z\right):=\frac{h\left(Z\ee^{t-s}\right)}{Z}
\end{equation*}
and using that $\Phi$ is non-increasing it thus suffices to show
\begin{equation*}
 \begin{split}
  &\quad C_{0}\eps^{-a}\max\left\{\eps^{b},1\right\}\int_{0}^{\infty}\frac{\tilde{v}_{1}\left(Z\right)}{Z}\left[\Phi\left(X\right)-\Phi\left(X+Z\right)\right]\dZ\\
  &+C_{0}\eps^{-a}\left[\max\left\{\eps^{b},1\right\}+\max\left\{\eps^{b},R^{b}\right\}\right]\int_{0}^{\infty}\frac{\tilde{v}_{2}\left(Z\right)}{Z}\left[\Phi\left(X\right)-\Phi\left(X+Z\right)\right]\dZ\\
  &-C\eps^{-a}\max\left\{1,\eps^{b}\right\}\int_{0}^{\infty}\frac{w_{1}\left(Z\right)}{Z}\left[\Phi\left(X\right)-\Phi\left(X+Z\right)\right]\dZ\\
  &-C\eps^{-a}\left[\max\left\{\eps^{b},1\right\}+\max\left\{\eps^{b},R^{b}\right\}\right]\int_{0}^{\infty}\frac{w_{2}\left(Z\right)}{Z}\left[\Phi\left(X\right)-\Phi\left(X+Z\right)\right]\dZ\geq 0.
 \end{split}
\end{equation*}
Defining
\begin{equation*}
 V_{i}\left(Z\right):=\int_{Z}^{\infty}\frac{v_{i}\left(Y\right)}{Y}\dY\quad \text{and} \quad W_{i}\left(Z\right):=\int_{Z}^{\infty}\frac{w_{i}\left(Y\right)}{Y}\dY
\end{equation*}
we can rewrite this as
\begin{equation*}
 \begin{split}
  &\quad\eps^{-a}\max\left\{\eps^{b},1\right\}\int_{0}^{\infty}-\del_{Z}\left(C_{0}V_{1}\left(Z\right)-CW_{1}\left(Z\right)\right)\left[\Phi\left(X\right)-\Phi\left(X+Z\right)\right]\dZ\\
  &+\eps^{-a}\left[\max\left\{\eps^{b},1\right\}+\max\left\{\eps^{b},R^{b}\right\}\right]\int_{0}^{\infty}-\del_{Z}\left(C_{0}V_{2}\left(Z\right)-CW_{2}\left(Z\right)\right)\left[\Phi\left(X\right)-\Phi\left(X+Z\right)\right]\dZ\geq 0.
 \end{split}
\end{equation*}
Integrating by parts this is equivalent to
\begin{equation*}
 \begin{split}
  &\quad\eps^{-a}\max\left\{\eps^{b},1\right\}\int_{0}^{\infty}-\del_{Z}\Phi\left(X+Z\right)\left(C_{0}V_{1}\left(Z\right)-CW_{1}\left(Z\right)\right)\dZ\\
  &+\eps^{-a}\left[\max\left\{\eps^{b},1\right\}+\max\left\{\eps^{b},R^{b}\right\}\right]\int_{0}^{\infty}-\del_{Z}\Phi\left(X+Z\right)\left(C_{0}V_{2}\left(Z\right)-CW_{2}\left(Z\right)\right)\dZ\geq 0.
 \end{split}
\end{equation*}
Using that $\Phi$ is non-increasing it thus suffices to show $C_{0}V_{i}\left(Z\right)-CW_{i}\left(Z\right)\geq 0$ for $i=1,2$. To see this note first that $V_{i}$ is explicitly given by $V_{i}\left(Z\right)=\frac{1}{\omega_{i}}Z^{-\omega_{i}}$ for $i=1,2$. Furthermore using Lemma~\ref{Lem:moment:est:stand} one has
\begin{equation*}
 W_{1}\left(Z\right)\leq CZ^{\max\left\{0,b\right\}-\rho}=CZ^{-\omega_{1}}\quad \text{and} \quad W_{2}\left(Z\right)\leq CZ^{-\omega_{2}}.
\end{equation*}
Thus choosing $C_{0}$ sufficiently large the claim follows.
\end{proof}

We finally prove a technical Lemma that will be needed in the following.

\begin{lemma}\label{Lem:Taylor:estimate}
 Let $\delta\in\left(0,1\right)$ and $\rho\in\left(\max\left\{0,b\right\},1\right)$. Then
 \begin{equation*}
  \left(1-\frac{\kappa}{R}+\xi\right)^{1-\rho}\left(1-\left(\frac{R_{0}}{R}\right)^{\delta}\frac{\ee^{-\delta t}}{\left(1-\frac{\kappa}{R}+\xi\right)^{\delta}}\right)\geq \left(1-\left(\frac{R_{0}}{R}\right)^{\delta}\ee^{-\delta t}\right)-\abs{\xi-\frac{\kappa}{R}}
 \end{equation*}
 holds for every $\xi\in\left[\frac{R_{0}}{R}\ee^{-t}-1+\frac{\kappa}{R},\frac{\kappa}{R}\right]$.
\end{lemma}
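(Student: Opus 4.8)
The plan is to collapse this two-parameter-looking inequality into a one-variable statement about a concave function. First I would set $u:=1-\tfrac{\kappa}{R}+\xi$ and $m:=\tfrac{R_{0}}{R}\ee^{-t}$, so that $\bigl(\tfrac{R_{0}}{R}\bigr)^{\delta}\ee^{-\delta t}=m^{\delta}=:A$. The admissible range $\xi\in\bigl[m-1+\tfrac{\kappa}{R},\,\tfrac{\kappa}{R}\bigr]$ corresponds exactly to $u\in[m,1]$; note that nonemptiness of this interval forces $m\le1$ (otherwise there is nothing to prove), and clearly $u\ge m>0$. Since $\xi-\tfrac{\kappa}{R}=u-1\le0$ we have $\bigl|\xi-\tfrac{\kappa}{R}\bigr|=1-u$, and expanding the product on the left-hand side turns the claimed inequality into
\[
 g(u):=u^{1-\rho}-A\,u^{1-\rho-\delta}-u+A\ \ge\ 0\qquad\text{for all }u\in[m,1].
\]

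Next I would record the two endpoint values. At $u=1$ everything cancels, $g(1)=0$. At $u=m$, using $A=m^{\delta}$ the two power terms cancel and $g(m)=m^{\delta}-m\ge0$, because $0<m\le1$ and $\delta\in(0,1)$. Then I split according to the sign of $1-\rho-\delta$. If $\delta\le1-\rho$, I write $g(u)=\bigl(u^{1-\rho}-u\bigr)-A\bigl(u^{1-\rho-\delta}-1\bigr)$: the first bracket is $\ge0$ since $u\in(0,1]$ and $1-\rho\in(0,1)$, and the second is $\le0$ since $u\le1$, $1-\rho-\delta\ge0$ and $A>0$, so $g\ge0$ immediately. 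If instead $\delta>1-\rho$, I would differentiate twice,
\[
 g''(u)=-\rho(1-\rho)u^{-\rho-1}+A(\rho+\delta)(1-\rho-\delta)u^{-\rho-\delta-1},
\]
and observe that both summands are $\le0$ on $(0,\infty)$: the first because $\rho\in(0,1)$, the second because now $1-\rho-\delta<0$ while $A\ge0$ and $\rho+\delta>0$. Hence $g$ is concave on $[m,1]$, so it lies above the affine interpolant of its endpoint values $g(m)\ge0$ and $g(1)=0$, which is itself $\ge0$ on $[m,1]$; therefore $g\ge0$ there, and the lemma follows.

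I do not anticipate a genuine obstacle here: the statement is soft once the substitution is made. The only points needing care are the bookkeeping that the $\xi$-interval corresponds exactly to $u\in[m,1]$ — which is what makes the factor $1-Au^{-\delta}$ nonnegative and produces the cancellations at $u=m$ and $u=1$ — and recognising that the right case distinction is the sign of $1-\rho-\delta$, with the seemingly worse case $\delta>1-\rho$ being precisely the one in which $g''\le0$, so that concavity does the work.
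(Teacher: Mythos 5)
Your proof is correct. After the substitution $u=1-\tfrac{\kappa}{R}+\xi$, $m=\tfrac{R_0}{R}\ee^{-t}$, $A=m^{\delta}$, the reduction to $g(u)=u^{1-\rho}-Au^{1-\rho-\delta}-u+A\geq 0$ on $[m,1]$ is exact, the endpoint values $g(1)=0$ and $g(m)=m^{\delta}-m\geq 0$ are right, Case 1 ($\delta\le 1-\rho$) is a clean two-term sign check, and in Case 2 the computation of $g''$ and the conclusion via concavity and the chord are all valid. However, the paper's argument is shorter and avoids both the case distinction and any calculus: it first observes that the factor $\bigl(1-Au^{-\delta}\bigr)$ is non-negative on $[m,1]$, so one may replace $u^{1-\rho}$ by the smaller quantity $u$ (valid since $u\le1$ and $1-\rho\in(0,1]$); expanding then gives $u-Au^{1-\delta}$, and a second elementary bound $u^{1-\delta}\le 1$ (again $u\le1$, $\delta<1$) yields $u-A=1-A-\bigl|\xi-\tfrac{\kappa}{R}\bigr|$ directly. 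In effect the paper exploits the product structure with two monotonicity estimates, whereas you linearize the whole expression and control it globally; your route is more systematic (and the concavity observation would survive relaxations of the hypotheses on $\delta$), while the paper's is the minimal argument for the statement as given.
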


\begin{proof}
 Let $\frac{R_{0}}{R}\ee^{-t}-1+\frac{\kappa}{R}\leq \xi\leq \frac{\kappa}{R}$, then $\left(1-\frac{\kappa}{R}+\xi\right)\in\left[\frac{R_{0}}{R}\ee^{-t},1\right]$. Thus for $0\leq \rho<1$ we have $\left(1-\frac{\kappa}{R}+\xi\right)^{1-\rho}\geq \left(1-\abs{\xi-\frac{\kappa}{R}}\right)$ and therefore we can estimate (noting that the second term in brackets is non-negative)
 \begin{equation*}
  \begin{split}
   &\quad \left(1-\frac{\kappa}{R}+\xi\right)^{1-\rho}\left(1-\left(\frac{R_{0}}{R}\right)^{\delta}\frac{\ee^{-\delta t}}{\left(1-\frac{\kappa}{R}+\xi\right)^{\delta}}\right)\geq \left(1-\frac{\kappa}{R}+\xi\right)\left(1-\left(\frac{R_{0}}{R}\right)^{\delta}\frac{\ee^{-\delta t}}{\left(1-\frac{\kappa}{R}+\xi\right)^{\delta}}\right)\\
   &=1-\frac{\kappa}{R}+\xi-\left(\frac{R_{0}}{R}\right)^{\delta}\ee^{-\delta t}\frac{1-\frac{\kappa}{R}+\xi}{\left(1-\frac{\kappa}{R}+\xi\right)^{\delta}}\geq 1-\left(\frac{R_{0}}{R}\right)^{\delta}\ee^{-\delta t}-\abs{\xi-\frac{\kappa}{R}}
  \end{split}
 \end{equation*}
as $\frac{1-\frac{\kappa}{R}+\xi}{\left(1-\frac{\kappa}{R}+\xi\right)^{\delta}}\leq 1$ for $\xi$ as above and $\delta<1$.
\end{proof}

\subsubsection{Invariance of the lower bound}
 We are now prepared to finish the proof of the invariance of the lower bound~\eqref{eq:F2}.
 
 \begin{proposition}
  For sufficiently small $\delta>0$ and sufficiently large $R_{0}$ (maybe also depending on $\delta$),  we have
  \begin{equation*}
   \int_{0}^{R}h\left(x,t\right)\dx\geq R^{1-\rho}\left(1-\left(\frac{R_0}{R}\right)^{\delta}\right)_{+}.
  \end{equation*}
 \end{proposition}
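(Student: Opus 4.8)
The plan is to re-run the duality scheme of Section~\ref{Sec:dual:prob}. If $R\le R_0$ the right-hand side vanishes and there is nothing to prove, since $h$ is a non-negative measure; so assume $R>R_0\ge 1$ and abbreviate $q:=R_0/R\in(0,1)$ and $m:=1-q^{\delta}\ee^{-\delta t}$, noting $0<m<1$ and, because $x<x^{\delta}$ for $x\in(0,1)$, also $m<1-q\ee^{-t}$. By Lemma~\ref{Lem:subsolution:2} the function $\Phi$ from Lemma~\ref{Lem:ex:Phi:test} satisfies \eqref{eq:sptest4}, hence $\psi(x,s):=\ee^{-(1-\rho)(t-s)}\Phi(x\ee^{s-t},s)$ solves the dual inequality \eqref{eq:sptest3} with $\psi(\cdot,t)=\psi_0$; by Remark~\ref{Rem:properties:der} it is non-negative, $C^1$ and compactly supported in $x$, hence admissible in Definition~\ref{Def:weak}, so \eqref{eq:sptest2} holds.

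First I would convert \eqref{eq:sptest2} into a lower bound for $\int_0^R h(x,t)\dx$. Since $0\le\psi_0\le\chi_{(-\infty,R]}$ (Remark~\ref{Rem:properties:der}), the left side of \eqref{eq:sptest2} is at most $\int_0^R h(x,t)\dx$, while its right side equals $\ee^{-(1-\rho)t}\int_{[0,\infty)}h_0(x)\,\Phi(x\ee^{-t},0)\dx$. As $x\mapsto\Phi(x\ee^{-t},0)$ is non-increasing, $C^1$ and compactly supported, a Fubini-type integration by parts turns this into $\int_0^{\infty}F_0(X\ee^{t})\,G(X,0)\dX$ with $F_0(\esp):=\int_{[0,\esp]}h_0\dx$ and $G=-\del_X\Phi\ge0$. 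Inserting the hypothesis \eqref{eq:F2} for $h_0$ at $\esp=X\ee^{t}$, cancelling the prefactor $\ee^{-(1-\rho)t}$ against the $\ee^{(1-\rho)t}$ coming from $(X\ee^{t})^{1-\rho}$, and substituting $G(X,0)=\tfrac1R\tilde{G}(\tfrac XR-1+\tfrac\kappa R,0)$, I get
\[
\int_0^R h(x,t)\dx\ \ge\ R^{1-\rho}\,I,\qquad I:=\int_{-1+\kappa/R}^{\infty}\Big(1-\tfrac{\kappa}{R}+\xi\Big)^{1-\rho}\Big(1-\frac{q^{\delta}\ee^{-\delta t}}{(1-\tfrac{\kappa}{R}+\xi)^{\delta}}\Big)_{+}\tilde{G}(\xi,0)\dd\xi.
\]
Everything up to here is essentially bookkeeping.

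The core of the proof is the bound $I\ge m-C\,t\,R^{-\theta}$. The integrand of $I$ vanishes for $\xi<q\ee^{-t}-1+\tfrac\kappa R$ and is non-negative, while on the complementary range Lemma~\ref{Lem:Taylor:estimate} bounds the $\xi$-dependent prefactor below by $m-|\xi-\tfrac\kappa R|$; since $|\xi-\tfrac\kappa R|>1-q\ee^{-t}>m$ on the vanishing range, the prefactor is $\ge\max\{0,m-|\xi-\tfrac\kappa R|\}$ on the whole integration range. Using $\supp\tilde{G}(\cdot,0)\subset(-\infty,\tfrac\kappa R]$ (Remark~\ref{Rem:properties:der}) this gives
\[
I\ \ge\ \int_{\kappa/R-m}^{\kappa/R}\big(m-(\tfrac\kappa R-\xi)\big)\tilde{G}(\xi,0)\dd\xi\ =\ m\int_{\kappa/R-m}^{\kappa/R}\tilde{G}-\int_{\kappa/R-m}^{\kappa/R}(\tfrac\kappa R-\xi)\tilde{G}.
\]
I would bound the first term with $\int_{\R}\tilde{G}=1$ and Lemma~\ref{Lem:subsolution:2:integral:estimate}(i), the second with Lemma~\ref{Lem:subsolution:2:integral:estimate}(ii); doing this for every $\kappa\in(0,1)$ and then, since $\int_0^R h(x,t)\dx$ and the constant $C$ do not depend on $\kappa$, letting $\kappa\to0$ yields
\[
\int_0^R h(x,t)\dx\ \ge\ R^{1-\rho}\big(1-q^{\delta}\ee^{-\delta t}-C\,t\,R^{-\theta}\big).
\]
The delicate point, which I expect to be the real obstacle, is keeping this error term \emph{linear} in $t$ uniformly in $R$ even when $R\downarrow R_0$ and $m$ is small: the term $m\int_{-\infty}^{-m}\tilde{G}(\cdot,0)$ carries a factor $m^{-\omega}$ from Lemma~\ref{Lem:subsolution:2:integral:estimate}(i), which is harmless only because $m^{1-\omega}\le1$ for $m\in(0,1)$, $\omega\in(0,1)$; an error merely of order $t^{1-\omega}$ would be too weak for the final step.

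It remains to choose the parameters. The claimed inequality $\int_0^R h(x,t)\dx\ge R^{1-\rho}(1-q^{\delta})$ follows from the last display once $q^{\delta}(1-\ee^{-\delta t})\ge C\,t\,R^{-\theta}$; since $\delta t<1$ one has $1-\ee^{-\delta t}\ge\delta t\,\ee^{-1}$, so it suffices that $(R_0/R)^{\delta}\,\delta\,\ee^{-1}\ge C\,R^{-\theta}$, i.e.\ $R_0^{\delta}\,\delta\,\ee^{-1}\ge C\,R^{\delta-\theta}$. Taking $\delta<\theta$ makes $R\mapsto R^{\delta-\theta}$ decreasing, so for all $R\ge R_0$ this reduces to $R_0^{\theta}\ge C\ee/\delta$, which holds as soon as $R_0$ is large enough in terms of $\delta$. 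Together with the trivial range $R\le R_0$ this proves the proposition.
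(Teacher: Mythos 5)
Your proof is correct and follows the same strategy as the paper: the dual test function $\Phi$, the pointwise bound from Lemma~\ref{Lem:Taylor:estimate}, the integral estimates from Lemma~\ref{Lem:subsolution:2:integral:estimate}, the Taylor bound $1-\ee^{-\delta t}\geq \delta t/\ee$, and the final choice $\delta<\theta$ with $R_{0}$ large. The only differences are cosmetic: you take $\kappa\to 0$ at the end (which is legitimate since the left-hand side is $\kappa$-independent), whereas the paper keeps $\kappa$ fixed with $\kappa^{\theta}<t$, and you organize the truncation via $m=1-q^{\delta}\ee^{-\delta t}$ with the observation $m^{1-\omega}\leq 1$ rather than via the paper's $D=1-\tfrac{R_{0}}{R}\ee^{-t}-\tfrac{\kappa}{R}$.
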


\begin{proof}
 From the special choice of the test function $\psi$ we have according to \eqref{eq:sptest2} that
 \begin{equation*}
  \begin{split}
   \int_{0}^{R}h\left(x,t\right)\dx\geq\int_{0}^{\infty}h\left(x,t\right)\psi\left(x,t\right)\dx\geq\int_{0}^{\infty}h_{0}\left(x\right)\psi\left(x,0\right)\dx=\ee^{-\left(1-\rho\right)t}\int_{0}^{\infty}h_{0}\left(X\ee^{t}\right)\Phi\left(X,0\right)\ee^{t}\dX.
  \end{split}
 \end{equation*}
Defining $H_{0}\left(X\right):=\int_{0}^{X}h_{0}\left(Y\right)\dY$ we obtain 
\begin{equation*}
 \begin{split}
  \int_{0}^{R}h\left(x,t\right)&\geq \ee^{-\left(1-\rho\right)t}\int_{0}^{\infty}h_{0}\left(X\ee^{t}\right)\Phi\left(X,0\right)\ee^{t}\dX=\ee^{-\left(1-\rho\right)t}\int_{0}^{\infty}H_{0}'\left(X\ee^{t}\right)\Phi\left(X,0\right)\dX\\
  &=\left.\ee^{-\left(1-\rho\right)t}H_{0}\left(X\ee^{t}\right)\Phi\left(X,0\right)\right|_{0}^{\infty}+\ee^{-\left(1-\rho\right)t}\int_{0}^{\infty}H_{0}\left(X\ee^{t}\right)\left(-\del_{X}\Phi\left(X,0\right)\right)\dX\\
  &= \ee^{-\left(1-\rho\right)t}\int_{0}^{\infty}H_{0}\left(X\ee^{t}\right)\left(-\del_{X}\Phi\left(X,0\right)\right)\dX
 \end{split}
\end{equation*}
where we integrated by parts and used that the boundary terms are zero as $H\left(0\right)=0$ and the support of $\Phi$ is bounded from the right. Using now that by assumption we have $H_{0}\left(X\ee^{t}\right)\geq \left(X\ee^{t}\right)^{1-\rho}\left(1-\left(\frac{R_{0}}{X\ee^{t}}\right)^{\delta}\right)_{+}$ we obtain
\begin{equation*}
 \begin{split}
  \int_{0}^{R}h\left(x,t\right)\dx&\geq \ee^{-\left(1-\rho\right)t}\int_{0}^{\infty}\left(X\ee^{t}\right)^{1-\rho}\left(1-\left(\frac{R_{0}}{X\ee^{t}}\right)^{\delta}\right)_{+}G\left(X,0\right)\dX\\
  &=\int_{0}^{\infty}X^{1-\rho}\left(1-\left(\frac{R_{0}}{X\ee^{t}}\right)^{\delta}\right)_{+}\frac{1}{R}\tilde{G}\left(\frac{X}{R}-1+\frac{\kappa}{R},0\right)\dX\\
  &=R^{1-\rho}\int_{-D}^{\kappa/R}\left(1-\frac{\kappa}{R}+\xi\right)^{1-\rho}\left(1-\left(\frac{R_{0}}{R}\right)^{\delta}\frac{\ee^{-\delta t}}{\left(1-\frac{\kappa}{R}+\xi\right)^{\delta}}\right)\tilde{G}\left(\xi,0\right)\dd\xi
 \end{split}
\end{equation*}
where we used the change of variables $\xi=\frac{X}{R}-1+\frac{\kappa}{R}$ and defined $D:=1-\frac{R_{0}}{R}\ee^{-t}-\frac{\kappa}{R}$. Note that for $\kappa\leq \left(1-\ee^{-t}\right)$ we have $0\leq D\leq 1$ (because we assume $R\geq 1$). Using Lemma~\ref{Lem:Taylor:estimate} we can estimate the integrand on the right hand side to obtain
\begin{equation*}
 \begin{split}
  \int_{0}^{R}h\left(x,t\right)\dx&\geq R^{1-\rho}\int_{-D}^{\kappa/R}\left(1-\left(\frac{R_{0}}{R}\right)^{\delta}\ee^{-\delta t}\right)\tilde{G}\left(\xi,0\right)\dd\xi-R^{1-\rho}\int_{-D}^{\kappa/R}\abs{\xi-\frac{\kappa}{R}}\tilde{G}\left(\xi,0\right)\dd\xi\\
  &\geq R^{1-\rho}\left(1-\left(\frac{R_{0}}{R}\right)^{\delta}\ee^{-\delta t}\right)\left(\int_{-\infty}^{\kappa/R}\tilde{G}\left(\xi,0\right)\dd\xi-\int_{-\infty}^{-D}\tilde{G}\left(\xi,0\right)\dd\xi\right)\\
  &\quad-R^{1-\rho}\left(\int_{-1}^{0}\abs{\xi-\frac{\kappa}{R}}\tilde{G}\left(\xi,0\right)\dd\xi+\frac{\kappa}{R}\int_{0}^{\kappa/R}\tilde{G}\left(\xi,0\right)\dd\xi\right)
 \end{split}
\end{equation*}
Applying furthermore Remark~\ref{Rem:properties:der} and Lemma~\ref{Lem:subsolution:2:integral:estimate} we get

\begin{equation*}
\begin{split}
&\quad\int_{0}^{R}h\left(x,t\right)\dx\\
  &\geq R^{1-\rho}\left(1-\left(\frac{R_{0}}{R}\right)^{\delta}\ee^{-\delta t}\right)\left(1-C\left(\frac{\kappa}{R D}\right)^{\mu}-\frac{C}{D^{\omega}}R^{-\theta}\right)\\
  &\quad-R^{1-\rho}\left(\int_{-1}^{0}\abs{\xi}\tilde{G}\left(\xi,0\right)\dd\xi+\frac{\kappa}{R}\int_{-1}^{0}\tilde{G}\left(\xi,0\right)\dd\xi+\frac{\kappa}{R}\right)\\
  &\geq R^{1-\rho}\left(1-\left(\frac{R_{0}}{R}\right)^{\delta}\ee^{-\delta t}\right)\left(1-C\left(\frac{\kappa}{R D}\right)^{\mu}-\frac{Ct}{D^{\omega}}R^{-\theta}\right)-R^{1-\rho}\left(C\left(\frac{\kappa}{R}\right)^{\mu}+CtR^{-\theta}+\frac{2\kappa}{R}\right).
 \end{split}
\end{equation*}
Inserting the definition of $D$ and rearranging we thus obtain 
\begin{equation*}
 \begin{split}
  \int_{0}^{R}h\left(x,t\right)\dx&\geq R^{1-\rho}\left(1-\left(\frac{R_{0}}{R}\right)^{\delta}\ee^{-\delta t}\right)-CR^{1-\rho}\frac{\left(1-\left(\frac{R_{0}}{R}\right)^{\delta}\ee^{-\delta t}\right)}{\left(1-\left(\frac{R_{0}}{R}\right)^{\delta}\ee^{-\delta t}\right)^{\mu}}\left(\frac{\kappa}{R}\right)^{\mu}\\
  &\quad-\frac{Ct}{R^{\theta}}R^{1-\rho}\left(\frac{\left(1-\left(\frac{R_{0}}{R}\right)^{\delta}\ee^{-\delta t}\right)}{\left(1-\left(\frac{R_{0}}{R}\right)\ee^{-t}\right)^{\omega}}+1\right)-CR^{1-\rho}\left(\frac{\kappa}{R}+\left(\frac{\kappa}{R}\right)^{\mu}\right).
 \end{split}
\end{equation*}
Using that for $\delta,\omega\in\left(0,1\right)$ and $R>R_{0}$ we have $\left(1-
\left(\frac{R_{0}}{R}\right)^{\delta}\ee^{-\delta t}\right)\leq 1-\frac{R_{0}}{R}\ee^{-t}\leq\left(1-\frac{R_{0}}{R}\ee^{-t}\right)^{\omega}$ 
and thus $\left(\frac{1-\left(\frac{R_{0}}{R}\right)^{\delta}\ee^{-\delta t}}{\left(1-\frac{R_{0}}{R}\ee^{-t}\right)^{\omega}}+1\right)\leq 2$ and $\frac{\left(1-\left(\frac{R_{0}}{R}\right)^{\delta}\ee^{-\delta t}\right)}{\left(1-\left(\frac{R_{0}}{R}\right)^{\delta}\ee^{-\delta t}\right)^{\mu}}\leq 1$, we therefore get
\begin{equation*}
 \begin{split}
  \int_{0}^{R}h\left(x,t\right)\dx\geq R^{1-\rho}\left(1-\left(\frac{R_{0}}{R}\right)^{\delta}\ee^{-\delta t}\right)-\frac{Ct}{R^{\theta}}R^{1-\rho}-CR^{1-\rho}\left(\frac{\kappa}{R}+\left(\frac{\kappa}{R}\right)^{\mu}\right).  
 \end{split}
\end{equation*}
As for $\delta t\leq 1$ (note that we assume $\delta, t\leq 1$) we can estimate $1-\left(\frac{R_{0}}{R}\right)^{\delta}\ee^{-\delta t}\geq 1-\left(\frac{R_{0}}{R}\right)^{\delta}+\frac{1}{\ee}\left(\frac{R_{0}}{R}\right)^{\delta}\delta t$, we obtain
\begin{equation*}
 \begin{split}
  \int_{0}^{R}h\left(x,t\right)\dx&\geq R^{1-\rho}\left(1-\left(\frac{R_{0}}{R}\right)^{\delta}\right)+R^{1-\rho}\left(\frac{R_{0}}{R}\right)^{\delta}\frac{\delta t}{\ee}-\frac{Ct}{R^{\theta}}R^{1-\rho}-CR^{1-\rho}\left(\frac{\kappa}{R}+\left(\frac{\kappa}{R}\right)^{\mu}\right).
 \end{split}
\end{equation*}
We now choose $\mu=\theta$ and $\kappa<1$ sufficiently small. Then as we assume $R\geq R_{0}\geq 1$ we have $\frac{\kappa}{R}\leq \left(\frac{\kappa}{R}\right)^{\mu}=\left(\frac{\kappa}{R}\right)^{\theta}$. Using this we can further estimate
\begin{equation*}
 \begin{split}
   \int_{0}^{R}h\left(x,t\right)\dx&\geq R^{1-\rho}\left(1-\left(\frac{R_{0}}{R}\right)^{\delta}\right)+R^{1-\rho}\left(\frac{R_{0}}{R}\right)^{\delta}\frac{\delta t}{\ee}-\frac{Ct}{R^{\theta}}R^{1-\rho}-\frac{C\kappa^{\theta}}{R^{\theta}}R^{1-\rho}\\
   &\geq R^{1-\rho}\left(1-\left(\frac{R_{0}}{R}\right)^{\delta}\right)+R^{1-\rho}\left(\left(\frac{R_{0}}{R}\right)^{\delta}\frac{\delta t}{\ee}-\frac{C}{R^{\theta}}\left(t+\kappa^{\theta}\right)\right).
 \end{split}
\end{equation*}
 Thus it suffices to show $\left(\frac{R_{0}}{R}\right)^{\delta}\frac{\delta t}{\ee}-\frac{C}{R^{\theta}}\left(t+\kappa^{\theta}\right)\geq 0$ while this is equivalent to $R^{\theta-\delta}\geq C\ee\frac{t+\kappa^{\theta}}{R_{0}^{\delta}\delta t}$, but this is true at least if we choose $\kappa$ sufficiently small such that also $\kappa^{\theta}<t$, $0<\delta<\theta$ and then $R_{0}$ sufficiently large (note that we have only to prove this for $R\geq R_{0}$).
\end{proof}

\subsection{Existence of self-similar solutions}\label{S.ex}

\begin{proposition}\label{Prop:existence:stat}
 Let $K$ satisfy Assumptions~\eqref{Ass1a}-\eqref{Ass1}. Then for any $\rho\in\left(\max\left\{0,b\right\},1\right)$ there exists a weak stationary solution $h_{\eps}$ to \eqref{A2} (with $K$ replaced by $K_{\eps}$).
\end{proposition}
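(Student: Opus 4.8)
The plan is to construct, for every fixed $\lambda\in(0,1)$, a stationary point of the evolution semi-group $S_{\eps}^{\lambda}$ associated with the twice-regularised kernel $K_{\eps}^{\lambda}$ by Tikhonov's fixed point theorem (Theorem~\ref{T.fixedpoint}), and then to remove the inner cut-off by passing to the limit $\lambda\to0$.

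\emph{Step 1 (fixed point for $K_{\eps}^{\lambda}$).} Take $X=\mathcal{X}_{\rho}$ with the weak topology generated by the functionals \eqref{eq:S2E6}, and let $\Y=\Y(R_{0},\delta)$ be the set of Definition~\ref{Def:InvSet} with $\delta>0$ small and $R_{0}$ large as prescribed by the invariance statement of Section~\ref{S.le}. The set $\Y$ is nonempty — the measure of density $(1-\rho)x^{-\rho}$, whose primitive equals $r^{1-\rho}$, lies in it — convex (it is cut out by the affine inequalities \eqref{eq:F1} and \eqref{eq:F2}), and weakly sequentially compact by Lemma~\ref{Prop:weakcontin}; the map $S_{\eps}^{\lambda}(t)$ leaves $\Y$ invariant, since \eqref{eq:F1} is preserved by Proposition~\ref{Prop:upper:bound:large} and \eqref{eq:F2} by the result of Section~\ref{S.le}, and it is weakly sequentially continuous by Proposition~\ref{Prop:ContSemi}. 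Since, as noted in the remark following \eqref{eq:sptest4}, it suffices to have these properties on a short interval $[0,T]$ (on $\Y$ the mild solution of Theorem~\ref{Thm:calexistence} has norm $\le1$ and hence extends to all $t\ge0$), Theorem~\ref{T.fixedpoint} yields $h_{\eps}^{\lambda}\in\Y$ with $S_{\eps}^{\lambda}(t)h_{\eps}^{\lambda}=h_{\eps}^{\lambda}$ for all $t$. Taking $\psi$ independent of time in Definition~\ref{Def:weak} (via Lemma~\ref{Lem:weakSol}), using $h(\cdot,s)\equiv h_{\eps}^{\lambda}$, dividing by $t$, and performing the substitution $z=x-y$ that merges the loss and gain terms in \eqref{eq:S1E5}, we find that $h_{\eps}^{\lambda}$ solves, for every $\test\in C_{c}^{1}([0,\infty))$,
\begin{equation*}
-\iint_{[0,\infty)^{2}}(\test(x+z)-\test(x))\,\frac{K_{\eps}^{\lambda}(x,z)}{z}\,h_{\eps}^{\lambda}(x)\,h_{\eps}^{\lambda}(z)\,\dx\,\dz+\int_{0}^{\infty}x\,h_{\eps}^{\lambda}(x)\,\test'(x)\,\dx-(\rho-1)\int_{0}^{\infty}h_{\eps}^{\lambda}(x)\,\test(x)\,\dx=0.
\end{equation*}

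\emph{Step 2 (limit $\lambda\to0$).} As $\{h_{\eps}^{\lambda}\}_{\lambda\in(0,1)}\subset\Y$ and $\Y$ is weakly sequentially compact, there is $\lambda_{n}\to0$ with $h_{\eps}^{\lambda_{n}}\rightharpoonup h_{\eps}\in\Y$ (tested against $C_{c}([0,\infty))$). The linear terms pass to the limit since $x\test'(x),\test(x)\in C_{c}([0,\infty))$. For the bilinear term set $F^{\lambda}(x,z):=(\test(x+z)-\test(x))K_{\eps}^{\lambda}(x,z)/z$; for fixed $\eps>0$ the kernel $K_{\eps}$ is bounded and continuous on compacta of $[0,\infty)^{2}$, so $\abs{\test(x+z)-\test(x)}\le\norm{\test'}_{\infty}z$ shows that $F^{\lambda}$ is bounded and continuous on $[0,\infty)^{2}$ (the apparent $1/z$ singularity at $z=0$ cancels), it vanishes for $x>M:=\sup\supp\test$, and $\abs{F^{\lambda}}\le\norm{\test'}_{\infty}K_{\eps}$. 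Split the domain into $\{z\le R\}$ and $\{z>R\}$ with $R\ge M$. On $\{z>R\}$ we have $\test(x+z)=0$ and $K_{\eps}(x,z)/z\le C(\eps,M)\,z^{\max\{b,-a\}-1}$ for $x\in[0,M]$, $z\ge1$; since $\rho>\max\{b,-a\}$ (because $\rho>b$ and $\rho>0>-a$), \eqref{eq:F1} and an integration by parts in $z$ give
\begin{equation*}
\iint_{\{z>R\}}\abs{F^{\lambda}(x,z)}\,h_{\eps}^{\lambda}(x)\,h_{\eps}^{\lambda}(z)\,\dx\,\dz\le C(\eps,M,\test)\,R^{\max\{b,-a\}-\rho},
\end{equation*}
which tends to $0$ as $R\to\infty$, uniformly in $\lambda$. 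On $\{z\le R\}$, for each $\delta>0$ the functions $F^{\lambda}$ converge uniformly on $[0,M]\times[\delta,R]$ to $F(x,z):=(\test(x+z)-\test(x))K_{\eps}(x,z)/z$ (because $K_{\eps}^{\lambda}=K_{\eps}$ on any fixed compactum of $(0,\infty)^{2}$ once $\lambda$ is small), while the contribution of $[0,M]\times[0,\delta]$ is $\le C(\eps,M,\test)\,\delta^{1-\rho}$ uniformly in $\lambda$ by \eqref{eq:F1}; moreover $h_{\eps}^{\lambda_{n}}\otimes h_{\eps}^{\lambda_{n}}\rightharpoonup h_{\eps}\otimes h_{\eps}$ vaguely (the uniform local mass bounds from $\Y$ legitimise passing to products, after replacing the sharp cut-off at $z=R$ by a continuous one and choosing $R$ outside the at most countable set of atoms of $h_{\eps}$). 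Hence $\iint_{\{z\le R\}}F^{\lambda_{n}}(x,z)\,h_{\eps}^{\lambda_{n}}(x)\,h_{\eps}^{\lambda_{n}}(z)\,\dx\,\dz\to\iint_{\{z\le R\}}F(x,z)\,h_{\eps}(x)\,h_{\eps}(z)\,\dx\,\dz$; letting first $n\to\infty$ and then $R\to\infty$ shows that $h_{\eps}$ satisfies the weak stationary equation of Step~1 with $K_{\eps}^{\lambda}$ replaced by $K_{\eps}$. Thus $h_{\eps}$ is a weak stationary solution of \eqref{A2} with $K$ replaced by $K_{\eps}$; in addition $h_{\eps}\in\Y$, so in particular \eqref{eq:F1} holds.

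\emph{Main obstacle.} Step 1 is a direct application of results already established in this section, so the only point requiring genuine work is the passage $\lambda\to0$ in the bilinear coagulation term. It rests on two facts: the uniform-in-$\lambda$ control of the large-$z$ tail, which uses precisely the strict inequality $\rho>\max\{b,-a\}$ built into the admissible range of $\rho$ together with the boundedness of $K_{\eps}$ near the origin for fixed $\eps>0$; and the fact that the mass of $h_{\eps}^{\lambda}$ near the origin is uniformly small, $\int_{[0,\delta]}h_{\eps}^{\lambda}\,\dx\le\delta^{1-\rho}$, which is exactly the content of the defining bound \eqref{eq:F1} of $\Y$ and is what lets the region of small $z$ be handled. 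The remaining assertions of Proposition~\ref{P.hepsexistence} — continuity of $h_{\eps}$ and $\int_{0}^{r}h_{\eps}\,\dx/r^{1-\rho}\to1$ as $r\to\infty$ — are obtained as in \cite{NV12a} and are not needed for the present statement.
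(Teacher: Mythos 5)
Your proposal follows the paper's own route exactly: Tikhonov's fixed point theorem applied to $S_{\eps}^{\lambda}$ on the set $\Y$ yields a stationary $h_{\eps}^{\lambda}$, and the inner regularisation is then removed by extracting a weakly convergent subsequence as $\lambda\to0$ and passing to the limit in the weak stationary equation; the paper omits the details of this last step (referring to the harder $\eps\to0$ argument of Section~\ref{sec:limit:eps}), whereas you supply them directly and correctly exploit that $K_{\eps}$ is bounded near the origin for fixed $\eps>0$, which is exactly what makes the $\lambda\to0$ limit simpler. One small imprecision: $[0,M]\times[\delta,R]$ is not a compactum of $(0,\infty)^{2}$, so for $x<\lambda$ the cut-off may still be active and $F^{\lambda}$ need not converge uniformly to $F$ there; this is repaired by the same device you already use for small $z$, since the contribution of $\{x<\delta'\}$ is $O\bigl((\delta')^{1-\rho}\bigr)$ uniformly in $\lambda$ by \eqref{eq:F1} while the integrand is bounded on that strip.
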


This proposition is proved in the following way: according to Theorem~\ref{T.fixedpoint} we obtain $h_{\eps}^{\lambda}\in\Y$ that is stationary under the action of $S_{\eps}^{\lambda}\left(t\right)$, i.e. $H_{\eps}^{\lambda}$ is a stationary mild solution of \eqref{eq:S1E9}. Then taking a subsequence of $h_{\eps}^{\lambda}$ converging weakly to some $h_{\eps}$ and passing to the limit $\lambda\to 0$ in the equation shows the claim. The last step is quite similar to passing to the limit $\eps\to 0$ in Section~\ref{sec:limit:eps} and thus we do not give details here.

We furthermore have the following result about the regularity and asymptotic behaviour of $h_{\eps}$ completing the proof of Proposition~\ref{P.hepsexistence}:

\begin{proposition}\label{Prop:reg:decay:stat:eps}
 The solution $h_{\eps}\in\mathcal{Y}$ from Proposition~\ref{Prop:existence:stat} is continuous on $\left(0,\infty\right)$ and satisfies $h_{\eps}\left(x\right)\sim \left(1-\rho\right)x^{-\rho}$ as $x\to\infty$. 
\end{proposition}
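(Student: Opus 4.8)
The plan is to follow the argument of the analogous regularity statement in \cite{NV12a}, which applies here because the shifted kernel $K_{\eps}$ is locally bounded on $[0,\infty)\times[0,\infty)$ (the singularities have been displaced to $-\eps$), so $h_{\eps}$ solves the self-similar equation \eqref{A2a} with a kernel that is continuous and bounded on compacts. First I would rewrite the weak stationary equation \eqref{A2a} in the integrated form: setting $Q_{\eps}[h](x):=\int_{0}^{x}\int_{x-y}^{\infty}\frac{K_{\eps}(y,z)}{z}h(z)h(y)\,\dz\,\dy$, the equation says that $\del_{x}Q_{\eps}[h]-\del_{x}(xh)-(\rho-1)h=0$ in $\mathcal D'((0,\infty))$, hence there is a constant $c$ with $xh(x)=Q_{\eps}[h](x)-(\rho-1)\int_{0}^{x}h+c$ as measures; the bound $\norm{h_{\eps}}<\infty$ from $h_{\eps}\in\Y$ together with the behaviour of $Q_{\eps}$ near $0$ forces $c=0$. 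Since $h_{\eps}\in\M_+([0,\infty))$ has $\int_{[0,R]}h_{\eps}\leq R^{1-\rho}$, the double integral defining $Q_{\eps}[h_{\eps}](x)$ is finite for every $x>0$ (here one uses $\rho>\max\{b,0\}$ and $a>0$ exactly as in the discussion around \eqref{Ass1c}, but now with $K_{\eps}$ which removes the singularity at the origin), and $x\mapsto Q_{\eps}[h_{\eps}](x)$ is continuous. Therefore $x h_{\eps}(x)$ equals a continuous function plus $-(\rho-1)\int_{0}^{x}h_{\eps}$; a standard bootstrap — the right-hand side, being absolutely continuous, shows $h_{\eps}$ has no atoms in $(0,\infty)$, hence $\int_{0}^{x}h_{\eps}$ is continuous, hence $xh_{\eps}(x)$ is continuous, hence $h_{\eps}$ is continuous on $(0,\infty)$.

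For the asymptotics as $x\to\infty$ I would use the averaged decay already in hand, $\lim_{r\to\infty}r^{\rho-1}\int_{0}^{r}h_{\eps}=1$, and upgrade it to a pointwise statement via the integrated equation. Write $\Theta(x):=\int_{0}^{x}h_{\eps}$, so $\Theta(x)\sim x^{1-\rho}$. From $xh_{\eps}(x)=Q_{\eps}[h_{\eps}](x)-(\rho-1)\Theta(x)$ one needs the leading-order behaviour of $Q_{\eps}[h_{\eps}](x)$ as $x\to\infty$. Splitting the $y$-integral and using homogeneity of $K_{\eps}$ at large arguments (where $K_{\eps}(y,z)\approx K(y,z)$ and $K(y,z)\le C_2(y^{-a}z^{b}+y^{b}z^{-a})$), together with $h_{\eps}(z)\lesssim z^{-\rho}$ in the averaged sense, one finds that the contribution of $Q_{\eps}[h_{\eps}](x)$ is of lower order than $x^{1-\rho}$ — this is where the strict inequalities $\rho>b$ and $\rho+a>0$ and $\gamma=b-a<1$ enter to guarantee the relevant integrals converge and decay. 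Consequently $xh_{\eps}(x)=-(\rho-1)\Theta(x)+o(x^{1-\rho})=(1-\rho)x^{1-\rho}+o(x^{1-\rho})$, i.e. $h_{\eps}(x)\sim(1-\rho)x^{-\rho}$. To make ``$o(x^{1-\rho})$'' into genuine pointwise convergence rather than a Cesàro statement, I would differentiate the asymptotic relation for $\Theta$ using monotonicity-type control: either show $h_{\eps}$ is eventually monotone-comparable (using the equation to bound $h_{\eps}'$), or, more robustly, run the standard argument from \cite{NV12a} that combines the integral identity with a scaling/compactness argument on the rescaled functions $x^{\rho}h_{\eps}(x)$ along sequences $x\to\infty$, showing every subsequential limit equals $1-\rho$.

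The main obstacle I anticipate is the second part: converting the averaged asymptotics $\int_{0}^{R}h_{\eps}\sim R^{1-\rho}$ into the pointwise asymptotics $h_{\eps}(x)\sim(1-\rho)x^{-\rho}$. This requires quantitative control of the coagulation term $Q_{\eps}[h_{\eps}]$ at infinity and enough regularity/monotonicity of $h_{\eps}$ to differentiate an integral asymptotic relation — neither of which is purely formal. Since the kernel $K_{\eps}$ is locally bounded and the needed estimates are exactly those carried out (for locally bounded kernels) in \cite{NV12a}, I would import that argument essentially verbatim, checking only that the shift by $\eps$ does not affect the large-$x$ analysis (it does not, since $K_{\eps}(y,z)/K(y,z)\to1$ as $y,z\to\infty$) and that the finiteness of $Q_{\eps}[h_{\eps}]$ near $0$ — which is new compared to the genuinely singular kernel $K$ — follows immediately from boundedness of $K_{\eps}$ at the origin. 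The everywhere-continuity on $(0,\infty)$ part is then a routine consequence of the integrated equation as sketched above.
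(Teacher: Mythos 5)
Your proposal matches the paper's approach: the authors' entire proof is the remark that the proposition ``can be proved in a similar way as the corresponding result in \cite[Lemma~4.2 \& Lemma~4.3]{NV12a},'' i.e.\ they defer wholesale to the locally-bounded-kernel case, which applies here because $K_{\eps}$ is bounded on compacts. Your sketch---integrating \eqref{A2a} to obtain $xh_{\eps}(x)=Q_{\eps}[h_{\eps}](x)-(\rho-1)\int_0^x h_{\eps}+c$, using boundedness of $K_{\eps}$ at the origin for the continuity bootstrap, and upgrading the averaged decay to the pointwise asymptotics via the integrated identity---is a faithful reconstruction of that imported argument, and you correctly flag both the genuine difficulty (averaged-to-pointwise) and the reason the $\eps$-shift is harmless for the large-$x$ analysis.
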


This can be proved in a similar way as the corresponding result in \cite[Lemma~4.2 \& Lemma~4.3]{NV12a}.

\section{Passing to the limit $\eps \to 0$}\label{sec:eps:to:zero}

\subsection{Strategy of the proof}

Our starting point is that we have a continuous positive function  $h_{\eps}$ that is a weak solution to 
\begin{equation}\label{eq:self:sim:eps}
 \partial_x I_{\eps}[h_{\eps}] = \partial_x \left( x h_{\eps}\right) + (\rho-1) h_{\eps}\,, \qquad \mbox{ with } \quad I_{\eps}[h_{\eps}] = \int_0^x \int_{x-y}^{\infty} \frac{K_{\eps}(y,z)}{z} h_{\eps}(y) h_{\eps}(z)\,dz\,dy\,.
\end{equation}
Furthermore we have the estimates 
\begin{equation}\label{hepsestimates}
 \int_0^rh_{\eps}(x)\,dx \leq r^{1-\rho} \qquad \mbox{ and } \qquad \lim_{r \to \infty} \int_0^r h_{\eps}(x)\,dx / r^{1-\rho} =1\,.
 \end{equation}
We introduce the following quantities:
\begin{equation}\label{lepsdef}
 \mu_{\eps}=\int_0^1 h_{\eps}(x) \left(x+\eps\right)^{-a} \,dx \, ,\qquad \lambda_{\eps} =\int_0^1 h_{\eps}(x) \left(x+\eps\right)^b\,dx \,, \qquad L_{\eps}:=\max\left( \lambda_{\eps}^{\frac{1}{1+a}},\mu_{\eps}^{\frac{1}{1-b}}\right)\,.
\end{equation}
Up to passing to a subsequence we can in the following assume that either $L_{\eps}$ converges or $L_{\eps}\to\infty$ for $\eps\to 0$. Furthermore as the case $L_{\eps}\to 0$ behaves slightly different, we use from now on the following notation: we define $L:=L_{\eps}$ if $L_{\eps}\not\to 0$ and $L:=1$ if $L_{\eps}\to 0$ and thus (up to passing maybe to another subsequence) we may assume $L>0$.
For the following let 
\[
 X=\frac{x}{L} \,, \qquad h_{\eps}(x) = H_{\eps}(X) L^{-\rho}\,.
\]

The strategy of the proof is the following: First we derive a uniform lower integral bound for $H_{\eps}$. This will be done by constructing a special test function that provides us with some estimate from below that is sufficient to conclude on a lower bound by some iteration argument. In order to obtain the same uniform lower bound for $h_{\eps}$ we need to exclude the case $L_{\eps}\to \infty$. For this reason we will show that in this case $H_{\eps}$ converges to some limit $H$ solving some differential equation that has no solution satisfying the growth condition $\int_{0}^{R}H\dX\leq R^{1-\rho}$. Note that at this point it is crucial to assume $\rho>0$. Using the lower bound on $h_{\eps}$ we can show some exponential decay of $h_{\eps}$ near zero which is then enough to pass to the limit $\eps\to 0$.

For $0<\kappa<1$ we again denote in the following by $\varphi_{\kappa}$ a non-negative, symmetric standard mollifier with $\supp \varphi_{\kappa}\subset \left[-\kappa,\kappa\right]$.

\subsection{Uniform lower bound for $H_{\eps}$}

In this subsection we will show a uniform lower bound on $H_{\eps}$, i.e. we will prove:
\begin{proposition}\label{Prop:Hepslowerbound}
 For any $\delta>0$ there exists $R_{\delta}>0$ such that 
 \begin{equation}\label{Hepslowerbound}
  \int_0^R H_{\eps}(X)\dX \geq (1-\delta) R^{1-\rho} \qquad \mbox{ for all } R \geq R_{\delta}.
 \end{equation}
\end{proposition}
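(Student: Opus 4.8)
\textbf{Proof proposal for Proposition~\ref{Prop:Hepslowerbound}.}

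The plan is to mimic the invariance argument of Section~\ref{S.le}, but now starting from the genuine stationary solution $h_\eps$ (equivalently $H_\eps$) and iterating. The point of passing to $H_\eps$ via the scale $L=L_\eps$ is that the rescaled problem has coefficients that stay bounded and bounded below independently of $\eps$: from \eqref{lepsdef} the relevant moments $\int_0^{L}h_\eps(x)(x{+}\eps)^{-a}\dx$ and $\int_0^{L}h_\eps(x)(x{+}\eps)^{b}\dx$ become order-one quantities after rescaling. I would first record the rescaled version of \eqref{eq:self:sim:eps} for $H_\eps$ together with the estimates \eqref{hepsestimates} in the $X$-variable, namely $\int_0^R H_\eps\dX\le R^{1-\rho}$ for all $R$ and $\int_0^R H_\eps\dX/R^{1-\rho}\to1$ as $R\to\infty$ (the latter with a rate that may depend on $\eps$, which is harmless because only the upper bound and the limit at infinity are used as input).

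Next, exactly as in Section~\ref{Sec:dual:prob}, I would use that $H_\eps$ is a \emph{stationary} weak solution to run the dual test function over a short time interval $[0,T]$ with $T<1$: plugging the stationary $H_\eps$ into \eqref{eq:sptest2} and choosing $\psi$ (equivalently $\Phi$) solving the dual inequality \eqref{eq:sptest4}, one gets $\int_0^R H_\eps\dX\ge\int_{[0,\infty)}H_\eps(X)\Phi(X,0)\dX$, and then, integrating by parts against the cumulative distribution $\mathcal H_\eps(X)=\int_0^X H_\eps$, one reduces to an estimate of $\int \mathcal H_\eps(X)(-\del_X\Phi(X,0))\dX=\int\mathcal H_\eps\,\tfrac1R\tilde G(\tfrac{X}{R}-1+\tfrac\kappa R,0)\dX$. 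The construction of $\Phi$ and the integral bounds for $\tilde G$ from Lemmas~\ref{Lem:subsolution:2:integral:estimate}--\ref{Lem:subsolution:2} go through verbatim with the rescaled coefficients, provided $C_0$ is chosen large enough depending only on the order-one rescaled moments, hence uniformly in $\eps$; this uniformity is the crucial gain. Feeding in a lower bound $\mathcal H_\eps(X)\ge c\,X^{1-\rho}$ valid for $X\ge $ some threshold (with $c<1$, available for free from the limit in \eqref{hepsestimates}, say $c=1-\delta_0$ for any fixed $\delta_0$) and using Lemma~\ref{Lem:Taylor:estimate}, the same computation as in the Proposition at the end of Section~\ref{S.le} yields, for $R$ large,
\begin{equation*}
 \int_0^R H_\eps(X)\dX \ \ge\ R^{1-\rho}\Bigl(c\,\ee^{-(1-\rho)T} - C_1 R^{-\theta} - C_1\kappa^{\mu}\Bigr)
\end{equation*}
with $\theta,\mu\in(0,1)$ and $C_1$ independent of $\eps$. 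Crucially the loss factor $\ee^{-(1-\rho)T}$ can be made as close to $1$ as we like by taking $T$ small, and the $R^{-\theta}$ and $\kappa^\mu$ terms are absorbed for $R$ large and $\kappa$ small. Iterating this bootstrap finitely many times — each application of the short-time dual argument upgrades the constant $c$ toward $1$ up to an arbitrarily small multiplicative loss, and after finitely many steps one overshoots $1-\delta$ — gives \eqref{Hepslowerbound} with a threshold $R_\delta$ independent of $\eps$.

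The main obstacle is the uniformity in $\eps$ of the constant $C_0$ (and hence of the error constants $C_1$): one has to verify that in the rescaled equation the quantities $\eps^{-a}\max\{\eps^b,1\}$ etc.\ appearing in \eqref{eq:subsolution:2} are replaced, after the scaling $X=x/L$, by factors controlled purely by $L_\eps$-normalised moments, which are $O(1)$ by the very definition of $L_\eps$ in \eqref{lepsdef}; this is where the dichotomy on $L_\eps$ (convergent vs.\ $L_\eps\to\infty$, with the $L_\eps\to0$ case folded into $L=1$) must be handled with care, and it is exactly the reason the present subsection is stated for $H_\eps$ rather than $h_\eps$ — the transfer back to $h_\eps$, and in particular excluding $L_\eps\to\infty$, is deferred to Sections~\ref{subsec:iterate}--\ref{subsec:exp:decay}. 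A secondary technical point is that $H_\eps$ is only a measure a priori near the origin, but since only its cumulative function $\mathcal H_\eps$ and the upper/lower power bounds enter, the integration-by-parts step is justified exactly as in Section~\ref{S.le} (no Dirac at the origin, $\Phi$ compactly supported to the right).
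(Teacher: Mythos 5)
Your overall architecture --- a dual test function whose coefficients are made uniform in $\eps$ through the $L_{\eps}$-rescaling, followed by an iteration that transports the asymptotic lower bound from infinity down to a fixed scale --- matches the paper's, but the iteration step as you describe it would not close, and that step is the heart of the proof. The one-step dual estimate (Lemma~\ref{Lem:recursion}) has the form
\begin{equation*}
F_{\eps}\left(A\right)\;\geq\;-CA^{\nu\left(1-\rho\right)}+F_{\eps}\left(\left(A-A^{\sigma}\right)\ee^{T}\right)\ee^{-\left(1-\rho\right)T}\left(1-\frac{C}{A^{\theta}}\right),
\end{equation*}
i.e.\ it expresses $F_{\eps}(A)$ through $F_{\eps}$ at the \emph{strictly larger} argument $(A-A^{\sigma})\ee^{T}$. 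The factor $\ee^{-(1-\rho)T}$ is therefore not a loss to be ``made as close to $1$ as we like by taking $T$ small'': it cancels exactly against $((A-A^{\sigma})\ee^{T})^{1-\rho}$ once a power-law lower bound is inserted, and what remains per step is a multiplicative degradation $(1-CA^{-\theta})$ together with an additive error $CA^{\nu(1-\rho)}$. In particular the constant $c$ is never ``upgraded toward $1$''; it can only degrade. Since the input bound $F_{\eps}(R)\geq(1-\delta_{*})R^{1-\rho}$ is available only above the $\eps$-dependent threshold $R_{\eps,\delta_{*}}$, the number $N$ of iterations needed to descend to a fixed $R_{\delta}=D$ is of order $\log(R_{\eps,\delta_{*}}/D)/T$ and is unbounded in $\eps$, so ``finitely many steps'' is not available. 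What makes the argument work is that along $A_{k+1}=\ee^{T}(A_{k}-A_{k}^{\sigma})$ the $A_{k}$ grow geometrically, so the product $\prod_{n}(1-CA_{n}^{-\theta})$ and the geometric sum of the additive errors are bounded uniformly in $N$ by quantities tending to $1$ and $0$ respectively as $D\to\infty$; this uniform-in-$N$ bookkeeping is precisely what your sketch omits, and taking $T$ small only makes it worse (more steps for the same descent).

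A second, smaller gap: the construction of Section~\ref{Sec:dual:prob} does not go through ``verbatim with rescaled coefficients'', because the constants in \eqref{eq:subsolution:2} are of the form $C_{0}\eps^{-a}\max\{\eps^{b},1\}$ and blow up as $\eps\to0$. Two modifications are needed. One is the one you identify: the near-origin part of the generator must retain the actual measure $h_{\eps}(z)(z+\eps)^{-a}\,$, $h_{\eps}(z)(z+\eps)^{b}$, whose masses are controlled by $\mu_{\eps},\lambda_{\eps}$ and hence by powers of $L_{\eps}$ via \eqref{lepsdef}. The other, which your sketch misses, is that the test function must be cut off at $\xi=A^{\nu}$ (i.e.\ $W=\tilde W\chi_{[A^{\nu},\infty)}$): the factor $(L\ee^{t}\xi+\eps)^{-a}$ coming from the \emph{first} argument of the kernel cannot be bounded uniformly in $\eps$ as $\xi\to0$, so the region $\xi<A^{\nu}$ has to be excised. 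This cutoff is the source of the additive term $-CA^{\nu(1-\rho)}$ in the recursion (via $F_{\eps}(A^{\nu})\leq A^{\nu(1-\rho)}$) and of the parameter $\nu$ throughout Section~\ref{subsec:iterate}; without it neither the pointwise verification of the dual inequality nor the error accounting in the iteration can be carried out uniformly in $\eps$.
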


\subsubsection{Construction of a suitable test function}\label{subsec:test}

We start by constructing a special test function and therefore notice that for $\psi=\psi\left(x,t\right)$ with $\psi\in C^{1}$ and compact support in $\left[0,T\right]\times \left[0,\infty\right)$ we obtain from the equation on $h_{\eps}$:
\begin{equation*}
 \begin{split} 
 0&=\int_{0}^{T}\int_{0}^{\infty}\del_{x}\psi I_{\eps}\left[h_{\eps}\right]\dx\dt-\int_{0}^{T}\int_{0}^{\infty}x\del_{x}\psi h_{\eps}\dx\dt+\left(\rho-1\right)\int_{0}^{T}\int_{0}^{\infty}\psi h_{\eps}\dx\dt\\
 &=\int_{0}^{T}\int_{0}^{\infty}\del_{t}\psi h_{\eps}\dx\dt+\int_{0}^{\infty}\psi\left(\cdot,0\right)h_{\eps}\dx-\int_{0}^{\infty}\psi\left(\cdot,T\right)h_{\eps}\dx.
 \end{split}
\end{equation*}
Choosing $\psi$ such that 
\begin{equation}\label{eq:special:test:1}
 \begin{split}
  \int_{0}^{T}\int_{0}^{\infty}\del_{x}\psi I_{\eps}\left[h_{\eps}\right]\dx\dt-\int_{0}^{T}\int_{0}^{\infty}x\del_{x}\psi h_{\eps}\dx\dt+\left(\rho-1\right)\int_{0}^{T}\int_{0}^{\infty}\psi h_{\eps}\dx\dt-\int_{0}^{T}\int_{0}^{\infty}\del_{t}\psi h_{\eps}\dx\dt\geq 0
 \end{split}
\end{equation}
we obtain
\begin{equation*}
 \begin{split}
  \int_{0}^{\infty}\psi\left(\cdot,0\right)h_{\eps}\dx\geq \int_{0}^{\infty}\psi\left(\cdot,T\right)h_{\eps}\dx.  
 \end{split}
\end{equation*}
Rewriting \eqref{eq:special:test:1} we obtain
\begin{equation}\label{eq:special:test:weak:psi}
 \begin{split}
  \int_{0}^{T}\int_{0}^{\infty}h_{\eps}\left(x\right)\left\{\int_{0}^{\infty}\frac{K_{\eps}\left(x,y\right)}{y}h_{\eps}\left(y\right)\left[\psi\left(x+y\right)-\psi\left(x\right)\right]\dy-x\del_{x}\psi\left(x\right)+\left(\rho-1\right)\psi\left(x\right)-\del_{t}\psi\left(x\right)\right\}\dx\dt\geq 0.
 \end{split}
\end{equation}
Defining $W$ by $\psi\left(x,t\right)=\ee^{-\left(1-\rho\right)t}W\left(\xi,t\right)$ with $\xi=\frac{x}{L\ee^{t}}$ we can rewrite the term in brackets and obtain that it suffices to construct $W$ such that
\begin{equation}\label{eq:special:test:2}
 \begin{split}
  \del_{t}W\left(\frac{x}{L\ee^{t}},t\right)\leq \int_{0}^{\infty}\frac{K_{\eps}\left(x,y\right)}{y}h_{\eps}\left(y\right)\left[W\left(\frac{x+y}{L\ee^{t}},t\right)-W\left(\frac{x}{L\ee^{t}},t\right)\right]\dy.  
 \end{split}
\end{equation}
For further use we also note that we only need this in weak form, i.e. we need that 
\begin{equation}\label{eq:special:test:weak}
 \begin{split}
  \int_{0}^{T}\int_{0}^{\infty}\ee^{-\left(1-\rho\right)t}h_{\eps}\left(x\right)\left\{\del_{t}W\left(\frac{x}{L\ee^{t}},t\right)-\int_{0}^{\infty}\frac{K_{\eps}\left(x,y\right)}{y}h_{\eps}\left(y\right)\left[W\left(\frac{x+y}{L\ee^{t}},t\right)-W\left(\frac{x}{L\ee^{t}},t\right)\right]\dy\right\}\dx\dt\leq 0,
 \end{split}
\end{equation}
provided that we can justify the change from $\psi$ to $W$.

We furthermore list here some parameters that are frequently used in the following. For given $\nu\in\left(0,1\right)$ that will be fixed later we define 
\begin{equation*}
 \begin{split}
  \beta:=\begin{cases}
          b & b\geq  0\\
          \nu b & b <0
         \end{cases}, \qquad 
  \omega_{1}:=\min\left\{\rho-b,\rho\right\}, \qquad
  \omega_{2}:=\rho,\qquad \tilde{b}:=\max\left\{0,b\right\}.
 \end{split}
\end{equation*}
The idea to construct the test function $W$ is similar to the approach in Section~\ref{Sec:constr:dual:prop}, i.e. we replace the integral kernel $K_{\eps}$ and $h_{\eps}$ by corresponding power laws. Due to the singular behaviour of $K_{\eps}$ (for $\eps\to 0$) the resulting integral is not defined near the origin and thus we have to consider the region near the origin separately. We have the following existence result.
\begin{lemma}\label{Lem:ex:special:test:1}
 For any constant $\tilde{C}>0$ there exists a function $\tilde{W}\in C^{1}\left(\left[0,T\right],C^{\infty}\left(\R\right)\right)$ solving
 \begin{equation*}
  \begin{split}
   &\quad\del_{t}\tilde{W}\left(\xi,t\right)-C\int_{0}^{1}\frac{h_{\eps}\left(z\right)}{z}\left(L^{b}A^{\beta}\left(z+\eps\right)^{-a}+L^{-a}A^{-\nu a}\left(z+\eps\right)^{b}\right)\left[\tilde{W}\left(\xi+\frac{z}{L}\right)-\tilde{W}\left(\xi\right)\right]\dz\\
   &-\frac{C}{L^{\rho+a-\max\left\{0,b\right\}}}\int_{0}^{\infty}\frac{A^{-\nu a}}{\eta^{1+\omega_{1}}}\left[\tilde{W}\left(\xi+\eta\right)-\tilde{W}\left(\xi\right)\right]\dd\eta-\frac{C}{L^{\rho-b}}\int_{0}^{\infty}\frac{A^{\beta}}{\eta^{1+\omega_{2}}}\left[\tilde{W}\left(\xi+\eta\right)-\tilde{W}\left(\xi\right)\right]\dd\eta=0
  \end{split}
 \end{equation*}
 with $\tilde{W}\left(\cdot,0\right)=\chi_{\left(-\infty,A-\kappa\right]}\ast^{3}\varphi_{\kappa/3}\left(\cdot\right)$.
\end{lemma}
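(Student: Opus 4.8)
The plan is to construct $\tilde{W}$ as the solution of a linear non-local transport-type equation. The equation has the structure
\[
 \del_{t}\tilde{W}(\xi,t) = \int_{0}^{\infty} \bigl[\tilde{W}(\xi+\eta,t)-\tilde{W}(\xi,t)\bigr]\,\dd N(\eta,t),
\]
where the (time-dependent) measure $N$ is the sum of three pieces: the ``near-origin'' contribution coming from the integral over $z\in[0,1]$ against $h_{\eps}(z)/z$ times the bounded weight $\bigl(L^{b}A^{\beta}(z+\eps)^{-a}+L^{-a}A^{-\nu a}(z+\eps)^{b}\bigr)$, pushed forward under $z\mapsto z/L$; and the two power-law pieces $C L^{-(\rho+a-\tilde b)}A^{-\nu a}\eta^{-1-\omega_{1}}\dd\eta$ and $C L^{-(\rho-b)}A^{\beta}\eta^{-1-\omega_{2}}\dd\eta$. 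The first step is to check that $N$ is a finite measure on $(0,\infty)$: the near-origin term is finite because on $[0,1]$ the weight $(z+\eps)^{-a}$ is bounded by $\eps^{-a}$ and $\int_{0}^{1}h_{\eps}(z)/z\,\dd z$ is controlled via $h_{\eps}\in\Y$ (the bound $\int_{0}^{r}h_{\eps}\dd x\le r^{1-\rho}$ gives $\int_{0}^{1}h_{\eps}(z)z^{-1}\dd z\le C$ since $1-\rho>0$); and the two power-law kernels, although singular as $\eta\to 0^{+}$ with exponents $1+\omega_{i}>1$, are integrated only against the \emph{increment} $\tilde W(\xi+\eta)-\tilde W(\xi)$, which for a $C^{1}$ (indeed $C^{\infty}$) function is $O(\eta)$ near $\eta=0$, so the combination $\eta^{-1-\omega_{i}}\bigl[\tilde W(\xi+\eta)-\tilde W(\xi)\bigr]$ is integrable near the origin, and near $\eta=\infty$ the factor $\eta^{-1-\omega_{i}}$ decays and $\tilde W$ is bounded.

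The cleanest way to organize the construction is to follow the same route as in Section~\ref{Sec:constr:dual:prop} and defer to the appendix: the equation is exactly of the ``generalized-fragmentation/transport'' type handled by Proposition~\ref{Prop:ex:dual:sum}, so I would invoke that proposition directly. Concretely, write the right-hand side as $\mathcal{L}_{t}\tilde W$ with $\mathcal{L}_{t}$ a linear operator that, acting on the Banach space $C_{b}(\R)\cap C^{1}$ (or better, on a scale of spaces $C^{k}_{b}$), is bounded uniformly for $t\in[0,T]$ with norm depending on $\eps,L,A,\kappa$ but not on $t$; the uniform bound again uses the two observations above together with the locally-uniform derivative bound on the data. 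Then $\tilde W(t)=e^{\int_{0}^{t}\mathcal L_{s}\dd s}$ applied to the initial datum $\chi_{(-\infty,A-\kappa]}\ast^{3}\varphi_{\kappa/3}$, which lies in $C^{\infty}(\R)$ with all derivatives bounded, solves the equation. Regularity in $\xi$ is propagated because $\mathcal L_{t}$ commutes with $\del_{\xi}$ (translation-type kernel) and maps $C^{k}_{b}\to C^{k}_{b}$, so one gets $\tilde W(\cdot,t)\in C^{\infty}(\R)$ for every $t$; regularity in $t$ follows because $t\mapsto\mathcal L_{t}$ is continuous (the only $t$-dependence sits in the bounded prefactors and is continuous), hence $t\mapsto\tilde W(t)$ is $C^{1}$ with values in $C^{\infty}(\R)$, giving $\tilde W\in C^{1}([0,T],C^{\infty}(\R))$ as claimed. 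The role of the constant $\tilde C$ (which appears as the ``$C$'' multiplying the kernels) is immaterial for existence: the argument works for any fixed positive coefficients.

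The main obstacle I anticipate is not the fixed-point/semigroup step itself but verifying the uniform-in-time boundedness of $\mathcal L_{t}$ in a space strong enough to close the iteration while still weak enough to contain the initial datum. The delicate point is the interplay between the singularity $\eta^{-1-\omega_{i}}$ at $\eta=0$ and the need for a norm that controls $\tilde W(\xi+\eta)-\tilde W(\xi)$ by $C\eta$ uniformly in $\xi$; this forces working with $C^{1}_{b}$ (or a Lipschitz norm) rather than merely $C_{b}$, and one must check that $\mathcal L_{t}$ is bounded on $C^{1}_{b}$, which in turn requires controlling second increments and hence using $C^{2}_{b}$, and so on up the scale — so the genuinely careful bookkeeping is the propagation of estimates through the full $C^{\infty}$ scale with constants that are finite at each fixed level (though blowing up with $k$, which is harmless here). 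Since all of this is carried out in full generality in Proposition~\ref{Prop:ex:dual:sum} in the appendix, the proof of the present lemma reduces to checking that the coefficients in the displayed equation satisfy the hypotheses of that proposition, namely: the weight in the near-origin integral is bounded (using $\eps\le 1$ and $0\le z\le 1$), $\int_{0}^{1}h_{\eps}(z)z^{-1}\dd z<\infty$, and the exponents $\omega_{1}=\min\{\rho-b,\rho\}$ and $\omega_{2}=\rho$ lie in $(0,1)$, which holds precisely because $\rho\in(\max\{0,b\},1)$.
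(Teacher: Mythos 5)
Your bottom line — that the lemma follows by invoking Proposition~\ref{Prop:ex:dual:sum} once one checks the displayed equation fits its form — coincides with the paper, whose entire proof is the single sentence ``This is shown in Proposition~\ref{Prop:ex:dual:sum}.''  The sum of three kernels (two power-law pieces $N_{\omega_{1}}$, $N_{\omega_{2}}$ and one near-origin piece of the form $N_{\eps}$) and the $3$-fold mollified initial datum $\chi_{(-\infty,A-\kappa]}\ast^{3}\varphi_{\kappa/3}$ are precisely the data of that proposition with $n=3$, so the reduction is correct.  However, two points deserve comment.

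First, the internals you sketch for Proposition~\ref{Prop:ex:dual:sum} differ from the paper's actual construction.  You propose building the evolution operator $\ee^{\int_{0}^{t}\mathcal{L}_{s}\ds}$ directly by establishing boundedness of $\mathcal L_{t}$ on a scale of spaces $C^{k}_{b}$, and you rightly identify this as requiring some care (control of increments forces one up the scale).  The paper circumvents this bookkeeping entirely: it solves the evolution for each kernel $N_{i}$ \emph{separately} — via a regularization of the singular kernel, a contraction mapping in $C([0,T],\Mfin_{+})$, and then a limit in the regularization parameter, followed by mollification to upgrade measures to smooth functions (Propositions~\ref{Prop:ex:stand:dual:distr}--\ref{Prop:ex:stand:dual:smooth:func}, \ref{Prop:ex:dual:eps}) — and then takes the \emph{convolution} $f=f^{1}\ast f^{2}\ast f^{3}$, which solves the problem with kernel $N=\sum N_{i}$; this is exactly what forces the $3$-fold mollification in the initial datum.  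Both routes are viable, but the convolution trick is what makes the paper's argument short.

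Second, there is a genuine error in your hypothesis check.  You assert that the bound $\int_{0}^{r}h_{\eps}\dx\leq r^{1-\rho}$ gives $\int_{0}^{1}h_{\eps}(z)z^{-1}\dz\leq C$ ``since $1-\rho>0$.''  This is false.  By the moment estimate Lemma~\ref{Lem:moment:est:stand}(1), $\int_{0}^{D}x^{\alpha}h\dx\leq C\norm{h}D^{1-\rho+\alpha}$ only for $\alpha>\rho-1$; taking $\alpha=-1$ would require $\rho<0$, which is excluded since $\rho\in(\max\{0,b\},1)$.  Indeed, a dyadic decomposition gives $\int_{0}^{1}z^{-1}h_{\eps}(z)\dz\leq\sum_{n\geq0}2^{n+1}\int_{2^{-(n+1)}}^{2^{-n}}h_{\eps}\leq 2\sum_{n\geq 0}2^{n\rho}$, which diverges for $\rho>0$, and a measure behaving like $h_{\eps}(z)\sim z^{-\rho}$ as $z\to0^{+}$ (which is compatible with $h_{\eps}\in\Y$) has $\int_{0}^{1}h_{\eps}(z)/z\dz=\infty$.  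The finiteness of $N_{\eps}$ — to the extent it is needed — must come from the finer (exponential) decay of $h_{\eps}$ at the origin for the regularized kernel $K_{\eps}$, not from membership in $\mathcal{X}_{\rho}$ alone; alternatively the paper's regularization-plus-limit construction avoids requiring $\int N_{\eps}<\infty$ at the outset.  As written your verification would not survive scrutiny, even though the lemma itself is correct.
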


\begin{proof}
 This is shown in Proposition~\ref{Prop:ex:dual:sum}.
\end{proof}

\begin{remark}
 As shown in the appendix $\tilde{W}$ is non-increasing, has support in $\left(-\infty,A\right]$, is non-negative and bounded by $1$.
\end{remark}

As $K_{\eps}$ might get quite singular at the origin for $\eps\to 0$, we define now $W$ as the function $W\left(\xi,t\right):=\tilde{W}\left(\xi,t\right)\chi_{\left[A^{\nu},\infty\right)}\left(\xi\right)$, i.e. we cut $\tilde{W}$ at $\xi=A^{\nu}$ in order to avoid integrating near the origin. Obviously $W$ is not in $C^{1}$ and thus the corresponding $\psi$ is also not differentiable. But as already mentioned it is enough to show that \eqref{eq:special:test:weak} holds, provided we can justify the change from $\psi$ to $W$ (and reverse). This will be done next, i.e. we will first show that \eqref{eq:special:test:2} holds for all $\xi\neq A^{\nu}$. Then by convolution in $\xi$ with $\varphi_{\delta}$ it is possible to change from $\psi$ to $W$ (and reverse). Finally taking the limit $\delta\to 0$ this then shows that \eqref{eq:special:test:weak} holds.

\begin{lemma}\label{Lem:special:test:ineq}
 For sufficiently large $\tilde{C}$, inequality \eqref{eq:special:test:2} holds pointwise for all $\xi\neq A^{\nu}$.
\end{lemma}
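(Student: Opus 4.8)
The plan is to verify inequality \eqref{eq:special:test:2} pointwise for $\xi\neq A^{\nu}$ by checking it separately in the two regions $\xi<A^{\nu}$ and $\xi>A^{\nu}$, using in both cases that $W=\tilde W\,\chi_{[A^{\nu},\infty)}$ and that $\tilde W$ is non-increasing (so that all increments $W(\xi+\eta)-W(\xi)$ appearing below have a sign we can exploit). First I would rewrite \eqref{eq:special:test:2} after the rescaling $\xi=x/(L\ee^{t})$ so that the kernel argument becomes $K_{\eps}(L\ee^{t}\xi, L\ee^{t}\eta_{*})$ for the appropriate substitution $y=L\ee^{t}\cdot(\text{increment})$; since $T<1$ we have $\ee^{t}\in[1,\ee)$, so powers of $\ee^{t}$ only contribute harmless constants and can be absorbed into $\tilde C$. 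Then one estimates $K_{\eps}(x,y)/y$ from above using \eqref{Ass1}, splitting the two additive pieces $x^{-a}y^{b}$ and $x^{b}y^{-a}$ (with $+\eps$ shifts), exactly as was done in the proof of Lemma~\ref{Lem:subsolution:2}: bound the $x$-powers using $\xi\in[0,A]$ (for the singular piece $x^{-a}$ one uses instead $\xi\geq A^{\nu}$ where $W$ is supported, giving the factor $A^{-\nu a}$), and keep the $y$-powers as the weights $\tilde v_i$. This produces exactly the two integral terms appearing in the equation for $\tilde W$ in Lemma~\ref{Lem:ex:special:test:1}, plus the near-origin term with $h_{\eps}$ integrated over $[0,1]$; so after this bounding step it suffices that $\tilde W$ be a supersolution of a comparison equation whose coefficients dominate those coming from $K_{\eps}$.

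In the region $\xi>A^{\nu}$: here $W(\xi)=\tilde W(\xi)$, and I would split the integral $\int_0^\infty$ in \eqref{eq:special:test:2} at the point where $\xi+\eta$ crosses $A^{\nu}$ — but since $\xi>A^{\nu}$ already, for every $\eta>0$ we have $\xi+\eta>A^{\nu}$, so $W(\xi+\eta)=\tilde W(\xi+\eta)$ and $W$ agrees with $\tilde W$ on the whole range of the integral. Hence \eqref{eq:special:test:2} for $W$ reduces verbatim to the corresponding inequality for $\tilde W$, which holds because $\tilde W$ solves the equation in Lemma~\ref{Lem:ex:special:test:1} with equality and the coefficients there dominate $C_{1}^{-1}C_{2}\cdot$(the coefficients extracted from $K_{\eps}$) once $\tilde C$ is large, using $V_i(Z)=\omega_i^{-1}Z^{-\omega_i}$ and the moment bounds of Lemma~\ref{Lem:moment:est:stand} on $W_i$ to compare the $h_{\eps}$-weighted term against the power-law term, just as in Lemma~\ref{Lem:subsolution:2}.

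In the region $\xi<A^{\nu}$: now $W(\xi)=0$, so \eqref{eq:special:test:2} becomes $\del_t W(\xi,t)\le \int_0^\infty \frac{K_{\eps}(x,y)}{y}h_{\eps}(y)\,W(\tfrac{x+y}{L\ee^t},t)\,\dy$, and the left side is $0$ (since $W\equiv 0$ in a neighbourhood of such $\xi$ for all $t$, $\del_t W=0$ there). The right side is manifestly $\geq 0$ because $K_{\eps}\geq 0$, $h_{\eps}\geq 0$ and $W\geq 0$ (Remark after Lemma~\ref{Lem:ex:special:test:1}, together with $W=\tilde W\chi_{[A^\nu,\infty)}\geq 0$). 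So the inequality is trivial there. The only subtlety, and the step I expect to be the main obstacle, is bookkeeping the constants in the upper bound for $K_{\eps}(x,y)/y$ so that the three resulting coefficients are each dominated by the corresponding coefficient in the $\tilde W$-equation \emph{uniformly in} $\eps$ and $L$ — in particular tracking the powers $L^{b}A^{\beta}$, $L^{-a}A^{-\nu a}$, $L^{-\rho-a+\tilde b}$, $L^{-\rho+b}$ and checking that the case distinction $b\geq 0$ versus $b<0$ (which is why $\beta$ is defined piecewise) is handled consistently; this is the same type of computation as in Lemma~\ref{Lem:subsolution:2} but with the extra $L$-scaling and the extra near-origin piece, and it is where one must choose $\nu$ and then $\tilde C$ appropriately.
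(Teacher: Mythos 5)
Your overall plan is essentially the paper's: split into $\xi<A^{\nu}$ (trivial by non-negativity of $W$) and $\xi\in(A^{\nu},A]$, use that $W=\tilde W$ on $[A^{\nu},\infty)$, bound $K_{\eps}$ via \eqref{Ass1} on the box $\xi\in[A^{\nu},A]$, split the $y$-integral at $y=1$, and compare term by term with the equation for $\tilde W$ using $V_{i}(Z)=\omega_{i}^{-1}Z^{-\omega_{i}}$ and the moment bounds of Lemma~\ref{Lem:moment:est:stand}, choosing $\tilde C$ large.

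The one step that would not work as you wrote it is the suggested substitution $y=L\ee^{t}\cdot(\text{increment})$. That substitution would indeed make the shift in the argument of $W$ match $\tilde W(\xi+\eta)$, but it turns $h_{\eps}(y)$ into $h_{\eps}(L\ee^{t}\eta)$, whereas the near-origin term in the $\tilde W$-equation is written with $h_{\eps}(z)$, $z\in[0,1]$, and shift $z/L$; after changing variables in that equation you would be comparing $h_{\eps}(L\ee^{t}\eta)$ against $h_{\eps}(L\eta)$, and $h_{\eps}$ has no monotonicity to exploit, so the $\ee^{t}$ factor is \emph{not} a harmless constant there. The paper instead leaves $y$ unscaled and uses the monotonicity of $W$ on $(A^{\nu},A]$ to replace $W\bigl(\xi+\tfrac{y}{L\ee^{t}}\bigr)$ by $W\bigl(\xi+\tfrac{y}{L}\bigr)$ (since $\tfrac{y}{L\ee^{t}}\le\tfrac{y}{L}$ and $W$ is non-increasing), so that the near-origin integral literally matches the near-origin term in the $\tilde W$-equation with the same $h_{\eps}(z)$ weight, and only the far part ($y>1$) is converted to $H_{\eps}(\eta)$ via $\eta=y/L$ and then compared with the power-law terms via $V_{i},W_{i}$. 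You do invoke monotonicity in your opening paragraph, so this is a fixable imprecision rather than a wrong idea, but it is the one place where the bookkeeping requires this specific device rather than a rescaling of $y$.
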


\begin{proof}
 From the non-negativity of $W$ the claim follows immediately for $\xi<A^{\nu}$ (where $W$ is identically zero). Thus it suffices to consider $\xi>A^{\nu}$. Using furthermore that $\supp W\subset \left(-\infty,A\right]$ it suffices to consider $\xi\in\left(A^{\nu},A\right]$. As $W$ is non-increasing on $\left(A^{\nu},A\right]$ we can estimate $-\left[W\left(\xi+\frac{y}{L\ee^{t}}\right)-W\left(\xi\right)\right]\leq-\left[W\left(\xi+\frac{y}{L}\right)-W\left(\xi\right)\right]$. On the other hand using the estimates on the kernel $K$ we obtain
 \begin{equation}\label{eq:special:test:estimate:1}
  \begin{split}
   &\quad-\int_{0}^{\infty}\frac{K_{\eps}\left(L\ee^{t}\xi,y\right)}{y}h_{\eps}\left(y\right)\left[W\left(\xi+\frac{y}{L\ee^{t}}\right)-W\left(\xi\right)\right]\dy\\
   &\leq -C_{2}\int_{0}^{\infty}\frac{\left(L\ee^{t}\xi+\eps\right)^{-a}\left(y+\eps\right)^{b}+\left(L\ee^{t}\xi+\eps\right)^{b}\left(y+\eps\right)^{-a}}{y}h_{\eps}\left(y\right)\left[W\left(\xi+\frac{y}{L}\right)-W\left(\xi\right)\right]\dy\\
   &\leq -C\int_{0}^{\infty}\frac{L^{-a}A^{-\nu a}\left(y+\eps\right)^{b}+L^{b}A^{\beta}\left(y+\eps\right)^{-a}}{y}h_{\eps}\left(y\right)\left[W\left(\xi+\frac{y}{L}\right)-W\left(\xi\right)\right]\dy\\
   &\leq -C\int_{0}^{1}\frac{h_{\eps}\left(y\right)}{y}\left[L^{b}A^{\beta}\left(y+\eps\right)^{-a}+L^{-a}A^{-\nu a}\left(y+\eps\right)^{b}\right]\left[W\left(\xi+\frac{y}{L}\right)-W\left(\xi\right)\right]\dy\\
   &\quad -\frac{C}{L^{\rho}}\int_{1/L}^{\infty}\frac{H_{\eps}\left(\eta\right)}{\eta}\left[L^{b}A^{\beta}+L^{-a}A^{-\nu a}\left(L\eta +\eps\right)^{b}\right]\left[W\left(\xi+\eta\right)-W\left(\xi\right)\right]\dd\eta\\
   &\leq -C\int_{0}^{1}\frac{h_{\eps}\left(y\right)}{y}\left[L^{b}A^{\beta}\left(y+\eps\right)^{-a}+L^{-a}A^{-\nu a}\left(y+\eps\right)^{b}\right]\left[W\left(\xi+\frac{y}{L}\right)-W\left(\xi\right)\right]\dy\\
   &\quad -\frac{C A^{\beta}}{L^{\rho-b}}\int_{0}^{\infty}\frac{H_{\eps}\left(\eta\right)}{\eta}\left[W\left(\xi+\eta\right)-W\left(\xi\right)\right]\dd\eta-\frac{C A^{-\nu a}}{L^{\rho+a-\max\left\{0,b\right\}}}\int_{0}^{\infty}\frac{H_{\eps}\left(\eta\right)}{\eta^{1-\max\left\{0,b\right\}}}\left[W\left(\xi+\eta\right)-W\left(\xi\right)\right]\dd\eta.
  \end{split}
 \end{equation}
 As $\xi>A^{\nu}$ we have by construction
 \begin{equation}\label{eq:special:test:estimate:2}
  \begin{split}
   &\del_{t}W\left(\xi,t\right)=\del_{t}\tilde{W}\left(\xi,t\right)=\tilde{C}\int_{0}^{1}\frac{h_{\eps}\left(z\right)}{z}\left[L^{b}A^{\beta}\left(z+\eps\right)^{-a}+L^{-a}A^{-\nu a}\left(z+\eps\right)^{b}\right]\left[\tilde{W}\left(\xi+\frac{z}{L}\right)-\tilde{W}\left(\xi\right)\right]\dz\\
   &\quad+\tilde{C}\frac{A^{\beta}}{L^{\rho-b}}\int_{0}^{\infty}\frac{1}{\eta^{1+\omega_{2}}}\left[\tilde{W}\left(\xi+\eta\right)-\tilde{W}\left(\xi\right)\right]\dd\eta+\tilde{C}\frac{A^{-\nu a}}{L^{\rho+a-\max\left\{0,b\right\}}}\int_{0}^{\infty}\frac{1}{\eta^{1+\omega_{1}}}\left[\tilde{W}\left(\xi+\eta\right)-\tilde{W}\left(\xi\right)\right]\dd\eta.
  \end{split}
 \end{equation}
 Thus in order to show \eqref{eq:special:test:2}, i.e.
 \begin{equation*}
  \begin{split}
   \del_{t}W\left(\xi,t\right)-\int_{0}^{\infty}\frac{K_{\eps}\left(L\ee^{t}\xi,y\right)}{y}h_{\eps}\left(y\right)\left[W\left(\xi+\frac{y}{L\ee^{t}},t\right)-W\left(\xi,t\right)\right]\dy\leq 0
  \end{split}
 \end{equation*}
 it is sufficient to compare the expressions in \eqref{eq:special:test:estimate:1} and \eqref{eq:special:test:estimate:2} term by term. Proceeding in the same way as in Lemma~\ref{Lem:subsolution:2} the claim follows, noting that due to the rescaling the estimates from Lemma~\ref{Lem:moment:est:stand} also hold for $H_{\eps}$.
\end{proof}

\begin{lemma}
 The change of variables from $\psi$ to $W$ (and reverse) is justified and inequality~\eqref{eq:special:test:weak} holds.
\end{lemma}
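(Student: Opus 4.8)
The plan is to make rigorous the informal statement after Lemma~\ref{Lem:ex:special:test:1} that ``by convolution in $\xi$ with $\varphi_\delta$ one may pass between $\psi$ and $W$''. The point is that $\psi(x,t)=\ee^{-(1-\rho)t}W(x/(L\ee^t),t)$ is merely bounded with compact support in $x$ (it has a jump at $x=L\ee^t A^\nu$), so it is not an admissible test function in Definition~\ref{Def:weak}; we must approximate it by genuine $C^1_c$ functions, verify the differential inequality survives the approximation, and then remove the approximation. First I would fix a small parameter $\delta\in(0,1)$ and set $W_\delta(\xi,t):=(W(\cdot,t)\ast\varphi_\delta)(\xi)$, and correspondingly $\psi_\delta(x,t):=\ee^{-(1-\rho)t}W_\delta(x/(L\ee^t),t)$. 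Since $\tilde W\in C^1([0,T],C^\infty(\R))$ by Lemma~\ref{Lem:ex:special:test:1} and the cut-off at $A^\nu$ only creates a jump on the single hyperplane $\xi=A^\nu$, the mollified $W_\delta$ lies in $C^1([0,T],C^\infty(\R))$ with support in $(-\infty,A+\delta]$, hence $\psi_\delta\in C^1_c([0,\infty)\times[0,T])$ is a legitimate test function, and plugging it into the weak formulation of the $h_\eps$-equation gives exactly the identity displayed just before \eqref{eq:special:test:1}, i.e.
\begin{equation*}
 \int_{0}^{\infty}\psi_\delta(\cdot,0)h_{\eps}\dx-\int_{0}^{\infty}\psi_\delta(\cdot,T)h_{\eps}\dx+\int_{0}^{T}\int_{0}^{\infty}\del_{t}\psi_\delta\,h_{\eps}\dx\dt=\int_{0}^{T}\int_{0}^{\infty}\Big(x\del_{x}\psi_\delta-\del_{x}\psi_\delta I_{\eps}[h_{\eps}]-(\rho-1)\psi_\delta\Big)h_{\eps}\dx\dt.
\end{equation*}

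Next I would check that the left-hand integrand, after the substitution $\xi=x/(L\ee^t)$, reproduces the bracket in \eqref{eq:special:test:weak:psi} with $W$ replaced by $W_\delta$; this is the same elementary computation (chain rule for $\del_t\psi_\delta$ and $\del_x\psi_\delta$) already used to pass from \eqref{eq:sptest3} to \eqref{eq:sptest4} and to derive \eqref{eq:special:test:weak:psi}, so it carries over verbatim. It remains to show that the bracket for $W_\delta$ is nonpositive after integration against $h_\eps$. Here I use Lemma~\ref{Lem:special:test:ineq}: \eqref{eq:special:test:2} holds pointwise for every $\xi\neq A^\nu$, so the map $\xi\mapsto \del_t W(\xi,t)-\int_0^\infty \tfrac{K_\eps(L\ee^t\xi,y)}{y}h_\eps(y)[W(\xi+\tfrac{y}{L\ee^t})-W(\xi)]\dy$ is $\le 0$ except on a null set; since the operator $W\mapsto \del_t W-\int\cdots$ is \emph{linear} in $W$ and the kernels $h_\eps(y)K_\eps(L\ee^t\xi,y)/y$ are, for $\xi\geq A^\nu$ fixed, such that mollification in $\xi$ commutes with them up to controlled errors, I would convolve the pointwise inequality with $\varphi_\delta$; the only subtlety is the translation $W(\xi+y/(L\ee^t))$, for which $(W(\cdot+y/(L\ee^t))\ast\varphi_\delta)(\xi)=W_\delta(\xi+y/(L\ee^t))$, so convolution passes through the difference operator exactly, and $\del_t$ commutes with $\ast\varphi_\delta$ as well; the term $\del_t W$ versus $\del_t \tilde W$ requires noting that the cut $\chi_{[A^\nu,\infty)}$ is time-independent so $\del_t W=(\del_t\tilde W)\chi_{[A^\nu,\infty)}$ off the jump. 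Hence the mollified bracket is $\le0$ pointwise for all $\xi$, and integrating against the nonnegative measure $\ee^{-(1-\rho)t}h_\eps(x)\dx\dt$ yields \eqref{eq:special:test:weak} with $W_\delta$ in place of $W$.

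Finally I would let $\delta\to0$. We have $W_\delta\to W$ in $L^1_{\mathrm{loc}}(\R)$ and boundedly pointwise off $\xi=A^\nu$, and $\del_t W_\delta\to\del_t W$ in the same sense, all uniformly bounded (by the bounds in the Remark after Lemma~\ref{Lem:ex:special:test:1} and the $C^1([0,T],C^\infty)$ regularity of $\tilde W$), while $h_\eps(x)\dx$ restricted to $x\in[L\ee^t A^\nu,L\ee^t A]$ is a finite measure and assigns zero mass to the single point $x=L\ee^t A^\nu$ (it has no atom there — indeed $h_\eps$ is continuous on $(0,\infty)$ by Proposition~\ref{Prop:reg:decay:stat:eps}); together with the integrability of $\int_0^\infty \tfrac{K_\eps(L\ee^t\xi,y)}{y}h_\eps(y)\dy$ for $\xi$ bounded away from $0$ (which is exactly why the cut at $A^\nu$ was introduced), dominated convergence gives \eqref{eq:special:test:weak}, and reversing the substitution recovers \eqref{eq:special:test:weak:psi}, i.e. the change of variables between $\psi$ and $W$ is justified. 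The main obstacle I anticipate is the bookkeeping around the jump at $\xi=A^\nu$: one must be careful that mollification does not smear mass across it in a way that spoils the pointwise inequality of Lemma~\ref{Lem:special:test:ineq} (which is stated only for $\xi\neq A^\nu$), and that the singular kernel $K_\eps(L\ee^t\xi,y)/y\sim \xi^{-a}$ stays integrable — both are handled precisely by keeping $\xi\geq A^\nu>0$ and by the absence of an atom of $h_\eps$ at the cut point, but this is where the argument needs care rather than in any hard estimate.
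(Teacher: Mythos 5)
Your high-level plan matches the paper's (mollify $W$, test with $\psi_\delta\in C^1_c$, pass to the limit $\delta\to0$, then invoke Lemma~\ref{Lem:special:test:ineq}), and you correctly flag the neighbourhood of $\xi=A^\nu$ as the danger zone; but the two technical steps you sketch have genuine problems.

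First, the intermediate claim that after convolving the pointwise inequality \eqref{eq:special:test:2} with $\varphi_\delta$, ``the mollified bracket is $\le0$ pointwise for all $\xi$,'' does not hold. The translation part of the operator does commute with $\ast\varphi_\delta$ as you say, but the multiplicative factor $K_\eps(L\ee^t\xi,y)$ depends on $\xi$, so the operator is not translation-invariant: writing $\mathcal{L}[W](\xi)=\int_0^\infty\tfrac{K_\eps(L\ee^t\xi,y)}{y}h_\eps(y)[W(\xi+\tfrac{y}{L\ee^t})-W(\xi)]\dy$, one has $(\mathcal{L}[W]\ast\varphi_\delta)(\xi)\neq\mathcal{L}[W_\delta](\xi)$, with a commutation error involving $\bigl(K_\eps(L\ee^t(\xi-\eta),y)-K_\eps(L\ee^t\xi,y)\bigr)\cdot\bigl[W(\xi-\eta+\tfrac{y}{L\ee^t})-W(\xi-\eta)\bigr]$. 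Near $\xi=A^\nu$ the second factor is of order $1$ for arbitrarily small $y$, so after pairing with the $\sim h_\eps(y)/y$ kernel the error is not pointwise small. You acknowledge ``controlled errors'' in one sentence but then assert that ``convolution passes through the difference operator exactly'' and draw the pointwise conclusion, which is inconsistent. Fortunately the step is also unnecessary: the paper does not transfer the pointwise inequality to $W_\delta$. Once the change of variables is established for $W$ itself, \eqref{eq:special:test:weak} follows directly by multiplying the a.e.\ inequality of Lemma~\ref{Lem:special:test:ineq} by the nonnegative measure $\ee^{-(1-\rho)t}h_\eps\dx\dt$ and integrating.

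Second, and more seriously, the limit $\delta\to0$ is where essentially all of the proof's substance lies, and your one-line appeal to dominated convergence does not supply a dominating function. The obstruction is not, as you suggest, a possible atom of $h_\eps$ at $x=L\ee^t A^\nu$, but the interaction of the jump of $W$ with the singular $y$-kernel: for $\xi$ near $A^\nu$ and small $y$, $|W_\delta(\xi+\tfrac{y}{L\ee^t})-W_\delta(\xi)|$ is of order $1$, and $\|\del_\xi W_\delta\|_\infty\to\infty$, so there is no $\delta$-independent pointwise majorant in $\xi$. The paper handles this by splitting the $\xi$-integral into $[0,A^\nu-\esp]$, $[A^\nu+\esp,\infty)$ and the band $[A^\nu-\esp,A^\nu+\esp]$, and proving, on the band, the $\delta$-uniform Fubini estimate
\begin{equation*}
\int_{A^\nu-\esp}^{A^\nu+\esp}\bigl|W_\delta\bigl(\xi+\tfrac{y}{L\ee^t}\bigr)-W_\delta(\xi)\bigr|\dd\xi\le C\min\Bigl\{\tfrac{y}{L\ee^t},\esp\Bigr\},
\end{equation*}
whose $\min$ with $y/(L\ee^t)$ is exactly what cancels the $1/y$ singularity of the kernel and, together with the continuity of $h_\eps$ and the moment estimates, yields an integrable majorant independent of $\delta$. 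Without an estimate of this type the passage to the limit is not justified, and that is precisely the content of the lemma.
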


\begin{proof}
 We define $W_{\delta}:=W\left(\cdot,t\right)\ast \varphi_{\delta}\left(\cdot\right)$. Then $W_{\delta}\left(\cdot,t\right)$ is smooth for all $t$ and the change of variables from the corresponding $\psi_{\delta}$ to $W_{\delta}$ (and reverse) is justified. We next show that we can pass to the limit $\delta\to 0$ in the left hand side of \eqref{eq:special:test:weak}. Then from Lemma~\ref{Lem:special:test:ineq} the claim follows.

 As $\del_{t}W_{\delta}\left(\frac{x}{L\ee^{t}},t\right)$ is compactly supported and uniformly bounded (in $\delta$) it suffices to consider
 \begin{equation}\label{eq:special:test:weak:2}
  \begin{split}
   \int_{0}^{\infty}h_{\eps}\left(x\right)\int_{0}^{\infty}\frac{K_{\eps}\left(x,y\right)}{y}h_{\eps}\left(y\right)\left[W_{\delta}\left(\frac{x+y}{L\ee^{t}},t\right)-W_{\delta}\left(\frac{x}{L\ee^{t}},t\right)\right]\dy\dx.
  \end{split}
 \end{equation}

 Changing variables, interchanging the order of integration and splitting the integral we have to consider
 \begin{equation*}
  \begin{split}
   &\quad\int_{0}^{\infty}\int_{0}^{\infty}\frac{1}{L\ee^{t}}\frac{K_{\eps}\left(L\ee^{t}\xi,y\right)}{y}h_{\eps}\left(L\ee^{t}\xi\right)h_{\eps}\left(y\right)\left[W_{\delta}\left(\xi+\frac{y}{L\ee^{t}}\right)-W_{\delta}\left(\xi\right)\right]\dd\xi\dy\\
   &=\int_{0}^{\infty}\int_{0}^{A^{\nu}-\esp}\left(\cdots\right)\dd\xi\dy+\int_{0}^{\infty}\int_{A^{\nu}+\esp}^{\infty}\left(\cdots\right)\dd\xi\dy+\int_{0}^{\infty}\int_{A^{\nu}-\esp}^{A^{\nu}+\esp}\left(\cdots\right)\dd\xi\dy\\
   &=:\left(I\right)+\left(II\right)+\left(III\right),
  \end{split}
 \end{equation*}
 where $2\delta<\esp$ is a fixed and sufficiently small constant. As $W$ and $W_{\delta}$ are compactly supported and bounded (uniformly in $\delta$) it is straightforward to pass to the limit $\delta\to 0$ in the $\xi$-integrals (for fixed $y>0$). It thus remains to show that it is also possible to pass to the limit $\delta\to 0$ in the $y$-integral while this will be done by using Lebesgue's Theorem. We therefore estimate the three integrands separately. First we have using that $W$ is non-negative, bounded and has support in $\left[A^{\nu}-\delta,A+\delta\right]$ as well as the estimate for $K$ and Lemma~\ref{Lem:moment:est:stand} that
 \begin{equation*}
  \begin{split}
   &\quad\abs{\int_{0}^{A^{\nu}-\esp}\frac{1}{L\ee^{t}}\frac{K_{\eps}\left(L\ee^{t}\xi,y\right)}{y}h_{\eps}\left(L\ee^{t}\xi\right)h_{\eps}\left(y\right)\left[W_{\delta}\left(\xi+\frac{y}{L\ee^{t}}\right)-W_{\delta}\left(\xi\right)\right]\dd\xi}\\
   &\leq \frac{C}{L\ee^{t}}\chi_{\left[L\ee^{t}\left(\esp-\delta\right),\infty\right)}\left(y\right)\int_{0}^{A^{\nu}-\esp}\frac{\left(L\ee^{t}\xi+\eps\right)^{-a}\left(y+\eps\right)^{b}}{y}h_{\eps}\left(L\ee^{t}\xi\right)h_{\eps}\left(y\right)W_{\delta}\left(\xi+\frac{y}{L\ee^{t}},t\right)\dd\xi\\
   &\quad +\frac{C}{L\ee^{t}}\chi_{\left[L\ee^{t}\left(\esp-\delta\right),\infty\right)}\left(y\right)\int_{0}^{A^{\nu}-\esp}\frac{\left(L\ee^{t}\xi+\eps\right)^{b}\left(y+\eps\right)^{-a}}{y}h_{\eps}\left(L\ee^{t}\xi\right)h_{\eps}\left(y\right)W_{\delta}\left(\xi+\frac{y}{L\ee^{t}},t\right)\dd\xi\\
   &\leq C\left(\eps, L, A\right)\frac{h_{\eps}\left(y\right)}{y}\chi_{\left[L\ee^{t}\frac{\esp}{2},\infty\right)}\left(y\right)\int_{0}^{A^{\nu}-\esp}h_{\eps}\left(L\ee^{t}\xi\right)\dd\xi\leq C\left(\eps, L, A\right) \left(L\ee^{t}\left(A^{\nu}-\esp\right)\right)^{1-\rho}\frac{h_{\eps}\left(y\right)}{y}\chi_{\left[L\ee^{t}\frac{\esp}{2},\infty\right)}\left(y\right).
  \end{split}
 \end{equation*} 
As the right hand side is independent of $\delta$ and integrable due to Lemma~\ref{Lem:moment:est:stand} we can pass to the limit $\delta\to 0$ in $(I)$.

To estimate the integrand in $(II)$ note that we can bound $h_{\eps}\left(L\ee^{t}\xi\right)$ for $\xi\in\left[A^{\nu},A+\delta\right]$ uniformly in $t$ for $t\in\left[0,T\right]$ as $h_{\eps}$ is continuous. Furthermore as $W$ is differentiable in $\xi$ on $\left[A^{\nu}+\esp,\infty\right)$ with bounded derivative (for $\esp>0$ fixed and depending on $\kappa$) we can bound the $L^{\infty}$-norm of $W_{\delta}$ and $\del_{\xi}W_{\delta}$ by the corresponding expression of $W$. Thus we obtain 
 \begin{equation*}
  \begin{split}
   &\quad\abs{\int_{A^{\nu}+\esp}^{A+\delta}\frac{1}{L\ee^{t}}\frac{K_{\eps}\left(L\ee^{t}\xi,y\right)}{y}h_{\eps}\left(L\ee^{t}\xi\right)h_{\eps}\left(y\right)\left[W_{\delta}\left(\xi+\frac{y}{L\ee^{t}}\right)-W_{\delta}\left(\xi\right)\right]\dd\xi}\\
   &\leq \frac{C}{L\ee^{t}}\int_{A^{\nu}+\esp}^{A+\delta}\frac{\left(L\ee^{t}\xi+\eps\right)^{-a}\left(y+\eps\right)^{b}+\left(L\ee^{t}\xi+\eps\right)^{b}\left(y+\eps\right)^{-a}}{y}h_{\eps}\left(L\ee^{t}\xi\right)h_{\eps}\left(y\right)\abs{W_{\delta}\left(\xi+\frac{y}{L\ee^{t}}\right)-W_{\delta}\left(\xi\right)}\dd\xi\\
   &\leq C\left(L,A,\esp,\eps,\kappa \right)\left(\norm{W}_{L^{\infty}}+\norm{\del_{\xi}W}_{L^{\infty}\left(\left[A^{\nu}+\esp,\infty\right)\right)}\right)\frac{\left(y+\eps\right)^{b}+\left(y+\eps\right)^{-a}}{y}h_{\eps}\left(y\right)\min\left\{\frac{y}{L\ee^{t}},A-A^{\nu}+1-\esp\right\}.
  \end{split}
 \end{equation*}
 Again the right hand side is independent of $\delta$ and integrable due to Lemma~\ref{Lem:moment:est:stand}. Thus we also can pass to the limit $\delta\to 0$ in the $y$-integral in $(II)$. Before estimating the integrand of $(III)$ we first derive an estimate for the expression $\int_{A^{\nu}-\esp}^{A^{\nu}+\esp}\abs{W_{\delta}\left(\xi+\frac{y}{L\ee^{t}}\right)-W_{\delta}\left(\xi\right)}\dd\xi$, i.e. for $y\in \left[0,L\ee^{t}\esp\right]$ we have 
 \begin{equation*}
  \begin{split}
   &\quad\int_{A^{\nu}-\esp}^{A^{\nu}+\esp}\abs{W_{\delta}\left(\xi+\frac{y}{L\ee^{t}}\right)-W_{\delta}\left(\xi\right)}\dd\xi\leq \int_{-\delta}^{\delta}\varphi_{\delta}\left(\eta\right)\int_{A^{\nu}-\esp}^{A^{\nu}+\esp}\abs{W\left(\xi-\eta+\frac{y}{L\ee^{t}}\right)-W\left(\xi-\eta\right)} \dd\xi\dd\eta.
  \end{split}
 \end{equation*}
We thus consider
\begin{equation*}
 \begin{split}
  &\quad \int_{A^{\nu}-\esp}^{A^{\nu}+\esp}\abs{W\left(\xi-\eta+\frac{y}{L\ee^{t}}\right)-W\left(\xi-\eta\right)} \dd\xi\\
  &= \int_{A^{\nu}-\esp}^{A^{\nu}+\eta}W\left(\xi-\eta+\frac{y}{L\ee^{t}}\right)\dd\xi+\int_{A^{\nu}+\eta}^{A^{\nu}+\esp}W\left(\xi-\eta\right)-W\left(\xi-\eta+\frac{y}{L\ee^{t}}\right)\dd\xi\\
  &= \int_{A^{\nu}-\esp+\frac{y}{L\ee^{t}}-\eta}^{A^{\nu}+\frac{y}{L\ee^{t}}}W\left(\xi\right)\dd\xi+\int_{A^{\nu}}^{A^{\nu}+\esp-\eta}W\left(\xi\right)\dd\xi-\int_{A^{\nu}+\frac{y}{L\ee^{t}}}^{A^{\nu}+\esp-\eta}W\left(\xi\right)\dd\xi-\int_{A^{\nu}+\esp-\eta}^{A^{\nu}+\esp-\eta+\frac{y}{L\ee^{t}}}W\left(\xi\right)\dd\xi\\
  &\leq 2\int_{A^{\nu}}^{A^{\nu}+\frac{y}{L\ee^{t}}}W\left(\xi\right)\dd\xi-\int_{A^{\nu}+\esp-\eta}^{A^{\nu}+\esp-\eta+\frac{y}{L\ee^{t}}}W\left(\xi\right)\dd\xi\leq 3\norm{W}_{L^{\infty}}\frac{y}{L\ee^{t}}.
 \end{split}
\end{equation*}
This shows $\int_{A^{\nu}-\esp}^{A^{\nu}+\esp}\abs{W_{\delta}\left(\xi+\frac{y}{L\ee^{t}}\right)-W_{\delta}\left(\xi\right)}\dd\xi\leq 3\norm{W}_{L^{\infty}}\frac{y}{L\ee^{t}}$, i.e. for $y\in \left[0,L\ee^{t}\esp\right]$.

 On the other hand for $y>L\ee^{t}\esp$ we have the trivial estimate 
 \begin{equation*}
  \int_{A^{\nu}-\esp}^{A^{\nu}+\esp}\abs{W_{\delta}\left(\xi+\frac{y}{L\ee^{t}}\right)-W_{\delta}\left(\xi\right)}\dd\xi\leq 2\norm{W_{\delta}}_{L^{\infty}}\esp\leq 2\norm{W}_{L^{\infty}}\esp
 \end{equation*}
 and thus altogether
 \begin{equation*}
  \int_{A^{\nu}-\esp}^{A^{\nu}+\esp}\abs{W_{\delta}\left(\xi+\frac{y}{L\ee^{t}}\right)-W_{\delta}\left(\xi\right)}\dd\xi\leq C \min\left\{\frac{y}{L\ee^{t}},\esp\right\}.
 \end{equation*}
 Using this and again that $h_{\eps}$ is continuous we can also estimate the integrand in $(III)$, i.e.
 \begin{equation*}
  \begin{split}
   &\quad \int_{A^{\nu}-\esp}^{A^{\nu}+\esp}\frac{1}{L\ee^{t}}\frac{K_{\eps}\left(L\ee^{t}\xi,y\right)}{y}h_{\eps}\left(L\ee^{t}\xi\right)h_{\eps}\left(y\right)\left[W_{\delta}\left(\xi+\frac{y}{L\ee^{t}}\right)-W_{\delta}\left(\xi\right)\right]\dd\xi\\
   &\leq C\left(L\right)\int_{A^{\nu}-\esp}^{A^{\nu}+\esp}\frac{\left(L\ee^{t}\xi+\eps\right)^{-a}\left(y+\eps\right)^{b}+\left(L\ee^{t}\xi+\eps\right)^{b}\left(y+\eps\right)^{-a}}{y}h_{\eps}\left(L\ee^{t}\xi\right)h_{\eps}\left(y\right)\abs{W_{\delta}\left(\xi+\frac{y}{L\ee^{t}}\right)-W_{\delta}\left(\xi\right)}\dd\xi\\
   &\leq C\left(L,A,\eps\right) \sup_{\xi\in \left[A^{\nu}-\esp,A^{\nu}+\esp\right]}\abs{h_{\eps}\left(L\ee^{t}\xi\right)}\frac{\left(y+\eps\right)^{b}+\left(y+\eps\right)^{-a}}{y}h_{\eps}\left(y\right)\min\left\{\frac{y}{L\ee^{t}},\esp\right\}.
  \end{split}
 \end{equation*}
 According to Lemma~\ref{Lem:moment:est:stand} the right hand side is integrable and thus we can also bound the integrand in $(III)$ independently of $\delta$ by some integrable function. Thus due to Lebesgue's Theorem we can pass to the limit $\delta\to 0$ in \eqref{eq:special:test:weak:2} and the claim then follows using Lemma~\ref{Lem:special:test:ineq}.
\end{proof}
 
 In the following we will now derive an estimate from below on $W$.

\subsubsection{Lower bound on $W$}

\begin{lemma}\label{L.westimate}
There exists $\sigma \in (\max\left\{b,\nu\right\},1)$ and  $\theta>0$ such that
 \begin{equation*}
  \begin{split}
   1-W(A-A^{\sigma}) &\leq C t A^{-\theta}
  \end{split}
 \end{equation*}
 for sufficiently large $A$.
 \end{lemma}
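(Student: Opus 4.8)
The plan is to read $\tilde W$ (hence $W$) off as the image of its initial datum under an explicit, $\xi$--autonomous linear convolution semigroup, and to translate the claimed bound into a tail estimate for the corresponding fundamental solution, which can then be controlled exactly by the kind of estimates already used in Lemma~\ref{Lem:subsolution:2:integral:estimate}.

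First I would remove the cut--off $W=\tilde W\,\chi_{[A^{\nu},\infty)}$: since $\sigma<1$ and $\nu<1$ one has $A-A^{\sigma}\geq A/2>A^{\nu}$ for $A$ large, so $W(A-A^{\sigma},t)=\tilde W(A-A^{\sigma},t)$ and it suffices to bound $1-\tilde W(A-A^{\sigma},t)$. Next, the equation for $\tilde W$ in Lemma~\ref{Lem:ex:special:test:1} has the form $\del_{t}\tilde W(\xi,t)=\int_{0}^{\infty}\bigl(\tilde W(\xi+\eta,t)-\tilde W(\xi,t)\bigr)\,\dd N(\eta)$ for a \emph{$\xi$-- and $t$--independent} positive measure $N=N_{1}+N_{2}+N_{3}$ on $(0,\infty)$: for $i=1,2$, $N_{i}$ has density $c_{i}(\eps,L)\,A^{e_{i}}\eta^{-1-\omega_{i}}$ with $e_{1}=-\nu a<0$ and $e_{2}=\beta$; and $N_{3}$ (the $h_{\eps}$--term after $\eta=z/L$) is supported in $(0,1/L]$ and, though possibly infinite, has finite first moment $M:=\int_{0}^{1/L}\eta\,\dd N_{3}(\eta)$, which by \eqref{lepsdef} satisfies $M\leq C(\eps,L)(A^{\beta}+A^{-\nu a})\leq C(\eps,L)A^{\tilde b}$ for $A\geq1$. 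Since $\int_{0}^{1}\eta\,\dd N(\eta)+N((1,\infty))<\infty$, exactly as in the proof of Lemma~\ref{Lem:subsolution:2:integral:estimate} the solution is $\tilde W(\xi,t)=\int_{0}^{\infty}\tilde W(\xi+\eta,0)\,p_{t}(\eta)\,\deta$, where $p_{t}$ is the fundamental solution furnished by Proposition~\ref{Prop:ex:dual:sum} (a probability measure on $[0,\infty)$, namely the law of the increasing pure--jump process with jump measure $N$). Using $\int_{0}^{\infty}p_{t}=1$ this gives $1-\tilde W(\xi,t)=\int_{0}^{\infty}\bigl(1-\tilde W(\xi+\eta,0)\bigr)p_{t}(\eta)\,\deta$.

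Then I would use that $\tilde W(\cdot,0)=\chi_{(-\infty,A-\kappa]}\ast^{3}\varphi_{\kappa/3}$ equals $1$ on $(-\infty,A-2\kappa]$ and satisfies $0\leq\tilde W(\cdot,0)\leq1$: for $\xi=A-A^{\sigma}$ the integrand $1-\tilde W(A-A^{\sigma}+\eta,0)$ vanishes when $\eta\leq A^{\sigma}-2\kappa$ and is $\leq1$ otherwise, hence for $A$ large
\begin{equation*}
1-\tilde W(A-A^{\sigma},t)\;\leq\;\int_{A^{\sigma}-2\kappa}^{\infty}p_{t}(\eta)\,\deta\;\leq\;\int_{A^{\sigma}/2}^{\infty}p_{t}(\eta)\,\deta .
\end{equation*}
With $R:=A^{\sigma}/2$ I would estimate this last tail by the standard big--jump/small--jump splitting $N=N|_{[R/2,\infty)}+N|_{(0,R/2)}$ of the subordinator, which yields
\begin{equation*}
\int_{R}^{\infty}p_{t}(\eta)\,\deta\;\leq\;t\,N\bigl([R/2,\infty)\bigr)+\frac{2t}{R}\int_{0}^{R/2}\eta\,\dd N(\eta)\;\leq\;C(\eps,L)\,t\Bigl(A^{-\nu a}R^{-\omega_{1}}+A^{\beta}R^{-\omega_{2}}+\tfrac{M}{R}\Bigr),
\end{equation*}
using that $N_{3}$ is supported in $(0,1/L]$ (so it contributes nothing to the big jumps once $A$ is large) and that $\int_{0}^{R/2}\eta\,\dd N_{i}(\eta)=C\,A^{e_{i}}R^{1-\omega_{i}}$ for $i=1,2$. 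Inserting $R\sim A^{\sigma}$, $\omega_{2}=\rho$ and $M\leq C(\eps,L)A^{\tilde b}$ gives
\begin{equation*}
1-\tilde W(A-A^{\sigma},t)\;\leq\;C(\eps,L)\,t\Bigl(A^{-\nu a-\sigma\omega_{1}}+A^{\beta-\sigma\rho}+A^{\tilde b-\sigma}\Bigr).
\end{equation*}

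Finally I would fix the parameters so that all three exponents are negative: $-\nu a-\sigma\omega_{1}$ is always negative; $\beta-\sigma\rho<0$ requires $\sigma>\beta/\rho$; and $\tilde b-\sigma<0$ requires $\sigma>\tilde b$. Since $\rho>\max\{0,b\}$ one has $\beta<\rho$ (so $\beta/\rho<1$) and $\tilde b<1$, whence $\max\{b,\nu,\beta/\rho,\tilde b\}<1$ while trivially $\max\{b,\nu,\beta/\rho,\tilde b\}\geq\max\{b,\nu\}$; one may therefore pick $\sigma\in\bigl(\max\{b,\nu,\beta/\rho,\tilde b\},1\bigr)\subset(\max\{b,\nu\},1)$. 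Setting $\theta:=\min\{\nu a+\sigma\omega_{1},\ \sigma\rho-\beta,\ \sigma-\tilde b\}>0$, the three bounds combine to $1-\tilde W(A-A^{\sigma},t)\leq C\,t\,A^{-\theta}$ for $A$ large, with $C$ depending on $\eps,L,\kappa,\nu,\rho,a,b$ but not on $A$ or on $t\in[0,T]$. The crux is not any single estimate but this balancing of prefactors: one must check that the growth $A^{\beta}$ of the density of $N_{2}$ and the growth $M\lesssim A^{\tilde b}$ of the first moment of the $h_{\eps}$--part of the kernel are strictly beaten by the algebraic gains $A^{-\sigma\rho}$, respectively $A^{-\sigma}$, produced by evaluating $\tilde W$ at distance $\sim A^{\sigma}$ to the left of $A$; this is precisely where the admissible range of $\sigma$ and the hypothesis $\rho>\max\{0,b\}$ enter. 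A minor technical point is the identification of $\tilde W$ with the convolution semigroup together with the listed properties of $p_{t}$ ($p_{t}\geq0$, $\int p_{t}=1$, $\supp p_{t}\subset[0,\infty)$), but this is supplied by Proposition~\ref{Prop:ex:dual:sum} and is already implicit in the proof of Lemma~\ref{Lem:subsolution:2:integral:estimate}.
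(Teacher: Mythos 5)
Your proof is correct, but it follows a genuinely different route from the paper's. The paper exploits the factorisation $G=-\del_{\xi}\tilde W=G_{1,1}\ast G_{1,2}\ast G_{2}$ coming from the splitting of the jump kernel $N=N_{1}+N_{2}+N_{3}$ by \emph{kernel type}, estimates each factor's left tail via a Laplace-transform argument (Lemma~\ref{Lem:der:int:est}, testing with $\ee^{Z(\xi-\kappa/3)}$), and then recombines via the convolution Lemma~\ref{Lem:int:est:conolution}. You instead keep $N$ intact, identify $\tilde W(\xi,t)=\int_{0}^{\infty}\tilde W(\xi+\eta,0)\,p_{t}(\dd\eta)$ with $p_{t}$ the law of the subordinator with Lévy measure $N$, and split $N$ by \emph{jump size}, using the standard big-jump/small-jump bound
$\mathbb{P}[X_t\geq R]\leq tN([R/2,\infty))+\tfrac{2t}{R}\int_{0}^{R/2}\eta\,\dd N(\eta)$.
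This is more elementary (Markov's inequality rather than Laplace transforms), avoids the auxiliary convolution lemma, and moreover bypasses the extra $C\kappa^{\mu}A^{-\mu\sigma}$ term the paper picks up from mollification, because you absorb the $\kappa$-smoothing directly into the support of $1-\tilde W(\cdot,0)$. The resulting exponents and the admissible range $\sigma\in(\max\{b,\nu,\beta/\rho,\tilde b\},1)\subset(\max\{b,\nu\},1)$ coincide with the paper's.

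One point to tighten: you state the constant $C$ depends on $\eps$ and $L$. For the iteration argument (and for the lemma to be usable uniformly in $\eps$) one needs $C$ independent of $\eps$. Your estimates do in fact deliver this, because $c_{1}(\eps,L)=CL^{-(a+\omega_1)}$, $c_{2}(\eps,L)=CL^{-(\rho-b)}$ and $M\leq C(A^{\beta}+A^{-\nu a})$ \emph{with a constant independent of $\eps$ and $L$} by the definition \eqref{lepsdef} of $L_\eps$; since the exponents of $L$ are positive and $L$ is bounded below, these prefactors are uniformly bounded. The paper makes this check explicitly in its final case distinction ($L=1$ versus $L=L_\eps$); you should too, as it is the reason the lemma can be invoked uniformly in $\eps$ in the iteration that follows.
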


\begin{proof}
 From the construction in Section~\ref{Sec:existence:results} we know that $\tilde{W}$ can be written as $\tilde{W}\left(\xi,t\right)=\int_{\xi}^{\infty}G\left(\eta,t\right)\dd\eta$ with $G=-\del_{\xi}\tilde{W}=G_{1,1}\ast G_{1,2}\ast G_{2}$, where $G_{1,1},G_{1,2}$ and $G_{2}$ solve
 \begin{equation}\label{eq:subsolepsder}
  \begin{split}
   \del_{t}G_{1,1}&=\frac{C A^{-\nu a}}{L^{\rho+a-\max\left\{0,b\right\}}}\int_{0}^{\infty}\frac{1}{\eta^{1+\omega_{1}}}\left[G_{1,1}\left(\xi+\eta\right)-G_{1,1}\left(\xi\right)\right]\dd\eta\\
   G_{1,1}\left(\cdot,0\right)&=\delta\left(\cdot-A+\kappa\right)\ast\varphi_{\kappa/3}\\
   \del_{t}G_{1,2}&=\frac{CA^{\beta}}{L^{\rho-b}}\int_{0}^{\infty}\frac{1}{\eta^{1+\omega_{2}}}\left[G_{1,2}\left(\xi+\eta\right)-G_{1,2}\left(\xi\right)\right]\dd\eta\\
   G_{1,2}\left(\cdot,0\right)&=\delta\left(\cdot\right)\ast\varphi_{\kappa/3}\\
   \del_{t}G_{2}&=C\int_{0}^{1}\frac{h_{\eps}\left(z\right)}{z}\left[\frac{A^{-\nu a}}{L^{a}}\left(z+\eps\right)^{b}+A^{\beta}L^{\beta}\left(z+\eps\right)^{-a}\right]\cdot\left[G_{2}\left(\xi+\frac{z}{L}\right)-G_{2}\left(\xi\right)\right]\dz\\
   G_{2}\left(\cdot,0\right)&=\delta\left(\cdot\right)\ast\varphi_{\kappa/3}.
  \end{split}
 \end{equation}
Then one has from Lemma~\ref{Lem:der:int:est} for any $\mu\in\left(0,1\right)$:
\begin{equation*}
 \begin{split}
  \int_{-\infty}^{-D}G_{1,2}\left(\xi,t\right)\dd\xi\leq C\left(\frac{\kappa}{D}\right)^{\mu}+\frac{C A^{\beta}t}{L^{\rho-b}D^{\omega_{2}}}
 \end{split}
\end{equation*}
and
\begin{equation*}
 \begin{split}
  \int_{-\infty}^{-D+A}G_{1,1}\left(\xi,t\right)\dd\xi&=\int_{-\infty}^{\left(A-\kappa\right)-\left(D-\kappa\right)}G_{1,1}\left(\xi,t\right)\dd\xi \leq C\left(\frac{\kappa}{D-\kappa}\right)^{\mu}+\frac{C A^{-\nu a}t}{L^{\rho+a-\max\left\{0,b\right\}}\left(D-\kappa\right)^{\omega_{1}}}\\
  &\leq C\left(\frac{\kappa}{D}\right)^{\mu}+\frac{C A^{-\nu a}t}{L^{\rho+a-\max\left\{0,b\right\}}D^{\omega_{1}}}.
 \end{split}
\end{equation*}
In the last step we used that for any $\delta\in\left(0,1\right)$ and $D\geq 1$, $\kappa\leq 1/2$ it holds $\left(D-\kappa\right)^{-\delta}\leq 2^{\delta} D^{-\delta}$. One thus needs an estimate for $G_{2}$. This will be quite similar to the proof of Lemma~\ref{Lem:der:int:est} but due to the different behaviour for $L_{\eps}\to 0$ and $L_{\eps}\not \to 0$ we sketch this here again. Defining $\tilde{G}_{2}\left(p,t\right):=\int_{\R}G_{2}\left(\xi,t\right)\ee^{p\left(\xi-\kappa/3\right)}\dd\xi$ and multiplying the equation for $G_{2}$ in \eqref{eq:subsolepsder} by $\ee^{p\left(\xi-\kappa/3\right)}$ and integrating one obtains
\begin{equation*}
 \begin{split}
  \del_{t}\tilde{G}_{2}\left(p,t\right)&=C\int_{0}^{1}\frac{h_{\eps}\left(z\right)}{z}\left[\left(z+\eps\right)^{-a}L^{b}A^{\beta}+\left(z+\eps\right)^{b}L^{-a}A^{-\nu a}\right]\cdot \left[\ee^{-\frac{pz}{L}}-1\right]\dz \tilde{G}_{2}\left(p,t\right)\\
  &=:M\left(p,L\right)\tilde{G}_{2}\left(p,t\right). 
 \end{split}
\end{equation*}
 Thus $\tilde{G}_{2}\left(p,t\right)=\int_{\R}\varphi_{\kappa/3}\left(\xi\right)\ee^{p\left(\xi-\kappa/3\right)}\dd\xi\exp\left(-t\abs{M\left(p,L\right)}\right)$ and one can estimate:
 \begin{equation*}
  \begin{split}
   \abs{M\left(p,L\right)}&\leq C\int_{0}^{1}\frac{h_{\eps}\left(z\right)}{z}\left[\left(z+\eps\right)^{-a}L^{b}A^{\beta}+\left(z+\eps\right)^{-a}L^{-a}A^{-\nu a}\right]\cdot \frac{pz}{L}\dz\\
   &=C p\left(L^{b-1}A^{\beta}\mu_{\eps}+L^{-a-1}A^{-\nu a}\lambda_{\eps}\right)\leq Cp\left(A^{\beta}+A^{-\nu a}\right).
  \end{split}
 \end{equation*}
For the last step note that due to our notation either $L=L_{\eps}$ (in the case $L_{\eps}\not\to 0$) and then the estimate is due to the definition of $L_{\eps}$. If $L=1$ (in the case $L_{\eps}\to 0$) one can assume without loss of generality that $\eps$ is such small that $L_{\eps}\leq 1$ (and thus by definition also $\lambda_{\eps},\mu_{\eps}\leq 1$). Using this and inserting $p:=\frac{1}{D}$ we obtain in the same way as in the proof of Lemma~\ref{Lem:der:int:est}:
\begin{equation*}
 \begin{split}
  \int_{-\infty}^{-D}G_{2}\left(\xi,t\right)\dd\xi\leq C\left(\left(\frac{\kappa}{D}\right)^{\mu}+\frac{t}{D}\left(A^{\beta}+A^{-\nu a}\right)\right).
 \end{split}
\end{equation*} 
Using these estimates on $G_{1,1}$, $G_{1,2}$ and $G_{2}$ one obtains from Lemma~\ref{Lem:int:est:conolution} (note also Remark~\ref{Rem:est:conv}):
\begin{equation*}
 \begin{split}
  1-\tilde{W}\left(A-D\right)&=\int_{-\infty}^{A-D}\left(G_{1,1}\ast G_{1,2}\ast G_{2}\right)\dd\xi\leq \int_{-\infty}^{A-\frac{D}{4}}G_{1,1}\dd\xi+\int_{-\infty}^{-\frac{D}{4}}G_{1,2}\dd\xi+\int_{-\infty}^{-\frac{D}{4}}G_{2}\dd\xi\\
  &\leq C\frac{\kappa^{\mu}}{D^{\mu}}+\frac{C A^{-\nu a}t}{L^{\rho+a-\max\left\{0,b\right\}}D^{\omega_{1}}}+\frac{C A^{\beta}t}{L^{\rho-b}D^{\omega_{2}}}+\frac{C t\left(A^{\beta}+A^{-\nu a}\right)}{D}.
 \end{split}
\end{equation*}
Choosing $D=A^{\sigma}$ (with $A\geq 1$) one has
\begin{equation*}
 1-\tilde{W}\left(A-A^{\sigma}\right)\leq C\kappa^{\mu}A^{-\mu\sigma}+\frac{Ct}{L^{a+\omega_{1}}}A^{-\nu a-\sigma \omega_{1}}+\frac{Ct}{L^{\rho-b}}A^{\beta -\sigma \omega_{2}}+C t\left(A^{\beta-\sigma}+A^{-\nu a -\sigma}\right).
\end{equation*}
In the case $L=1$ (i.e. $L_{\eps}\to 0$) it suffices to consider the exponents of $A$:
\begin{itemize}
 \item $-\mu\sigma<0$, as $\mu,\sigma>0$,
 \item $-\nu a-\sigma\omega_{1}<0$, as $\omega_{1},a>0$,
 \item $\beta-\sigma\omega_{2}=\beta-\sigma\rho=\begin{cases}
                                                 b-\sigma \rho & b\geq 0\\
                                                 \nu b-\sigma \rho & b<0
                                                \end{cases}
                                                <0$, independently of the sign of $b$ if we choose $\sigma$ sufficiently close to $1$ and $\sigma>\nu$ (as $b<\rho$).
 \item $\beta-\sigma=\begin{cases}
                      b-\sigma & b\geq 0\\
                      \nu b-\sigma & b<0
                     \end{cases}
                     <0$, independently of the sign of $b$ if we choose $\sigma>b$ as $\nu<1$ and $b<1$ (note that this choice of $\sigma$ does not collide with the choice made before)
 \item $-\nu a -\sigma <0$, as $a,\sigma>0$.
\end{itemize}
Thus, taking $-\theta$ to be the maximum of the (negative) exponents proves the claim in this case. 

If $L=L_{\eps}$ (i.e. $L_{\eps}\not\to 0$) one has to consider also the exponents of $L$:
\begin{itemize}
 \item $a+\omega_{1}>0$, as $a,\omega_{1}>0$,
 \item $\rho-b>0$, as by assumption $b<\rho$.
\end{itemize}
Thus either the two terms containing $L=L_{\eps}$ are bounded (if $L_{\eps}$ is bounded) or converge to zero (if $L_{\eps}\to \infty$) and so in both cases with the same $\theta>0$ as above the claim follows.
\end{proof}

\subsubsection{The iteration argument}\label{subsec:iterate}
In this section we will show Proposition~\ref{Prop:Hepslowerbound}. We therefore define 
\begin{equation*}
 F_{\eps}\left(X\right):=\int_{0}^{X}H_{\eps}\left(Y\right)\dY\quad \text{while for }L_{\eps}\to 0 \text{ this reduces to } \quad F_{\eps}\left(x\right)=\int_{0}^{x}h_{\eps}\left(y\right)\dy.
\end{equation*}
 We first show the following Lemma, that will be the key in the proof of Proposition~\ref{Prop:Hepslowerbound}.
\begin{lemma}\label{Lem:recursion}
 There exists $\theta>0$ such that
 \begin{equation*}
  \begin{split}
   F_{\eps}\left(A\right)&\geq -CA^{\nu\left(1-\rho\right)}+F_{\eps}\left(\left(A-A^{\sigma}\right)\ee^{T}\right)\ee^{-\left(1-\rho\right)T}\left(1-\frac{C}{A^{\theta}}\right) 
  \end{split}
 \end{equation*}
 for $A$ sufficiently large.
\end{lemma}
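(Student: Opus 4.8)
The plan is to feed the special test function $W$ constructed in Section~\ref{subsec:test} into the inequality $\int_0^\infty \psi(\cdot,0)h_{\eps}\,\dx \geq \int_0^\infty \psi(\cdot,T)h_{\eps}\,\dx$ obtained there, and to rewrite everything in the rescaled variable $X=x/L$. Recalling that $\psi(x,t)=\ee^{-(1-\rho)t}W\left(\frac{x}{L\ee^{t}},t\right)$, we have $\psi(x,0)=W(x/L,0)$ and $\psi(x,T)=\ee^{-(1-\rho)T}W\left(\frac{x}{L\ee^{T}},T\right)$; substituting $h_{\eps}(x)\,\dx=H_{\eps}(X)L^{-\rho}L\,\dX$, the common factor $L^{1-\rho}$ cancels and the inequality becomes
\begin{equation*}
 \int_0^\infty W(X,0)H_{\eps}(X)\,\dX \;\geq\; \ee^{-(1-\rho)T}\int_0^\infty W\left(X\ee^{-T},T\right)H_{\eps}(X)\,\dX .
\end{equation*}

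Next I would bound the two sides. For the left side, $W(\cdot,0)$ is non-negative, bounded by $1$, and supported in $[A^\nu,A]$ (by the properties of $\tilde W$ recorded after Lemma~\ref{Lem:ex:special:test:1} together with the cut $W=\tilde W\chi_{[A^\nu,\infty)}$), and $H_{\eps}\geq 0$, so the left side is at most $\int_0^A H_{\eps}(X)\,\dX = F_{\eps}(A)$. For the right side I would keep only the contribution from the set where $A^\nu\leq X\ee^{-T}\leq A-A^\sigma$; on it, by monotonicity of $\tilde W(\cdot,T)$ and by Lemma~\ref{L.westimate} (with $t=T$), one has $W(X\ee^{-T},T)=\tilde W(X\ee^{-T},T)\geq \tilde W(A-A^\sigma,T)\geq 1-CTA^{-\theta}$. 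Here one uses the elementary fact that $A^\nu<A-A^\sigma<A$ for $A$ large, which holds since $\nu,\sigma<1$ (so $A-A^\sigma\sim A\gg A^\nu$), and that $\tilde W$ and $W$ coincide on $[A^\nu,\infty)$. Hence the right side is at least
\begin{equation*}
 \left(1-CTA^{-\theta}\right)\left(F_{\eps}\left((A-A^\sigma)\ee^{T}\right)-F_{\eps}\left(A^\nu\ee^{T}\right)\right).
\end{equation*}

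Finally I would discard the error term $F_{\eps}(A^\nu\ee^{T})$ by means of the upper integral bound $\int_0^R H_{\eps}(X)\,\dX\leq R^{1-\rho}$, which follows from $\int_0^r h_{\eps}(x)\,\dx\leq r^{1-\rho}$ after the rescaling, so that $F_{\eps}(A^\nu\ee^{T})\leq (A^\nu\ee^{T})^{1-\rho}=\ee^{(1-\rho)T}A^{\nu(1-\rho)}$. Combining the chain $F_{\eps}(A)\geq(\text{left side})\geq(\text{right side})\geq\dots$, using $1-CTA^{-\theta}\leq 1$, and absorbing the fixed factor $T<1$ into $C$, one obtains
\begin{equation*}
 F_{\eps}(A)\;\geq\; -CA^{\nu(1-\rho)}+\ee^{-(1-\rho)T}F_{\eps}\left((A-A^\sigma)\ee^{T}\right)\left(1-\frac{C}{A^{\theta}}\right),
\end{equation*}
with the same $\sigma$ and $\theta$ as in Lemma~\ref{L.westimate}, which is the claim. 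There is essentially no analytic obstacle at this stage: all of the hard work (the construction of $W$, the justification of the passage between $\psi$ and $W$, and the decay estimate of Lemma~\ref{L.westimate}) has already been carried out, and what remains is careful bookkeeping of the rescaling together with the support and monotonicity of $W$; the only point deserving a word of comment is the inequality $A^\nu<A-A^\sigma$ for $A$ large, which guarantees that Lemma~\ref{L.westimate} applies on the whole interval $[A^\nu,A-A^\sigma]$.
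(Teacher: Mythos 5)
Your proof is correct and reaches the stated recursion, and in outline it is the same as the paper's: feed the test function $W$ of Section~\ref{subsec:test} into the inequality coming from the choice of $\psi$, pass to the rescaled variable $X$, discard everything outside the window governed by $A^\nu$ and $A-A^\sigma$, and invoke Lemma~\ref{L.westimate}. The one place where you diverge is the final bookkeeping step: the paper integrates by parts, writing $\int H_\eps\,W$ as a boundary term plus $\int F_\eps\,(-\del_\xi W)$, and then exploits the representation $-\del_\xi\tilde W = G_{1,1}\ast G_{1,2}\ast G_2$ so that $\int_{A-A^\sigma}^\infty(G_{1,1}\ast G_{1,2}\ast G_2)\,\dd\xi = W(A-A^\sigma)$; you instead restrict the $X$-integral to $[A^\nu\ee^T,(A-A^\sigma)\ee^T]$ and use directly that $W(\cdot,T)$ is non-increasing and coincides with $\tilde W$ there, so $W(X\ee^{-T},T)\geq W(A-A^\sigma,T)\geq 1-CTA^{-\theta}$. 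Both routes then control the small error near $A^\nu$ via $F_\eps(R)\leq R^{1-\rho}$, producing the same $-CA^{\nu(1-\rho)}$ term. Your variant is marginally cleaner, as it avoids the distributional integration by parts at the kink $\xi=A^\nu$ where $W=\tilde W\chi_{[A^\nu,\infty)}$ has a jump, and it does not need the explicit convolution representation of $-\del_\xi\tilde W$ (only its sign, i.e.\ monotonicity of $W$). One small imprecision in your writeup: the phrase \emph{``the right side is at least $(1-CTA^{-\theta})(F_\eps((A-A^\sigma)\ee^T)-F_\eps(A^\nu\ee^T))$''} omits the prefactor $\ee^{-(1-\rho)T}$, but since you reinstate it correctly in the final display this is only a slip of language, not a gap.
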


\begin{proof}
 From the choice of $\psi$ and $W$ respectively (using also the non-negativity and monotonicity properties of $W$)
 \begin{equation*}
  \begin{split}
   F_{\eps}\left(A\right)&=\int_{0}^{A}H_{\eps}\left(X\right)\dX\geq\int_{0}^{\infty}W\left(X,0\right)H_{\eps}\left(X\right)\dX\geq \ee^{-\left(1-\rho\right)T}\int_{0}^{\infty}H_{\eps}\left(X\right)W\left(\frac{X}{\ee^{T}},T\right)\dX\\
   &\geq\ee^{-\left(1-\rho\right)T}\int_{A^{\nu}}^{\infty}\del_{X}F_{\eps}\left(X\right)W\left(\frac{X}{\ee^{T}},T\right)\dX\\
   &=-\ee^{-\left(1-\rho\right)T}F_{\eps}\left(A^{\nu}\right)W\left(\frac{A^{\nu}}{\ee^{T}},T\right)-\int_{A^{\nu}}^{\infty}\ee^{-\left(1-\rho\right)T}\ee^{-T}F_{\eps}\left(X\right)\del_{\xi}W\left(\frac{X}{\ee^{T}},T\right)\dX\\
   &\geq -CA^{\nu\left(1-\rho\right)}+\ee^{-\left(1-\rho\right)T}\int_{A^{\nu}\ee^{-T}}^{\infty}F_{\eps}\left(X\ee^{T}\right)\left(G_{1,1}\ast G_{1,2}\ast G_{2}\right)\left(X,T\right)\dX
 \end{split}
 \end{equation*}
 where we changed variables in the last step and used that $W$ is bounded, $\del_{\xi}W=-G_{1,1}\ast G_{1,2}\ast G_{2}$ on $\left(A^{\nu},\infty\right)$ as well as $ F_{\eps}\left(A^{\nu}\right)\leq A^{\nu\left(1-\rho\right)}$. Noting that for $\sigma>\nu$ we have $A^{\nu}\ee^{-T}\leq A-A^{\sigma}$ for sufficiently large $A$ and using also the monotonicity of $F_{\eps}$ we can further estimate
 \begin{equation*}
  \begin{split}
   F_{\eps}\left(A\right)&\geq -CA^{\nu\left(1-\rho\right)}+\ee^{-\left(1-\rho\right)T}\int_{A-A^{\sigma}}^{\infty}F_{\eps}\left(X\ee^{T}\right)\left(G_{1,1}\ast G_{1,2}\ast G_{2}\right)\left(X,T\right)\dX\\
   &\geq -CA^{\nu\left(1-\rho\right)}+\ee^{-\left(1-\rho\right)T}F_{\eps}\left(\left(A-A^{\sigma}\right)\ee^{T}\right)\int_{A-A^{\sigma}}^{\infty}\left(G_{1,1}\ast G_{1,2}\ast G_{2}\right)\left(X,T\right)\dX\\
   &=-CA^{\nu\left(1-\rho\right)}+F_{\eps}\left(\left(A-A^{\sigma}\right)\ee^{T}\right)\ee^{-\left(1-\rho\right)T}W\left(A-A^{\sigma}\right)\\
   &\geq -CA^{\nu\left(1-\rho\right)}+F_{\eps}\left(\left(A-A^{\sigma}\right)\ee^{T}\right)\ee^{-\left(1-\rho\right)T}\left(1-\frac{C}{A^{\theta}}\right),
  \end{split}
 \end{equation*}
while in the last step Lemma~\ref{L.westimate} was applied.
\end{proof}
 We are now prepared to prove Proposition~\ref{Prop:Hepslowerbound}. This will be done by some iteration argument using recursively Lemma~\ref{Lem:recursion}.
\begin{proof}[Proof of Proposition~\ref{Prop:Hepslowerbound}]
 Let $\alpha:=\ee^{T}>1$. For any $\delta>0$ there exists $R_{\epsilon,\delta}>0$ such that $F_{\eps}\left(R\right)\geq R^{1-\rho}\left(1-\delta\right)$ for all $R\geq R_{\eps,\delta}$. For $A_{0}>\left(\frac{\alpha}{\alpha-1}\right)^{\frac{1}{1-\sigma}}$ we define a sequence $\left\{A_{k}\right\}_{k\in\N_{0}}$ by $A_{k+1}:=\alpha\left(A_{k}-A_{k}^{\sigma}\right)$. From the choice of $A_{0}$ one obtains that $A_{k}$ is strictly increasing and one has $A_{k}\to\infty$ as $k\to\infty$. Furthermore $\alpha A_{k}=A_{k+1}+\alpha A_{k}^{\sigma}$ and thus 
 \begin{equation*}
  A_{k}=\frac{A_{k+1}}{\alpha}\left(1+\alpha\frac{A_{k}^{\sigma}}{A_{k+1}}\right).
 \end{equation*}
 By iteration one obtains for any $N\in\N$:
 \begin{equation}\label{eq:reprR0}
  A_{0}=\frac{A_{N}}{\alpha^{N}}\prod_{k=0}^{N-1}\left(1+\alpha\frac{A_{k}^{\sigma}}{A_{k+1}}\right).
 \end{equation}
 For any $N\in\N$ and $0\leq k<N$ applying Lemma~\ref{Lem:recursion} one gets by induction:
 \begin{equation}\label{eq:reprFepsk}
  \begin{split}
   F_{\eps}\left(A_{k}\right)\geq F_{\eps}\left(A_{N}\right)\alpha^{-\left(N-k\right)\left(1-\rho\right)}\prod_{n=k}^{N-1}\left(1-\frac{C}{A_{n}^{\theta}}\right)-C\sum_{m=k}^{N-1}\alpha^{-\left(m-k\right)\left(1-\rho\right)}\left(\prod_{n=k}^{m-1}\left(1-\frac{C}{A_{n}^{\theta}}\right)\right)A_{m}^{\nu\left(1-\rho\right)},
  \end{split}
 \end{equation}
 where we use the convention $\sum_{k=l}^{u}a_{k}=0$ and $\prod_{k=l}^{u}a_{k}=1$ if $u<l$. Thus for $k=0$ one particularly obtains
 \begin{equation}\label{eq:lowerbditI0}
  \begin{split}
   F_{\eps}\left(A_{0}\right)&\geq F_{\eps}\left(A_{N}\right)\alpha^{-N\left(1-\rho\right)}\prod_{n=0}^{N-1}\left(1-\frac{C}{A_{n}^{\theta}}\right)-C\sum_{m=0}^{N-1}\alpha^{-m\left(1-\rho\right)}\left(\prod_{n=0}^{m-1}\left(1-\frac{C}{A_{n}^{\theta}}\right)\right)A_{m}^{\nu\left(1-\rho\right)}\\
   &=F_{\eps}\left(A_{N}\right)\alpha^{-N\left(1-\rho\right)}\prod_{n=0}^{N-1}\left(1-\frac{C}{A_{n}^{\theta}}\right)-C\sum_{m=0}^{N-1}\alpha^{\left(\nu-1\right)\left(1-\rho\right)m}\left(\prod_{n=0}^{m-1}\left(1-\frac{C}{A_{n}^{\theta}}\right)\right)\left(\alpha^{-m}A_{m}\right)^{\nu\left(1-\rho\right)}\\
   &=:\left(I\right)-\left(II\right).
  \end{split}
 \end{equation}
 We now estimate the two terms separately.

 Let $\delta_{*}:=\delta/2$. Choosing $N$ sufficiently large such that $A_{N}\geq R_{\epsilon,\delta_{*}}$ one has, using also~\eqref{eq:reprR0} 
 \begin{equation}\label{eq:lowerbditI1}
  \begin{split}
   \left(I\right)&\geq \left(1-\delta_{*}\right)A_{N}^{1-\rho}\alpha^{-N\left(1-\rho\right)}\prod_{n=0}^{N-1}\left(1-\frac{C}{A_{n}^{\theta}}\right)=\left(1-\delta_{*}\right)\left(\frac{A_{N}}{\alpha^{N}}\right)^{1-\rho}\prod_{n=0}^{N-1}\left(1-\frac{C}{A_{n}^{\theta}}\right)\\
   &=\left(1-\delta_{*}\right)A_{0}^{1-\rho}\frac{\prod_{n=0}^{N-1}\left(1-\frac{C}{A_{n}^{\theta}}\right)}{\left(\prod_{k=0}^{N-1}\left(1+\alpha\frac{A_{k}^{\sigma}}{A_{k+1}}\right)\right)^{1-\rho}}.
  \end{split}
 \end{equation}
 Let $0<D_{0}<D$ be parameters to be fixed later and assume $A_{0}>D$. One has $A_{k+1}=\alpha\left(A_{k}-A_{k}^{\sigma}\right)$. Thus using the monotonicity of $A_{k}$
 \begin{equation*}
  \begin{split}
   \frac{A_{k+1}}{A_{k}}=\alpha\left(1-A_{k}^{\sigma-1}\right)>\alpha\left(1-D_{0}^{\sigma-1}\right)=:\beta_{0}>1
  \end{split}
 \end{equation*}
 if we fix $D_{0}$ sufficiently large as $\alpha>1$. Using this, one has $A_{k+1}>\beta_{0}A_{k}$ and thus by iteration $A_{k+1}>\beta_{0}^{k+1}A_{0}$.\\
 We continue to estimate $(I)$ and thus consider first $\prod_{n=0}^{N-1}\left(1-\frac{C}{A_{n}^{\theta}}\right)$ while we assume that $D_{0}$ is sufficiently large such that $\frac{C}{D^{\theta}}<1$ and thus also $\frac{C}{A_{n}^{\theta}}<1$ by the monotonicity of $A_{n}$. Taking the logarithm of the product one has using the estimate $\log\left(1-x\right)\geq -\frac{x}{1-x}$:
 \begin{equation*}
  \begin{split}
   \sum_{n=0}^{N-1}\log\left(1-\frac{C}{A_{n}^{\theta}}\right)&\geq \sum_{n=0}^{N-1}-\frac{C}{A_{n}^{\theta}}\cdot \frac{1}{1-\frac{C}{A_{n}^{\theta}}}=-C\sum_{n=0}^{N-1}\frac{1}{A_{n}^{\theta}-C}\geq -C\sum_{n=0}^{N-1}\frac{1}{\beta_{0}^{n\theta}A_{0}^{\theta}-C}\\
   &\geq -C\sum_{n=0}^{N-1}\frac{1}{\beta_{0}^{\theta n}}\frac{1}{D^{\theta}-C}\geq -C\frac{\beta_{0}^{\theta}}{\beta_{0}^{\theta}-1}\frac{1}{D^{\theta}-C}=:-\frac{C_{\beta}}{D^{\theta}-C}.
  \end{split}
 \end{equation*}
 Thus one obtains using $\exp\left(-x\right)\geq 1-x$:
 \begin{equation}\label{eq:lowerbditI2}
  \begin{split}
   \prod_{n=0}^{N-1}\left(1-\frac{C}{A_{n}^{\theta}}\right)\geq \exp\left(-\frac{C_{\beta}}{D^{\theta}-C}\right)\geq 1-\frac{C_{\beta}}{D^{\theta}-C}.
  \end{split}
 \end{equation}
 Considering $\prod_{k=0}^{N-1}\left(1+\alpha\frac{A_{k}^{\sigma}}{A_{k+1}}\right)$ and applying again first the logarithm on the product and then using $\log\left(1+x\right)\leq x$ one obtains
 \begin{equation*}
  \begin{split}
   \sum_{k=0}^{N-1}\log\left(1+\alpha\frac{A_{k}^{\sigma}}{A_{k+1}}\right)&\leq \sum_{k=0}^{N-1}\alpha \frac{A_{k}^{\sigma}}{A_{k+1}}\leq \alpha \sum_{k=0}^{N-1}A_{k}^{\sigma-1}\leq \alpha A_{0}^{\sigma-1}\sum_{k=0}^{N-1}\left(\beta_{0}^{\sigma-1}\right)^{k}\leq \alpha D^{\sigma-1}\sum_{k=0}^{\infty}\left(\beta_{0}^{\sigma-1}\right)^{k}\\
   &=:C_{\gamma}D^{\sigma-1}.
  \end{split}
 \end{equation*}
 Thus one can estimate
 \begin{equation}\label{eq:lowerbditI3}
  \begin{split}
   \left(\prod_{k=0}^{N-1}\left(1+\alpha\frac{A_{k}^{\sigma}}{A_{k+1}}\right)\right)^{1-\rho}&\leq \exp\left[\left(C_{\gamma}D^{\sigma-1}\right)\left(1-\rho\right)\right]=1+\sum_{n=1}^{\infty}\frac{\left(1-\rho\right)^{n}C_{\gamma}^{n}D^{n\left(\sigma-1\right)}}{n!}\\
   &\leq 1+D^{\sigma-1}\sum_{n=1}^{\infty}\frac{C_{\gamma}^{n}\left(1-\rho\right)^{n}}{n!}\leq 1+\frac{\exp\left(C_{\gamma}\left(1-\rho\right)\right)}{D^{1-\sigma}}.
  \end{split}
 \end{equation}
 Putting the estimates of \eqref{eq:lowerbditI2} and \eqref{eq:lowerbditI3} together one obtains
 \begin{equation}\label{eq:lowerbditI4}
  \begin{split}
   \frac{\prod_{n=0}^{N-1}\left(1-\frac{C}{A_{n}^{\theta}}\right)}{\left(\prod_{k=0}^{N-1}\left(1+\alpha\frac{A_{k}^{\sigma}}{A_{k+1}}\right)\right)^{1-\rho}}&\geq \frac{1-\frac{C_{\beta}}{D^{\theta}-C}}{1+\frac{\exp\left(C_{\gamma}\left(1-\rho\right)\right)}{D^{1-\sigma}}}=1-\frac{\frac{C_{\beta}}{D^{\theta}-C}+\frac{\exp\left(C_{\gamma}\left(1-\rho\right)\right)}{D^{1-\sigma}}}{1+\frac{\exp\left(C_{\gamma}\left(1-\rho\right)\right)}{D^{1-\sigma}}}\\
   &\geq 1-\frac{C_{\beta}}{D^{\theta}-C}-\frac{\exp\left(C_{\gamma}\left(1-\rho\right)\right)}{D^{1-\sigma}}.
  \end{split}
 \end{equation}
 Together with \eqref{eq:lowerbditI1} this shows
 \begin{equation}\label{eq:lowerbditI5}
  \begin{split}
   (I)\geq \left(1-\delta_{*}\right)A_{0}^{1-\rho}\left(1-\frac{C_{\beta}}{D^{\theta}-C}-\frac{\exp\left(C_{\gamma}\left(1-\rho\right)\right)}{D^{1-\sigma}}\right).   
  \end{split}
 \end{equation}

 To estimate $(II)$ we note first that one has $\prod_{n=0}^{m-1}\left(1-\frac{C}{A_{n}^{\theta}}\right)\leq 1$ as well as $\prod_{k=0}^{m-1}\left(1+\alpha\frac{A_{k}^{\sigma}}{A_{k+1}}\right)\geq 1$ for any $m\in \N_{0}$. Using this as well as \eqref{eq:reprR0} for $N=m$ and $m=0,\ldots,N-1$ one obtains
 \begin{equation}\label{eq:lowerbditII1}
  \begin{split}
   (II)&=C\sum_{m=0}^{N-1}\alpha^{\left(\nu-1\right)\left(1-\rho\right)m}\left(\prod_{n=0}^{m-1}\left(1-\frac{C}{A_{n}^{\theta}}\right)\right)\left(\alpha^{-m}A_{m}\right)^{\nu\left(1-\rho\right)}\leq C\sum_{m=0}^{N-1}\alpha^{\left(\nu-1\right)\left(1-\rho\right)m}\left(\alpha^{-m}A_{m}\right)^{\nu\left(1-\rho\right)}\\
   &=C\sum_{m=0}^{N-1}\alpha^{\left(\nu-1\right)\left(1-\rho\right)m}\frac{A_{0}^{\nu\left(1-\rho\right)}}{\left(\prod_{k=0}^{m-1}\left(1+\alpha\frac{A_{k}^{\sigma}}{A_{k+1}}\right)\right)^{\nu\left(1-\rho\right)}}\leq C\sum_{m=0}^{N-1}\alpha^{\left(\nu-1\right)\left(1-\rho\right)m}A_{0}^{\nu\left(1-\rho\right)}\\
   &\leq A_{0}^{\nu\left(1-\rho\right)}C\sum_{m=0}^{\infty}\alpha^{\left(\nu-1\right)\left(1-\rho\right)m}=:C_{\nu}A_{0}^{\nu\left(1-\rho\right)}
  \end{split}
 \end{equation}
 Putting together the estimates \eqref{eq:lowerbditI5}, \eqref{eq:lowerbditII1} and \eqref{eq:lowerbditI0} yields
 \begin{equation}\label{eq:lowerbdit}
  \begin{split}
   F_{\eps}\left(A_{0}\right)&\geq \left(1-\delta_{*}\right)\left(1-\frac{C_{\beta}}{D^{\theta}-C}-\frac{\exp\left(C_{\gamma}\left(1-\rho\right)\right)}{D^{1-\sigma}}\right)A_{0}^{1-\rho}-C_{\nu}A_{0}^{\nu\left(1-\rho\right)}\\
   &\geq A_{0}^{1-\rho}\left(\left(1-\delta_{*}\right)\left(1-\frac{C_{\beta}}{D^{\theta}-C}-\frac{\exp\left(C_{\gamma}\left(1-\rho\right)\right)}{D^{1-\sigma}}\right)-\frac{C_{\nu}}{D^{\left(1-\nu\right)\left(1-\rho\right)}}\right).
  \end{split}
 \end{equation}
 We choose now $D$ sufficiently large such that one has
 \begin{itemize}
  \item $D\geq \left(3C_{\beta}\frac{2-\delta}{\delta}+C\right)^{1/\theta}$ which is equivalent to $\frac{C_{\beta}}{D^{\theta}-C}\leq \frac{1}{3}\frac{\delta}{2-\delta}$
  \item $D\geq \left(3\exp\left(C_{\gamma}\left(1-\rho\right)\right)\frac{2-\delta}{\delta}\right)^{\frac{1}{1-\sigma}}$ which is equivalent to $\frac{\exp\left(C_{\gamma}\left(1-\rho\right)\right)}{D^{1-\sigma}}\leq \frac{1}{3}\frac{\delta}{2-\delta}$
  \item $D\geq \left(\frac{3C_{\nu}}{1-\delta/2}\frac{2-\delta}{\delta}\right)^{\frac{1}{\left(1-\nu\right)\left(1-\rho\right)}}$ which is equivalent to $C_{\nu}\frac{1}{D^{\left(1-\nu\right)\left(1-\rho\right)}}\leq \left(1-\delta/2\right)\frac{1}{3}\frac{\delta}{2-\delta}$.
 \end{itemize}
 Inserting these estimates in \eqref{eq:lowerbdit} together with $\delta_{*}=\delta/2$ one gets
 \begin{equation*}
  \begin{split}
   F_{\eps}\left(A_{0}\right)&\geq A_{0}^{1-\rho}\left(\left(1-\frac{\delta}{2}\right)\left(1-\frac{2}{3}\frac{\delta}{2-\delta}\right)-\left(1-\frac{\delta}{2}\right)\frac{1}{3}\frac{\delta}{2-\delta}\right)\\
   &=A_{0}^{1-\rho}\left(1-\frac{\delta}{2}\right)\left(1-\frac{\delta}{2-\delta}\right)=\left(1-\delta\right)A_{0}^{1-\rho}.
  \end{split}
 \end{equation*}
 This proves the claim with $R_{\delta}=D$.
\end{proof}

\subsection{Excluding $L_{\eps}\to\infty$}

Before we can pass to the limit $\eps\to 0$ we have to exclude the case $L_{\eps}\to \infty$ as $\eps\to 0$ in order to obtain Proposition~\ref{Prop:Hepslowerbound} also for $h_{\eps}$ (instead of $H_{\eps}$). This will be done by some contradiction argument. We furthermore remark here that throughout this section we frequently use that we can bound
\begin{equation}\label{eq:bound:Leps}
 \begin{split}
  \int_{0}^{1}\left(x+\eps\right)^{-a}h_{\eps}\left(x\right)\dx\leq L_{\eps}^{1-b} \quad \text{and} \quad \int_{0}^{1}\left(x+\eps\right)^{b}h_{\eps}\left(x\right)\dx\leq L_{\eps}^{1+a}  
 \end{split}
\end{equation}
due to the definition of $L_{\eps}$ in \eqref{lepsdef}.

\subsubsection{Deriving a limit equation for $H_{\eps}$}

We first show the following lemma, stating the convergence of a certain integral occurring later.

\begin{lemma}\label{Lem:convergence:Q}
 Assume $L_{\eps}\to\infty$ as $\eps\to 0$. Let $Q_{\eps}$ be given by 
 \begin{equation*}
  Q_{\eps}\left(X\right):=\int_{0}^{1}\frac{h_{\eps}\left(y\right)}{L_{\eps}}K_{\eps}\left(y,L_{\eps}X\right)\dy.
 \end{equation*}
 Then there exists a (continuous) function $Q$ such that $Q_{\eps}\to Q$ locally uniformly up to a subsequence.
\end{lemma}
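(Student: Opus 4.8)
The plan is to realize $Q$ as a locally uniform limit, on compact subsets of $(0,\infty)$, extracted by the Arzel\`a--Ascoli theorem; so the task reduces to proving, on each interval $[\delta,M]\subset(0,\infty)$, a bound on $Q_\eps$ and on $\del_X Q_\eps$ that is uniform in $\eps$ (along the subsequence, where we may assume $L_\eps\geq1$). For the pointwise upper bound I would write $Q_\eps(X)=\int_0^1\frac{h_\eps(y)}{L_\eps}K(y+\eps,L_\eps X+\eps)\,\dy$ and use the upper estimate in \eqref{Ass1} with second argument $z=L_\eps X+\eps$, which gives
\[
Q_\eps(X)\leq\frac{C_2}{L_\eps}\Big((L_\eps X+\eps)^{b}\mu_\eps+(L_\eps X+\eps)^{-a}\lambda_\eps\Big).
\]
For $X\in[\delta,M]$ and $L_\eps\geq1$ one has $L_\eps\delta\leq L_\eps X+\eps\leq(M+1)L_\eps$, hence $(L_\eps X+\eps)^{b}\leq C(\delta,M)L_\eps^{b}$ and $(L_\eps X+\eps)^{-a}\leq\delta^{-a}L_\eps^{-a}$; combined with $\mu_\eps\leq L_\eps^{1-b}$ and $\lambda_\eps\leq L_\eps^{1+a}$ (which is \eqref{eq:bound:Leps}) this yields $Q_\eps(X)\leq C(\delta,M)$ uniformly in $\eps$.

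For equicontinuity I would differentiate under the integral sign, which is legitimate since $K\in C^1$, $h_\eps$ is continuous and the derivative bound below is $\dy$-integrable on $[0,1]$ (as $\mu_\eps,\lambda_\eps<\infty$), obtaining $\del_X Q_\eps(X)=\int_0^1 h_\eps(y)\,(\del_2K)(y+\eps,L_\eps X+\eps)\,\dy$, where $\del_2$ denotes the derivative in the second slot. The key point is that $\del_2 K$ is homogeneous of degree $\gamma-1$ by \eqref{Ass1b}, so $\del_2 K(y+\eps,L_\eps X+\eps)=(L_\eps X+\eps)^{\gamma-1}\,\del_2 K\bigl(\tfrac{y+\eps}{L_\eps X+\eps},1\bigr)$, while by the symmetry \eqref{Ass1a} one has $\del_2 K(s,1)=\del_x K(1,s)$, which is controlled for every $s>0$ by \eqref{eq:Ass2} applied on a fixed interval $[d,D]\ni1$, namely $\abs{\del_x K(1,s)}\leq C_3(s^{-a}+s^{b})$. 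Substituting and using $\gamma=b-a$,
\[
\bigl|(\del_2 K)(y+\eps,L_\eps X+\eps)\bigr|\leq C_3\Big((L_\eps X+\eps)^{b-1}(y+\eps)^{-a}+(L_\eps X+\eps)^{-a-1}(y+\eps)^{b}\Big),
\]
whence $\abs{\del_X Q_\eps(X)}\leq C_3\bigl((L_\eps X+\eps)^{b-1}\mu_\eps+(L_\eps X+\eps)^{-a-1}\lambda_\eps\bigr)$. For $X\geq\delta$, since $b-1<0$ and $-a-1<0$, bounding $L_\eps X+\eps\geq L_\eps\delta$ from below and again invoking $\mu_\eps\leq L_\eps^{1-b}$, $\lambda_\eps\leq L_\eps^{1+a}$ makes the powers of $L_\eps$ cancel and leaves $\abs{\del_X Q_\eps(X)}\leq C_3(\delta^{b-1}+\delta^{-a-1})$, uniformly in $\eps$ (and in $M$).

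With these two uniform bounds the family $\{Q_\eps\}$ is uniformly bounded and uniformly Lipschitz on each $[\delta,M]$, so Arzel\`a--Ascoli provides a uniformly convergent subsequence there; a diagonal argument over an exhaustion $[\delta_k,M_k]\uparrow(0,\infty)$ produces a single subsequence $Q_{\eps_n}\to Q$ locally uniformly on $(0,\infty)$, with $Q$ continuous (and non-negative) as a locally uniform limit of continuous functions. The only non-routine ingredient is the derivative estimate: \eqref{eq:Ass2} bounds $\del_x K$ only on compact intervals of its first argument, whereas here the relevant first argument $L_\eps X+\eps$ runs off to infinity, so one must first use homogeneity and symmetry to transfer the bound to a fixed interval; that the resulting exponents then recombine, through $\gamma=b-a$ and the definition of $L_\eps$, into $\eps$-independent constants is precisely what makes the compactness argument close.
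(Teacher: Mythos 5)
Your proof is correct and follows essentially the same strategy as the paper: uniformly bound $Q_\eps$ and $Q_\eps'$ on compact subsets of $(0,\infty)$ using \eqref{Ass1}, \eqref{eq:Ass2}, homogeneity and the bounds $\mu_\eps\leq L_\eps^{1-b}$, $\lambda_\eps\leq L_\eps^{1+a}$, then invoke Arzel\`a--Ascoli. The only cosmetic difference is in the derivative estimate: the paper first rescales to $Q_\eps(X)=L_\eps^\gamma\int_0^1\frac{h_\eps(z)}{L_\eps}K_{\eps/L_\eps}(X,z/L_\eps)\,\dz$ so that \eqref{eq:Ass2} applies with first argument $X$ in a fixed compact interval, whereas you instead use homogeneity of $\del_2 K$ (degree $\gamma-1$) together with symmetry to fix the second argument at $1$ and apply \eqref{eq:Ass2} there; these are equivalent manipulations and lead to the same $\eps$-independent bound.
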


\begin{proof}
 It suffices to show that both $Q_{\eps}$ as well as $Q_{\eps}'$ are uniformly bounded on each fixed interval $\left[d,D\right]\subset \R_{+}$. One has using \eqref{eq:bound:Leps}
 \begin{equation}\label{eq:est:Qeps}
  \begin{split}
   \abs{Q_{\eps}\left(X\right)}&\leq C_{2}\int_{0}^{1}\frac{h_{\eps}\left(z\right)}{L_{\eps}}\left(\left(L_{\eps}X+\eps\right)^{-a}\left(z+\eps\right)^{b}+\left(L_{\eps}X+\eps\right)^{b}\left(z+\eps\right)^{-a}\right)\dz\\
   &\leq \frac{C_{2}}{L_{\eps}}\left(L_{\eps}X+\eps\right)^{-a}\int_{0}^{1}\left(z+\eps\right)^{b}h_{\eps}\left(z\right)\dz+\frac{C_{2}}{L_{\eps}}\left(L_{\eps}X+\eps\right)^{b}\int_{0}^{1}\left(z+\eps\right)^{-a}h_{\eps}\left(z\right)\dz\\
   &\leq C_{2}L_{\eps}^{a}\left(L_{\eps}X+\eps\right)^{-a}+C_{2}L_{\eps}^{-b}\left(L_{\eps}X+\eps\right)^{b}\\
   &= C_{2}\left(X+\frac{\eps}{L_{\eps}}\right)^{-a}+C_{2}\left(X+\frac{\eps}{L_{\eps}}\right)^{b}
  \end{split}
 \end{equation}
 while the right hand side is clearly locally uniformly bounded under the given assumptions. Rewriting $Q_{\eps}$ one obtains
 \begin{equation*}
  \begin{split}
   Q_{\eps}\left(X\right)=\int_{0}^{1}\frac{h_{\eps}\left(z\right)}{L_{\eps}}K_{\eps}\left(L_{\eps}X,z\right)\dz=L_{\eps}^{\gamma}\int_{0}^{1}\frac{h_{\eps}\left(z\right)}{L_{\eps}}K_{\frac{\eps}{L_{\eps}}}\left(X,\frac{z}{L_{\eps}}\right)\dz.
  \end{split}
 \end{equation*}
 Furthermore from \eqref{eq:Ass2} one has
 \begin{equation*}
  \abs{\del_{y}K_{\eps}\left(y,z\right)}\leq C\left(\left(z+\eps\right)^{-a}+\left(z+\eps\right)^{b}\right) \quad \text{for all } y\in\left[a,A\right]
 \end{equation*}
 and hence, similarly as before
 \begin{equation*}
  \begin{split}
   \abs{Q_{\eps}'\left(X\right)}&\leq CL_{\eps}^{\gamma}\int_{0}^{1}\frac{h_{\eps}\left(z\right)}{L_{\eps}}\left[\left(\frac{z+\eps}{L_{\eps}}\right)^{-a}+\left(\frac{z+\eps}{L_{\eps}}\right)^{b}\right]\dz\\
   &=CL_{\eps}^{\gamma-1+a}\int_{0}^{1}\left(z+\eps\right)^{-a}h_{\eps}\left(z\right)\dz+CL_{\eps}^{\gamma-1-b}\int_{0}^{1}\left(z+\eps\right)^{b}h_{\eps}\left(z\right)\dz\\
   &\leq C
  \end{split}
 \end{equation*}
 where we used also $\gamma=b-a$.
\end{proof}

\begin{lemma}
 Let $\rho\in\left(\max\left\{0,b\right\},1\right)$ and assume $L_{\eps}\to\infty$ as $\eps\to 0$. Then there exists a measure $H$ such that (up to a subsequence) $H_{\eps}\stackrel{*}{\rightharpoonup}H$ and $H$ satisfies
 \begin{equation}\label{eq:H:infty}
  \del_{X}\left(XH\right)+\left(\rho-1\right)H-\del_{X}\left(Q\left(X\right)H\left(X\right)\right)+\frac{H\left(X\right)Q\left(X\right)}{X}=0
 \end{equation}
 in the sense of distributions with $Q\left(X\right)=\lim_{\eps\to 0}\int_{0}^{1}\frac{h_{\eps}\left(y\right)}{L_{\eps}}K_{\eps}\left(y,L_{\eps}X\right)\dy$.
\end{lemma}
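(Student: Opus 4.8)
\emph{Overview and compactness.} The plan is to rescale the weak formulation of \eqref{eq:self:sim:eps}, extract a weak-$*$ limit of $H_\eps$, and track which parts of the coagulation term survive after division by the natural scaling factor $L_\eps^{1-\rho}$. Since $\int_0^r h_\eps(x)\dx\le r^{1-\rho}$ by \eqref{hepsestimates}, the rescaling $X=x/L_\eps$, $h_\eps(x)=H_\eps(X)L_\eps^{-\rho}$ gives $\int_0^R H_\eps(X)\dX\le R^{1-\rho}$ for all $R>0$; hence $\{H_\eps\}$ is bounded in $\mathcal{M}_+([0,\infty))$ uniformly on compact subsets, and along a subsequence $H_\eps\stackrel{*}{\rightharpoonup}H$ with $H\ge 0$. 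By Lemma~\ref{Lem:convergence:Q} we may in addition assume $Q_\eps\to Q$ locally uniformly on $(0,\infty)$, where $Q_\eps(X)=\frac1{L_\eps}\int_0^1 h_\eps(y)K_\eps(y,L_\eps X)\dy$.

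\emph{Rescaled weak identity.} Testing \eqref{eq:self:sim:eps} against $\varphi\in C_c^1((0,\infty))$ and performing the Fubini step in the flux term one obtains
\begin{equation*}
 \int_0^\infty\!\!\int_0^\infty\frac{K_\eps(y,z)}{z}h_\eps(y)h_\eps(z)\bigl[\varphi(y+z)-\varphi(y)\bigr]\dz\dy=\int_0^\infty x h_\eps(x)\varphi'(x)\dx-(\rho-1)\int_0^\infty h_\eps(x)\varphi(x)\dx.
\end{equation*}
I would insert $\varphi(x)=\Phi(x/L_\eps)$ with $\Phi\in C_c^1((0,\infty))$. After the substitution $x=L_\eps X$ the right-hand side equals $L_\eps^{1-\rho}\bigl(\int X\Phi'(X)H_\eps(X)\dX-(\rho-1)\int\Phi(X)H_\eps(X)\dX\bigr)$, so dividing by $L_\eps^{1-\rho}$ and using $H_\eps\stackrel{*}{\rightharpoonup}H$ it converges to $\int X\Phi'\,\dd H-(\rho-1)\int\Phi\,\dd H$.

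\emph{Splitting the coagulation term.} On the left I would split the $z$-integral into $\{z\le 1\}$ and $\{z>1\}$, and the latter further into $\{y\le 1\}$ and $\{y>1\}$. First, on $\{z\le 1\}$ the added cluster is microscopic compared with $y\asymp L_\eps$; a Taylor expansion $\Phi((y+z)/L_\eps)-\Phi(y/L_\eps)=\Phi'(y/L_\eps)z/L_\eps+O(z^2/L_\eps^2)$ turns the main part into $\frac1{L_\eps}\int h_\eps(y)\Phi'(y/L_\eps)\big(\int_0^1 K_\eps(y,z)h_\eps(z)\dz\big)\dy=L_\eps^{1-\rho}\int\Phi'(X)Q_\eps(X)H_\eps(X)\dX$, directly by the definition of $Q_\eps$, the remainder being $o(L_\eps^{1-\rho})$ by the moment estimates of Lemma~\ref{Lem:moment:est:stand} together with $\lambda_\eps\le L_\eps^{1+a}$, $\mu_\eps\le L_\eps^{1-b}$ (see \eqref{eq:bound:Leps}). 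Second, on $\{y\le 1<z\}$ one has $\Phi(y/L_\eps)=0$ and $\Phi((y+z)/L_\eps)\ne0$ forces $z\asymp L_\eps$; rescaling $z=L_\eps Z$ and replacing $\Phi(Z+y/L_\eps)$ by $\Phi(Z)$ up to an error that is $o(L_\eps^{1-\rho})$, this part becomes $L_\eps^{-\rho}\int\frac{\Phi(Z)H_\eps(Z)}{Z}\big(\int_0^1 K_\eps(y,L_\eps Z)h_\eps(y)\dy\big)\dZ+o(L_\eps^{1-\rho})=L_\eps^{1-\rho}\int\frac{\Phi(X)Q_\eps(X)H_\eps(X)}{X}\dX+o(L_\eps^{1-\rho})$, again by the definition of $Q_\eps$. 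Third, on $\{y>1,\,z>1\}$ both clusters are rescaled; using homogeneity $K_\eps(L_\eps Y,L_\eps Z)=L_\eps^{\gamma}K_{\eps/L_\eps}(Y,Z)$ this part equals $L_\eps^{\gamma+1-2\rho}$ times $\int\!\!\int_{Y,Z\ge 1/L_\eps}\frac{K_{\eps/L_\eps}(Y,Z)}{Z}H_\eps(Y)H_\eps(Z)[\Phi(Y+Z)-\Phi(Y)]\dZ\dY$. Here $\Phi(Y+Z)-\Phi(Y)=O(Z)$ for small $Z$ absorbs the $1/Z$ singularity; combining this with $\int_0^R H_\eps\le R^{1-\rho}$ and an integration by parts near the cut-off $1/L_\eps$ one bounds the double integral by $C\max\{1,L_\eps^{\sigma+\rho-1}\}$ with $\sigma=\max\{a,-b\}$, and since $\rho>\gamma=b-a$, $b<1$ and $a>0$ the product with $L_\eps^{\gamma+1-2\rho}$ is $o(L_\eps^{1-\rho})$.

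\emph{Passing to the limit, and the main obstacle.} Dividing the identity by $L_\eps^{1-\rho}$ and letting $\eps\to0$, using $Q_\eps\to Q$ locally uniformly and $H_\eps\stackrel{*}{\rightharpoonup}H$ on the common compact support of the test functions, the left-hand side converges to $\int\Phi'(X)Q(X)\,\dd H+\int\frac{\Phi(X)Q(X)}{X}\,\dd H$ and the right-hand side to $\int X\Phi'\,\dd H-(\rho-1)\int\Phi\,\dd H$. Equating the two gives, for every $\Phi\in C_c^1((0,\infty))$,
\begin{equation*}
 \int X\Phi'\,\dd H-(\rho-1)\int\Phi\,\dd H-\int Q\Phi'\,\dd H-\int\frac{\Phi Q}{X}\,\dd H=0,
\end{equation*}
which is exactly \eqref{eq:H:infty} tested in the distributional sense, with $QH$ and $QH/X$ understood as the measures obtained by multiplying $H$ by the continuous functions $Q$ and $Q/X$ on $(0,\infty)$. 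The delicate point is the third sub-case above: showing, uniformly in $\eps$, that genuinely macroscopic coagulation is negligible, i.e.\ that the mildly singular double integral is controlled against the prefactor $L_\eps^{\gamma+1-2\rho}$; this requires careful bookkeeping with the moment estimates of Lemma~\ref{Lem:moment:est:stand}, and it is precisely here that the hypotheses $a>0$ (equivalently $\rho>\gamma$) and $b<1$ are used.
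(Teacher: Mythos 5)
Your proposal follows essentially the same route as the paper: rescale by $L_\eps$, test the weak form of the stationary equation against $\Phi(x/L_\eps)$, split the quadratic coagulation term according to which of the two clusters is microscopic (size $O(1)$) versus macroscopic (size $\asymp L_\eps$), identify the micro--macro and macro--micro interactions via $Q_\eps$, and show the macro--macro contribution is of lower order. Your partition (by $z\le1$ vs.\ $z>1$, then by $y$) is a reparametrization of the paper's (both $\ge1$; $y\ge1,z<1$; $y<1$), and leads to the same three limiting pieces because the region $\{y\le1,z\le1\}$ contributes nothing once $\supp\Phi\subset(0,\infty)$ and $L_\eps$ is large.

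The one substantive inaccuracy is in your stated bound for the macro--macro double integral, $C\max\{1,L_\eps^{\sigma+\rho-1}\}$ with $\sigma=\max\{a,-b\}$. After inserting the kernel bound, the truncated moments $\int_{1/L_\eps}^{\cdot}Y^{-a}H_\eps\,\dY$ and $\int_{1/L_\eps}^{\cdot}Z^{b}H_\eps\,\dZ$ can diverge \emph{independently} as $L_\eps\to\infty$ (the first precisely when $a>1-\rho$, the second when $-b>1-\rho$), so the correct bound has a product structure, of the form $C\max\{1,L_\eps^{a+\rho-1}\}\cdot\max\{1,L_\eps^{-b+\rho-1}\}$ (this is, up to bookkeeping, what the paper's estimate of its term $(I)$ delivers). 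Your conclusion still survives: in the worst case the product is $L_\eps^{a-b+2(\rho-1)}=L_\eps^{-\gamma+2\rho-2}$, and multiplied by the prefactor $L_\eps^{\gamma+1-2\rho}$ this is $L_\eps^{-1}=o(L_\eps^{1-\rho})$ since $\rho<2$; but your single-max bound, taken literally, would not support the estimate when both truncated moments blow up, so the product form is needed. A minor side remark: $a>0$ is not \emph{equivalent} to $\rho>\gamma$; rather $a>0$ combined with the standing assumption $\rho>\max\{0,b\}$ gives $\rho>b>b-a=\gamma$.
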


\begin{proof}
 Transforming the equation
 \begin{equation*}
  \del_{x}\left(\int_{0}^{x}\int_{x-y}^{\infty}\frac{K_{\eps}\left(y,z\right)}{z}h_{\eps}\left(y\right)h_{\eps}\left(z\right)\dz\dy\right)=\del_{x}\left(xh_{\eps}\left(x\right)\right)+\left(\rho-1\right)h_{\eps}\left(x\right)
 \end{equation*}
 to the rescaled variables $X=\frac{x}{L_{\eps}}$ one obtains
 \begin{equation*}
  \frac{1}{L_{\eps}}\del_{X}\left(\int_{0}^{L_{\eps}X}\int_{L_{\eps}X-y}^{\infty}\frac{K_{\eps}\left(y,z\right)}{z}h_{\eps}\left(y\right)h_{\eps}\left(z\right)\dz\dy\right)=\frac{1}{L_{\eps}}\del_{X}\left(L_{\eps}X\frac{H\left(X\right)}{L_{\eps}^{\rho}}\right)+\left(\rho-1\right)\frac{H_{\eps}\left(X\right)}{L_{\eps}^{\rho}}.
 \end{equation*}
 Testing with $\psi\in C_{c}^{\infty}\left(\R_{+}\right)$ (in the rescaled $X$-variable), splitting the integral and interchanging the order of integration we can rewrite this as
 \begin{equation*}
  \begin{split}
   &\quad\int_{0}^{\infty}\left(X\del_{X}\psi\left(X\right)-\left(\rho-1\right)\psi\left(X\right)\right)H_{\eps}\left(X\right)\dX\\
   &=\frac{1}{L_{\eps}^{\rho-\gamma}}\int_{\frac{1}{L_{\eps}}}^{\infty}\int_{\frac{1}{L_{\eps}}}^{\infty}\frac{K_{\frac{\eps}{L_{\eps}}}\left(Y,Z\right)}{Z}H_{\eps}\left(Y\right)H_{\eps}\left(Z\right)\left[\psi\left(Y+Z\right)-\psi\left(Y\right)\right]\dZ\dY\\
   &\quad + \int_{\frac{1}{L_{\eps}}}^{\infty}\int_{0}^{1}\frac{K_{\eps}\left(L_{\eps}Y,z\right)}{z}h_{\eps}\left(z\right)H_{\eps}\left(Y\right)\left[\psi\left(Y+\frac{z}{L_{\eps}}\right)-\psi\left(Y\right)\right]\dz\dY\\
   &\quad + \int_{0}^{1}\int_{0}^{\infty}\left(\int_{\frac{y}{L_{\eps}}}^{\frac{y}{L_{\eps}}+Z}\del_{X}\psi\left(X\right)\dX\right)\frac{K_{\eps}\left(y,L_{\eps}Z\right)}{L_{\eps}Z}h_{\eps}\left(y\right)H_{\eps}\left(Z\right)\dZ\dy\\
   &=\left(I\right)+\left(II\right)+\left(III\right).
  \end{split}
 \end{equation*}
In the following we assume that $\supp\psi\subset\left[d,D\right]$ with $d>0$ and $D>1$. Furthermore we can assume that $L_{\eps}>1$ is sufficiently large and that $\eps<1$ is sufficiently small (as we are assuming $L_{\eps}\to\infty$ for $\eps\to 0$). We first show that $\left(I\right)\to 0$ as $\eps\to 0$:
\begin{equation*}
 \begin{aligned}
  &\quad\abs{\left(I\right)}\\
   &\leq \frac{C_{2}}{L_{\eps}^{\rho-\gamma}}\int_{\frac{1}{L_{\eps}}}^{D}\int_{\frac{1}{L_{\eps}}}^{\infty}\frac{\left(Y+\frac{\eps}{L_{\eps}}\right)^{-a}\left(Z+\frac{\eps}{L_{\eps}}\right)^{b}}{Z}H_{\eps}\left(Y\right)H_{\eps}\left(Z\right)\abs{\psi\left(Y+Z\right)-\psi\left(Y\right)}\dZ\dY\\
  &\quad +\frac{C_{2}}{L_{\eps}^{\rho-\gamma}}\int_{\frac{1}{L_{\eps}}}^{D}\int_{\frac{1}{L_{\eps}}}^{\infty}\frac{\left(Y+\frac{\eps}{L_{\eps}}\right)^{b}\left(Z+\frac{\eps}{L_{\eps}}\right)^{-a}}{Z}H_{\eps}\left(Y\right)H_{\eps}\left(Z\right)\abs{\psi\left(Y+Z\right)-\psi\left(Y\right)}\dZ\dY\\
  &\leq \frac{2^{\tilde{b}}C}{L_{\eps}^{\rho-\gamma}}\int_{\frac{1}{L_{\eps}}}^{D}\int_{\frac{1}{L_{\eps}}}^{\infty}\frac{Y^{-a}Z^{b}+Y^{b}Z^{-a}}{Z}H_{\eps}\left(Y\right)H_{\eps}\left(Z\right)\abs{\psi\left(Y+Z\right)-\psi\left(Y\right)}\dZ\dY\\
  &\leq \frac{C\norm{\psi'}_{L^{\infty}}}{L_{\eps}^{\rho-\gamma-a}}\int_{\frac{1}{L_{\eps}}}^{D}\int_{\frac{1}{L_{\eps}}}^{1}\left(Z^{b}+Y^{b}\right)H_{\eps}\left(Y\right)H_{\eps}\left(Z\right)\dZ\dY+\frac{C\norm{\psi}_{L^{\infty}}}{L_{\eps}^{\rho-\gamma}}\left[L_{\eps}^{a}\int_{\frac{1}{L_{\eps}}}^{D}H_{\eps}\left(Y\right)\dY\int_{1}^{\infty}Z^{b-1}H_{\eps}\left(Z\right)\dZ\right.\\
  &\quad\left.+\max\left\{L_{\eps}^{-b},D^{b}\right\}\int_{\frac{1}{L_{\eps}}}^{D}H_{\eps}\left(Y\right)\dY\int_{1}^{\infty}Z^{-a-1}H_{\eps}\left(Z\right)\dZ\right]\\
  &\leq CL_{\eps}^{\gamma+a-\rho}\max\left\{1,L_{\eps}^{-b},D^{b}\right\}\int_{\frac{1}{L_{\eps}}}^{D}H_{\eps}\left(Y\right)\dY\int_{\frac{1}{L_{\eps}}}^{1}H_{\eps}\left(Z\right)\dZ+CL_{\eps}^{\gamma-\rho}\left[L_{\eps}^{a}D^{1-\rho}+\max\left\{L_{\eps}^{-b},D^{b}\right\}D^{1-\rho}\right]\\
  &=CD^{1-\rho}L_{\eps}^{b-\rho}\max\left\{D^{b},L_{\eps}^{-b}\right\}+CD^{1-\rho}L_{\eps}^{b-\rho}+CD^{1-\rho}L_{\eps}^{b-a-\rho}\max\left\{L_{\eps}^{-b},D^{b}\right\}\\
  &\to 0
 \end{aligned}
\end{equation*}
as $L_{\eps}\to\infty$ (i.e. for $\eps\to 0$ by assumption).

 Next we show $\left(II\right)\to \int_{0}^{\infty}\del_{X}\psi\left(X\right)H\left(X\right)Q\left(X\right)\dX$. As $H_{\eps}$ is a sequence of locally uniformly bounded (non-negative Radon) measures there exists a (non-negative Radon) measure $H$ such that $H_{\eps}\stackrel{*}{\rightharpoonup} H$ in the sense of measures. Using now Taylor's formula for $\psi$ one obtains
\begin{equation*}
 \psi\left(Y+\frac{z}{L_{\eps}}\right)-\psi\left(Y\right)=\psi'\left(Y\right)\cdot \frac{z}{L_{\eps}}+\frac{z^{2}}{L_{\eps}^{2}}\int_{0}^{z}\left(z-t\right)\psi''\left(Y+\frac{t}{L_{\eps}}\right)\dt.
\end{equation*}
Using this in $(II)$ one gets
\begin{equation*}
 \begin{split}
  \left(II\right)&=\int_{\frac{1}{L_{\eps}}}^{\infty}\int_{0}^{1}\frac{K_{\eps}\left(L_{\eps}Y,z\right)}{z}h_{\eps}\left(z\right)H_{\eps}\left(Y\right)\cdot\frac{z}{L_{\eps}}\psi'\left(Y\right)\dz\dY\\
  &\quad+\int_{\frac{1}{L_{\eps}}}^{\infty}\int_{0}^{1}\frac{K_{\eps}\left(L_{\eps}Y,z\right)}{z}h_{\eps}\left(z\right)H_{\eps}\left(Y\right)\cdot\frac{z^{2}}{L_{\eps}^{2}}\int_{0}^{z}\left(z-t\right)\psi''\left(Y+\frac{t}{L_{\eps}}\right)\dt\dz\dY\\
  &=\left(II\right)_{a}+\left(II\right)_{b}
 \end{split}
\end{equation*}
We consider terms separately beginning with $\left(II\right)_{b}$ (and assuming $L_{\eps}$ to be sufficiently large, i.e. $\frac{1}{L_{\eps}}<d$):
\begin{equation*}
 \begin{split}
  \abs{\left(II\right)_{b}}&\leq \frac{1}{L_{\eps}^{2}}\int_{\frac{1}{L_{\eps}}}^{\infty}\int_{0}^{1}z^{2}h_{\eps}\left(z\right)K_{\eps}\left(L_{\eps}Y,z\right)H_{\eps}\left(Y\right)\int_{0}^{z}\abs{\psi''\left(Y+\frac{t}{L_{\eps}}\right)}\dt\dz\dY\\
  &\leq \frac{C\norm{\psi''}_{\infty}}{L_{\eps}^{2}}\int_{d-\frac{1}{L_{\eps}}}^{D}\int_{0}^{1}h_{\eps}\left(z\right)H_{\eps}\left(Y\right)\left[\left(L_{\eps}Y+\eps\right)^{-a}\left(z+\eps\right)^{b}+\left(L_{\eps}Y+\eps\right)^{b}\left(z+\eps\right)^{-a}\right]\dz\dY\\
  &\leq \frac{C D^{1-\rho}}{L_{\eps}^{2}}\left[\left(L_{\eps}d-1+\eps\right)^{-a}L_{\eps}^{1+a}+\max\left\{\left(L_{\eps}D+\eps\right)^{b},\left(L_{\eps}d-1+\eps\right)^{b}\right\}L_{\eps}^{1-b}\right]\\
  &=CD^{1-\rho}\left[\frac{\left(d-\frac{1}{L_{\eps}}+\frac{\eps}{L_{\eps}}\right)^{-a}}{L_{\eps}}+\frac{\max\left\{\left(D+\frac{\eps}{L_{\eps}}\right)^{b},\left(d-\frac{1}{L_{\eps}}+\frac{\eps}{L_{\eps}}\right)^{b}\right\}}{L_{\eps}}\right]\\
  &\to 0
 \end{split}
\end{equation*}
as $\eps\to 0$ (and $L_{\eps}\to\infty$). On the other hand (using the symmetry of $K_{\eps}$)
\begin{equation*}
 \begin{split}
  \left(II\right)_{a}=\int_{\frac{1}{L_{\eps}}}^{\infty}H_{\eps}\left(Y\right)\psi'\left(Y\right)\int_{0}^{1}\frac{h_{\eps}\left(z\right)}{L_{\eps}}K_{\eps}\left(L_{\eps}Y,z\right)\dz\dY=\int_{\frac{1}{L_{\eps}}}^{\infty}H_{\eps}\left(Y\right)\psi'\left(Y\right)Q_{\eps}\left(Y\right)\dY.
 \end{split}
\end{equation*}
Thus one obtains $\left(II\right)_{a}\to \int_{0}^{\infty}H\left(Y\right)Q\left(Y\right)\psi'\left(Y\right)\dY$ directly from Lemma~\ref{Lem:convergence:Q}. It remains to show that $\left(III\right)\to \int_{0}^{\infty}H\left(Y\right)\frac{Q\left(Y\right)}{Y}\psi\left(Y\right)\dY$. We first rewrite $(III)$ as
\begin{equation*}
 \begin{split}
  (III)&=\int_{0}^{\infty}\int_{0}^{1}\int_{\frac{y}{L_{\eps}}}^{\frac{y}{L_{\eps}}+Z}\frac{K_{\eps}\left(y,L_{\eps}Z\right)}{L_{\eps}Z}h_{\eps}\left(y\right)H_{\eps}\left(Z\right)\del_{X}\psi\left(X\right)\dX\dy\dZ\\
  &=\int_{0}^{\infty}\int_{0}^{1}\frac{K_{\eps}\left(y,L_{\eps}Z\right)}{L_{\eps}Z}h_{\eps}\left(y\right)H_{\eps}\left(Z\right)\left[\psi\left(Z\right)+\psi\left(Z+\frac{y}{L_{\eps}}\right)-\psi\left(Z\right)-\psi\left(\frac{y}{L_{\eps}}\right)\right]\dy\dZ.
 \end{split}
\end{equation*}
Due to $\supp \psi\subset \left[d,D\right]$ we obtain for $L_{\eps}$ sufficiently large (i.e. $L_{\eps}\geq \frac{2}{d}$) that $\psi\left(\frac{y}{L_{\eps}}\right)=0$ for all $y\in\left[0,1\right]$.
Thus using also the definition of $Q_{\eps}$ we can rewrite $(III)$ as
\begin{equation*}
 \begin{split}
  (III)=\int_{0}^{\infty}\psi\left(Z\right)H_{\eps}\left(Z\right)\frac{Q_{\eps}\left(Z\right)}{Z}\dZ+\int_{0}^{\infty}\int_{0}^{1}\frac{K_{\eps}\left(y,L_{\eps}Z\right)}{L_{\eps}Z}h_{\eps}\left(y\right)H_{\eps}\left(Z\right)\left[\psi\left(Z+\frac{y}{L_{\eps}}\right)-\psi\left(Z\right)\right]\dy\dZ\\
  =:(III)_{a}+(III)_{b}.
 \end{split}
\end{equation*}
 The integral $(III)_{a}$ converges (up to a subsequence) to $\int_{0}^{\infty}\psi\left(Z\right)H\left(Z\right)\frac{Q\left(Z\right)}{Z}\dZ$ according to Lemma~\ref{Lem:convergence:Q}. It thus remains to show that $(III)_{b}$ converges to zero. To see this note that as $\psi$ is smooth and compactly supported we have for $y\in\left[0,1\right]$:
\begin{equation*}
 \begin{split}
  \abs{\psi\left(Z+\frac{y}{L_{\eps}}\right)-\psi\left(Z\right)}\leq C\left(\psi\right)\frac{y}{L_{\eps}}\chi_{\left[d-\frac{1}{L_{\eps}},\infty\right)}\left(Z\right)\leq\frac{C\left(\psi\right)}{L_{\eps}}\chi_{\left[d-\frac{1}{L_{\eps}},\infty\right)}\left(Z\right).
 \end{split}
\end{equation*}
Using this we can estimate
\begin{equation*}
 \begin{split}
  (III)_{b}\leq \frac{C\left(\psi\right)}{L_{\eps}}\int_{d-1/L_{\eps}}^{\infty}H_{\eps}\left(Z\right)\frac{Q_{\eps}\left(Z\right)}{Z}\dZ.
 \end{split}
\end{equation*}
From the estimates on $Q_{\eps}$ in \eqref{eq:est:Qeps} we obtain that the integral on the right hand side is bounded uniformly in $\eps$ and thus for $L_{\eps}\to\infty$ the right hand side converges to zero, concluding the proof.
\end{proof}

\subsubsection{Non-solvability of the limit equation}

\begin{lemma}\label{Lem:non:solv:limit}
 For $\rho\in\left(0,1\right)$ there exists no solution $H$ to \eqref{eq:H:infty} satisfying the lower bound \eqref{Hepslowerbound} and $\int_{0}^{R}H\left(X\right)\dX\leq R^{1-\rho}$ for each $R\geq 0$.
\end{lemma}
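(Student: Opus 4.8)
The plan is to argue by contradiction. Suppose $H$ is such a solution; being a weak-$*$ limit of the non‑negative measures $H_\eps$ (the previous lemma), it is a non‑negative locally finite measure, and the two growth bounds give $\int_0^R H\,\dX<\infty$ for every $R$ together with $\int_0^R H\,\dX\to\infty$ as $R\to\infty$ (the latter via \eqref{Hepslowerbound}). First I would rewrite \eqref{eq:H:infty} as
\[
 \del_X\big((X-Q(X))H\big)+\Big(\rho-1+\frac{Q(X)}{X}\Big)H=0\qquad\text{in }\mathcal D'\big((0,\infty)\big),
\]
which exhibits $H$ as governed by the linear transport ODE $(X-Q)H'+(\rho-Q'+Q/X)H=0$. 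The decisive point is the sign of $X\mapsto X-Q(X)$: by Lemma~\ref{Lem:convergence:Q} (and its proof) $Q$ is continuous and locally Lipschitz on $(0,\infty)$; by \eqref{eq:est:Qeps} one has $Q(X)\le C(X^{-a}+X^{b})$, hence $Q(X)/X\to 0$ and $X-Q(X)>0$ for all large $X$ (here $b<1$ is used); and, passing to a subsequence along which $L_\eps$ is realized by the same one of $\lambda_\eps^{1/(1+a)}$, $\mu_\eps^{1/(1-b)}$, the lower constant $C_1$ in \eqref{Ass1} together with \eqref{lepsdef} yields $Q(X)\ge c\,X^{-a}$ or $Q(X)\ge c\,X^{b}$ near the origin, so that (again because $a>0$, $b<1$) $X-Q(X)<0$ for small $X>0$. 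Consequently $X_*:=\sup\{X>0:X-Q(X)\le 0\}$ lies in $(0,\infty)$, satisfies $Q(X_*)=X_*$, and $X-Q(X)>0$ for every $X>X_*$.

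Next I would upgrade $H$ on $(X_*,\infty)$ to a classical solution. Setting $M:=(X-Q)H$, the equation reads $M'=-pM$ with $p:=(\rho-1+Q/X)/(X-Q)$; since $X-Q$ is continuous and strictly positive on compact subsets of $(X_*,\infty)$, $p\in L^\infty_{\mathrm{loc}}(X_*,\infty)$, so $M$ is continuous there and $M(X)=M(X_0)\exp\big(-\int_{X_0}^{X}p\big)$. If $M\equiv 0$ on $(X_*,\infty)$ then $H$ vanishes there and $\int_0^R H\,\dX$ stays bounded, contradicting $\int_0^R H\,\dX\to\infty$; hence $M>0$ on $(X_*,\infty)$. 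Now the hypothesis $\rho>0$ enters in an essential way: as $X\to X_*^+$ the numerator $\rho-1+Q(X)/X$ tends to $\rho-1+Q(X_*)/X_*=\rho>0$, while $0<X-Q(X)\le C(X-X_*)$ by the local Lipschitz bound (and $X_*-Q(X_*)=0$), so $p(s)\ge c\,(s-X_*)^{-1}$ near $X_*$ with $c>0$. Integrating gives $M(X)\ge c'(X-X_*)^{-c}$, whence
\[
 H(X)=\frac{M(X)}{X-Q(X)}\ \ge\ \frac{c''}{(X-X_*)^{\,1+c}}\qquad (X_*<X<X_0),
\]
with $c''>0$. Since $1+c>1$, the right‑hand side is not integrable at $X_*^+$, so $\int_0^{X_0}H\,\dX=\infty$, contradicting $\int_0^{X_0}H\,\dX\le X_0^{1-\rho}<\infty$. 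This is the desired contradiction.

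The main obstacle is exactly the second step: making rigorous that the distributional equation \eqref{eq:H:infty}, once restricted to the region $\{X-Q>0\}$, pins $H$ down to the scaling class of the transport ODE and thereby forces the non‑integrable blow‑up $(X-X_*)^{-1-c}$ at the crossing point $X_*=Q(X_*)$; it is precisely here that $\rho>0$ is indispensable, for if $\rho\le 0$ the exponent would be $\le 1$ and no contradiction results. Two auxiliary points need some care: the limited regularity of $Q$ (only locally Lipschitz), which is why the argument is phrased through $M=(X-Q)H$ and $L^\infty_{\mathrm{loc}}$ coefficients rather than assuming $Q\in C^1$; and the verification, after passing to a subsequence, that the two‑sided bound on $K$ really produces $X-Q(X)<0$ near the origin, so that the crossing point $X_*$ exists at all.
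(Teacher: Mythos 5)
Your proposal is correct and follows essentially the same route as the paper: both rewrite \eqref{eq:H:infty} as $\del_X\bigl((X-Q)H\bigr)=\bigl((1-\rho)-Q/X\bigr)H$, use the two-sided power-law bounds on $Q$ to locate a crossing point $X_*$ with $Q(X_*)=X_*$ and $Q<X$ beyond it, rule out the trivial solution via the lower bound \eqref{Hepslowerbound}, and then exploit $\rho>0$ together with $X-Q(X)\le K(X-X_*)$ to force the non-integrable blow-up $H\gtrsim (X-X_*)^{-1-\alpha}$, contradicting $\int_0^{X_0}H\,\dX<\infty$. Your explicit appeal to the local Lipschitz bound on $Q$ (from the uniform bound on $Q_\eps'$) to justify $X-Q(X)\le K(X-X_*)$ is a slightly more careful rendering of a step the paper states without comment, but the argument is the same.
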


\begin{proof}
 Assuming such a solution exists and rewriting \eqref{eq:H:infty} one has
 \begin{equation*}
  \del_{X}\left(\left(X-Q\left(X\right)\right)H\left(X\right)\right)=\left(\left(1-\rho\right)-\frac{Q\left(X\right)}{X}\right)H\left(X\right).
 \end{equation*}
 Defining $F\left(X\right):=\left(X-Q\left(X\right)\right)H\left(X\right)$ this is equivalent to
 \begin{equation*}
  \del_{X}F\left(X\right)=\frac{\left(1-\rho\right)-\frac{Q\left(X\right)}{X}}{X-Q\left(X\right)}F\left(X\right)\quad \text{and thus} \quad F\left(X\right)=C\cdot \exp\left(\int_{A}^{X}\frac{\left(1-\rho\right)-\frac{Q\left(Y\right)}{Y}}{Y-Q\left(Y\right)}\dY\right)
 \end{equation*}
 for some constant $C$. Considering $Q$ one can assume (up to passing to a subsequence of $\eps$ again denoted by $\eps$) that either $\lambda_{\eps}\geq \mu_{\eps}$ or $\mu_{\eps}\geq\lambda_{\eps}$ for all $\eps$, while both cases will be denoted as $\lambda\geq \mu$ or $\mu\geq \lambda$ either. One can then estimate (using the definition of $\lambda_{\eps}$ and $\mu_{\eps}$):
 \begin{equation*}
  \begin{split}
   0\leq Q\left(X\right)&\leq \lim_{\eps\to 0}C_{2}\int_{0}^{1}\frac{h_{\eps}\left(y\right)}{L_{\eps}}\left(\left(y+\eps\right)^{-a}\left(L_{\eps}X+\eps\right)^{b}+\left(y+\eps\right)^{b}\left(L_{\eps}X+\eps\right)^{-a}\right)\dy\\
   &\leq \lim_{\eps\to 0}\frac{C_{2}}{L_{\eps}}\left(L_{\eps}^{1-b}\left(L_{\eps}X+\eps\right)^{b}+L_{\eps}^{a+1}\left(L_{\eps}X+\eps\right)^{-a}\right)=\lim_{\eps\to 0}C_{2}\left(\left(X+\frac{\eps}{L_{\eps}}\right)^{b}+\left(X+\frac{\eps}{L_{\eps}}\right)^{-a}\right)\\
   &=C_{2}\left(X^{b}+X^{-a}\right).
  \end{split}
 \end{equation*}
 On the other hand 
 \begin{equation*}
  \begin{split}
   Q\left(X\right)&\geq \lim_{\eps\to 0}C_{1}\int_{0}^{1}\frac{h_{\eps}\left(y\right)}{L_{\eps}}\left(\left(y+\eps\right)^{-a}\left(L_{\eps}X+\eps\right)^{b}+\left(y+\eps\right)^{b}\left(L_{\eps}X+\eps\right)^{-a}\right)\dy\\
   &\geq \lim_{\eps\to 0} \frac{C_{1}}{L_{\eps}}\begin{cases}
                                                 L_{\eps}^{1-b}\left(L_{\eps}X+\eps\right)^{b} & \mu\geq \lambda\\
                                                 L_{\eps}^{1+a}\left(L_{\eps}X+\eps\right)^{-a} & \lambda \geq \mu
                                                \end{cases}\\
   &=C_{1}\begin{cases}
           X^{b} & \mu\geq\lambda\\
           X^{-a} & \lambda\geq\mu.
          \end{cases}
  \end{split}
 \end{equation*}
 This shows in particular that for sufficiently large $A$ one has $Q\left(X\right)<X$ for all $X\geq A$ and $F$ is well defined for $X\geq A$. We claim now $C>0$. To see this assume $C\leq 0$. Then $F\leq 0$ and as just shown $X-Q\left(X\right)\geq 0$ for $X\geq A$. As $H\left(X\right)=\frac{F\left(X\right)}{X-Q\left(X\right)}$ one has (using $\int_{0}^{R}H\dX\geq \frac{R^{1-\rho}}{2}$ for sufficiently large $R$ due to Proposition~\ref{Prop:Hepslowerbound} and $\int_{0}^{A}H\dX\leq A^{1-\rho}$):
 \begin{equation*}
  \begin{split}
   0\geq \int_{A}^{R}H\left(X\right)\dX=\int_{0}^{R}H\left(X\right)\dX-\int_{0}^{A}H\left(X\right)\dX\geq \frac{1}{2}R^{1-\rho}-A^{1-\rho}=R^{1-\rho}\left(\frac{1}{2}-\left(\frac{A}{R}\right)^{1-\rho}\right)>0
  \end{split}
 \end{equation*}
 for sufficiently large $R$ and thus a contradiction. Therefore we have $C>0$.

 We choose now $X_{0}<A$ such that $Q\left(X_{0}\right)=X_{0}$ and $Q\left(X\right)<X$ for all $X>X_{0}$ which is possible due to the lower and upper bound for $Q$. We get that $\frac{X-Q\left(X\right)}{X-X_{0}}$ is bounded on $\left[X_{0},\infty\right)$ and thus we have $X-Q\left(X\right)\leq K\left(X-X_{0}\right)$ for some $K>0$. Furthermore as $Q\left(Y\right)\sim Y$ for $Y\to X_{0}$ we obtain
 \begin{equation*}
  \begin{split}
   -\left(1-\rho\right)+\frac{Q\left(Y\right)}{Y}=\rho-1+\frac{Q\left(Y\right)}{Y}\geq \rho-\delta>0
  \end{split}
 \end{equation*}
 on $\left[X_{0},\overline{X}\right]$ for some $\overline{X}>X_{0}$ close to $X_{0}$ and $\delta>0$. We then get
 \begin{equation*}
  \int_{\overline{X}}^{A}\frac{\left(1-\rho\right)-\frac{Q\left(Y\right)}{Y}}{Y-Q\left(Y\right)}\dY:=C\left(\overline{X},A\right)<\infty.
 \end{equation*}
 Using the definition of $F$ we can then write $H$ as
 \begin{equation*}
  \begin{split}
   H&=\frac{C}{X-Q\left(X\right)}\exp\left(-\int_{X}^{A}\frac{\left(1-\rho\right)-\frac{Q\left(Y\right)}{Y}}{Y-Q\left(Y\right)}\dY\right)\\
   &\geq \frac{C}{K\left(X-X_{0}\right)}\exp\left(-\int_{X}^{\overline{X}}\frac{\left(1-\rho\right)-\frac{Q\left(Y\right)}{Y}}{Y-Q\left(Y\right)}\dY-\int_{\overline{X}}^{A}\frac{\left(1-\rho\right)-\frac{Q\left(Y\right)}{Y}}{Y-Q\left(Y\right)}\dY\right)\\
   &\geq \frac{C}{K\left(X-X_{0}\right)}\exp\left(-C\left(\overline{X},A\right)\right)\exp\left(\left(\rho-\delta\right)\int_{X}^{\overline{X}}\frac{1}{K\left(Y-X_{0}\right)}\dY\right)\\
   &=\frac{C}{K}\exp\left(-C\left(\overline{X},A\right)\right)\left(\overline{X}-X_{0}\right)^{\frac{\rho-\delta}{K}}\frac{1}{\left(X-X_{0}\right)^{1+\frac{\rho-\delta}{K}}}=C\left(A,\overline{X},X_{0},K\right)\frac{1}{\left(X-X_{0}\right)^{1+\alpha}}
  \end{split}
 \end{equation*}
 with $\alpha=\frac{\rho-\delta}{K}>0$, contradicting the local integrability of $H$.
\end{proof}

This shows that $L_{\eps}$ has to be bounded and thus by scale invariance we obtain from Proposition~\ref{Prop:Hepslowerbound} also the lower bound for $h_{\eps}$, i.e. we have
\begin{proposition}\label{P.hepslowerbound}
 For any $\delta>0$ there exists $R_{\delta}>0$ such that 
 \begin{equation}\label{hepslowerbound}
  \int_0^R h_{\eps}(x)\,dx \geq (1-\delta) R^{1-\rho} \qquad \mbox{ for all } R \geq R_{\delta}.
 \end{equation}
\end{proposition}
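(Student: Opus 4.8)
The plan is to obtain Proposition~\ref{P.hepslowerbound} as an essentially immediate consequence of Proposition~\ref{Prop:Hepslowerbound}, once the scaling parameter $L=L_{\eps}$ has been controlled. Recall from \eqref{lepsdef} and the discussion following it that, along the chosen subsequence, either $L_{\eps}$ converges to a finite limit or $L_{\eps}\to\infty$, and that we set $L:=1$ in the degenerate case $L_{\eps}\to0$. The first step would be to rule out $L_{\eps}\to\infty$: in that case the preceding lemma produces a nonzero limit measure $H$ with $H_{\eps}\stackrel{*}{\rightharpoonup}H$ solving \eqref{eq:H:infty}, and $H$ inherits from $H_{\eps}$ both the upper bound $\int_{0}^{R}H\,\dX\le R^{1-\rho}$ and the lower bound \eqref{Hepslowerbound}; but Lemma~\ref{Lem:non:solv:limit} asserts that no such $H$ exists when $\rho\in(0,1)$ — this is exactly where the hypothesis $\rho>0$ is used. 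Hence $L_{\eps}$ converges, and together with the convention $L:=1$ in the case $L_{\eps}\to0$ there is a constant $L_{\max}$, independent of $\eps$, with $0<L\le L_{\max}$ for all sufficiently small $\eps$.

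The second step is to undo the rescaling. Fix $\delta>0$ and let $R_{\delta}$ be as in Proposition~\ref{Prop:Hepslowerbound}. Substituting $x=LX$ in \eqref{Hepslowerbound} and using $h_{\eps}(x)=L^{-\rho}H_{\eps}(X)$ gives, for every $R\ge R_{\delta}$,
\begin{equation*}
 \int_{0}^{LR}h_{\eps}(x)\,\dx \;=\; L^{1-\rho}\int_{0}^{R}H_{\eps}(X)\,\dX \;\ge\; (1-\delta)\,L^{1-\rho}R^{1-\rho} \;=\; (1-\delta)\,(LR)^{1-\rho}.
\end{equation*}
Writing $r:=LR$, this is precisely $\int_{0}^{r}h_{\eps}(x)\,\dx\ge(1-\delta)r^{1-\rho}$ for all $r\ge LR_{\delta}$, hence in particular for all $r\ge L_{\max}R_{\delta}$; so one concludes by setting $R'_{\delta}:=L_{\max}R_{\delta}$, which does not depend on $\eps$. (In the case $L_{\eps}\to0$ one has $L=1$ and $H_{\eps}=h_{\eps}$, so the claim is just \eqref{Hepslowerbound} verbatim.)

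I do not expect any serious obstacle at this last step: the substantive work — the construction of the dual test function $W$ in Section~\ref{subsec:test}, the lower estimate on $W$ in Lemma~\ref{L.westimate}, the iteration argument behind Proposition~\ref{Prop:Hepslowerbound}, and the non-solvability of the limit equation in Lemma~\ref{Lem:non:solv:limit} — has all been done already. The only point to keep an eye on is that $R_{\delta}$ must be uniform in $\eps$, and this is automatic because the $R_{\delta}$ furnished by Proposition~\ref{Prop:Hepslowerbound} is already $\eps$-independent while the bound $L\le L_{\max}$ holds uniformly along the subsequence.
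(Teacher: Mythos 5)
Your argument is correct and follows exactly the route the paper takes: the text preceding Proposition~\ref{P.hepslowerbound} simply states ``this shows that $L_{\eps}$ has to be bounded and thus by scale invariance we obtain from Proposition~\ref{Prop:Hepslowerbound} also the lower bound for $h_{\eps}$,'' and your two steps — excluding $L_{\eps}\to\infty$ via Lemma~\ref{Lem:non:solv:limit} to get a uniform $L_{\max}$, then substituting $x=LX$ and setting $R'_{\delta}=L_{\max}R_{\delta}$ — are a faithful unpacking of that sentence.
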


\subsection{Exponential decay at the origin}\label{subsec:exp:decay}

We will show in this section that $h_{\eps}$ decays exponentially near zero in an averaged sense, a property that will be crucial when passing to the limit $\eps\to 0$.

\begin{lemma}\label{Lem:expdecay}
 There exist constants $C$ and $c$ independent of $\eps$ such that 
 \begin{equation*}
  \begin{split}
   \int_{0}^{D}h_{\eps}\left(y\right)\dy\leq CD^{1-\rho}\exp\left(-c \left(D+\eps\right)^{-a}\right)
  \end{split}
 \end{equation*}
 for any $D\in\left(0,1\right]$ and all $\eps>0$.
\end{lemma}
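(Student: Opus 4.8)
The plan is to turn the stationary equation \eqref{eq:self:sim:eps} into a differential inequality for $F_\eps(x):=\int_0^x h_\eps(y)\,\dy$ and then integrate it from $D$ to $1$. Integrating \eqref{eq:self:sim:eps} over $(0,x)$ gives the flux identity
\[
 I_\eps[h_\eps](x) = x h_\eps(x) + (\rho-1) F_\eps(x)\,,\qquad x>0\,,
\]
where the boundary terms at the origin vanish: $I_\eps[h_\eps](0^+)=0$ is clear, and $\xi h_\eps(\xi)\to 0$ as $\xi\to 0$ because otherwise this very identity would force $h_\eps(\xi)\sim c/\xi$ near the origin for some $c\neq 0$, contradicting either the positivity of $h_\eps$ or the bound $\int_0^\xi h_\eps\le\xi^{1-\rho}$ from \eqref{hepsestimates}. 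Since $h_\eps>0$ and $h_\eps$ is continuous, $F_\eps>0$ on $(0,\infty)$ and $F_\eps\in C^1((0,\infty))$ with $F_\eps'=h_\eps$.

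The key step is the lower bound $I_\eps[h_\eps](x)\ge C_3\,(x+\eps)^{-a}F_\eps(x)$ for $x\in(0,1]$ with $C_3>0$ \emph{independent of} $\eps$. First, by Proposition~\ref{P.hepslowerbound} (applied with $\delta=1/2$) one can fix real numbers $1<r<R$, independent of $\eps$, with $r^{1-\rho}\le\tfrac14 R^{1-\rho}$ and $R\ge R_{1/2}$; then, using also $F_\eps(r)\le r^{1-\rho}$ from \eqref{hepsestimates},
\[
 \int_r^R h_\eps(z)\,\dz = F_\eps(R)-F_\eps(r)\ \ge\ \tfrac12 R^{1-\rho}-r^{1-\rho}\ \ge\ \tfrac14 R^{1-\rho}=:m_0>0\,.
\]
Second, for $x\in(0,1]$ and $y\in(0,x)$ one has $x-y<1<r$, so $[r,R]\subset(x-y,\infty)$, and restricting the inner integral together with the lower bound $K_\eps(y,z)\ge C_1(y+\eps)^{-a}(z+\eps)^b$ from \eqref{Ass1} gives
\[
 I_\eps[h_\eps](x)\ \ge\ C_1\Big(\int_0^x (y+\eps)^{-a}h_\eps(y)\,\dy\Big)\int_r^R \frac{(z+\eps)^b}{z}h_\eps(z)\,\dz\,.
\]
On $[r,R]$, for $\eps\le 1$, the factor $(z+\eps)^b/z$ is bounded below by a positive constant $c_0=c_0(r,R,b)$ (treat $b\ge0$ and $b<0$ separately). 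Combining with $\int_0^x(y+\eps)^{-a}h_\eps(y)\,\dy\ge (x+\eps)^{-a}F_\eps(x)$ and the mass bound $m_0$ yields $I_\eps[h_\eps](x)\ge C_3 (x+\eps)^{-a}F_\eps(x)$ with $C_3:=C_1 c_0 m_0$.

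Inserting this into the flux identity and using $1-\rho>0$ gives, for $x\in(0,1]$,
\[
 x F_\eps'(x)=x h_\eps(x)= I_\eps[h_\eps](x)+(1-\rho)F_\eps(x)\ \ge\ \big(C_3 (x+\eps)^{-a}+(1-\rho)\big)F_\eps(x)\,,
\]
hence $\big(\log F_\eps\big)'(x)\ge \tfrac{1-\rho}{x}+\tfrac{C_3(x+\eps)^{-a}}{x}$. Integrating from $D$ to $1$, using $F_\eps(1)\le 1$ and the elementary bound $\int_D^1\tfrac{(y+\eps)^{-a}}{y}\,\dy\ge\int_D^1(y+\eps)^{-a-1}\,\dy=\tfrac1a\big((D+\eps)^{-a}-(1+\eps)^{-a}\big)\ge\tfrac1a(D+\eps)^{-a}-\tfrac1a$, we get
\[
 F_\eps(D)\ \le\ F_\eps(1)\,D^{1-\rho}\exp\!\Big(-C_3\!\int_D^1\!\tfrac{(y+\eps)^{-a}}{y}\,\dy\Big)\ \le\ \ee^{C_3/a}\,D^{1-\rho}\exp\!\big(-\tfrac{C_3}{a}(D+\eps)^{-a}\big)\,,
\]
which is the claim with $C=\ee^{C_3/a}$, $c=C_3/a$; for $\eps\ge 1$ it is immediate from $F_\eps(D)\le D^{1-\rho}$, since then $c(D+\eps)^{-a}<c$ and $C=\ee^c$. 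I expect the only genuinely delicate points to be the rigorous justification of the flux identity with vanishing boundary terms (i.e. controlling $h_\eps$ near the origin, using its continuity and the $L^1_{\mathrm{loc}}$ bound at $0$) and ensuring that all constants, in particular the one extracted from Proposition~\ref{P.hepslowerbound}, are truly $\eps$-independent; everything else is routine.
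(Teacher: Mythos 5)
Your proposal is correct and follows essentially the same route as the paper: both derive the flux identity by testing with (an approximation of) $\chi_{(-\infty,R]}$, bound $I_{\eps}[h_{\eps}](x)$ from below by $C(x+\eps)^{-a}F_{\eps}(x)$ using the kernel lower bound together with an $\eps$-uniform amount of mass on a fixed interval $[r,R]$ supplied by Proposition~\ref{P.hepslowerbound}, and then integrate the resulting differential inequality for $\log F_{\eps}$ over $[D,1]$.
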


\begin{proof}
  Let $\delta=\frac{1}{2}$, then due to Proposition~\ref{P.hepslowerbound} there exists $R_{*}>0$ such that $\int_{0}^{B_2 R_{*}}h_{\eps}\left(z\right)\dz\geq \frac{\left(B_{2} R_{*}\right)^{1-\rho}}{2}$ for any $B_{2}\geq 1$. On the other hand one has $\int_{0}^{B_{1}R_{*}}h_{\eps}\left(z\right)\dz\leq \left(B_{1}R_{*}\right)^{1-\rho}$ for any $B_{1}\geq 0$. Thus one has
  \begin{equation*}
   \begin{split}
    \int_{B_{1}R_{*}}^{B_{2}R_{*}}h_{\eps}\left(z\right)\dz=\int_{0}^{B_{2}R_{*}}h_{\eps}\left(z\right)\dz-\int_{0}^{B_{1}R_{*}}h_{\eps}\left(z\right)\dz\geq \frac{\left(B_{2}R_{*}\right)^{1-\rho}}{2}-\left(B_{1}R_{*}\right)^{1-\rho}\geq 1
   \end{split}
  \end{equation*}
  for sufficiently large $B_{2}$ (depending on $B_{1}$). Thus one can estimate
  \begin{equation*}
   \begin{split}
    &\quad\int_{0}^{R}\int_{R-y}^{\infty}\frac{K_{\eps}\left(y,z\right)}{z}h_{\eps}\left(y\right)h_{\eps}\left(z\right)\dz\dy\\
    &\geq C_{1}\int_{0}^{R}\int_{B_{1}R_{*}}^{B_{2}R_{*}}\frac{\left(y+\eps\right)^{-a}\left(z+\eps\right)^{b}+\left(y+\eps\right)^{b}\left(z+\eps\right)^{-a}}{z}h_{\eps}\left(y\right)h_{\eps}\left(z\right)\dz\dy\\
    &\geq \frac{C_{1}}{\left(R+\eps\right)^{a}}\int_{0}^{R}h_{\eps}\left(y\right)\dy\int_{B_{1}R_{*}}^{B_{2}R_{*}}\frac{\left(z+\eps\right)^{b}}{z}h_{\eps}\left(z\right)\dz\\
    &\geq \frac{C}{\left(R+\eps\right)^{a}}\int_{0}^{R}h_{\eps}\left(y\right)\dy\left(B_{2}R_{*}\right)^{b-1}\int_{B_{1}R_{*}}^{B_{2}R_{*}}h_{\eps}\left(z\right)\dz\geq \frac{C}{\left(R+\eps\right)^{a}}\left(B_{2}R_{*}\right)^{b-1}\int_{0}^{R}h_{\eps}\left(y\right)\dy.
   \end{split}
  \end{equation*}
  Using this and taking $\chi_{\left(-\infty,R\right]}$ (restricted to $\left[0,\infty\right)$) by some approximation argument as test function in the equation $\left(1-\rho\right)h_{\eps}\left(x\right)-\del_{x}\left(xh_{\eps}\left(x\right)\right)+\del_{x}I_{\eps}\left[h_{\eps}\right]\left(x\right)=0$ we obtain
  \begin{equation*}
   \begin{split}
    0&=\left(1-\rho\right)\int_{0}^{R}h_{\eps}\left(x\right)\dx-Rh_{\eps}\left(R\right)+\int_{0}^{R}\int_{R-y}^{\infty}\frac{K_{\eps}\left(y,z\right)}{z}h_{\eps}\left(y\right)h_{\eps}\left(z\right)\dz\dy\\
    &\geq \left(1-\rho\right)\int_{0}^{R}h_{\eps}\left(x\right)\dx-Rh_{\eps}\left(R\right)+\frac{C}{\left(R+\eps\right)^{a}}\left(B_{2}R_{*}\right)^{b-1}\int_{0}^{R}h_{\eps}\left(x\right)\dx.    
   \end{split}
  \end{equation*}
  Thus one has
  \begin{equation*}
   \begin{split}
    \left(1-\rho\right)\int_{0}^{R}h_{\eps}\left(x\right)\dx+\frac{C\left(B_{2}R_{*}\right)^{b-1}}{\left(R+\eps\right)^{a}}\int_{0}^{R}h_{\eps}\left(x\right)\dx\leq Rh_{\eps}\left(R\right)=R\del_{R}\int_{0}^{R}h_{\eps}\left(x\right)\dx
   \end{split}
  \end{equation*}
  or equivalently
  \begin{equation*}
   \begin{split}
    \del_{R}\left(\int_{0}^{R}h_{\eps}\left(x\right)\dx\right)\geq \left(\frac{1-\rho}{R}+\frac{C\left(B_{2}R_{*}\right)^{b-1}}{R\left(R+\eps\right)^{a}}\right)\int_{0}^{R}h_{\eps}\left(x\right)\dx&\geq \left(\frac{1-\rho}{R}+\frac{C\left(B_{2}R_{*}\right)^{b-1}}{\left(R+\eps\right)^{a}}\right)\int_{0}^{R}h_{\eps}\left(x\right)\dx,
   \end{split}
  \end{equation*}
 where we used $\frac{1}{R\left(R+\eps\right)^{a}}\geq \frac{1}{\left(R+\eps\right)^{a}}$ for $R\in\left[D,1\right]$. Integrating this inequality over $\left[D,1\right]$ and using $\int_{0}^{1}h_{\eps}\dx\leq 1$ as well as $\left(1+\eps\right)^{-a}\leq 1$ gives
  \begin{equation*}
   \begin{split}
    \int_{0}^{D}h_{\eps}\left(x\right)\dx
    &\leq \exp\left(\frac{C\left(B_{2}R_{*}\right)^{b-1}}{a}\right) D^{1-\rho}\exp\left(-\frac{C\left(B_{2}R_{*}\right)^{b-1}}{a}\left(D+\eps\right)^{-a}\right).
   \end{split}
  \end{equation*}
\end{proof}

\begin{lemma}\label{Lem:moment:est:eps}
 For $D\leq 1$ and any $\alpha\in \R$ one has the following estimate
 \begin{equation*}
  \begin{split}
   \int_{0}^{D}\left(x+\eps\right)^{\alpha}h_{\eps}\left(x\right)\dx\leq CD^{1-\rho}\left(D+\eps\right)^{\alpha}\exp\left(-c\left(D+\eps\right)^{-a}\right) \quad \text{if } \alpha\geq 0\\
   \int_{0}^{D}\left(x+\eps\right)^{\alpha}h_{\eps}\left(x\right)\dx\leq \tilde{C}D^{1-\rho}\exp\left(-\frac{c}{2}\left(D+\eps\right)^{-a}\right)\quad \text{if } \alpha<0
  \end{split}
 \end{equation*}
\end{lemma}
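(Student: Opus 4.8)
The plan is to deduce both estimates directly from Lemma~\ref{Lem:expdecay}. The case $\alpha\geq 0$ is essentially free: since $0\leq x+\eps\leq D+\eps$ on $[0,D]$ we have $(x+\eps)^{\alpha}\leq(D+\eps)^{\alpha}$, so I would simply pull this constant out of the integral and invoke Lemma~\ref{Lem:expdecay} at scale $D\leq 1$, giving $\int_{0}^{D}(x+\eps)^{\alpha}h_{\eps}(x)\,\dx\leq(D+\eps)^{\alpha}\int_{0}^{D}h_{\eps}(x)\,\dx\leq C D^{1-\rho}(D+\eps)^{\alpha}\exp(-c(D+\eps)^{-a})$, which is exactly the claimed bound (with the same $C,c$).

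For $\alpha<0$ the weight $(x+\eps)^{\alpha}$ is singular near the origin as $\eps\to0$, so I would decompose dyadically: $\int_{0}^{D}(x+\eps)^{\alpha}h_{\eps}(x)\,\dx=\sum_{k\geq0}\int_{2^{-k-1}D}^{2^{-k}D}(x+\eps)^{\alpha}h_{\eps}(x)\,\dx$. On the $k$-th interval the weight is maximised at the left endpoint, and since $2^{-k-1}D+\eps\geq\tfrac12(2^{-k}D+\eps)$ we get $(x+\eps)^{\alpha}\leq(2^{-k-1}D+\eps)^{\alpha}\leq 2^{\abs{\alpha}}(2^{-k}D+\eps)^{\alpha}$; meanwhile Lemma~\ref{Lem:expdecay} applied at scale $2^{-k}D\leq D\leq1$ gives $\int_{2^{-k-1}D}^{2^{-k}D}h_{\eps}\leq\int_{0}^{2^{-k}D}h_{\eps}\leq C\,2^{-k(1-\rho)}D^{1-\rho}\exp(-c(2^{-k}D+\eps)^{-a})$. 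Writing $t_{k}:=2^{-k}D+\eps$, the $k$-th term is thus at most $2^{\abs{\alpha}}C\,D^{1-\rho}\,2^{-k(1-\rho)}\,t_{k}^{\alpha}\exp(-c\,t_{k}^{-a})$.

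The key elementary step I would isolate is that for $\alpha<0$ one has $t^{\alpha}\exp(-c\,t^{-a})\leq C_{\alpha}\exp(-\tfrac{c}{2}t^{-a})$ for all $t>0$: substituting $s=t^{-a}$ this is $s^{\abs{\alpha}/a}\leq C_{\alpha}\ee^{cs/2}$, valid with $C_{\alpha}:=\sup_{s>0}s^{\abs{\alpha}/a}\ee^{-cs/2}<\infty$. In other words one spends half of the exponential gain from Lemma~\ref{Lem:expdecay} to absorb the negative power of $t_{k}$. Since $t_{k}=2^{-k}D+\eps\leq D+\eps$ we also have $\exp(-\tfrac{c}{2}t_{k}^{-a})\leq\exp(-\tfrac{c}{2}(D+\eps)^{-a})$, so the $k$-th term is bounded by $2^{\abs{\alpha}}CC_{\alpha}\,D^{1-\rho}\,2^{-k(1-\rho)}\exp(-\tfrac{c}{2}(D+\eps)^{-a})$. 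Summing the geometric series $\sum_{k\geq0}2^{-k(1-\rho)}=(1-2^{-(1-\rho)})^{-1}<\infty$, which converges precisely because $\rho<1$, yields the assertion with $\tilde{C}:=2^{\abs{\alpha}}CC_{\alpha}(1-2^{-(1-\rho)})^{-1}$ and the decay rate $\tfrac{c}{2}$ as stated.

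There is no genuine obstacle here; the only point requiring thought is the interplay between the dyadic decomposition and the super-exponential decay, encapsulated in the inequality $t^{\alpha}\exp(-c\,t^{-a})\leq C_{\alpha}\exp(-\tfrac{c}{2}t^{-a})$, which is what makes the negative-$\alpha$ estimate hold uniformly in $\eps$. One should just note that $1-\rho>0$ is used twice — to factor $(2^{-k}D)^{1-\rho}=2^{-k(1-\rho)}D^{1-\rho}$ and to make the resulting geometric sum converge — and that the constants necessarily degrade from $(C,c)$ to $(\tilde{C},c/2)$ exactly as recorded in the statement.
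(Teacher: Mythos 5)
Your proof is correct and follows the same route the paper merely sketches in one line (``dyadic decomposition as in Lemma~\ref{Lem:moment:est:stand}''): the $\alpha\geq 0$ case is immediate, and for $\alpha<0$ you decompose dyadically, apply Lemma~\ref{Lem:expdecay} at each scale, and absorb the negative power via $t^{\alpha}\exp(-ct^{-a})\leq C_{\alpha}\exp(-\tfrac{c}{2}t^{-a})$. The explicit identification of this last elementary inequality, and your accounting for why the rate degrades from $c$ to $c/2$, is exactly the content the paper leaves implicit.
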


\begin{proof}
The case $\alpha\geq 0$ follows immediately from Lemma~\ref{Lem:expdecay}. The case $\alpha<0$ follows from Lemma~\ref{Lem:expdecay} using a dyadic decomposition as in Lemma~\ref{Lem:moment:est:stand}.
\end{proof}

As $\left\{h_{\eps}\right\}_{\eps>0}$ is a locally bounded sequence of non-negative Radon measures one can extract a subsequence (again denoted by $\eps$) such that $h_{\eps}\stackrel{*}{\rightharpoonup} h$ in the sense of measures. As a direct consequence of Lemma~\ref{Lem:moment:est:eps} one obtains:
\begin{lemma}\label{Lem:moment:exp}
 For $D\leq 1$ and $\alpha\in\R$ one has
 \begin{equation*}
  \begin{split}
   \int_{0}^{D}x^{\alpha}h\left(x\right)\dx&\leq \tilde{C}D^{1-\rho}\exp\left(-\frac{c}{2}D^{-a}\right) \quad\text{if } \alpha<0\\
   \int_{0}^{D}x^{\alpha}h\left(x\right)\dx&\leq C D^{1+\alpha-\rho}\exp\left(-cD^{-a}\right)\quad \text{if } \alpha\geq 0.
  \end{split}
 \end{equation*}
 \end{lemma}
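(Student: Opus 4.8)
The plan is to obtain Lemma~\ref{Lem:moment:exp} by passing to the limit $\eps\to 0$ in the $\eps$‑uniform estimates of Lemma~\ref{Lem:moment:est:eps}, using the weak‑$*$ convergence $h_{\eps}\stackrel{*}{\rightharpoonup}h$ recorded just before the statement. The only real issue is that $x\mapsto x^{\alpha}$ is not continuous (nor even bounded, when $\alpha<0$) at the origin, so one cannot test the measures directly against $\chi_{[0,D]}x^{\alpha}$; instead I would approximate $\chi_{[0,D]}$ from below by continuous, compactly supported cut‑offs $\phi$ and relax the cut‑offs only at the very end. Throughout, fix $D\in(0,1]$.

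For $\alpha\ge 0$ and $D'<D$, pick $\phi\in C_{c}([0,\infty))$ with $\chi_{[0,D']}\le\phi\le\chi_{[0,D]}$. Then $\phi\, x^{\alpha}\in C_{c}([0,\infty))$ and $0\le x^{\alpha}\le(x+\eps)^{\alpha}$, so by weak‑$*$ convergence and Lemma~\ref{Lem:moment:est:eps},
\[
\int_{0}^{\infty}\phi\, x^{\alpha}h\,\dx=\lim_{\eps\to 0}\int_{0}^{\infty}\phi\, x^{\alpha}h_{\eps}\,\dx\le\liminf_{\eps\to 0}\int_{0}^{D}(x+\eps)^{\alpha}h_{\eps}\,\dx\le\liminf_{\eps\to 0}CD^{1-\rho}(D+\eps)^{\alpha}\ee^{-c(D+\eps)^{-a}}=CD^{1+\alpha-\rho}\ee^{-cD^{-a}}.
\]
Letting $D'\uparrow D$ and using monotone convergence gives $\int_{[0,D)}x^{\alpha}h\le CD^{1+\alpha-\rho}\ee^{-cD^{-a}}$; the special case $\alpha=0$ is precisely the first asserted inequality.

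For $\alpha<0$, take $\phi\in C_{c}((0,\infty))$ with $\supp\phi\subset[d,D]$ and $\phi\equiv 1$ on $[2d,D/2]$, where $0<d<D/4$. On $\supp\phi$ one has $(x+\eps)^{\alpha}=x^{\alpha}(1+\eps/x)^{\alpha}\ge(1+\eps/d)^{\alpha}x^{\alpha}$, i.e.\ $x^{\alpha}\le(1+\eps/d)^{-\alpha}(x+\eps)^{\alpha}$, so Lemma~\ref{Lem:moment:est:eps} yields
\[
\int_{0}^{\infty}\phi\, x^{\alpha}h_{\eps}\,\dx\le(1+\eps/d)^{-\alpha}\int_{0}^{D}(x+\eps)^{\alpha}h_{\eps}\,\dx\le(1+\eps/d)^{-\alpha}\tilde C D^{1-\rho}\ee^{-\frac{c}{2}(D+\eps)^{-a}}.
\]
Since $\phi\, x^{\alpha}\in C_{c}((0,\infty))$, letting $\eps\to 0$ (the prefactor tends to $1$ for each fixed $d$) gives $\int\phi\,x^{\alpha}h\le\tilde C D^{1-\rho}\ee^{-\frac{c}{2}D^{-a}}$, and then $d\downarrow 0$ while enlarging $\phi$ to exhaust $(0,D)$ gives $\int_{(0,D)}x^{\alpha}h\le\tilde C D^{1-\rho}\ee^{-\frac{c}{2}D^{-a}}$. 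Equivalently, once the case $\alpha=0$ is in hand one may instead decompose $(0,D]=\bigcup_{k\ge 0}(2^{-k-1}D,2^{-k}D]$ as in Lemma~\ref{Lem:moment:est:eps}, bound $x^{\alpha}$ on each dyadic ring by its value at the left endpoint, apply the $\alpha=0$ estimate on $[0,2^{-k}D]$, and absorb both the series $\sum_{k}2^{k(|\alpha|-1+\rho)}\ee^{-c2^{ka}D^{-a}}$ and the factor $D^{\alpha}$ into the exponential, at the cost of a smaller constant $c$.

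Finally, in both cases the integral over $[0,D)$ (respectively $(0,D)$, since $h$ carries no mass at the origin) differs from $\int_{[0,D]}x^{\alpha}h$ only by a possible atom of $h$ at $x=D$; as the bounds obtained are continuous and nondecreasing in $D$, evaluating them at $D_{1}>D$ and letting $D_{1}\downarrow D$ (or approaching $D=1$ from below) promotes them to the stated inequalities in the convention $\int_{0}^{D}=\int_{[0,D]}$. I expect the handling of the origin in the case $\alpha<0$ — the comparison of $x^{\alpha}$ with $(x+\eps)^{\alpha}$ on $\{x\ge d\}$, or equivalently the dyadic reduction to $\alpha=0$ — to be the only genuinely delicate step; everything else is a routine weak‑$*$ passage to the limit.
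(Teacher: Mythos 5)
Your argument is correct and follows exactly the route the paper takes: the paper's entire proof is the one-line statement that the lemma follows from Lemma~\ref{Lem:moment:est:eps} via the weak-$*$ convergence $h_{\eps}\stackrel{*}{\rightharpoonup}h$, and you have simply supplied the (routine but genuinely needed) details — the cut-off approximation of $\chi_{[0,D]}$, the comparison of $x^{\alpha}$ with $(x+\eps)^{\alpha}$ away from the origin for $\alpha<0$, and the treatment of a possible atom at $x=D$.
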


\begin{proof}
 This follows from Lemma~\ref{Lem:moment:est:eps}.
\end{proof}

As a consequence of Lemma~\ref{Lem:moment:exp} together with Lemma~\ref{Lem:moment:est:stand} we obtain

\begin{corollary}\label{Cor:moment:est}
For any $\alpha\in\R$ and $D>0$ each limit $h$ satisfies
\begin{enumerate}
 \item $\int_{0}^{\infty}x^{\alpha}h\left(x\right)\dx<\infty$ if $\alpha<\rho-1$,
 \item $\int_{0}^{D}x^{\alpha}h\left(x\right)\dx<C\left(D\right)$.
\end{enumerate}
 \end{corollary}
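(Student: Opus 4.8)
The plan is to combine the near-origin exponential decay from Lemma~\ref{Lem:moment:exp} with the polynomial tail bound underlying Lemma~\ref{Lem:moment:est:stand}, after first recording that the limit measure $h$ inherits the averaged upper bound $\int_0^r h\,\dx\le r^{1-\rho}$ for all $r>0$. The latter follows by weak-$*$ lower semicontinuity from \eqref{hepsestimates}: for any $r>0$ and $\delta>0$ one has $\int_{[0,r]}h\,\dx\le\liminf_{\eps\to 0}\int_{[0,r+\delta]}h_{\eps}\,\dx\le(r+\delta)^{1-\rho}$, and letting $\delta\to 0$ gives the claim. With this in hand the two assertions are elementary.

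For the second assertion I would split at the point $\min\{D,1\}$. On $[0,\min\{D,1\}]$ Lemma~\ref{Lem:moment:exp} applies directly and yields a finite bound, the exponential prefactor being harmless since its exponent is bounded on this set. If $D>1$, on the remaining interval $[1,D]$ one estimates $x^{\alpha}\le\max\{1,D^{\alpha}\}$ and uses $\int_1^D h\,\dx\le\int_0^D h\,\dx\le D^{1-\rho}$, so that $\int_1^D x^{\alpha}h\,\dx\le\max\{1,D^{\alpha}\}D^{1-\rho}$. Adding the two contributions gives the constant $C(D)$.

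For the first assertion, note that $\alpha<\rho-1<0$ since $\rho<1$. Split $\int_0^{\infty}x^{\alpha}h\,\dx=\int_0^1 x^{\alpha}h\,\dx+\int_1^{\infty}x^{\alpha}h\,\dx$; the first term is finite by Lemma~\ref{Lem:moment:exp} (equivalently, by the previous paragraph). For the tail I would use a dyadic decomposition: since $\alpha<0$,
\[
\int_1^{\infty}x^{\alpha}h(x)\,\dx\le\sum_{k=0}^{\infty}2^{k\alpha}\int_{[0,2^{k+1}]}h(x)\,\dx\le 2^{1-\rho}\sum_{k=0}^{\infty}2^{k(\alpha+1-\rho)},
\]
and the geometric series converges because $\alpha+1-\rho<0$. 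Combining the two pieces proves $\int_0^{\infty}x^{\alpha}h\,\dx<\infty$. This is precisely the reasoning behind Lemma~\ref{Lem:moment:est:stand}, now run for the limit $h$ via its averaged upper bound.

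There is no genuine obstacle here; the only point deserving a word of care is the passage of the averaged upper bound to the weak-$*$ limit, which is handled by the lower-semicontinuity remark above, and the bookkeeping that the exponential prefactors produced by Lemma~\ref{Lem:moment:exp} are treated as fixed constants on the bounded sets on which they occur.
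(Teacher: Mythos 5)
Your proof is correct and follows essentially the same route as the paper, which deduces the corollary directly from Lemma~\ref{Lem:moment:exp} (near the origin) and the dyadic argument of Lemma~\ref{Lem:moment:est:stand} (for the tail and for $[1,D]$). The only detail you supply beyond the paper's one-line derivation is the weak-$*$ lower semicontinuity argument showing that the limit $h$ inherits the bound $\int_{[0,r]}h\,\dx\le r^{1-\rho}$, which is indeed the prerequisite for applying Lemma~\ref{Lem:moment:est:stand} to $h$ and is handled correctly.
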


\begin{remark}
 We obtain corresponding results for $h_{\eps}$ and $h$ with $x^{\alpha}$ replaced by $\left(x+\eps\right)^{\alpha}$.
\end{remark}

\subsection{Passing to the limit $\eps\to 0$}\label{sec:limit:eps}

In this section we will finally conclude the proof of Theorem~\ref{T.main} by passing to the limit $\eps\to 0$ in \eqref{eq:self:sim:eps}. Before doing this we first show that $I\left[h\right]$ is locally integrable:

\begin{lemma}
 For $h$ as given above one has $I\left[h\right]\in L_{\mathrm{loc}}^{1}\left(\left[0,\infty\right)\right)$.
\end{lemma}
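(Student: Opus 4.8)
The goal is to show that
\[
 I[h](x) = \int_0^x \int_{x-y}^{\infty} \frac{K(y,z)}{z}\, h(y)\,h(z)\,\dz\,\dy
\]
is locally integrable on $[0,\infty)$, i.e. that for every $D>0$ we have $\int_0^D I[h](x)\,\dx<\infty$. The plan is to integrate in $x$ first and use Tonelli's theorem (all integrands are non-negative) to convert the triple integral into a product-type expression to which the moment bounds of Corollary~\ref{Cor:moment:est} and Lemma~\ref{Lem:moment:exp} apply. Concretely, for fixed $D>0$,
\[
 \int_0^D I[h](x)\,\dx = \int_0^D\!\!\int_0^x\!\!\int_{x-y}^{\infty}\frac{K(y,z)}{z}h(y)h(z)\,\dz\,\dy\,\dx
 = \int\!\!\int \frac{K(y,z)}{z}h(y)h(z)\,\Big(\int_{\max\{y,\,y+z-?\}}^{D}\dots\Big)
\]
— more carefully, after swapping the order of integration the $x$-integration runs over $x\in[y, \min\{D,y+z\}]$ (the constraints being $y\le x\le D$ and $x-y\le z$, i.e. $x\le y+z$), so the $x$-integral simply contributes a factor $\big(\min\{D,y+z\}-y\big)_+\le \min\{z,D\}$. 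Hence
\[
 \int_0^D I[h](x)\,\dx \le \int_0^D\!\!\int_0^{\infty}\frac{K(y,z)}{z}\,\min\{z,D\}\,h(y)\,h(z)\,\dz\,\dy
 \le D\int_0^D\!\!\int_0^{\infty}\frac{K(y,z)}{z}h(y)h(z)\,\dz\,\dy .
\]
Now use the upper bound $K(y,z)\le C_2(y^{-a}z^{b}+y^{b}z^{-a})$ from \eqref{Ass1}, so that $\frac{K(y,z)}{z}\le C_2(y^{-a}z^{b-1}+y^{b}z^{-a-1})$, and split the $y$-integral over $[0,D]$ and the $z$-integral over $(0,1]$ and $[1,\infty)$.

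For the $y$-integration over $[0,D]$ (with $D\le 1$, the general case following by enlarging $D$ and using local finiteness of $h$ on $[0,\infty)$) the relevant moments are $\int_0^D y^{-a}h(y)\,\dy$ and $\int_0^D y^{b}h(y)\,\dy$; both are finite by Corollary~\ref{Cor:moment:est}(ii) (indeed by Lemma~\ref{Lem:moment:exp} they are even exponentially small, which is more than enough). For the $z$-integration the dangerous regions are $z\to 0$ and $z\to\infty$. Near $z=0$ we need $\int_0^1(z^{b-1}+z^{-a-1})h(z)\,\dz<\infty$: the term $z^{-a-1}$ is controlled by Lemma~\ref{Lem:moment:exp} (exponential decay of $\int_0^D z^{-a-1}h\,\dz$), and $z^{b-1}$ is even milder. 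Near $z=\infty$ we need $\int_1^{\infty}(z^{b-1}+z^{-a-1})h(z)\,\dz<\infty$; since $h$ satisfies $\int_0^r h\le r^{1-\rho}$ and $b-1<0$, $-a-1<\rho-1$, Corollary~\ref{Cor:moment:est}(i) gives finiteness (for the $z^{b-1}$ term one uses $b-1<\rho-1$, which holds because $\rho>b$). Multiplying the (finite) $y$-factor by the (finite) $z$-factor and recombining the two split pieces yields $\int_0^D I[h]\,\dx<\infty$.

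The only genuinely delicate point is the behaviour of $h$ near the origin: the singular factor $z^{-a-1}$ in $K(y,z)/z$ would be non-integrable against a generic finite measure, and it is precisely here that the exponential decay estimate (Lemma~\ref{Lem:expdecay}, passed to the limit in Lemma~\ref{Lem:moment:exp}) is used — so I expect the main obstacle to be organizing the splitting so that every singular power of $z$ (and of $y$) is paired with a region where the corresponding moment bound from Corollary~\ref{Cor:moment:est} or Lemma~\ref{Lem:moment:exp} applies; once the bookkeeping is set up, each individual estimate is routine. A minor point to handle cleanly is the use of Tonelli to justify the interchange of the order of integration, which is legitimate since $K\ge 0$, $h\ge 0$ and the resulting iterated integrals are, as shown, finite.
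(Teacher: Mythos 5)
Your proof is correct and follows essentially the same route as the paper: after interchanging the order of integration (or, in the paper's slightly cruder variant, simply enlarging the inner domains to $\int_0^D\int_0^\infty$), one bounds $\frac{K(y,z)}{z}\le C(y^{-a}z^{b-1}+y^b z^{-a-1})$ and invokes the moment estimates of Corollary~\ref{Cor:moment:est} — the $z$-moments $\int_0^\infty z^{b-1}h\,\dz$ and $\int_0^\infty z^{-a-1}h\,\dz$ being finite since $b-1<\rho-1$ and $-a-1<\rho-1$, and the $y$-moments $\int_0^D(y^{-a}+y^b)h\,\dy$ being finite thanks to the exponential decay of $h$ at the origin. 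Your exact computation of the $x$-integral (giving the factor $\min\{z,D\}$) is a little sharper than needed but leads to the same conclusion.
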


\begin{proof}
 Let $D>0$. Then one has
 \begin{equation*}
  \begin{split}
   \int_{0}^{D}I\left[h\right]\left(x\right)\dx&=\int_{0}^{D}\int_{0}^{x}\int_{x-y}^{\infty}\frac{K\left(y,z\right)}{z}h\left(y\right)h\left(z\right)\dz\dy\dx\\
   &\leq C\int_{0}^{D}\int_{0}^{D}\int_{0}^{\infty}\left(y^{-a}z^{b-1}+y^{b}z^{-a-1}\right)h\left(y\right)h\left(z\right)\dz\dy\dx\\
   &\leq C\int_{0}^{D}\int_{0}^{D}\left(y^{-a}+y^{b}\right)h\left(y\right)\dy\dx\leq C\left(D\right)
  \end{split}
 \end{equation*}
 where Corollary~\ref{Cor:moment:est} was used. One similarly gets $\int_{N}I\left[h\right]\dx=0$ for bounded null sets $N\subset \left[0,\infty\right)$.
\end{proof}

To show that $h$ is a (weak) self-similar solution it only remains to pass to the limit in the weak form of the equation
\begin{equation*}
 \begin{split}
  \partial_x I_{\eps}[h_{\eps}] = \partial_x \left( x h_{\eps}\right) + (\rho-1) h_{\eps}.
 \end{split}
\end{equation*}
Thus let $\varphi\in C_{c}^{\infty}\left(\left[0,\infty\right)\right)$. Then the weak form reads as
\begin{equation*}
 \begin{split}
  \int_{0}^{\infty}\del_{x}\varphi\left(x\right)\int_{0}^{x}\int_{x-y}^{\infty}\frac{K_{\eps}\left(y,z\right)}{z}h_{\eps}\left(y\right)h_{\eps}\left(z\right)\dz\dy\dx=\int_{0}^{\infty}\del_{x}\varphi\left(x\right)xh_{\eps}\left(x\right)\dx+\left(1-\rho\right)\int_{0}^{\infty}\varphi\left(x\right)h_{\eps}\left(x\right)\dx.
 \end{split}
\end{equation*}
One can easily pass to the limit in the right hand side. To prove Theorem~\ref{T.main} it thus remains to show that one can also take the limit in the left hand side of this equation. This will be done in the following Proposition.

\begin{proposition}\label{Prop:limit:eps}
 For any $\varphi\in C_{c}^{\infty}\left(\left[0,\infty\right)\right)$ one has 
 \begin{equation*}
  \begin{split}
   \int_{0}^{\infty}\del_{x}\varphi\left(x\right)\int_{0}^{x}\int_{x-y}^{\infty}\frac{K_{\eps}\left(y,z\right)}{z}h_{\eps}\left(z\right)h_{\eps}\left(y\right)\dz\dy\dx\longrightarrow \int_{0}^{\infty}\del_{x}\varphi\left(x\right)\int_{0}^{x}\int_{x-y}^{\infty}\frac{K\left(y,z\right)}{z}h\left(z\right)h\left(y\right)\dz\dy\dx
  \end{split}
 \end{equation*}
as $\eps\to 0$.
\end{proposition}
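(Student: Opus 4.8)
The plan is to rewrite the triple integral as a symmetric double integral, truncate away from the coordinate axes and from $z=\infty$, pass to the limit on the resulting compact region by weak-$*$ convergence, and control the discarded pieces uniformly in $\eps$ using the moment and exponential-decay estimates for $h_\eps$.

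First I would apply Fubini. Fix $D>0$ with $\supp\varphi\subset[0,D]$; for fixed $\eps>0$ the kernel $K_\eps$ is bounded on compact subsets of $[0,\infty)^2$ and $\int_0^r h_\eps\,dx\le r^{1-\rho}$, so interchanging the integrations is legitimate, and the region $\{0<y<x,\ z>x-y\}$ becomes $\{y<x<y+z\}$, giving $\int_y^{y+z}\del_x\varphi(x)\,dx=\varphi(y+z)-\varphi(y)$. Hence the left-hand side equals
\[
 J_\eps:=\int_0^\infty\!\!\int_0^\infty \frac{K_\eps(y,z)}{z}\bigl(\varphi(y+z)-\varphi(y)\bigr)\,h_\eps(y)h_\eps(z)\,\dy\,\dz,
\]
and the analogous identity with $\eps=0$ and $h$ in place of $h_\eps$ holds by the preceding lemma; it therefore suffices to show $J_\eps\to J:=\int_0^\infty\!\!\int_0^\infty \frac{K(y,z)}{z}(\varphi(y+z)-\varphi(y))h(y)h(z)\,\dy\,\dz$. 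Write $\Psi_\eps(y,z)=\frac{K_\eps(y,z)}{z}(\varphi(y+z)-\varphi(y))$; it vanishes for $y>D$, and using $\abs{\varphi(y+z)-\varphi(y)}\le\min\{2\norm{\varphi}_\infty,\norm{\del_x\varphi}_\infty z\}$ together with \eqref{Ass1} one gets
\[
 \abs{\Psi_\eps(y,z)}\le C_\varphi\bigl((y+\eps)^{-a}(z+\eps)^{b}+(y+\eps)^{b}(z+\eps)^{-a}\bigr)\min\{1,1/z\}.
\]

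Next, assuming $\eps\le1$ and given $\delta>0$, I would fix $0<\eta_0<\eta$ small and $M$ large so that the contributions to $J_\eps$ of the three regions $\{y<\eta_0\}$, $\{\eta_0\le y\le D,\ z<\eta\}$ and $\{\eta_0\le y\le D,\ z>M\}$ are each below $\delta$, uniformly in $\eps\le1$. On $\{z>M\}$ one uses $\min\{1,1/z\}\le 1/z$, so that (after the power-law bound) the $z$-integral is dominated by $\int_M^\infty(z^{b-1}+z^{-a-1})h_\eps(z)\,\dz\to0$ as $M\to\infty$ uniformly in $\eps$ (since $b-1,\,-a-1<\rho-1$ and $\int_0^r h_\eps\le r^{1-\rho}$), while $\int_0^D((y+\eps)^{-a}+(y+\eps)^b)h_\eps(y)\,\dy$ is bounded uniformly in $\eps\le1$ by Lemma~\ref{Lem:moment:est:eps} and Corollary~\ref{Cor:moment:est}. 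On $\{z<\eta\}$, and symmetrically on $\{y<\eta_0\}$, one uses $\min\{1,1/z\}\le1$ for $z\le1$ and the factor $1/z$ for $z>1$, and applies Lemma~\ref{Lem:moment:est:eps} to bound $\int_0^{\eta}((z+\eps)^{-a}+(z+\eps)^b)h_\eps(z)\,\dz\le C\eta^{1-\rho}\to0$; here the exponential decay of $h_\eps$ near zero is exactly what compensates the $(z+\eps)^{-a}$ growth. By Lemma~\ref{Lem:moment:exp} and Corollary~\ref{Cor:moment:est} the same three regions contribute at most $\delta$ to $J$ as well (enlarging/shrinking $\eta_0,\eta,M$ further if needed).

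Finally, on the compact box $Q=[\eta_0,D]\times[\eta,M]$ one has $K_\eps(y,z)=K(y+\eps,z+\eps)\to K(y,z)$ uniformly by uniform continuity of $K$ on $[\eta_0,D+1]\times[\eta,M+1]$, hence $\Psi_\eps\to\Psi$ uniformly on $Q$; and $h_\eps\otimes h_\eps\stackrel{*}{\rightharpoonup}h\otimes h$ on $(0,\infty)^2$, since for $f,g\in C_{c}((0,\infty))$ we have $\int\!\!\int fg\,\dd(h_\eps\otimes h_\eps)=(\int f\,\dd h_\eps)(\int g\,\dd h_\eps)\to(\int f\,\dd h)(\int g\,\dd h)$, tensor products are dense in $C_{c}((0,\infty)^2)$, and the masses of $h_\eps$ on compacts are uniformly bounded. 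Choosing $\eta_0,\eta,M$ outside the countable set of atoms of $h$ so that $h\otimes h(\partial Q)=0$, one concludes $\int\!\!\int_Q\Psi_\eps\,\dd(h_\eps\otimes h_\eps)\to\int\!\!\int_Q\Psi\,\dd(h\otimes h)$, using also $\sup_Q\abs{\Psi_\eps-\Psi}\cdot\sup_\eps(h_\eps\otimes h_\eps)(Q)\to0$. Combining with the previous step gives $\limsup_{\eps\to0}\abs{J_\eps-J}\le C\delta$, and letting $\delta\to0$ finishes the proof. The main obstacle is the uniform-in-$\eps$ taming of the singular part of $K_\eps$ near the coordinate axes in the truncation step, where the exponential decay of $h_\eps$ at the origin from Lemma~\ref{Lem:expdecay}/Lemma~\ref{Lem:moment:est:eps} is indispensable; the weak-$*$ convergence of the product measures and the null-boundary choice of $Q$ are only a minor technical point.
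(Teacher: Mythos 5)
Your argument is correct, and the uniform tail estimates you invoke (Lemma~\ref{Lem:moment:est:eps}, Corollary~\ref{Cor:moment:est}, Lemma~\ref{Lem:moment:est:stand}) are exactly the right tools for taming the singular factors $(y+\eps)^{-a}$, $(z+\eps)^{-a}$ near the axes and the decay at infinity. But your route differs from the paper's in how the limit is taken on the ``good'' region. The paper does not pass to the limit in the product measure $h_\eps\otimes h_\eps$ at all: it splits the difference of the two double integrals into three terms, one carrying the kernel difference $K-K_\eps$ against $h\otimes h$ (handled by dominated convergence), and two carrying the measure difference $h-h_\eps$ in one variable only. For the latter it freezes $h$ in the other variable and shows that the partially integrated test function $\psi_\eps^N(z)=\int_0^D\frac{K_\eps^{1,N}(y,z)}{z}h(y)[\varphi(y+z)-\varphi(y)]\,\dy$ converges \emph{strongly} in $C([0,\infty))$, so that only the single weak-$*$ convergence $h_\eps\stackrel{*}{\rightharpoonup}h$ is needed; the cutoffs $\zeta^N,\xi^M$ play the role of your truncation box. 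Your version instead uses the weak-$*$ convergence of $h_\eps\otimes h_\eps$ to $h\otimes h$ together with uniform convergence of $\Psi_\eps$ on a compact box, which forces you to supply the tensor-product density argument and to choose the box with $h\otimes h$-null boundary (a genuine necessity, since $h$ may have atoms away from the origin). What you gain is symmetry and a single clean limit passage; what the paper's strong--weak pairing gains is that it never has to discuss product measures, Portmanteau-type continuity sets, or atoms of $h$, at the price of an asymmetric three-term decomposition and an auxiliary uniform-convergence step for $\psi_\eps^N$. Both proofs rest on the same quantitative input, namely the $\eps$-uniform exponential decay of $h_\eps$ at the origin and the moment bounds at infinity.
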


\begin{proof}
 Taking the difference of the two integrals and rewriting one obtains
 \begin{equation*}
  \begin{split}
   &\quad \abs{\int_{0}^{\infty}\del_{x}\varphi\left(x\right)\left(\int_{0}^{x}\int_{x-y}^{\infty}\frac{K\left(y,z\right)}{z}h\left(y\right)h\left(z\right)-\frac{K_{\eps}\left(y,z\right)}{z}h_{\eps}\left(y\right)h_{\eps}\left(z\right)\dy\dz\right)\dx}\\
   &\leq \abs{\int_{0}^{\infty}\del_{x}\varphi\left(x\right)\left(\int_{0}^{x}\int_{x-y}^{\infty}\frac{K\left(y,z\right)-K_{\eps}\left(y,z\right)}{z}h\left(y\right)h\left(z\right)\dz\dy\right)\dx}\\
   &\quad+\abs{\int_{0}^{\infty}\del_{x}\varphi\left(x\right)\left(\int_{0}^{x}\int_{x-y}^{\infty}\frac{K_{\eps}\left(y,z\right)}{z}h\left(y\right)\left(h\left(z\right)-h_{\eps}\left(z\right)\right)\dz\dy\right)\dx}\\
   &\quad +\abs{\int_{0}^{\infty}\del_{x}\varphi\left(x\right)\left(\int_{0}^{x}\int_{x-y}^{\infty}\frac{K_{\eps}\left(y,z\right)}{z}h_{\eps}\left(z\right)\left(h\left(y\right)-h_{\eps}\left(y\right)\right)\dz\dy\right)\dx}=:\left(I\right)+\left(II\right)+\left(III\right)
  \end{split}
 \end{equation*} 
 We estimate the three terms separately and take $D>0$ such that $\supp \varphi\subset \left[0,D\right]$. Then due to Lebesgue's Theorem (using also Corollary~\ref{Cor:moment:est} and Lemma~\ref{Lem:moment:est:stand}) we obtain
 \begin{equation*}
  \begin{split}
   \left(I\right)&\leq \int_{0}^{\infty}\abs{\del_{x}\varphi\left(x\right)}\left(\int_{0}^{x}\int_{x-y}^{\infty}\frac{\abs{K\left(y,z\right)-K_{\eps}\left(y,z\right)}}{z}h\left(y\right)h\left(z\right)\dz\dy\right)\dx\to 0\quad \text{as } \eps\to 0. 
  \end{split}
 \end{equation*}
 To estimate the other two terms we will need some cutoff functions. Let $M,N\in\N$ and $\zeta_{1}^{N},\zeta_{2}^{N},\xi_{1}^{M},\xi_{2}^{M}\in C^{\infty}\left(\left[0,\infty\right)\right)$ such that 
\begin{equation*}
 \begin{split}
   \zeta_{1}^{N}=0 \text{ on } \left[0,\frac{1}{N}\right]\cup\left[N+1,\infty\right),\quad  \zeta_{1}^{N}=1 \text{ on } \left[\frac{2}{N},N\right], \quad  0\leq \zeta_{1}^{N}\leq 1, \quad  \zeta_{2}^{N}:=1-\zeta_{1}^{N},\\
   \xi_{1}^{M}=0 \text{ on } \left[0,\frac{1}{M}\right], \quad \xi_{1}^{M}=1 \text{ on } \left[\frac{2}{M},\infty\right), \quad 0\leq \xi_{1}^{M}\leq 1, \quad  \xi_{2}^{M}:=1-\xi_{1}^{M}. 
 \end{split}
\end{equation*}
Defining $K_{\eps}^{i,N}\left(y,z\right):=K_{\eps}\left(y,z\right)\cdot \zeta_{i}^{N}\left(z\right)$ for $i=1,2$ one obtains using also Fubini's Theorem:
\begin{equation*}
 \begin{split}
  \left(II\right)
  &\leq \abs{\int_{0}^{\infty}\int_{0}^{\infty}\frac{K_{\eps}^{1,N}\left(y,z\right)}{z}h\left(y\right)\left(h\left(z\right)-h_{\eps}\left(z\right)\right)\int_{y}^{y+z}\del_{x}\varphi\left(x\right)\dx\dy\dz}\\
  &\quad +\abs{\int_{0}^{\infty}\int_{0}^{\infty}\frac{K_{\eps}^{2,N}\left(y,z\right)}{z}h\left(y\right)\left(h\left(z\right)-h_{\eps}\left(z\right)\right)\int_{y}^{y+z}\del_{x}\varphi\left(x\right)\dx\dy\dz}=:\left(II\right)_{a}+\left(II\right)_{b}
 \end{split}
\end{equation*}
We consider again terms separately and without loss of generality we assume $\eps<1$. Then using Corollary~\ref{Cor:moment:est} and Lemma~\ref{Lem:moment:est:stand} we obtain
\begin{equation}\label{eq:weak:strong:0}
 \begin{split}
  \left(II\right)_{b}
  &\leq C\norm{\del_{x}\varphi}_{\infty}\int_{0}^{\frac{2}{N}}\int_{0}^{D}\left[\left(y+\eps\right)^{-a}\left(z+\eps\right)^{b}+\left(y+\eps\right)^{b}\left(z+\eps\right)^{-a}\right]h\left(y\right)\left(h\left(z\right)+h_{\eps}\left(z\right)\right)\dy\dz\\
  &\quad +C\norm{\varphi}_{\infty}\int_{N}^{\infty}\int_{0}^{D}\frac{\left(y+\eps\right)^{-a}\left(z+\eps\right)^{b}+\left(y+\eps\right)^{b}\left(z+\eps\right)^{-a}}{z}h\left(y\right)\left(h\left(z\right)+h_{\eps}\left(z\right)\right)\dy\dz\\
  &\leq \norm{\del_{x}\varphi}_{\infty}C\left(D\right)\int_{0}^{\frac{2}{N}}\left(\left(z+\eps\right)^{b}+\left(z+\eps\right)^{-a}\right)\left(h\left(z\right)+h_{\eps}\left(z\right)\right)\dz\\
  &\qquad +C\left(D\right)\norm{\varphi}_{\infty}\int_{N}^{\infty}\left(2^{\tilde{b}}z^{b-1}+z^{-a-1}\right)\left(h\left(z\right)+h_{\eps}\left(z\right)\right)\dz\\
  &\leq \norm{\del_{x}\varphi}_{\infty}C\left(D\right)\left[\frac{1}{N^{1-\rho}}\left(\frac{2}{N}+\eps\right)^{\tilde{b}}+\frac{1}{N^{1-\rho}}\right]+C\left(D\right)\norm{\varphi}_{\infty}\left[N^{b-\rho}+N^{-a-\rho}\right]\longrightarrow 0,
 \end{split}
\end{equation}
for $N\to\infty$. Furthermore one has
\begin{equation}\label{eq:weak:strong:1}
 \begin{split}
  \left(II\right)_{a}
  &=\abs{\int_{0}^{\infty}\left(h\left(z\right)-h_{\eps}\left(z\right)\right)\psi_{\eps}^{N}\left(z\right)\dz}
 \end{split}
\end{equation}
with $\psi_{\eps}^{N}\left(z\right):=\int_{0}^{D}\frac{K_{\eps}^{1,N}\left(y,z\right)}{z}h\left(y\right)\left[\varphi\left(y+z\right)-\varphi\left(y\right)\right]\dy$. We claim that $\psi_{\eps}^{N}\to \psi^{N}$ strongly in $C\left(\left[0,\infty\right)\right)$ with $\psi^{N}\left(z\right):=\int_{0}^{D}\frac{K\left(y,z\right)}{z}h\left(y\right)\zeta_{1}^{N}\left(z\right)\left[\varphi\left(y+z\right)-\varphi\left(y\right)\right]\dy$. Note that by construction we have $\supp\psi_{\eps}^{N}\subset \left[\frac{1}{N},N+1\right]$ for all $\eps>0$. To show (uniform) convergence we have to use a cutoff also in $y$, i.e. one can estimate
\begin{equation*}
 \begin{split}
  \abs{\psi^{N}\left(z\right)-\psi_{\eps}^{N}\left(z\right)}&\leq \abs{\int_{0}^{D}\frac{K\left(y,z\right)-K_{\eps}\left(y,z\right)}{z}\zeta_{1}^{N}\left(z\right)\xi_{1}^{M}\left(y\right)h\left(y\right)\left[\varphi\left(y+z\right)-\varphi\left(y\right)\right]\dy}\\
  &\quad +\abs{\int_{0}^{D}\frac{K\left(y,z\right)-K_{\eps}\left(y,z\right)}{z}\zeta_{1}^{N}\left(z\right)\xi_{2}^{M}\left(y\right)h\left(y\right)\left[\varphi\left(y+z\right)-\varphi\left(y\right)\right]\dy}\\
  &=:\left(II\right)_{a,1}+\left(II\right)_{a,2}.
 \end{split}
\end{equation*}
Using similar arguments as in \eqref{eq:weak:strong:0} we get
\begin{equation}\label{eq:strong:conv:1:1}
 \begin{split}
  \left(II\right)_{a,2}
  &\leq C\left(N,\varphi\right)\left[\frac{1}{M^{1+\tilde{b}-\rho}}+\frac{1}{M^{1-\rho}}\right]\longrightarrow 0
 \end{split}
\end{equation}
for $M\to \infty$ and $N$ fixed. As $K$ is continuous on $\left[\frac{1}{M},D\right]\times\left[\frac{1}{N},N+1\right]$ for $M,N\in\N$ fixed, one has $K_{\eps}\to K$ uniformly on $\left[\frac{1}{M},D\right]\times\left[\frac{1}{N},N+1\right]$ for $\eps\to 0$. Thus we get $(II)_{a,1}\to 0$ for $\eps\to 0$ (with $M,N$ fixed). Together with \eqref{eq:strong:conv:1:1} this shows that $\psi_{\eps}^{N}\to \psi^{N}$ strongly. Thus one can pass to the limit in \eqref{eq:weak:strong:1} to obtain together with \eqref{eq:weak:strong:0}: $(II)\to 0$ as $\eps\to 0$.

In a similar way we can show that $(III)\to 0$ for $\eps\to 0$.

 \end{proof}

\section*{Acknowledgements} 
The authors acknowledge support through the CRC 1060 \textit{The mathematics of emergent effects} at the University of Bonn that is funded through the German Science Foundation (DFG).

\appendix

\section{Moment estimates}

\begin{lemma}\label{Lem:moment:est:stand}
 Let $h\in \mathcal{X}_{\rho}$ and $\alpha\in\R$. Then one has the following estimates
 \begin{enumerate}
  \item $\int_{0}^{D}x^{\alpha}h\left(x\right)\dx\leq C \norm{h}D^{1-\rho+\alpha}$ for all $D>0$ if $\rho-1<\alpha$,
  \item $\int_{D}^{\infty}x^{\alpha}h\left(x\right)\dx\leq C\norm{h}D^{1-\rho+\alpha}$ for all $D>0$ if $\alpha<\rho-1$,
 \end{enumerate}
where $\norm{h}$ is defined in~\eqref{eq:S1E3}.
\end{lemma}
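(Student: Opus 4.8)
The plan is to derive both bounds from the single inequality $\int_{[0,R]}h\dx\le\norm{h}\,R^{1-\rho}$, which is merely a restatement of the definition \eqref{eq:S1E3}, by means of a dyadic decomposition of the range of integration. Before doing anything else I would observe that since $1-\rho>0$, sending $R\to0$ in this inequality forces $h\bigl(\{0\}\bigr)=0$; consequently $x^{\alpha}$ is finite $h$-almost everywhere and the point $x=0$ plays no role in any of the integrals below.

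For part (1) I would write $(0,D]=\bigcup_{k\ge0}I_k$ with $I_k:=(2^{-k-1}D,2^{-k}D]$ and use that on $I_k$ one has $x^{\alpha}\le 2^{|\alpha|}(2^{-k}D)^{\alpha}$ (the value at the right endpoint when $\alpha\ge0$, and $2^{-\alpha}$ times it at the left endpoint when $\alpha<0$). Estimating $\int_{I_k}h\dx$ by $\int_{[0,2^{-k}D]}h\dx\le\norm{h}(2^{-k}D)^{1-\rho}$ and summing, one arrives at
\[
\int_0^D x^{\alpha}h\left(x\right)\dx\le 2^{|\alpha|}\norm{h}\,D^{\alpha+1-\rho}\sum_{k\ge0}2^{-k(\alpha+1-\rho)},
\]
and the geometric series converges precisely because the hypothesis $\alpha>\rho-1$ makes its exponent negative, which gives the asserted bound with $C=2^{|\alpha|}\bigl(1-2^{-(\alpha+1-\rho)}\bigr)^{-1}$.

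Part (2) is entirely symmetric: I would decompose $[D,\infty)=\bigcup_{k\ge0}[2^kD,2^{k+1}D)$, note that $\alpha<\rho-1<0$ gives $x^{\alpha}\le(2^kD)^{\alpha}$ on the $k$-th piece, bound $\int_{[2^kD,2^{k+1}D)}h\dx\le\norm{h}(2^{k+1}D)^{1-\rho}=2^{1-\rho}\norm{h}(2^kD)^{1-\rho}$, and sum the resulting geometric series $\sum_{k\ge0}2^{k(\alpha+1-\rho)}$, which now converges exactly because $\alpha<\rho-1$.

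I do not expect any real obstacle. The two points that need a little care are the non-atomicity of $h$ at the origin — settled above using $1-\rho>0$ — and keeping straight in which direction the geometric series is summed in each part, so that the sign conditions $\rho-1<\alpha$ and $\alpha<\rho-1$ are exactly what guarantee convergence (and, incidentally, why the borderline case $\alpha=\rho-1$ is excluded from the statement).
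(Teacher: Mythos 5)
Your proof is correct and follows essentially the same route as the paper: both rest on the dyadic decomposition of the domain of integration combined with the defining bound $\int_{[0,R]}h\dx\leq\norm{h}R^{1-\rho}$, and both obtain convergent geometric series from the sign hypothesis on $\alpha+1-\rho$. The only cosmetic difference is that the paper dispatches the case $\alpha\geq 0$ of part (1) directly via $x^{\alpha}\leq D^{\alpha}$ and reserves the dyadic argument for $\alpha\in(\rho-1,0)$, whereas you treat both signs uniformly with the $2^{|\alpha|}$ factor.
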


\begin{proof}
 ~\begin{enumerate}
   \item The case $\alpha\geq 0$ is clear by definition of $\mathcal{X}_{\rho}$. For $\alpha\in\left(\rho-1,0\right)$ one has, using a dyadic decomposition, that
         \begin{equation*}
          \begin{split}
           &\quad\int_{0}^{D}x^{\alpha}h\left(x\right)\dx=\sum_{n=0}^{\infty}\int_{2^{-\left(n+1\right)}D}^{2^{-n}D}x^{\alpha}h\left(x\right)\dx\leq \sum_{n=0}^{\infty}2^{-\alpha\left(n+1\right)}D^{\alpha}\int_{2^{-\left(n+1\right)}D}^{2^{-n}D}h\left(x\right)\dx\\
           &\leq \norm{h}\sum_{n=0}^{\infty}2^{-\alpha\left(n+1\right)}D^{\alpha}\left(2^{-n}D\right)^{1-\rho}=2^{-\alpha}\norm{h}D^{1+\alpha-\rho}\sum_{n=0}^{\infty}\left(2^{1+\alpha-\rho}\right)^{-n}=C\left(\alpha,\rho\right)\norm{h}D^{1+\alpha-\rho}.
          \end{split}
         \end{equation*}
   \item This follows similarly using again a dyadic decomposition.
  \end{enumerate}
\end{proof}

\section{Dual problems}

\subsection{Existence results}\label{Sec:existence:results}

In this section we show the existence of solutions to some dual problems arising in the proof of the lower bounds. Throughout this section we will use the following notation: $\Mfin$ will denote the space of finite measures, $\Mfin_{+}$ is the space on non-negative finite measures. Furthermore $C_{b}^{n}$ denotes the space of bounded $n$-times differentiable functions with bounded derivatives. Let $\omega\in\left(0,1\right)$, $A\in\R$ and consider the equation 
\begin{equation}\label{eq:standard:dual:problem}
 \del_{t}f\left(x,t\right)-\const\int_{0}^{\infty}\frac{1}{y^{1+\omega}}\left[f\left(x+y\right)-f\left(x\right)\right]\dy=0
\end{equation}
together with initial value $f\left(x,0\right)=\delta\left(\cdot-A\right)$.

\begin{proposition}\label{Prop:ex:stand:dual:distr}
 There exists a (weak) solution $f\in C\left(\left[0,T\right],\Mfin_{+}\right)$ of \eqref{eq:standard:dual:problem} with initial value $f_{0}=\delta\left(\cdot-A\right)$. Furthermore this $f$ satisfies $\supp f\left(\cdot,t\right)\subset \left(-\infty,A\right]$ and $\int_{\R}f\left(\cdot,t\right)\dx=1$ for all $t\in\left[0,T\right]$.
\end{proposition}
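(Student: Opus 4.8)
The plan is to recognise that the right-hand side of \eqref{eq:standard:dual:problem} is (the formal adjoint of) the generator of an $\omega$-stable subordinator and to read the solution off from that structure. Let $(S_{t})_{t\geq 0}$ be the pure-jump L\'evy process with L\'evy measure $\const\,y^{-1-\omega}\dd y$ on $(0,\infty)$; since $0<\omega<1$ this measure integrates $1\wedge y$, so $S_{t}$ is well defined, nonnegative, of finite variation, with Laplace exponent $\int_{0}^{\infty}(1-\ee^{-py})\const\,y^{-1-\omega}\dd y=\const\,\abs{\Gamma(-\omega)}\,p^{\omega}=:c\,p^{\omega}$, $c>0$, where we used $\int_{0}^{\infty}y^{-1-\omega}(\ee^{-py}-1)\dd y=\Gamma(-\omega)\,p^{\omega}$ and $\Gamma(-\omega)<0$ for $\omega\in(0,1)$. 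I would then \emph{define} $f(\cdot,t)$ for $t>0$ to be the law of $A-S_{t}$ and $f(\cdot,0):=\delta(\cdot-A)$. By construction $f(\cdot,t)$ is a probability measure, hence lies in $\Mfin_{+}$ with $\int_{\R}f(\cdot,t)\dx=1$; and $S_{t}\geq 0$ forces $A-S_{t}\leq A$, so $\supp f(\cdot,t)\subset(-\infty,A]$ (equivalently $\int_{\R}\ee^{p(x-A)}f(\dd x,t)=\ee^{-ctp^{\omega}}\leq 1$ for $p\geq0$, which excludes mass in $(A+\delta,\infty)$).

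Next I would verify that this $f$ is a weak solution and continuous in $t$. The L\'evy process $Y_{t}:=A-S_{t}$ has generator $\mathcal{B}\test(x)=\const\int_{0}^{\infty}y^{-1-\omega}[\test(x-y)-\test(x)]\dd y$, the integral being absolutely convergent for $\test\in C_{c}^{\infty}(\R)$ since the integrand is $O(y)$ near $y=0$ (this is where $\omega<1$ enters) and bounded at $y=\infty$ (where $\omega>0$ enters). By Dynkin's formula $t\mapsto\int_{\R}\test(x)\,f(\dd x,t)=\mathbb{E}[\test(Y_{t})]$ is $C^{1}$ with
\[
 \frac{\dd}{\dd t}\int_{\R}\test(x)\,f(\dd x,t)=\int_{\R}\mathcal{B}\test(x)\,f(\dd x,t)=\const\int_{\R}\Big(\int_{0}^{\infty}\frac{\test(x-y)-\test(x)}{y^{1+\omega}}\,\dd y\Big)f(\dd x,t),
\]
which by Fubini and the translation $x\mapsto x+y$ is exactly the weak form of \eqref{eq:standard:dual:problem}. (If one prefers to avoid quoting L\'evy process theory one can instead check $\del_{t}\widehat f(\xi,t)=\psi(\xi)\widehat f(\xi,t)$ with $\psi(\xi)=\const\int_{0}^{\infty}(\ee^{-\im\xi y}-1)y^{-1-\omega}\dd y=\const\Gamma(-\omega)(\im\xi)^{\omega}$, differentiate the Parseval identity in $t$ — legitimate because $\abs{\psi(\xi)\ee^{t\psi(\xi)}}\leq\abs{\psi(\xi)}\lesssim(1+\abs{\xi})^{\omega}$ while test-function transforms decay rapidly — and transfer the symbol to the test function.) Finally, continuity $t\mapsto f(\cdot,t)\in\Mfin_{+}$ in the weak topology follows from the scaling $S_{t}\overset{d}{=}t^{1/\omega}S_{1}$, which makes $t\mapsto\mathrm{Law}(S_{t})$ weakly continuous on $(0,\infty)$, together with $S_{t}\to0$ a.s.\ as $t\to0^{+}$, giving $f(\cdot,t)\rightharpoonup\delta(\cdot-A)=f(\cdot,0)$; since the mass is always $1$ there is no loss of tightness.

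The one genuine subtlety is that the L\'evy measure $y^{-1-\omega}\dd y$ is infinite, so the operator in \eqref{eq:standard:dual:problem} is unbounded and cannot be exponentiated directly; the real content is the identification of the symbol $c\,p^{\omega}$ as the transform of an honest probability measure, which is classical. If one wants a fully self-contained construction I would truncate: for $\lambda>0$ the operator $\mathcal{L}_{\lambda}g(x):=\const\int_{\lambda}^{\infty}y^{-1-\omega}[g(x+y)-g(x)]\dd y$ is \emph{bounded}, so $\ee^{t\mathcal{L}_{\lambda}}\delta(\cdot-A)$ is an explicit compound-Poisson measure, supported in $(-\infty,A]$ with mass $1$ and solving the truncated equation, and one lets $\lambda\to0$; the exponents $\const\int_{\lambda}^{\infty}y^{-1-\omega}(\ee^{-py}-1)\dd y$ decrease to $\const\Gamma(-\omega)p^{\omega}$, so the Laplace transforms converge and pin down the limit uniquely. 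On this route the only nontrivial point is the tightness of $\{f_{\lambda}(\cdot,t)\}_{\lambda>0}$, which cannot come from a uniform first moment (the limiting stable law has none) and must be extracted from a uniform bound on a fractional moment $\int_{\R}(A-x)^{\omega'}f_{\lambda}(\dd x,t)\leq C(\omega',T)$ with $\omega'\in(0,\omega)$. Everything else is routine bookkeeping.
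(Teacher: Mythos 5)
Your proof is correct and takes a genuinely different route from the paper. You recognise \eqref{eq:standard:dual:problem} as the forward Kolmogorov (Fokker--Planck) equation for $Y_{t}=A-S_{t}$, $S_{t}$ a one-sided $\omega$-stable subordinator with L\'evy measure $\const\,y^{-1-\omega}\dd y$, and simply read the solution off as $f(\cdot,t)=\mathrm{Law}(A-S_{t})$; nonnegativity, unit mass, and support in $(-\infty,A]$ are then automatic, the weak equation is Dynkin's formula for the generator $\mathcal{B}\test(x)=\const\int_{0}^{\infty}y^{-1-\omega}[\test(x-y)-\test(x)]\dd y$ (convergent on $C_{c}^{\infty}$ precisely because $0<\omega<1$), and continuity of $t\mapsto f(\cdot,t)$ in $\Mfin_{+}$ follows from stochastic continuity of the L\'evy process (your self-similarity argument $S_{t}\overset{d}{=}t^{1/\omega}S_{1}$ is one clean way to see it). The paper instead proceeds analytically and without any probabilistic input: it regularises the kernel to $(y^{1+\omega}+\nu)^{-1}$, sets up a fixed-point reformulation and applies the contraction mapping principle to get $f^{\nu}\in C([0,T],\Mfin_{+})$, derives a uniform (in $\nu$ and $t$) bound on the fractional moment $\int\abs{x}^{\tilde{\omega}}f^{\nu}(x,t)\dx$ for $\tilde{\omega}\in(0,\omega)$ to obtain tightness, shows equicontinuity in $t$ in the distributional sense, and passes to the limit $\nu\to 0$ along a subsequence to identify a weak solution. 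What each buys: your main route is shorter and conceptually transparent, but outsources the heavy lifting to the theory of L\'evy processes and their generators; the paper's route is longer but self-contained and needs only elementary ODE/measure-theory tools, which matches the style of the rest of the appendix. Your truncation alternative (cut the L\'evy measure at $\lambda$, exponentiate the resulting bounded compound-Poisson generator, let $\lambda\to0$ with tightness from a uniform fractional-moment bound) is in fact the probabilistic mirror image of what the paper does --- the paper regularises by adding $\nu$ to the denominator rather than truncating the domain, but both produce a bounded kernel and both control the limit via a uniform $\tilde{\omega}$-moment with $\tilde{\omega}<\omega$.
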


\begin{proof}[Proof (Sketch)]
 First we consider the regularized equation
 \begin{equation}\label{eq:reg:weak:sol}
  \begin{split}
   \del_{t}f\left(x,t\right)&=\const\int_{0}^{\infty}\frac{1}{y^{1+\omega}+\nu}\left[f\left(x+y,t\right)-f\left(x,t\right)\right]\dy\\
   f\left(\cdot,0\right)&=\delta\left(\cdot-A\right)
  \end{split}
 \end{equation}
with $\nu>0$. In the second step we will pass to the limit $\nu\to 0$. We can reformulate \eqref{eq:reg:weak:sol} as the following fixed-point problem:
\begin{equation}\label{eq:reg:fix}
 \begin{split}
  f^{\nu}\left(x,t\right)=\delta\left(x-A\right)\ee^{-\const\int_{0}^{\infty}\frac{1}{y^{1+\omega}+\nu}\dy}+\int_{0}^{t}\ee^{-\left(t-s\right)\int_{0}^{\infty}\frac{1}{y^{1+\omega}+\nu}\dy}\int_{0}^{\infty}\frac{1}{y^{1+\omega}+\nu}f\left(x+y\right)\dy\ds.
 \end{split}
\end{equation}
It is straightforward, applying the contraction mapping theorem, to obtain a solution $f\in C\left(\left[0,T\right],\Mfin_{+}\right)$ for any $T>0$. Furthermore, one obtains $\int_{\R}f^{\nu}\left(x,t\right)\dx=1$ for all $t>0$ and $\nu>0$ (by integrating the equation, see below). In addition $f^{\nu}$ satisfies equation \eqref{eq:reg:weak:sol} in weak form, i.e.
\begin{equation}\label{eq:reg:weak:form}
 \begin{split}
  \int_{\R}f^{\nu}\left(x,t\right)\psi\left(x\right)\dx=\psi\left(A\right)+\int_{0}^{t}\int_{\R}\int_{0}^{\infty}\frac{1}{y^{1+\omega}+\nu}f^{\nu}\left(x,s\right)\left[\psi\left(x-y\right)-\psi\left(x\right)\right]\dy\dx\ds
 \end{split}
\end{equation}
for all $\psi\in C_{b}\left(\R\right)$ and for $0<\tilde{\omega}<\omega$ taking $\psi\left(x\right)=\abs{x}^{\tilde{\omega}}$ and using $\abs{\abs{x-y}^{\tilde{\omega}}-\abs{x}^{\tilde{\omega}}}\leq \abs{y}^{\tilde{\omega}}$ we obtain (by approximation)
\begin{equation*}
 \begin{split}
  \int_{\R}f^{\nu}\left(x,t\right)\abs{x}^{\tilde{\omega}}\dx&\leq \abs{A}^{\tilde{\omega}}+\int_{0}^{t}\int_{\R}\int_{0}^{\infty}\frac{\abs{\abs{x-y}^{\tilde{\omega}}-\abs{x}^{\tilde{\omega}}}}{y^{1+\omega}+\nu}f^{\nu}\left(x,s\right)\dy\dx\ds\\
  &\leq \abs{A}^{\tilde{\omega}}+\int_{0}^{t}\int_{\R}\int_{0}^{\infty}\frac{\abs{y}^{\tilde{\omega}}}{y^{1+\omega}+\nu}f^{\nu}\left(x,s\right)\dy\dx\ds\leq C\left(T,\omega,\tilde{\omega},A\right).
 \end{split}
\end{equation*}
Thus $\int_{\R}\abs{x}^{\tilde{\omega}}f^{\nu}\left(x,t\right)\dx$ is uniformly bounded (i.e. independent of $\nu$ and $t$).

Using this and that $\left\{f^{\nu}\right\}_{\nu>0}$ is uniformly bounded by $1$, we can extract a subsequence $\left\{f^{\nu_{n}}\right\}_{n\in\N}$ (denoted in the following as $\left\{f^{n}\right\}_{n\in\N}$) such that $f^{n}\left(\cdot,t_{k}\right)$ converges in the sense of measures to some $f\left(\cdot,t_{k}\right)$ for all $k\in\N$, where $\left\{t_{k}\right\}_{k\in\N}=\left[0,T\right]\cap \Q$.

We next show that $f^{n}$ is equicontinuous in $t$ as a distribution, i.e. from \eqref{eq:reg:weak:form} we obtain for any $\psi\in C_{c}^{1}\left(\R\right)$:
\begin{equation*}
 \begin{split}
  &\quad \abs{\int_{\R}\left(f^{n}\left(x,t\right)-f^{n}\left(x,s\right)\right)\psi\left(x\right)\dx}=\abs{\int_{s}^{t}\int_{\R}f^{n}\left(x,r\right)\int_{0}^{\infty}\frac{1}{y^{1+\omega}+\nu}\left[\psi\left(x-y\right)-\psi\left(x\right)\right]\dy\dx\dd r}\\
  &\leq \int_{s}^{t}\int_{\R}f^{n}\left(x,r\right)\left[\int_{0}^{1}\frac{\norm{\psi'}_{L^{\infty}}y}{y^{1+\omega}+\nu}\dy+\int_{1}^{\infty}\frac{2\norm{\psi}_{L^{\infty}}}{y^{1+\omega}+\nu}\dy\right]\dx\dd r\leq C\left(\psi\right)\abs{t-s},
 \end{split}
\end{equation*}
where $C\left(\psi\right)$ is a constant independent of $\nu$ but depending on $\psi$ and $\psi'$. Using the equicontinuity of $f^{n}$ (as a distribution) one can show that $f^{n}$ converges to some limit $f$ (in the sense of distributions) for all $t\in\left[0,T\right]$. 

Using furthermore the uniform boundedness of $\int_{\R}\abs{x}^{\tilde{\omega}}f^{n}\left(x,t\right)\dx$ one can show that $f^{n}$ converges already in the sense of measures by approximating and cutting the test function for large values of $\abs{x}$.

Using similar arguments we can also show that for the limit $f^{n}\rightharpoonup f$ we have $f\in C\left(\left[0,T\right],\Mfin_{+}\right)$ and taking the limit $n\to \infty$ in \eqref{eq:reg:weak:form}, $f$ satisfies
\begin{equation}\label{eq:weak:sol:limit}
 \begin{split}
   \int_{\R}f\left(x,t\right)\psi\left(x\right)\dx=\psi\left(A\right)+\int_{0}^{t}\int_{\R}\int_{0}^{\infty}\frac{1}{y^{1+\omega}}f\left(x,s\right)\left[\psi\left(x-y\right)-\psi\left(x\right)\right]\dy\dx\ds
 \end{split}
\end{equation}
for each $\psi\in C_{b}^{1}\left(\R\right)$ and all $t\in\left[0,T\right]$.

From the construction of $f$ using the contraction mapping principle we immediately get $\supp f\left(\cdot,t\right)\subset \left(-\infty,A\right]$ for all $t\in\left[0,T\right]$. To see $\int_{\R}f\left(\cdot,t\right)\dx=1$ for all $t\in\left[0,T\right]$ we integrate equation~\eqref{eq:standard:dual:problem} over $\R$ and use Fubini's theorem to obtain $\del_{t}\int_{\R}G\left(\cdot,t\right)\dx=0$. Thus together with the initial condition the claim follows. 
\end{proof}

\begin{remark}
 The analogous result holds true if $f_{0}=-\delta\left(\cdot-A\right)$.
\end{remark}

As a direct consequence of Proposition~\ref{Prop:ex:stand:dual:distr} we also obtain smooth solutions for smoothed initial data. Therefore for $\kappa>0$ we denote in the following by $\varphi_{\kappa}$ a non-negative, symmetric standard mollifier with $\supp \varphi_{\kappa}\subset\left[-\kappa, \kappa\right]$.

\begin{proposition}\label{Prop:ex:stand:dual:smooth:meas}
 Let $f_{0}:=\delta\left(\cdot-A\right)$. Then there exists a solution $f\in C^{1}\left(\left[0,T\right],C^{\infty}\left(\R\right)\right)$ to \eqref{eq:standard:dual:problem} with initial datum $f_{0}\ast\varphi_{\kappa}=\varphi_{\kappa}\left(\cdot-A\right)$. 
\end{proposition}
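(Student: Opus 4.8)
The plan is to obtain the smooth solution directly from the distributional solution of Proposition~\ref{Prop:ex:stand:dual:distr} by convolution, exploiting that \eqref{eq:standard:dual:problem} is linear and invariant under translations. Let $f\in C([0,T],\Mfin_{+})$ be the solution with initial datum $\delta(\cdot-A)$ furnished by that proposition, with $\supp f(\cdot,t)\subset(-\infty,A]$ and $\int_{\R}f(\cdot,t)\dx=1$, and set $g(\cdot,t):=f(\cdot,t)\ast\varphi_{\kappa}$. Since $f(\cdot,t)$ is a probability measure and $\varphi_{\kappa}\in C_{c}^{\infty}(\R)$, each $g(\cdot,t)$ lies in $C^{\infty}(\R)$ and every $x$-derivative is bounded uniformly in $t$ by $\norm{\del_{x}^{n}\varphi_{\kappa}}_{L^{\infty}}$; moreover $g(\cdot,0)=\delta(\cdot-A)\ast\varphi_{\kappa}=\varphi_{\kappa}(\cdot-A)$, the required initial datum, and $\supp g(\cdot,t)\subset(-\infty,A+\kappa]$.

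To see that $g$ solves the equation, I would test the weak identity \eqref{eq:weak:sol:limit} with $\psi=\varphi_{\kappa}(x_{0}-\cdot)\in C_{b}^{1}(\R)$ for an arbitrary fixed $x_{0}\in\R$. Using symmetry of $\varphi_{\kappa}$ one has $\int_{\R}f(x,s)\psi(x)\dx=g(x_{0},s)$ and $\int_{\R}f(x,s)\bigl[\psi(x-y)-\psi(x)\bigr]\dx=g(x_{0}+y,s)-g(x_{0},s)$. Interchanging the $x$- and $y$-integrations is legitimate because for $y\le1$ one has $\abs{\varphi_{\kappa}(x_{0}-x+y)-\varphi_{\kappa}(x_{0}-x)}\le\norm{\varphi_{\kappa}'}_{L^{\infty}}y$, so the integrand is dominated by $\norm{\varphi_{\kappa}'}_{L^{\infty}}y^{-\omega}$ against the probability measure $f(\cdot,s)$, while for $y\ge1$ it is dominated by $2\norm{\varphi_{\kappa}}_{L^{\infty}}y^{-1-\omega}$; both are integrable precisely because $\omega\in(0,1)$. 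This yields the scalar integral equation
\begin{equation*}
 g(x_{0},t)=\varphi_{\kappa}(x_{0}-A)+\const\int_{0}^{t}\int_{0}^{\infty}\frac{1}{y^{1+\omega}}\bigl[g(x_{0}+y,s)-g(x_{0},s)\bigr]\dy\ds.
\end{equation*}

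Since $s\mapsto f(\cdot,s)$ is weakly continuous, the inner double integral is continuous in $s$, uniformly in $x_{0}$ (using the same domination and the uniform moment bound on $\int_{\R}\abs{x}^{\tilde\omega}f(\cdot,s)\dx$ from the proof of Proposition~\ref{Prop:ex:stand:dual:distr} to control the contribution of large $\abs{x}$), hence the right-hand side is a $C^{1}$ function of $t$; differentiating in $t$ shows that $g$ solves \eqref{eq:standard:dual:problem} pointwise with $\del_{t}g(x,t)=\const\int_{0}^{\infty}y^{-1-\omega}\bigl[g(x+y,t)-g(x,t)\bigr]\dy$. Differentiating this identity in $x$ under the integral sign (dominated convergence, via the uniform bounds on $\del_{x}^{n}g$) shows every $\del_{x}^{n}\del_{t}g$ is bounded and continuous, and $t\mapsto g(\cdot,t)$ is continuous into $C^{\infty}(\R)$ because $f(\cdot,t)\rightharpoonup f(\cdot,t_{0})$ in the sense of measures together with the uniform support and moment bounds makes $\del_{x}^{n}g(\cdot,t)=\int(\del_{x}^{n}\varphi_{\kappa})(\cdot-x')\,f(dx',t)$ converge uniformly on $\R$. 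Thus $g\in C^{1}([0,T],C^{\infty}(\R))$. The only genuinely delicate point is the Fubini/domination step together with the verification that the singular integral is finite; once $\omega<1$ is used there, everything else is a routine consequence of linearity and the properties of $f$ already established.
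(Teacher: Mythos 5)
Your proof is correct and uses the same convolution approach as the paper, whose entire proof reads ``This follows directly by convolution in $x$ from Proposition~\ref{Prop:ex:stand:dual:distr}.'' You have simply spelled out the details the paper leaves to the reader: testing the weak identity \eqref{eq:weak:sol:limit} with $\psi=\varphi_{\kappa}(x_{0}-\cdot)$, the domination argument that makes the singular integral and the Fubini interchange legitimate for $\omega\in(0,1)$, and the passage from the resulting scalar integral equation to a pointwise $C^{1}$-in-time solution in $C^{\infty}(\R)$.
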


\begin{proof}
 This follows directly by convolution in $x$ from Proposition~\ref{Prop:ex:stand:dual:distr}.
\end{proof}

\begin{proposition}\label{Prop:ex:stand:dual:smooth:func}
 There exists a strong solution $f\in C^{1}\left(\left[0,T\right],C^{\infty}\left(\R\right)\right)$ to \eqref{eq:standard:dual:problem} with initial datum $f_{0}:=\chi_{\left(-\infty,A\right]}\ast \varphi_{\kappa}$.
\end{proposition}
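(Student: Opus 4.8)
The plan is to deduce the statement from Proposition~\ref{Prop:ex:stand:dual:smooth:meas} by exploiting that equation \eqref{eq:standard:dual:problem} is linear and translation invariant and that its nonlocal operator $(\LL h)(x):=\int_{0}^{\infty}\frac{h(x+y)-h(x)}{y^{1+\omega}}\dy$ commutes with integration in $x$. First I would take the strong solution $g\in C^{1}\left([0,T],C^{\infty}(\R)\right)$ provided by Proposition~\ref{Prop:ex:stand:dual:smooth:meas} with initial datum $\varphi_{\kappa}(\cdot-A)$. Since $g$ is obtained by convolving in $x$ the non-negative measure solution from Proposition~\ref{Prop:ex:stand:dual:distr}, one has $g(\cdot,t)\geq0$, $\int_{\R}g(x,t)\dx=1$, $\supp g(\cdot,t)\subset(-\infty,A+\kappa]$ and $\norm{\del_{x}^{k}g(\cdot,t)}_{L^{\infty}}\leq\norm{\del_{x}^{k}\varphi_{\kappa}}_{L^{\infty}}$ uniformly for $t\in[0,T]$. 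I would then set
\[
 f(x,t):=\int_{x}^{\infty}g(z,t)\dz,
\]
which is well defined with $0\leq f\leq1$, and which satisfies $f(x,0)=\int_{x-A}^{\infty}\varphi_{\kappa}(w)\,dw=(\chi_{(-\infty,A]}\ast\varphi_{\kappa})(x)$, the required initial datum.

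Next I would check the regularity. Differentiating under the integral sign gives $\del_{x}^{k}f(\cdot,t)=-\del_{x}^{k-1}g(\cdot,t)$ for $k\geq1$, so $f(\cdot,t)\in C^{\infty}(\R)$ and $t\mapsto\del_{x}^{k}f(\cdot,t)$ inherits continuity from $g$. For the time derivative, since $g$ solves \eqref{eq:standard:dual:problem} classically we have $\del_{t}g=\const\,\LL g$, and because $\supp g(\cdot,t)\subset(-\infty,A+\kappa]$ the function $\LL g(\cdot,t)$ is supported in $(-\infty,A+\kappa]$ as well and is smooth and bounded on compact sets, locally uniformly in $t$; hence $\del_{t}f(x,t)=\int_{x}^{\infty}\del_{t}g(z,t)\dz$ is well defined, smooth in $x$, and continuous in $t$, so $f\in C^{1}\left([0,T],C^{\infty}(\R)\right)$.

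To see that $f$ solves the equation, I would write $(\mathcal{I}h)(x):=\int_{x}^{\infty}h(z)\dz$ and verify, via Fubini's theorem, that $\LL(\mathcal{I}h)(x)$ and $\mathcal{I}(\LL h)(x)$ both equal $-\int_{0}^{\infty}\frac{1}{y^{1+\omega}}\int_{0}^{y}h(x+u)\,du\,dy$, i.e. that $\LL$ commutes with $\mathcal{I}$; this is legitimate for $h=g(\cdot,t)$ thanks to the mass bound $\int_{\R}g\dx=1$, the support bound, and the uniform $L^{\infty}$ bounds on $g$ and $\del_{x}g$. Then $\del_{t}f=\mathcal{I}(\del_{t}g)=\mathcal{I}(\const\,\LL g)=\const\,\LL(\mathcal{I}g)=\const\,\LL f$, which together with the initial condition computed above finishes the argument. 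The only steps requiring genuine care are the repeated interchanges of integration and differentiation (differentiation under the integral sign and the Fubini swap), but all of these are controlled by the support, mass and boundedness properties of $g$ that are already available from Propositions~\ref{Prop:ex:stand:dual:distr} and~\ref{Prop:ex:stand:dual:smooth:meas}; I do not expect any serious obstacle, so this proposition is in effect a corollary of Proposition~\ref{Prop:ex:stand:dual:smooth:meas}.
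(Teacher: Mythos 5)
Your proof is correct and follows exactly the same route as the paper: take the solution $G$ from Proposition~\ref{Prop:ex:stand:dual:smooth:meas} with initial datum $\varphi_{\kappa}(\cdot-A)$ and set $f(x,t)=\int_{x}^{\infty}G(z,t)\dz$. The paper states this as a one-line observation; you have simply spelled out the (straightforward) verification of the initial condition, regularity, and the commutation of $\int_{x}^{\infty}$ with the nonlocal operator.
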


\begin{proof}
 Let $G$ be the solution given by Proposition~\ref{Prop:ex:stand:dual:smooth:meas} for $G_{0}:=\delta\left(\cdot-A\right)\ast \varphi_{\kappa}$. Then $f\left(x,t\right):=\int_{x}^{\infty}G\left(y,t\right)\dy$ solves \eqref{eq:standard:dual:problem} with the desired initial condition.
\end{proof}

In the same way as in the proofs of Proposition~\ref{Prop:ex:stand:dual:smooth:func} and Proposition~\ref{Prop:ex:stand:dual:smooth:func} we obtain the following existence result:

\begin{proposition}\label{Prop:ex:dual:eps}
 Let $\eps>0$, $L>0$ and $\lambda_{1},\lambda_{2}>0$ be two constants (depending on some parameters). Then there exists a weak solution $G\in C\left(\left[0,T\right],\Mfin_{+}\right)$ and a strong solution $W\in C\left(\left[0,T\right],C^{\infty}\right)$ of the equation
 \begin{equation}\label{eq:dual:eps}
  \del_{t} W\left(\xi,t\right)-\int_{0}^{1}\frac{h_{\eps}\left(z\right)}{z}\left[\lambda_{1}\left(z+\eps\right)^{-a}+\lambda_{2}\left(z+\eps\right)^{b}\right]\left[W\left(\xi+\frac{z}{L},t\right)-W\left(\xi,t\right)\right]\dz=0
 \end{equation}
together with initial condition $G\left(\cdot,0\right)=\delta\left(\cdot-A\right)$ and $W\left(\cdot,0\right)=\chi_{\left(-\infty,A\right]}\ast\varphi_{\kappa}$.
\end{proposition}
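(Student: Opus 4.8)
The plan is to follow the template of Propositions~\ref{Prop:ex:stand:dual:distr}, \ref{Prop:ex:stand:dual:smooth:meas} and \ref{Prop:ex:stand:dual:smooth:func}; the only genuinely new point is a preliminary integrability estimate for the coefficient
\[
 \mu\left(z\right):=\frac{h_{\eps}\left(z\right)}{z}\left[\lambda_{1}\left(z+\eps\right)^{-a}+\lambda_{2}\left(z+\eps\right)^{b}\right],\qquad z\in\left(0,1\right].
\]
Since $\eps>0$ is fixed we have $\left(z+\eps\right)^{-a}\leq\eps^{-a}$ and $\left(z+\eps\right)^{b}\leq\max\{\eps^{b},\left(1+\eps\right)^{b}\}$ on $\left[0,1\right]$, so Lemma~\ref{Lem:moment:est:stand} applied to $h_{\eps}\in\mathcal{X}_{\rho}$ gives $\int_{0}^{1}z^{\sigma}\mu\left(z\right)\dz\leq C\left(\eps\right)\int_{0}^{1}z^{\sigma-1}h_{\eps}\left(z\right)\dz<\infty$ for every $\sigma\in\left(\rho,1\right]$ (this range is non-empty because $\rho<1$). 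In particular $\int_{0}^{1}z\,\mu\left(z\right)\dz<\infty$, which is exactly what makes the operator $\mathcal{L}W\left(\xi\right):=\int_{0}^{1}\mu\left(z\right)\left[W\left(\xi+\tfrac{z}{L}\right)-W\left(\xi\right)\right]\dz$ well defined and bounded from $C_{b}^{1}\left(\R\right)$ to $C_{b}\left(\R\right)$, and its formal adjoint $\mathcal{L}^{*}\psi\left(\xi\right)=\int_{0}^{1}\mu\left(z\right)\left[\psi\left(\xi-\tfrac{z}{L}\right)-\psi\left(\xi\right)\right]\dz$ bounded on $C_{b}^{1}$. The possible non-integrability of $\mu$ itself near $z=0$ is the analogue here of the singularity $y^{-1-\omega}$ in \eqref{eq:standard:dual:problem}, and it is the only obstacle; it is dealt with, as there, by exploiting the difference structure of $\mathcal{L}$ (which supplies a compensating factor $z$) together with a regularisation.

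Concretely, for the measure-valued solution $G$ I would first replace $\mu$ by $\mu_{\nu}:=\min\{\mu,\nu^{-1}\}$, so that $C_{\nu}:=\int_{0}^{1}\mu_{\nu}\left(z\right)\dz<\infty$, and solve the Duhamel fixed-point problem
\[
 G^{\nu}\left(\xi,t\right)=\ee^{-C_{\nu}t}\,\delta\left(\xi-A\right)+\int_{0}^{t}\ee^{-C_{\nu}\left(t-s\right)}\int_{0}^{1}\mu_{\nu}\left(z\right)\,G^{\nu}\left(\xi-\tfrac{z}{L},s\right)\dz\,\ds
\]
in $C\left(\left[0,T\right],\Mfin_{+}\right)$ by the contraction mapping principle, for any $T>0$ (iterating on short intervals if necessary), exactly as for \eqref{eq:reg:fix}. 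Integrating the equation gives $\int_{\R}G^{\nu}\left(\cdot,t\right)\dx=1$, and the Duhamel structure (mass only moves to the left, since $z/L>0$) gives $\supp G^{\nu}\left(\cdot,t\right)\subset\left(-\infty,A\right]$. Testing the weak form against $\psi\left(x\right)=\abs{x}^{\tilde{\omega}}$ with a fixed $\tilde{\omega}\in\left(\rho,1\right)$, using $\bigl|\abs{x-z/L}^{\tilde{\omega}}-\abs{x}^{\tilde{\omega}}\bigr|\leq\left(z/L\right)^{\tilde{\omega}}$ and $\int_{0}^{1}z^{\tilde{\omega}}\mu\left(z\right)\dz<\infty$, gives the uniform bound $\int_{\R}\abs{x}^{\tilde{\omega}}G^{\nu}\left(x,t\right)\dx\leq\abs{A}^{\tilde{\omega}}+Ct$; testing against $\psi\in C_{c}^{1}\left(\R\right)$ gives equicontinuity in $t$ in the weak-$*$ topology, with constant controlled by $\norm{\psi'}_{L^{\infty}}\int_{0}^{1}z\,\mu\left(z\right)\dz$. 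A diagonal extraction over $\left[0,T\right]\cap\Q$, combined with this equicontinuity and the $\tilde{\omega}$-moment bound (which prevents escape of mass to $-\infty$), then yields $G^{\nu}\to G$ in $C\left(\left[0,T\right],\Mfin_{+}\right)$. Passing to the limit in the weak form is legitimate since $\mu_{\nu}\uparrow\mu$ and the increment $\psi\left(\cdot-z/L\right)-\psi\left(\cdot\right)$ is dominated by $\norm{\psi'}_{L^{\infty}}z/L\in L^{1}\bigl(\mu;\left(0,1\right)\bigr)$, so $G$ is a weak solution of \eqref{eq:dual:eps} with $G\left(\cdot,0\right)=\delta\left(\cdot-A\right)$, still with $\supp G\left(\cdot,t\right)\subset\left(-\infty,A\right]$ and $\int_{\R}G\left(\cdot,t\right)\dx=1$.

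Finally, the smooth solution is obtained as in Propositions~\ref{Prop:ex:stand:dual:smooth:meas} and \ref{Prop:ex:stand:dual:smooth:func}. Because \eqref{eq:dual:eps} is translation invariant in $\xi$, the convolution $\tilde{G}\left(\cdot,t\right):=G\left(\cdot,t\right)\ast\varphi_{\kappa}$ solves the same equation with smooth datum $\varphi_{\kappa}\left(\cdot-A\right)$ and, differentiating under the integral, lies in $C^{1}\left(\left[0,T\right],C^{\infty}\left(\R\right)\right)$. Setting $W\left(\xi,t\right):=\int_{\xi}^{\infty}\tilde{G}\left(y,t\right)\dy$ and using $\int_{\xi}^{\infty}\bigl[\tilde{G}\left(y+\tfrac{z}{L}\right)-\tilde{G}\left(y\right)\bigr]\dy=W\left(\xi+\tfrac{z}{L}\right)-W\left(\xi\right)$ one checks that $W\in C^{1}\left(\left[0,T\right],C^{\infty}\left(\R\right)\right)$ solves \eqref{eq:dual:eps} with $W\left(\cdot,0\right)=\chi_{\left(-\infty,A\right]}\ast\varphi_{\kappa}$; moreover $W$ is non-negative with $0\leq W\leq 1$, non-increasing in $\xi$, and supported in $\left(-\infty,A+\kappa\right]$, which are precisely the monotonicity, boundedness and support properties invoked for the test functions in Sections~\ref{S.le} and \ref{sec:eps:to:zero}.
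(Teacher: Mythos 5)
Your proof is correct and fills in, faithfully, the route the paper intends when it refers back to the earlier propositions for the kernel $N_{\omega}$: truncate the coefficient so it becomes integrable, solve the Duhamel fixed-point by contraction, obtain a uniform $\left|x\right|^{\tilde{\omega}}$-moment bound and equicontinuity in $t$, extract a weak-$*$ limit, then mollify and integrate to produce $W$, recording the support, mass and monotonicity properties used in Sections~\ref{S.le} and~\ref{sec:eps:to:zero}; the key observation that $\int_{0}^{1}z^{\sigma}\mu(z)\,\dz<\infty$ for $\sigma>\rho$ (in particular $\sigma=1$) plays exactly the role of the $\left|y\right|^{\tilde{\omega}}$-integrability of $y^{-1-\omega}$ in Proposition~\ref{Prop:ex:stand:dual:distr}. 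One small slip: in your Duhamel formula the argument should be $G^{\nu}\!\left(\xi+\tfrac{z}{L},s\right)$, not $G^{\nu}\!\left(\xi-\tfrac{z}{L},s\right)$, since the equation reads $\del_{t}G=\int_{0}^{1}\mu(z)\left[G\left(\xi+\tfrac{z}{L}\right)-G\left(\xi\right)\right]\dz$; with the correct sign mass indeed moves to the left, which is precisely what your own remark about $\supp G^{\nu}\subset\left(-\infty,A\right]$ relies on, so this is a typo rather than a gap.
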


\begin{remark}
 The measure $G$ has the same properties as the measure $f$ in Proposition~\ref{Prop:ex:stand:dual:distr}.
\end{remark}

\begin{remark}
 By convolution we also obtain a strong solution $G\in C\left(\left[0,T\right],C^{\infty}\right)$ of \eqref{eq:dual:eps} with initial condition $G\left(\cdot,0\right)=\delta\left(\cdot-A\right)\ast\varphi_{\kappa}$.
\end{remark}

For further use we denote the integral kernels occurring in Proposition~\ref{Prop:ex:stand:dual:distr} and Proposition~\ref{Prop:ex:dual:eps} by
\begin{equation}\label{eq:kernel:Neps}
 N_{\omega}\left(z\right):=z^{-1-\omega}\quad \text{and} \quad  N_{\eps}\left(z\right):=\frac{h_{\eps}\left(z\right)}{z}\left[\lambda_{1}\left(z+\eps\right)^{-a}+\lambda_{2}\left(z+\eps\right)^{b}\right].
\end{equation}

\begin{proposition}\label{Prop:ex:dual:sum}
 Let $n\in\N$, $R\in \R$ and $N_{i}\colon \left(0,\infty\right)\to\R_{\geq 0}$ either of the form $N_{\omega_{i}}$ for some $\omega_{i}\in \left(0,1\right)$ or $N_{\eps}$ given by \eqref{eq:kernel:Neps} (and then continued by $0$ to $\left(0,\infty\right)$) for $i=1,\ldots n$. Let $N:=\sum_{i=1}^{n} N_{i}$. Then there exists a solution $f\in C^{1}\left(\left[0,T\right],C^{\infty}\left(\R\right)\right)$ to the equation
  \begin{equation}\label{eq:dual:convolution}
   \begin{split}
    \del_{t}f\left(x,t\right)=\int_{0}^{\infty}N\left(z\right)\left[f\left(x+z\right)-f\left(x\right)\right]\dz
   \end{split}
  \end{equation}
 either with initial datum $f_{0}=\chi_{\left(-\infty,R\right]}\ast^{n} \varphi_{\kappa}$ or $f_{0}=\delta\left(\cdot-R\right)\ast^{n}\varphi_{\kappa}$, where $\ast^{n}$ denotes the $n$-fold convolution with $\varphi_{\kappa}$.
\end{proposition}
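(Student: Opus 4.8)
The strategy is to exploit that \eqref{eq:dual:convolution} is linear and that each of the ``elementary'' generators
\[
 \LL_i g(x):=\int_0^\infty N_i(z)\,(g(x+z)-g(x))\,\dz
\]
is translation invariant; hence the $\LL_i$ commute and the semigroups they generate are convolution semigroups. Consequently a solution of \eqref{eq:dual:convolution}, whose generator is $\LL:=\sum_{i=1}^n\LL_i$, with an $n$--fold mollified initial datum can be produced as the convolution, in the $x$ variable, of solutions of the single--kernel equations $\del_t f^{(i)}=\LL_i f^{(i)}$. Existence of the latter (with $\varphi_\kappa$--smoothed data) is already available: when $N_i=N_{\omega_i}$ by Proposition~\ref{Prop:ex:stand:dual:smooth:meas}, and when $N_i=N_{\eps}$ by Proposition~\ref{Prop:ex:dual:eps} (taking there the shift parameter equal to $1$, or with the obvious minor modification).

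Concretely I would set $R_1:=R$ and $R_i:=0$ for $i=2,\dots,n$, and by the results just cited obtain for each $i$ a function $f^{(i)}\in C^1([0,T],C^\infty(\R))$ solving $\del_t f^{(i)}=\LL_i f^{(i)}$ with $f^{(i)}(\cdot,0)=\varphi_\kappa(\cdot-R_i)$. These building blocks can be taken of the form (measure solution)$\,\ast\,\varphi_\kappa$, so $f^{(i)}(\cdot,t)$ is non-negative, has integral $1$, is supported in $(-\infty,R_i+\kappa]$, and all its $x$--derivatives are bounded on $\R\times[0,T]$ by the corresponding derivatives of $\varphi_\kappa$; moreover $\del_t f^{(i)}=\LL_i f^{(i)}$ is bounded on $\R\times[0,T]$, and $f^{(i)}(\cdot,t),\partial_x f^{(i)}(\cdot,t)\in L^1(\R)$ uniformly in $t$. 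I then define $f:=f^{(1)}\ast\cdots\ast f^{(n)}$. Young's inequality and smoothness of the factors give $f(\cdot,t)\in C^\infty(\R)$ with bounded derivatives, while the uniform bound on $\del_t f^{(i)}$ together with the support control justify differentiating under the convolution, so that $f\in C^1([0,T],C^\infty(\R))$ with
\[
 \del_t f=\sum_{i=1}^n f^{(1)}\ast\cdots\ast(\del_t f^{(i)})\ast\cdots\ast f^{(n)}
 =\sum_{i=1}^n f^{(1)}\ast\cdots\ast(\LL_i f^{(i)})\ast\cdots\ast f^{(n)}.
\]

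To conclude the case of $\delta$--type data it remains to pull each $\LL_i$ out of the convolution, i.e. to show $\LL_i(g_1\ast g_2)=(\LL_i g_1)\ast g_2$ for $g_1\in C^1_b(\R)$ and $g_2\in L^1(\R)$. This follows from Fubini's theorem after splitting the $z$--integral at $z=1$: near the origin one absorbs the singularity of $N_i$ using $\abs{g_1(x+z)-g_1(x)}\le\norm{g_1'}_{L^\infty}z$, since $\int_0^1 z^{-\omega_i}\,\dz<\infty$ when $N_i=N_{\omega_i}$ and $\int_0^1 N_{\eps}(z)\,z\,\dz\le C(\eps)\int_0^1 h_\eps\,\dz<\infty$ when $N_i=N_{\eps}$; away from the origin one uses boundedness of $g_1$ and $\int_1^\infty N_i(z)\,\dz<\infty$. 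Iterating gives $\del_t f=\sum_i\LL_i f=\LL f$, i.e. \eqref{eq:dual:convolution}, and since $f(\cdot,0)=\varphi_\kappa(\cdot-R)\ast\varphi_\kappa\ast\cdots\ast\varphi_\kappa=\delta(\cdot-R)\ast^n\varphi_\kappa$, this settles that case. For the datum $f_0=\chi_{(-\infty,R]}\ast^n\varphi_\kappa$ I would argue as in Proposition~\ref{Prop:ex:stand:dual:smooth:func}: with $f$ as just constructed (datum $\delta(\cdot-R)\ast^n\varphi_\kappa$), set $F(x,t):=\int_x^\infty f(y,t)\,\dy$. Since $\LL$ commutes with $\del_x$, differentiating $f=-\del_x F$ gives $\del_x\bigl(\del_t F-\int_0^\infty N(z)(F(x+z)-F(x))\,\dz\bigr)=0$, hence $\del_t F-\int_0^\infty N(z)(F(x+z)-F(x))\,\dz=c(t)$; evaluating for $x$ to the right of $\supp f(\cdot,t)\subset(-\infty,R+n\kappa]$, where $F$ and all its translates vanish, forces $c\equiv0$. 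As $F(\cdot,0)=\chi_{(-\infty,R]}\ast^n\varphi_\kappa$ and $F\in C^1([0,T],C^\infty(\R))$, this is the required solution.

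The only genuinely technical points are the justification of differentiating the convolution in $t$ and of interchanging $\LL_i$ with convolution, i.e. the Fubini and dominated--convergence arguments controlling the singularity of $N_i$ at $z=0$; these are routine given the uniform bounds on the $f^{(i)}$ and their derivatives recorded above, and I expect them to be the main (but mild) obstacle.
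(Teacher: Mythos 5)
Your proposal is correct and is essentially the paper's argument: the paper also constructs the single-kernel solutions with smoothed data (via Propositions~\ref{Prop:ex:stand:dual:smooth:meas}--\ref{Prop:ex:dual:eps}), convolves them, and leaves the verification as ``a straightforward computation,'' which you have fleshed out with the Fubini argument for interchanging $\LL_i$ with convolution. The only cosmetic deviation is the handling of the $\chi$-type datum: the paper takes one convolution factor to have initial datum $\chi_{\left(-\infty,R\right]}\ast\varphi_{\kappa}$, while you first build the $\delta$-type solution and then integrate from $x$ to $\infty$ as in Proposition~\ref{Prop:ex:stand:dual:smooth:func}; both routes are equivalent.
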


\begin{proof}
 It suffices to consider the case $n=2$ (otherwise argue by induction). Then by Proposition~\ref{Prop:ex:stand:dual:smooth:meas} and Proposition~\ref{Prop:ex:stand:dual:smooth:func} there exist solutions $f^{i}$ to equation~\eqref{eq:dual:convolution} with $N$ replaced by $N_{i}$ and initial datum $f^{1}_{0}=\delta\left(\cdot\right)\ast\varphi_{\kappa}$ and $f^{2}_{0}=\chi_{\left(-\infty,R\right]}\ast\varphi_{\kappa}$ (or $f^{2}_{0}=\delta\left(\cdot-R\right)\ast\varphi_{\kappa}$). A straightforward computation shows that the convolution $f:=f^{1}\ast f^{2}$ satisfies~\eqref{eq:dual:convolution} together with the correct initial condition.
\end{proof}

\begin{remark}\label{Rem:properies}
 Let $G_{\kappa}$ and $f_{\kappa}$ be the solutions given by Proposition~\ref{Prop:ex:dual:sum} with initial condition $G_{\kappa}\left(\cdot,0\right)=\delta\left(\cdot-A\right)\ast^{n} \varphi_{\kappa}$ and $f\left(\cdot,0\right)=\chi_{\left(-\infty,A\right]}\ast^{n}\varphi_{\kappa}$. Then from the construction in the proof of Proposition~\ref{Prop:ex:dual:sum} and Proposition~\ref{Prop:ex:stand:dual:distr} we obtain:
 \begin{enumerate}
  \item $G_{\kappa}\geq 0$ on $\R$ (in the sense of measures) and $0\leq f_{\kappa}\leq 1$ for all $t\in\left[0,T\right]$,
  \item $\supp G_{\kappa}\left(\cdot,t\right), \supp f_{\kappa}\left(\cdot,t\right)\subset \left(-\infty,A+n\kappa\right]$ for all $t\in\left[0,T\right]$,
  \item $\int_{\R}G\left(\cdot,t\right)\dx=1$ for all $t\in\left[0,T\right]$,
  \item $f_{\kappa}$ is non-increasing.
 \end{enumerate}
\end{remark}

\subsection{Integral estimates for subsolutions}

In this section we will always assume that the integral kernel $N$ is given as the sum of kernels of the form $N_{\omega_{i}}$ or $N_{\eps}$ and we will prove several properties and estimates that are frequently used. We now prove some integral estimates.

\begin{lemma}\label{Lem:der:int:est}
 Let $\omega\in \left(0,1\right)$ and $G$ the solution of 
 \begin{equation}\label{eq:der:int:est}
  \begin{split}
   \del_{t}G\left(x,t\right)&=P\int_{0}^{\infty}N_{\omega}\left(z\right)\left[G\left(x+z\right)-G\left(x\right)\right]\dz\\
   G\left(\cdot,0\right)&=\delta\left(\cdot-A\right)\ast\varphi_{\kappa}=\varphi_{\kappa}\left(x-A\right)
  \end{split}
 \end{equation}
 given by Proposition~\ref{Prop:ex:stand:dual:smooth:meas}, where $P$ is a constant. Then for any $\mu \in \left(0,1\right)$ one has
 \begin{enumerate}
  \item $\int_{-\infty}^{A-D}G\left(x,t\right)\dx\leq C\left(\frac{\kappa}{D}\right)^{\mu}+ C\frac{P t}{D^{\omega}}$ for all $D>0$ and
  \item $\int_{A-2}^{A}\abs{x-A}G\left(x,t\right)\dx\leq C_{\mu}\kappa^{\mu}+C_{\omega} Pt$.
 \end{enumerate}
\end{lemma}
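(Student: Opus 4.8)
The plan is to test the equation \eqref{eq:der:int:est} against suitable bounded, nonincreasing barrier functions and to use that the total mass is conserved, $\int_{\R}G(\cdot,t)\dx=1$ (Proposition~\ref{Prop:ex:stand:dual:distr}). From the weak form of \eqref{eq:der:int:est} — obtained from \eqref{eq:weak:sol:limit} (with the constant $P$) applied to the solution with initial datum $\delta(\cdot-A)$ and then convolved with $\varphi_\kappa$ — one has, for every $w\in C_b^1(\R)$,
\[
\int_{\R}w\,G(\cdot,t)\dx=\int_{\R}w\,G(\cdot,0)\dx+P\int_0^t\int_{\R}\Bigl(\int_0^\infty N_\omega(z)\,[w(x-z)-w(x)]\dz\Bigr)G(x,s)\dx\ds.
\]
If $w$ is nonincreasing the inner $z$-integral is nonnegative, and if moreover it is bounded by some constant $M$ uniformly in $x$, then since $\int_{\R}G(\cdot,s)\dx=1$ we get $\int_{\R}w\,G(\cdot,t)\dx\le\int_{\R}w\,G(\cdot,0)\dx+PMt$. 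The barriers used below are only Lipschitz, or merely continuous, so this identity is first applied to the mollifications $w\ast\varphi_\sigma\in C_b^1$ — for which the inner integral equals $\bigl(\int_0^\infty N_\omega(z)[w(\cdot-z)-w(\cdot)]\dz\bigr)\ast\varphi_\sigma$ and hence keeps the sign and the bound $M$ — and then one lets $\sigma\to0$, using $w\ast\varphi_\sigma\to w$ uniformly and that $G(\cdot,t)$ is a finite measure.

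For (1) I would first discard two trivial cases: if $D\le\kappa$ then $(\kappa/D)^\mu\ge1\ge\int_{-\infty}^{A-D}G(\cdot,t)\dx$; and if $\mu\le\omega$ one may replace $\mu$ by any $\mu'\in(\omega,1)$, since for $D>\kappa$ one has $(\kappa/D)^{\mu'}\le(\kappa/D)^\mu$. So assume $D>\kappa$ and $\mu\in(\omega,1)$ and take the barrier $w(x):=\min\bigl(((A-x)_+/D)^{\mu},1\bigr)$, which is continuous, valued in $[0,1]$, nonincreasing, equal to $1$ on $(-\infty,A-D]$ and vanishing on $[A,\infty)$, so that $w\ge\chi_{(-\infty,A-D]}$. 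The scaling $z=D\zeta$, $r=(A-x)/D$ turns the inner integral into $D^{-\omega}\int_0^\infty\zeta^{-1-\omega}[v(r+\zeta)-v(r)]\dd\zeta$ with $v(r):=\min(r_+^\mu,1)$, and a short case analysis over $r\in\R$ — using $(r+\zeta)^\mu-r^\mu\le\mu r^{\mu-1}\zeta$ for $\zeta\le r$, $(r+\zeta)^\mu\le 2^\mu\zeta^\mu$ for $\zeta\ge r$, and the convergence of $\int_0^1\zeta^{\mu-1-\omega}\dd\zeta$ (which is exactly where $\mu>\omega$ enters) — shows this is $\le C(\mu,\omega)$. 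Hence the inner integral is $\le C(\mu,\omega)D^{-\omega}$ uniformly in $x$, so $\int_{-\infty}^{A-D}G(\cdot,t)\dx\le\int_{\R}w\,G(\cdot,t)\dx\le\int_{\R}w\,G(\cdot,0)\dx+C(\mu,\omega)PtD^{-\omega}$; and since $G(\cdot,0)=\varphi_\kappa(\cdot-A)$ is supported in $[A-\kappa,A+\kappa]$, where $w\le(\kappa/D)^\mu$, the first term is at most $(\kappa/D)^\mu$. This proves (1).

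For (2) I would take the $1$-Lipschitz barrier $w(x):=\min\bigl((A-x)_+,2\bigr)$, which is nonincreasing, bounded by $2$, equal to $A-x$ on $[A-2,A]$ and vanishing on $[A,\infty)$. Then $0\le w(x-z)-w(x)\le\min(z,2)$, so the inner integral is bounded by $\int_0^\infty z^{-1-\omega}\min(z,2)\dz=2^{1-\omega}\bigl(\tfrac1{1-\omega}+\tfrac1\omega\bigr)=:C_\omega$, uniformly in $x$; hence $\int_{\R}w\,G(\cdot,t)\dx\le\int_{\R}w\,G(\cdot,0)\dx+C_\omega Pt$. As $\kappa<1<2$ and $w\le\kappa$ on $[A-\kappa,A+\kappa]$, the initial term is $\le\kappa\le\kappa^\mu$ (using $\kappa<1$ and $\mu<1$), while $\int_{A-2}^A|x-A|\,G(x,t)\dx=\int_{A-2}^A(A-x)\,G(x,t)\dx\le\int_{\R}w\,G(\cdot,t)\dx$. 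This gives (2) with $C_\mu=1$.

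The one step that needs genuine care is the uniform-in-$x$ bound on the inner integral for the power barrier of (1): this barrier has a cusp, i.e. infinite one-sided slope, at $x=A$, so one must verify that the non-local operator applied to it is still bounded there. After the reduction to $\mu>\omega$ this holds, because the small-jump contribution near the cusp, which is of order $D^{-\mu}\int_0^D\zeta^{\mu-1-\omega}\dd\zeta$, is finite and comparable to $D^{-\omega}$; everything else is elementary.
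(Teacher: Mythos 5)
Your proof is correct, but it takes a genuinely different route from the paper. The paper computes the exponential moment $\int_{\R}G(x,t)\,\ee^{Z(x-\kappa)}\dx$ exactly (it satisfies a closed linear ODE in $t$ because the operator is a convolution), bounds the symbol $\abs{M_{\omega}(Z)}\leq CPZ^{\omega}$ via a Gamma-function computation, and then uses $1-\ee^{Z(x-\kappa)}$ with $Z=1/D$ as a comparison function for the indicator $\chi_{(-\infty,A-D]}$; part (2) is then deduced from part (1) by a dyadic decomposition of $[A-2,A]$. You instead test the weak formulation directly against explicit nonincreasing barriers --- a truncated power $\min\bigl(((A-x)_+/D)^{\mu},1\bigr)$ for (1) and a truncated linear function for (2) --- and only need a one-sided, uniform-in-$x$ bound on the nonlocal operator applied to the barrier, together with mass conservation. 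Your reduction to $\mu>\omega$ is legitimate and is exactly where the cusp of the power barrier at $x=A$ becomes harmless, and your mollification argument to pass from $C^1_b$ test functions to merely Lipschitz or continuous ones is sound (the operator commutes with convolution, so sign and bound are preserved). What each approach buys: the paper's Laplace-transform identity is exact and recycles the same mechanism used elsewhere in the appendix (e.g.\ for the kernel $N_{\eps}$, where no explicit scaling is available), whereas your barrier argument is more elementary, avoids the dyadic decomposition in (2) entirely, and gives cleaner explicit constants such as $C_{\mu}=1$. Both yield the stated estimates.
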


\begin{proof}
 By shifting with $A$ we can assume $A=0$. Let $Z>0$. Then testing equation~\eqref{eq:der:int:est} with $\ee^{Z\left(x-\kappa\right)}$ (note that this is possible as $\supp G\subset \left(-\infty,\kappa\right]$) one obtains
 \begin{equation*}
  \begin{split}
   \del_{t}\int_{\R}G\left(x,t\right)\ee^{Z\left(x-\kappa\right)}\dx&=P\int_{\R}\int_{0}^{\infty}N_{\omega}\left(y\right)\left[G\left(x+y\right)-G\left(x\right)\right]\ee^{Z\left(x-\kappa\right)}\dy\dx\\
   &=P\int_{0}^{\infty}N_{\omega}\left(y\right)\left(\ee^{-Zy}-1\right)\dy\int_{\R}G\left(x,t\right)\ee^{Z\left(x-\kappa\right)}\dy=:M_{\omega}\left(Z\right)\int_{\R}G\left(x,t\right)\ee^{Z\left(x-\kappa\right)}\dx.
  \end{split}
 \end{equation*}
 Furthermore
 \begin{equation*}
  \begin{split}
   \int_{\R}G_{\kappa}\left(x,0\right)\ee^{Z\left(x-\kappa\right)}\dx=\int_{\R}\varphi_{\kappa}\left(x\right)\ee^{Z\left(x-\kappa\right)}\dx.
  \end{split}
 \end{equation*}
 Thus we obtain $\int_{\R}G\left(x,t\right)\ee^{Z\left(x-\kappa\right)}\dx=\int_{\R}\varphi_{\kappa}\left(x\right)\ee^{Z\left(x-\kappa\right)}\dx\exp\left(-t\abs{M_{\omega}\left(Z\right)}\right)$. Estimating $M_{\omega}\left(Z\right)$ we obtain
 \begin{equation*}
  \begin{split}
   \abs{M_{\omega}\left(Z\right)}&\leq P\int_{0}^{\infty}\frac{1-\ee^{-Zy}}{y^{1+\omega}}\dy=-\frac{P}{\omega}\int_{0}^{\infty}\left(1-\ee^{-Zy}\right)\frac{\del}{\del y}\left(y^{-\omega}\right)\dy\\
   &=\frac{PZ}{\omega}\int_{0}^{\infty}\frac{\ee^{-Zy}}{y^{\omega}}\dy=\frac{PZ^{\omega}}{\omega}\int_{0}^{\infty}y^{-\omega}\ee^{-y}\dy=P\frac{\Gamma\left(1-\omega\right)}{\omega}Z^{\omega}\\
   &=CPZ^{\omega}.
  \end{split}
 \end{equation*}
 Using that $G=0$ on $\left(\kappa,\infty\right)$ we get 
 \begin{equation*}
  \begin{split}
   &\quad\int_{-\infty}^{\kappa}G\left(x,t\right)\left(1-\ee^{Z\left(x-\kappa\right)}\right)\dx\\
   &=\int_{\R}G\left(x,t\right)\dx-\int_{\R}G\left(x,t\right)\ee^{Z\left(x-\kappa\right)}\dx=1-\int_{\R}\varphi_{\kappa}\left(x\right)\ee^{Z\left(x-\kappa\right)}\dx\exp\left(-t\abs{M_{\omega}\left(Z\right)}\right)\\
   &\leq \left[\left(1-\int_{\R}\varphi_{\kappa}\left(x\right)\ee^{Z\left(x-\kappa\right)}\dx\right)+\int_{\R}\varphi_{\kappa}\left(x\right)\ee^{Z\left(x-\kappa\right)}\dx\abs{M_{\omega}\left(Z\right)}t\right].
  \end{split}
 \end{equation*}
 As $\supp \varphi\subset\left[-\kappa,\kappa\right]$ we can estimate $\ee^{-2Z\kappa}\leq \int_{\R}\varphi_{\kappa}\left(x\right)\ee^{Z\left(x-\kappa\right)}\dx\leq 1$. Then choosing $Z=\frac{1}{D}$ and using also the estimate for $M_{\omega}$ we obtain
 \begin{equation*}
  \begin{split}
     &\quad \int_{-\infty}^{-D}G\left(x,t\right)\dx\leq \int_{-\infty}^{-D}G\left(x,t\right)\frac{1-\ee^{\frac{x-\kappa}{D}}}{1-\ee^{-1-\frac{\kappa}{D}}}\dx\leq \int_{-\infty}^{\kappa}\left(\cdots\right)\dx\\
     &\leq\frac{1}{1-\ee^{-1-\frac{\kappa}{D}}} \left[\left(1-\ee^{-\frac{2\kappa}{D}}\right)+CPt\frac{1}{D^{\omega}}\right]\leq C\left(\frac{\kappa}{D}\right)^{\mu}+C\frac{Pt}{D^{\omega}}.
  \end{split}
 \end{equation*} 
 To prove the second part we use a dyadic decomposition and the estimate from the first part to obtain
 \begin{equation*}
  \begin{split}
   \int_{-2}^{0}\abs{x}G_{\kappa}\left(x,t\right)\dx&=\sum_{n=-1}^{\infty}\int_{-2^{-n}}^{-2^{-\left(n+1\right)}}\abs{x}G_{\kappa}\left(x,t\right)\dx\leq C\sum_{n=-1}^{\infty}2^{-n}\left[\left(\frac{\kappa}{2^{n+1}}\right)^{\mu}+Pt2^{\omega\left(n+1\right)}\right]\\
   &=C\sum_{n=-1}^{\infty}2^{\mu}\kappa^{\mu}\left(2^{\mu-1}\right)^{n}+2^{\omega}Pt\left(2^{\omega-1}\right)^{n}\leq C_{\mu}\kappa^{\mu}+C_{\omega}Pt.
  \end{split}
 \end{equation*}
\end{proof}

We now consider the situation of Proposition~\ref{Prop:ex:dual:sum} where the integral kernel is given as the sum of different kernels

\begin{lemma}\label{Lem:int:est:conolution}
 In the situation of Proposition~\ref{Prop:ex:dual:sum} with $n=2$ one has
 \begin{enumerate}
  \item $\int_{-\infty}^{A-D}G\left(x,t\right)\dx\leq \int_{-\infty}^{A-D/2}G_{1}\left(x,t\right)\dx+\int_{-\infty}^{-D/2}G_{2}\left(x,t\right)\dx$
  \item $\int_{A-1}^{A}\abs{x-A}G\left(x,t\right)\dx\leq \int_{A-1-\kappa}^{A+\kappa}\abs{x-A}G_{1}\left(x\right)\dx+\int_{-1-\kappa}^{\kappa}\abs{x}G_{2}\left(x\right)\dx$.
 \end{enumerate}
\end{lemma}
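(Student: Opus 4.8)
The plan is to use the factorization $G=G_{1}\ast G_{2}$ established in the proof of Proposition~\ref{Prop:ex:dual:sum}, where $G_{1}$ is the solution with initial datum $\delta\left(\cdot-A\right)\ast\varphi_{\kappa}$ and $G_{2}$ the one with initial datum $\delta\left(\cdot\right)\ast\varphi_{\kappa}$. By Remark~\ref{Rem:properies} both $G_{i}\left(\cdot,t\right)$ are non-negative with $\int_{\R}G_{i}\left(\cdot,t\right)\dx=1$, and $\supp G_{1}\left(\cdot,t\right)\subset\left(-\infty,A+\kappa\right]$, $\supp G_{2}\left(\cdot,t\right)\subset\left(-\infty,\kappa\right]$. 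Since all the integrands below are non-negative, Tonelli's theorem allows us to interchange the order of integration at will.

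For part (i), writing $G=G_{1}\ast G_{2}$ and substituting $u=x-y$ in the $x$-integral gives
\begin{equation*}
 \int_{-\infty}^{A-D}G\left(x,t\right)\dx=\int_{\R}G_{2}\left(y,t\right)\left(\int_{-\infty}^{A-D-y}G_{1}\left(u,t\right)\dd u\right)\dy.
\end{equation*}
I would then split the outer integral at $y=-D/2$. For $y\geq-D/2$ one has $A-D-y\leq A-D/2$, so the inner integral is bounded by $\int_{-\infty}^{A-D/2}G_{1}\left(u,t\right)\dd u$; integrating against $G_{2}$ and using $\int_{\R}G_{2}=1$ produces the first term on the right-hand side. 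For $y<-D/2$ one bounds the inner integral by $\int_{\R}G_{1}=1$ and is left with $\int_{-\infty}^{-D/2}G_{2}\left(y,t\right)\dy$, the second term.

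For part (ii), the same substitution turns $\int_{A-1}^{A}\abs{x-A}G\left(x,t\right)\dx$ into the integral of $\left(A-u-y\right)G_{1}\left(u,t\right)G_{2}\left(y,t\right)$ over $\left\{A-1\leq u+y\leq A\right\}$. The two support conditions together with the constraint $A-1\leq u+y$ confine this region to $u\in\left[A-1-\kappa,A+\kappa\right]$ and $y\in\left[-1-\kappa,\kappa\right]$. Using the elementary inequality $A-u-y\leq\abs{A-u}+\abs{y}$, splitting the resulting integral into an $\abs{A-u}$-piece and a $\abs{y}$-piece, and bounding one factor in each piece by $\int_{\R}G_{2}=1$, respectively $\int_{\R}G_{1}=1$ (admissible since the integrands are non-negative), yields exactly $\int_{A-1-\kappa}^{A+\kappa}\abs{x-A}G_{1}\left(x,t\right)\dx+\int_{-1-\kappa}^{\kappa}\abs{x}G_{2}\left(x,t\right)\dx$.

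There is no genuine obstacle here; the statement is essentially a support- and convexity-bookkeeping exercise for the convolution. The only points requiring a little attention are keeping track of the supports of $G_{1}$ and $G_{2}$ so as to recover precisely the integration ranges appearing in the statement, and invoking Tonelli to justify the interchanges of integration, both immediate from Remark~\ref{Rem:properies}.
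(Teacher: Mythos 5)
Your proof is correct and follows essentially the same argument as the paper: factor $G=G_1\ast G_2$, interchange integrations by Tonelli, split at $y=-D/2$ for part (i), and for part (ii) use the support constraints to confine the region and the triangle inequality $|A-u-y|\le|A-u|+|y|$ together with $\int_{\R}G_i=1$. The only cosmetic difference is that you work with general $A$ directly, whereas the paper first reduces to $A=0$ by a shift.
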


\begin{proof}
 We consider again only the case $A=0$, while the general result follows by shifting. 
\begin{enumerate}
   \item One has
         \begin{equation*}
          \begin{split}
           \int_{-\infty}^{-D}G\left(x,t\right)\dx&=\int_{\R}\int_{\R}\chi_{\left(-\infty,-D\right]}\left(x+y\right)G_{1}\left(x,t\right)G_{2}\left(y,t\right)\dx\dy\\
           &=\int_{\R}\int_{-\infty}^{-D-y}G_{1}\left(x,t\right)G_{2}\left(y,t\right)\dx\dy\\
           &=\int_{-\frac{D}{2}}^{\infty}\int_{-\infty}^{-D-y}G_{1}\left(x,t\right)G_{2}\left(y,t\right)\dx\dy+\int_{-\infty}^{-\frac{D}{2}}\int_{-\infty}^{-D-y}G_{1}\left(x,t\right)G_{2}\left(y,t\right)\dx\dy\\
           &\leq \int_{-\infty}^{-\frac{D}{2}}G_{1}\left(x,t\right)\dx\int_{\R}G_{2}\left(y,t\right)\dy+\int_{-\infty}^{-\frac{D}{2}}G_{2}\left(y,t\right)\dy\int_{\R}G_{1}\left(x,t\right)\dx\\
           &\leq \int_{-\infty}^{-D/2}G_{1}\left(x,t\right)\dx+\int_{-\infty}^{-D/2}G_{2}\left(x,t\right)\dx
          \end{split}
         \end{equation*}
         where in the last step we used that $G_{i}$ is normalized for $i=1,2$.
   \item To prove the second estimate we first rewrite
         \begin{equation*}
          \begin{split}
           &\quad \int_{-1}^{0}\abs{x}G\left(x,t\right)\dx=\int_{\R}\int_{\R}\chi_{\left[-1,0\right]}\left(x+y\right)\abs{x+y}G_{1}\left(x,t\right)G_{2}\left(y,t\right)\dx\dy\\
           &=\int_{\R}\int_{-1-y}^{-y}\abs{x+y}G_{1}\left(x,t\right)G_{2}\left(y,t\right)\dx\dy\\
           &=\int_{-\infty}^{\kappa}\int_{-1-y}^{-y}\abs{x+y}G_{1}\left(x,t\right)G_{2}\left(y,t\right)\dx\dy
          \end{split}
         \end{equation*}
         where we used that $G_{2}=0$ on $\left\{y>\kappa\right\}$. Using also $G_{1}=0$ on $\left\{x>\kappa\right\}$ we have furthermore
         \begin{equation*}
          \begin{split}
           \int_{-1}^{0}\abs{x}G\left(x,t\right)\dx\leq \int_{-\infty}^{\kappa}\int_{-1-y}^{\kappa}\abs{x+y}G_{1}\left(x,t\right)G_{2}\left(y,t\right)\dx\dy.
          \end{split}
         \end{equation*}
         Noting that for $y<-1-\kappa$ we have $-1-y>\kappa$ and thus the $x$-integral equal zero as $G_{1}=0$ on $\left\{x>\kappa\right\}$ we obtain (also using $-1-\kappa\leq -1-y$ for $y\in \left[-1-\kappa,\kappa\right]$)
         \begin{equation*}
          \begin{split}
           \int_{-1}^{0}\abs{x}G\left(x,t\right)\dx&\leq \int_{-1-\kappa}^{\kappa}\int_{-1-\kappa}^{\kappa}\abs{x+y}G_{1}\left(x,t\right)G_{2}\left(y,t\right)\dx\dy\\
           &\leq \int_{-1-\kappa}^{\kappa}\int_{-1-\kappa}^{\kappa}\left(\abs{x}+\abs{y}\right)G_{1}\left(x,t\right)G_{2}\left(y,t\right)\dx\dy\\
           &\leq \int_{-1-\kappa}^{\kappa}\abs{x}G_{1}\left(x,t\right)\dx+\int_{-1-\kappa}^{\kappa}\abs{y}G_{2}\left(y,t\right)\dy,
          \end{split}
         \end{equation*}
         where in the last step we used $\int_{-1-\kappa}^{\kappa}G_{i}\left(x,t\right)\dx\leq \int_{\R}G_{i}\left(x,t\right)\dx=1$.
  \end{enumerate}
\end{proof}

\begin{remark}\label{Rem:est:conv}
 By induction we can prove the corresponding estimates for $n>2$ with $D/2$ replaced by $D/2^{n-1}$ and $\kappa$ replaced by $n\kappa$ (and of course summing over all $G_{i}$, $i=1,\ldots, n$ on the right hand side).
\end{remark}

\bibliographystyle{amsplain}
\bibliography{coagulation}

\end{document}